\newtheorem{theorem}{Theorem}[section]
\newtheorem{definition}[theorem]{Definition}
\newtheorem{remark}[theorem]{Remark}
\newtheorem{lemma}[theorem]{Lemma}
\newtheorem{corollary}[theorem]{Corollary}
\numberwithin{figure}{section}
\def\it{\textit}
\newenvironment{proof}{{\it{
Proof.}}}{\nopagebreak\mbox{}{\hfill$\square$}
\par\smallskip}
\newcommand{\rthm}[1]{Theorem~\ref{#1}}
\newcommand{\rlem}[1]{Lemma~\ref{#1}}
\newcommand{\rrem}[1]{Remark~\ref{#1}}
\def\e{\epsilon}
\def\a{\alpha}
\def\b{\beta}
\def\d{\delta}
\def\D{\Delta}
\def\gg{\gamma}
\def\ga{\gamma}
\def\G{\Gamma}
\def\s{\sigma}
\def\Si{\Sigma}
\def\t{\theta}
\def\l{\lambda}
\def\w{\omega}
\def\L{\Lambda}
\def\m{\mu}
\def\r{\rho}
\def\F{{\mathfrak{F}}}
\def\g{{\mathfrak{g}}}
\def\nn{{\mathfrak{n}}}
\def\h{{\mathfrak{h}}}
\def\bb{{\mathfrak{b}}}
\def\CC{{\mathcal{C}}}
\def\V{{\mathcal{V}}}
\def\Z{{\mathbb{Z}}}
\def\C{{\mathbb{C}}}
\def\O{{\mathcal{O}}}
\def\A{{\mathcal{A}}}
\def\F{{\mathcal{F}}}
\def\U{{\mathcal{U}}}
\def\S{\Sigma}
\def\F{{\mathcal{F}}}
\def\0{{\bar{0}}}
\def\1{{\bar{1}}}
\begin{document}

\title{The representation theory of the exceptional Lie superalgebras $F(4)$ and $G(3)$}
\date{}
\author{Lilit Martirosyan}

\maketitle
\begin{abstract}

This paper is a resolution of three related problems proposed by Yu. I. Manin and V. Kac for the so-called {\it{exceptional}} Lie superalgebras $F(4)$ and $G(3)$. The first problem posed by Kac (1978) is the problem of finding character and superdimension formulae for the simple modules. The second problem posed by Kac (1978) is the problem of classifying all indecomposable representations. The third problem posed by Manin (1981) is the problem of constructing the {\it{superanalogue}} of Borel-Weil-Bott theorem.
\end{abstract}

\section{Introduction}

After classifying all finite-dimensional simple Lie superalgebras over $\C$ in 1977, V. Kac proposed the problem of finding character and superdimension formulae for the simple modules (see \cite{K2}). The first main result in this thesis is solving this problem in full for the so-called {\it{exceptional}} Lie superalgebras $F(4)$ and $G(3)$.  

The next problem, also posed by V. Kac in 1977, is the problem of classifying all indecomposable representations of classical Lie superalgebras (see \cite{K2}). For the exceptional Lie superalgebras $F(4)$ and $G(3)$, we describe the {\it{blocks}} up to equivalence and find the corresponding {\it{quivers}}, which gives a full solution of this problem. We show that the blocks of atypicality 1 are {\it{tame}}, which together with Serganova's results for other Lie superalgebras proves a conjecture by J. Germoni. 

In the geometric representation theory of Lie algebras, the Borel-Weil-Bott (BWB) theorem (see \rthm{BWB}) plays a crucial role. This theorem describes how to construct families of representations from sheaf cohomology groups associated to certain vector bundles. It was shown by I. Penkov, that this theorem is not true for Lie superalgebras. In 1981, Yu. I. Manin proposed the problem of constructing a {\it{superanalogue}} of BWB theorem. The first steps towards the development of this theory were carried out by I. Penkov in \cite{P}. One of my results (see \rthm{BWBF} and \rthm{BWBG}) is an analogue of BWB theorem for the exceptional Lie superalgbras $F(4)$ and $G(3)$ for dominant weights.

\section{Main results}

\subsection{Classification of blocks}

Let $\CC$ denote the category of finite-dimensional $\g$-modules.  And let $\F$ be the full subcategory of $\CC$ consisting of modules such that the parity of any weight space coincides with the parity of the corresponding weight. %Then, $\CC=\F\oplus\Pi(\F)$, where $\Pi$ is change of parity functor. 

The category $\F$ decomposes into direct sum of full subcategories called {\it{blocks}} $\F^\chi$, where $\F^\chi$ consists of all finite dimensional modules with (generalized) central character $\chi$. A block having more than one element is called an {\it{atypical}} block. By $F^\chi$, we denote the set of weights corresponding to central character $\chi$.

 A {\it{quiver diagram}} is a directed graph that has vertices the finite-dimensional irreducible representations of $\g$, and the number of arrows from vertex $\l$ to the vertex $\m$ is $dim{Ext^1_{\A}(L_\l,L_\m)}$.

\begin{theorem}\label{CBF} The following holds for $\g=F(4)$:

(1) The atypical blocks are parametrized by dominant weights $\m$ of $\mathfrak{sl}(3)$, such that $\m+\r_l=a\w_1+b\w_2$ with $a=3n+b$. Here, $b\in\mathbb{Z}_{>0}$ and $n\in\mathbb{Z}_{\geq 0}$; $\w_1$ and $\w_2$ are the fundamental weights of $\mathfrak{sl}(3)$; $\r_l$ is the Weyl vector for $\mathfrak{sl}(3)$.

(2) There are two, up to equivalence, atypical blocks, corresponding to dominant weights $\m$ of $\mathfrak{sl}(3)$, such that $\m+\r_l=a\w_1+b\w_2$ with $a=b$ or $a\neq b$. We call these blocks symmetric or non-symmetric and denote by $\F^{(a, a)}$ or $\F^{(a, b)}$ respectively.

(3) For the symmetric block $\F^{(a, a)}$, we have the following quiver diagram, which is of type $D_{\infty}$:

\begin{center}
  \begin{tikzpicture}[scale=.4]
    \draw (-2.5,0) node[anchor=east]  {$\F^{(a, a)}$};
    \foreach \x in {0,...,4}
    \draw[xshift=\x cm,thick,fill=black] (\x cm,0) circle (.2cm);
    \draw[xshift=8 cm,thick,fill=black] (30: 17 mm) circle (.2cm);
    \draw (10 cm,1.5 cm) node[anchor=west]  {\tiny $\l_1$};
    \draw[xshift=8 cm,thick,fill=black] (-30: 17 mm) circle (.2cm);
    \draw (10 cm,-1.5 cm) node[anchor=west]  {\tiny $\l_2$};
    \draw[dotted,thick] (-1.5 cm,0) -- +(1.4 cm,0);

    \draw (8 cm, 0.5 cm) node[anchor=south]  {\tiny $\l_0$};
    
    \foreach \y in {0.15,1.15,...,3.15}
    \draw[xshift=\y cm,thick] (\y cm,0) -- +(1.4 cm,0);
    \foreach \z in {0.15, 1.15, 2.15, 3.15}
    \draw[xshift=\z cm,thick] (\z cm,0) -- +(0.3 cm,0.15);
    \foreach \w in {0.15, 1.15, 2.15, 3.15}
    \draw[xshift=\w cm,thick] (\w cm,0) -- +(0.3 cm,-0.15);
    \foreach \u in {0.85, 1.85, 2.85, 3.85}
    \draw[xshift=\u cm,thick] (\u cm, 0) -- +(-0.3 cm,0.15);
    \foreach \v in {0.85, 1.85, 2.85, 3.85}
    \draw[xshift=\v cm,thick] (\v cm, 0) -- +(-0.3 cm,-0.15);
    \draw[xshift=8 cm,thick] (30: 3 mm) -- (30: 14 mm);
    \draw[xshift=8 cm,thick] (-30: 3 mm) -- (-30: 14 mm);
    \draw[thick] (8.2 cm,0.1) -- +(0.2 cm, 0.27);
    \draw[thick] (8.2 cm,0.1) -- +(0.35 cm, 0.05);
     \draw[thick] (8.2 cm,-0.1) -- +(0.2 cm, -0.27);
    \draw[thick] (8.2 cm,-0.1) -- +(0.35 cm, -0.05);
    \draw[thick] (9.2 cm, 7mm) -- +(-0.15 cm,-0.27);
    \draw[thick] (9.2 cm, 7mm) -- +(-0.35 cm,-0.03);
    \draw[thick] (9.2 cm, -7mm) -- +(-0.15 cm,0.27);
    \draw[thick] (9.2 cm, -7mm) -- +(-0.35 cm,0.03);
  \end{tikzpicture}
\end{center}

%Let $\l_0$ be the branching point of the quiver. 
%\\

(4) For the non-symmetric block $\F^{(a, b)}$, we have the following quiver diagram, which is of type $A_{\infty}$: 

%\begin{displaymath}
 %   \xymatrix{
%        \cdots\bullet\ar[r] &\bullet\ar[r]\ar[l] & \bullet \ar[r]\ar[l] & \bullet\ar[r]\ar[l] & \bullet \ar[l]\ar[r] & \bullet \ar[l]\ar[r] &\bullet \ar[l] \cdots  }
%\end{displaymath}

\begin{center}
  \begin{tikzpicture}[scale=.4]
    \draw (-2.5,0) node[anchor=east]  {$\F^{(a, b)}$};
    \foreach \x in {0,...,6}
    \draw[xshift=\x cm,thick,fill=black] (\x cm,0) circle (.2cm);
    
    \draw[dotted,thick] (-1.5 cm,0) -- +(1.4 cm,0);
    \draw[dotted,thick] (12.3,0) -- +(1.4 cm,0);
    \foreach \y in {0.15,1.15,...,3.15,4.15,5.15}
    \draw[xshift=\y cm,thick] (\y cm,0) -- +(1.4 cm,0);
    \foreach \z in {0.15, 1.15, 2.15, 3.15,4.15,5.15}
    \draw[xshift=\z cm,thick] (\z cm,0) -- +(0.3 cm,0.15);
    \foreach \w in {0.15, 1.15, 2.15, 3.15,4.15,5.15}
    \draw[xshift=\w cm,thick] (\w cm,0) -- +(0.3 cm,-0.15);
    \foreach \u in {0.85, 1.85, 2.85, 3.85,4.85,5.85}
    \draw[xshift=\u cm,thick] (\u cm, 0) -- +(-0.3 cm,0.15);
    \foreach \v in {0.85, 1.85, 2.85, 3.85,4.85,5.85}
    \draw[xshift=\v cm,thick] (\v cm, 0) -- +(-0.3 cm,-0.15);
    \draw (6 cm, 0.5 cm) node[anchor=south]  {\tiny $\l_0$};
    
  \end{tikzpicture}
\end{center}

%Let $\l_0$ be the special vertex of the quiver. 

\end{theorem}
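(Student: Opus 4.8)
\emph{Strategy.} I would proceed in three stages: first determine the atypical central characters and the linkage, then collapse every atypical block to one of two model blocks by exact functors, and finally compute the $\operatorname{Ext}^1$-quiver of each model. Stage one yields (1), stage two yields (2), and stage three yields (3) and (4).

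\emph{Stage 1: linkage and the parametrization in (1).} Fix a Borel subalgebra of $F(4)$ with Weyl vector $\rho$. Since $F(4)$ has defect one, I would invoke the Kac--Sergeev description of central characters: a dominant weight $\lambda$ is atypical exactly when $(\lambda+\rho,\alpha)=0$ for some isotropic odd root $\alpha$, and two atypical dominant weights are linked iff their $\rho$-shifts lie in one orbit of the Weyl group together with the odd reflection in $\alpha$. Writing this condition in coordinates and intersecting with the dominant cone, the solutions organize into chains, and bookkeeping which chains occur produces exactly the list in (1): a chain is labelled by a dominant $\mathfrak{sl}(3)$-weight $\mu$ with $\mu+\rho_l=a\omega_1+b\omega_2$ and $a=3n+b$. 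The dichotomy $a=b$ versus $a\neq b$ is, concretely, whether the diagram automorphism $\omega_1\leftrightarrow\omega_2$ of $\mathfrak{sl}(3)$ fixes $\mu$; this extra $\mathbb{Z}/2$ symmetry is what ultimately forces the two different quiver shapes.

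\emph{Stage 2: reduction to two blocks, giving (2).} Here I would use translation functors --- tensoring with a finite-dimensional $\g$-module and projecting to a block --- to build exact functors between atypical blocks, and show they are equivalences whenever the two linkage chains agree as posets carrying their extra $\mathbb{Z}/2$-datum. The conceptually cleanest implementation is via the Duflo--Serganova functor $DS_x$ attached to an odd root vector $x$ with $[x,x]=0$: it is an exact symmetric monoidal functor from $\F$ for $F(4)$ to finite-dimensional $\g_x$-modules, and for $F(4)$ one has $(\g_x)_{\bar 0}\cong\mathfrak{sl}(3)$, which explains why $\mathfrak{sl}(3)$-weights are the natural labels. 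Tracking $DS_x$ on Kac modules and simple modules reduces the classification to two equivalence classes, the symmetric block $\F^{(a,a)}$ and the non-symmetric block $\F^{(a,b)}$.

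\emph{Stage 3: the quivers in (3) and (4).} Now I would feed in the character and superdimension formulas for the simple modules (the paper's first main result) together with BGG reciprocity, available because $\F$ has enough projectives, has standard (Kac) objects $K(\mu)$, and carries a simple-preserving duality. The steps: (a) the character formula gives the composition factors of each $K(\lambda)$, and in defect one this forces a two-step radical filtration $0\to L(\lambda^{-})\to K(\lambda)\to L(\lambda)\to 0$ with $\lambda^{-}$ the predecessor of $\lambda$ in the chain --- except at the distinguished end of a symmetric chain, where $K(\lambda_0)$ acquires the two further constituents $L(\lambda_1),L(\lambda_2)$; (b) BGG reciprocity $[P(\lambda):K(\mu)]=[K(\mu):L(\lambda)]$ together with $P(\lambda)\cong I(\lambda)$ (so the Loewy structure is palindromic) pins down the radical filtration of each indecomposable projective; (c) $\dim\operatorname{Ext}^1_{\F}(L_\lambda,L_\mu)$ equals the multiplicity of $L_\mu$ in $\operatorname{rad}P(\lambda)/\operatorname{rad}^2 P(\lambda)$, read off from (b). For the non-symmetric block this makes every vertex adjacent to exactly its two chain-neighbours with one-dimensional $\operatorname{Ext}^1$, i.e. the $A_\infty$ quiver; for the symmetric block the same holds except that $\lambda_0$ is adjacent to the three weights $\lambda_1,\lambda_2$ and its chain-successor, producing the trivalent fork of $D_\infty$. \emph{Main obstacle:} the crux is Stage 3 at the distinguished vertex of the symmetric block --- proving that $K(\lambda_0)$ (equivalently $P(\lambda_0)$) genuinely has the branched structure claimed, that every edge carries $\operatorname{Ext}^1$ of dimension exactly one, and that there are no extensions between non-adjacent weights. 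The character formula only yields composition multiplicities, not Loewy layers, so genuine extra input is needed here; I would expect to control it either through the Jantzen filtration on the Kac modules at $\lambda_0$ or through the homological information carried by $DS_x$ and the associated variety.
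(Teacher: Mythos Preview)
Your three-stage outline matches the paper's overall architecture, and Stage~1 is essentially what the paper does in \S4.4 via $\g_x\cong\mathfrak{sl}(3)$. But Stages~2 and~3 diverge in ways that matter.

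\textbf{Wrong standard objects, and a circularity.} $F(4)$ is of Type~II: $\g_{\1}$ is $\g_{\0}$-irreducible, so there is no abelian odd radical from which to induce a Kac module with the clean two-step filtration you assume. The paper's standard objects are the geometric inductions $\G_0(G/B,\O_\l)=H^0(G/B,\O_\l^*)^*$. More seriously, your Stage~3 feeds in the character and superdimension formulas to extract composition factors; in the paper the logic runs the other way. The composition factors of $\G_0(G/B,\O_\l)$ are pinned down first --- \rlem{3} bounds which $L_\m$ can occur, and then the vanishing of the Euler-characteristic superdimension (\rlem{7}), propagated inward from \emph{generic} weights where Penkov's formula already gives the character, forces the remaining multiplicities to be exactly~$1$. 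The full character and superdimension formulas (\S10) are \emph{consequences} of these cohomology computations, not inputs to them.

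\textbf{Stages~2 and~3 are intertwined, not sequential.} The translation-functor equivalences in \S7--8 are established by explicit weight-by-weight verification that each $T(L_\l)$ is simple, and the delicate cases (e.g.\ \rlem{aa1}) already use the cohomology exact sequences for the smaller block; the induction on $a$ weaves the two stages together. The fiber functor $M\mapsto M_x$ is used only to identify $\g_x$ and to show $T(L_\l)\neq 0$ (\rlem{zero}); it does not produce the block equivalences. Your instinct that the branch at $\l_0$ is the crux is right --- this is \rlem{coho0}, settled not by a Jantzen filtration but by the constraint $sdim\,\G_0(G/B,\O_{\l_0})=0$ together with the prior determinations $sdim\,L_{\l_1}=sdim\,L_{\l_2}=1$ and $sdim\,L_{\l_0}=\pm 2$, which force $[\G_0(G/B,\O_{\l_0}):L_{\l_i}]=1$ for $i=1,2$.
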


\begin{theorem}\label{CBG} The following holds for $\g=G(3)$:

(1) The atypical blocks are parametrized by dominant weight $\m$ of $\mathfrak{sl}(2)$, such that $\m_l+\r=a\w_1$ with $a=2n+1$. Here, $n\in\mathbb{Z}_{\geq 0}$; $\w_1$ is the fundamental weight of $\mathfrak{sl}(2)$; $\r_l$ is the Weyl vector for $\mathfrak{sl}(2)$.

(2) There is one, up to equivalence, atypical block, corresponding to dominant weight $\m$ of $\mathfrak{sl}(2)$, such that $\m_l+\r=a\w_1$. Denote it by $\F^{a}$.

(3) For the block $\F^{a}$, we have the following quiver diagram, which is of type $D_{\infty}$:

%\begin{displaymath}
 %   \xymatrix{
 %    \ &   \cdots\bullet\ar[r] & \bullet \ar[r]\ar[l] & \bullet\ar[d]\ar[r]\ar[l] & \bullet{\l_1} \ar[l] \ &\\
%      \ &  \ & \ & \bullet{\l_2} \ar[u] &\  }
%\end{displaymath}

\begin{center}
  \begin{tikzpicture}[scale=.4]
    \draw (-2.5,0) node[anchor=east]  {$\F^{a}$};
    \foreach \x in {0,...,4}
    \draw[xshift=\x cm,thick,fill=black] (\x cm,0) circle (.2cm);
    \draw[xshift=8 cm,thick,fill=black] (30: 17 mm) circle (.2cm);
    \draw (10 cm,1.5 cm) node[anchor=west]  {\tiny $\l_1$};
    \draw[xshift=8 cm,thick,fill=black] (-30: 17 mm) circle (.2cm);
    \draw (10 cm,-1.5 cm) node[anchor=west]  {\tiny $\l_2$};
    \draw[dotted,thick] (-1.5 cm,0) -- +(1.4 cm,0);
        \draw (8 cm, 0.5 cm) node[anchor=south]  {\tiny $\l_0$};

    \foreach \y in {0.15,1.15,...,3.15}
    \draw[xshift=\y cm,thick] (\y cm,0) -- +(1.4 cm,0);
    \foreach \z in {0.15, 1.15, 2.15, 3.15}
    \draw[xshift=\z cm,thick] (\z cm,0) -- +(0.3 cm,0.15);
    \foreach \w in {0.15, 1.15, 2.15, 3.15}
    \draw[xshift=\w cm,thick] (\w cm,0) -- +(0.3 cm,-0.15);
    \foreach \u in {0.85, 1.85, 2.85, 3.85}
    \draw[xshift=\u cm,thick] (\u cm, 0) -- +(-0.3 cm,0.15);
    \foreach \v in {0.85, 1.85, 2.85, 3.85}
    \draw[xshift=\v cm,thick] (\v cm, 0) -- +(-0.3 cm,-0.15);
    \draw[xshift=8 cm,thick] (30: 3 mm) -- (30: 14 mm);
    \draw[xshift=8 cm,thick] (-30: 3 mm) -- (-30: 14 mm);
    \draw[thick] (8.2 cm,0.1) -- +(0.2 cm, 0.27);
    \draw[thick] (8.2 cm,0.1) -- +(0.35 cm, 0.05);
     \draw[thick] (8.2 cm,-0.1) -- +(0.2 cm, -0.27);
    \draw[thick] (8.2 cm,-0.1) -- +(0.35 cm, -0.05);
    \draw[thick] (9.2 cm, 7mm) -- +(-0.15 cm,-0.27);
    \draw[thick] (9.2 cm, 7mm) -- +(-0.35 cm,-0.03);
    \draw[thick] (9.2 cm, -7mm) -- +(-0.15 cm,0.27);
    \draw[thick] (9.2 cm, -7mm) -- +(-0.35 cm,0.03);
  \end{tikzpicture}
\end{center}

%Let $\l_0$ be the branching point of the quiver. 

\end{theorem}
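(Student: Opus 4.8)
The plan is to prove Theorem~\ref{CBG} by transferring structural information from $F(4)$, where a parabolic induction/restriction calculus is available, to $G(3)$, and by a direct analysis of the atypicality-one condition. First I would recall the root system of $\g=G(3)$, fix a distinguished system of simple roots, and write down the explicit form of the bilinear form on $\h^*$ together with the Weyl vector $\r$. Atypicality of a weight $\l$ is governed by the existence of an isotropic root $\b$ with $(\l+\r,\b)=0$; since $G(3)$ has defect $1$, every atypical block has atypicality exactly $1$, and I would parametrize the atypical weights by the ``tail'' data in the $\mathfrak{sl}(2)$-direction that survives after fixing the value of the Casimir-type central character. Concretely, Shapovalov/Harish-Chandra-type computations show that two dominant weights have the same central character iff their $\mathfrak{sl}(2)$-parts $\m$ satisfy $\m_l+\r=a\w_1$ with $a$ running over the odd positive integers $a=2n+1$; this gives parts (1) and (2). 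The equivalence of all such blocks (part (2)) I would obtain via translation functors: tensoring with a suitable finite-dimensional typical module and projecting onto the relevant central character gives an exact functor between $\F^{a}$ and $\F^{a'}$, and one checks on the level of socle filtrations of Kac modules that it is an equivalence.

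The heart of the theorem is part (3), the computation of the quiver. The strategy is to compute $\dim \mathrm{Ext}^1_{\F}(L_\l,L_\m)$ for all pairs $\l,\m$ in $\F^{a}$ using the Kac modules $K(\l)$ (induced from the even part): these have a two-step radical filtration in atypicality-one blocks, and the key input is the ``Kazhdan--Lusztig''-type multiplicity statement identifying $[K(\l):L_\m]$, which for defect-one superalgebras is $0$ or $1$ and is controlled by the ``arrows'' in a chain of atypical weights ordered by the Bruhat-like order coming from the odd reflection. I would: (i) list the atypical weights in the block as an explicit sequence $\l_0, \l_1, \l_2, \dots$ where $\l_0$ is the ``most atypical'' (the one whose $F^\chi$-tail is ramified into two weights $\l_1,\l_2$ by the two choices at the branch point), and $\l_3,\l_4,\dots$ continue along a single chain; (ii) show $[K(\l_i):L_{\l_j}]=1$ precisely for $j=i$ and one neighbour, which forces the Cartan matrix of the block; (iii) deduce $\mathrm{Ext}^1$ from these composition factors together with the self-duality of the block (a contravariant duality fixing simples) and the BGG-reciprocity-type identity $[P(\l):K(\m)] = [K(\m):L(\l)]$, obtaining exactly the $D_\infty$ shape: a trivalent vertex $\l_0$ adjacent to $\l_1$ and $\l_2$, and an infinite simple chain afterward, with a single (double-headed, i.e.\ reciprocal) arrow on each edge.

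The main obstacle I anticipate is pinning down the branching at $\l_0$: one must show that the forked $D_\infty$ end genuinely occurs for $G(3)$ rather than the straight $A_\infty$ chain of the non-symmetric $F(4)$ block. This is a finite but delicate computation with odd reflections near the ``wall,'' where the generic pattern of the atypical chain degenerates because two distinct dominant weights become linked to the same Casimir value; I would handle it by an explicit small-rank calculation (listing weights with $|\,(\l+\r,\b)\,|$ small and applying the odd reflection $r_\b$) and cross-checking against the known $F(4)$ picture via the embedding of the relevant rank-one parabolic. A secondary technical point is justifying that there are no ``long'' $\mathrm{Ext}^1$'s (arrows between non-adjacent atypical weights); this follows from the two-step filtration of $K(\l)$ together with the fact that $\mathrm{Ext}^1(L_\l,L_\m)\ne 0$ implies $\l,\m$ are adjacent in the linkage order, a statement I would prove using the long exact sequence applied to $0\to \mathrm{rad}\,K(\l)\to K(\l)\to L_\l\to 0$ and its dual. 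Once these two obstacles are cleared, parts (1)--(3) assemble formally, and the $D_\infty$ diagram is forced.
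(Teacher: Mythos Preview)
Your proposal has a structural gap: you build the argument around Kac modules $K(\l)$ ``induced from the even part'' with a two-step radical filtration and BGG reciprocity $[P(\l):K(\m)]=[K(\m):L(\l)]$. That machinery is specific to \emph{type I} basic classical superalgebras, where the $\Z$-grading $\g=\g_{-1}\oplus\g_0\oplus\g_{+1}$ lets one define $K(\l)=\mathrm{Ind}^{\g}_{\g_0\oplus\g_{+1}}L_\l(\g_0)$. The superalgebra $G(3)$ is \emph{type II}: the odd part $\g_{\1}$ is an irreducible $\g_{\0}$-module, there is no compatible $\Z$-grading, and hence no Kac modules in this sense. The two-step filtration you invoke, and the reciprocity formula in that form, simply are not available. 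Likewise, the idea of ``transferring from $F(4)$ via the embedding of the relevant rank-one parabolic'' does not make sense: there is no embedding of $G(3)$ in $F(4)$ (or conversely) that would let you import the block structure.

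What the paper does instead is replace Kac modules by the geometric-induction modules $\G_0(G/B,\O_\l)=H^0(G/B,\O_\l^*)^*$. These are finite-dimensional highest-weight quotients of Verma modules (\rlem{2}) and, crucially, for atypical $\l\neq\l_0,\l_1,\l_2$ they sit in short exact sequences $0\to L_\m\to\G_0(G/B,\O_\l)\to L_\l\to 0$ with a single neighbouring $\m$ (\rlem{exact}); this is the substitute for your ``two-step filtration.'' The multiplicities are controlled not by Kazhdan--Lusztig combinatorics but by the Penkov-type constraint of \rlem{3} together with the superdimension identity $\sum_i(-1)^i\,\mathrm{sdim}\,\G_i(G/B,\O_\l)=0$ (\rlem{7}), which forces the extra composition factor to appear. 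The branch at $\l_0$ (three subquotients $L_{\l_0},L_{\l_1},L_{\l_2}$) and the special behaviour of $\G_1$ at $\l_1,\l_2$ are handled by explicit calculations in the base block $\F^1$ (Section~9.1) and then transported by translation functors, whose equivalence is proven via \rthm{eqgeneral}. The quiver then follows from $\dim\mathrm{Ext}^1(L_\m,L_{\m'})=[\G_0(G/B,\O_\m):L_{\m'}]$ and contravariance of $\CC$. If you want to salvage your outline, the fix is to replace every occurrence of $K(\l)$ by $\G_0(G/B,\O_\l)$ and to justify its composition series via \rlem{3} and \rlem{7} rather than via a Kac-module filtration.
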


\subsection{Character and superdimension formulae}

The {\it{superdimension}} of a representation $V$ is the number $sdim{V}=dim{V_0}-dim{V_1}$ (see \cite{KW}).

Let $X=\{ x\in\g_{\1}| [x,x]=0\}$ be the {\it{self-commuting cone}} in $\g_{\1}$ studied in \cite{DuSe}. 
For $x\in X$, we denote by $\g_x$ the quotient $C_\g(x)/[x,\g]$ as in \cite{DuSe}, where $C_\g(x)=\{a\in\g | [a,x]=0\}$ is the centralizer of $x$ in $\g$.

%For a $\g$-module $M$ and for $x\in X$, let $M_x=Ker\,{x}/Im\,{x}$ be the fiber as the cohomology of $x$ in $M$ as defined in \cite{S3}. Then $M_x$ is $\mathfrak{sl}(3)$-module for $\g=F(4)$ and $\mathfrak{sl}(2)$-module for $\g=G(3)$. See \cite{DuSe}. 
%\\

We proved the following superdimension formula for the exceptional Lie superalgebra $\g=F(4)$.

\begin{theorem}\label{SFF} Let $\g=F(4)$. 
Let $\l\in F^{(a,b)}$ and $\m+\r_l=a\w_1+b\w_2$. If $\l \neq \l_1,\,\l_2$, the following superdimension formula holds:

$$sdim\,{L_\l}=\pm 2dim\,{L_\m(\g_x)}.$$

For the special weights, we have: $sdim\,{L_{\l_1}}=sdim\,{L_{\l_2}}=dim\,{L_\m(\g_x)}$. Here, $\g_x\cong\mathfrak{sl}(3)$. 
\end{theorem}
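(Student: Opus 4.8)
The plan is to compute the superdimension via the Duflo--Serganova functor $DS_x\colon \CC \to \CC(\g_x)$ associated to a nonzero $x \in X$, using the fundamental fact that $sdim\,V = sdim\,DS_x(V)$ for any $V$ in $\CC$, and that for $\g = F(4)$ of atypicality $1$ one has $\g_x \cong \mathfrak{sl}(3)$ (with the ``$\pm$'' accounting for the parity shift/normalization in identifying $DS_x(L_\l)$ with a module over $\g_x$). So the first step is to recall the structure of $DS_x$: it is a tensor functor, it kills typical modules, and on an atypical block it sends simple modules to (virtual, but in our rank-one situation honest up to sign) $\g_x$-modules. The key identity to establish is therefore $DS_x(L_\l) \cong \Pi^{\epsilon(\l)} L_\m(\g_x)$ for $\l \neq \l_1,\l_2$, and $DS_x(L_{\l_i}) \cong L_\m(\g_x)$ for the two special weights, where $\m$ is exactly the $\mathfrak{sl}(3)$-weight attached to the block via $\m + \r_l = a\w_1 + b\w_2$ as in \rthm{CBF}.

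The second step is to identify which simple $\g_x$-module appears. Here I would use the fact that $DS_x$ interacts well with central characters: the central character $\chi_\l$ of $L_\l$ determines, after passing to $\g_x$, the central character of $DS_x(L_\l)$, and for atypicality-$1$ blocks the bijection between $\g$-central-characters and $\g_x$-central-characters is governed by the ``tail'' of the weight along the isotropic root — concretely, deleting the atypical coordinate from $\l + \r$ leaves the $\mathfrak{sl}(3)$-weight $\m + \r_l$. Combined with the block classification in \rthm{CBF}(1)--(2), this pins down $DS_x(L_\l)$ up to parity as a \emph{single} simple $\mathfrak{sl}(3)$-module $L_\m(\g_x)$, uniformly across the block; then $sdim\,L_\l = \pm sdim\,L_\m(\g_x) = \pm \dim L_\m(\g_x)$ since $L_\m(\g_x)$ is a purely even $\mathfrak{sl}(3)$-module, whence $sdim = \dim$.

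The third step is the special weights $\l_1,\l_2$. In the $D_\infty$ quiver of the symmetric block (\rthm{CBF}(3)) these are the two ``fork'' vertices at the end, and at the level of $DS_x$ they are exactly the weights on which the functor behaves differently: rather than producing a two-step resolution whose alternating sum gives a factor $2$, each of $L_{\l_1}$ and $L_{\l_2}$ maps to a single copy of $L_\m(\g_x)$ with a definite parity, so $sdim\,L_{\l_i} = +\dim L_\m(\g_x)$ with no factor of $2$ and no sign ambiguity. I would verify this by a direct analysis of $DS_x$ on the relevant low-lying modules (using the known composition series / BGG-type resolutions in the block, i.e.\ the arrows of the quiver), checking that the Euler characteristic telescopes to a single simple $\g_x$-module. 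For the generic weights one instead gets a length-two complex contributing $\pm 2$; tracking the parity of the surviving cohomology against the position of $\l$ in the quiver gives the sign.

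The main obstacle I expect is precisely the parity bookkeeping in the generic case — showing that $DS_x(L_\l)$, a priori a complex with cohomology in possibly two adjacent degrees, actually has cohomology concentrated (up to the stated sign $\pm 2$) so that $sdim$ comes out to $\pm 2\dim L_\m(\g_x)$ rather than something smaller; this requires knowing $H^\bullet(DS_x, L_\l)$ exactly, not just its Euler characteristic, which in turn leans on the explicit projective/injective resolutions in the atypicality-$1$ blocks of $F(4)$ established earlier in the paper together with Serganova's techniques for computing $DS_x$ on simple modules. A secondary technical point is to make the identification $\g_x \cong \mathfrak{sl}(3)$ canonical enough that ``$\m$'' on the $\g$ side and ``$\m$'' on the $\g_x$ side genuinely agree, which is where the root-combinatorics of $F(4)$ (the defect-one geometry of its root system) has to be spelled out.
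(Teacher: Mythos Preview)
Your route via the Duflo--Serganova functor is conceptually natural, but it is \emph{not} what the paper does, and as written it has a real gap.

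The paper's argument is inductive along the block, with the base case handled computationally. For \emph{generic} $\l$ the paper invokes Penkov's character formula (\rthm{chgeneric}) to write $ch\,L_\l$ as an alternating sum of $\g_\0$-characters, and then literally evaluates $sdim\,L_\l = \sum_{\m\in S}(-1)^l \dim L_\m(\g_\0)$ by computer to get $\pm 2\dim L_\m(\g_x)$ (\rthm{character formula generic weights}). The non-generic weights are then reached via the short exact sequences of \rlem{exact}: since $sdim\,\G_0(G/B,\O_\l)=0$ (\rlem{7}) and $\G_0(G/B,\O_\l)$ has exactly the two subquotients $L_\l$, $L_\m$, one gets $sdim L_\l = -sdim L_\m$, and the result propagates step by step down to $\l_0$. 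The special weights $\l_1,\l_2$ are handled separately via $\G_0(G/B,\O_{\l_1})=L_{\l_1}$, $\G_1(G/B,\O_{\l_1})=L_{\l_2}$ and the three-term filtration of $\G_0(G/B,\O_{\l_0})$ (\rlem{coho0}), which forces $sdim L_{\l_1}=sdim L_{\l_2}$ and $2\,sdim L_{\l_1}=-sdim L_{\l_0}$.

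Your proposal, by contrast, tries to identify $DS_x(L_\l)$ directly. The problem is that \rlem{dominant integral weights of F(4)} only tells you $M_x \cong L_{a,b}^{\oplus m_1}\oplus L_{b,a}^{\oplus m_2}\oplus \Pi(L_{a,b})^{\oplus m_3}\oplus \Pi(L_{b,a})^{\oplus m_4}$ with unknown multiplicities; determining that $m_1+m_2-m_3-m_4=\pm 2$ (resp.\ $\pm 1$) is precisely the content of the theorem. Your mechanism for the factor of $2$ --- a ``length-two complex whose alternating sum gives $\pm 2$'' --- is not how the $2$ actually arises, and you give no concrete way to compute those multiplicities. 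Appealing to ``projective/injective resolutions established earlier in the paper'' is circular here: the paper's cohomology computations in Sections~7--9 themselves rely on the generic superdimension formula as input (see e.g.\ the proofs of \rlem{coh011}, \rlem{exact41}, \rlem{41sdim}). So either you supply an independent calculation of $DS_x(L_\l)$ --- which the paper does not do and which is genuinely nontrivial for $F(4)$ --- or you are back to the paper's strategy of anchoring at generic weights via Penkov's formula and propagating by the exact sequences.
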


Similarly, we proved the following superdimension formula for the exceptional Lie superalgebra $\g=G(3)$.

\begin{theorem}\label{SFG} Let $\g=G(3)$. Let $\l\in F^{a}$ and $\m+\r_l=a\w_1$. If $\l\neq \l_1,\,\l_2$, the following superdimension formula holds:

$$sdim\,{L_\l}=\pm 2dim\,{L_\m(\g_x)}.$$

For the special weights, we have: $sdim\,{L_{\l_1}}=sdim\,{L_{\l_2}}=dim\,{L_\m(\g_x)}$. Here, $\g_x\cong\mathfrak{sl}(2)$. 

\end{theorem}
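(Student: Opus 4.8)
\emph{Proof sketch.} The argument runs parallel to that of \rthm{SFF}; I indicate the main steps. The central tool is the Duflo--Serganova functor $DS_x$ attached to a rank-one element $x$ of the self-commuting cone $X$: it carries finite-dimensional $\g$-modules to finite-dimensional $\g_x$-modules, and its defining feature is that
$$sdim\,M \;=\; sdim\,DS_x(M)$$
for every $M$, because $DS_x(M)$ is the cohomology of the odd operator $x$ acting on $M$ and the $\Z/2$-graded Euler characteristic is unchanged by passing to the cohomology of an odd differential.

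First I would identify $\g_x$. Since $\g=G(3)$ has defect $1$, a generic nonzero $x\in X$ has rank $1$, and an explicit computation with an isotropic odd root vector (equivalently, the table in \cite{DuSe}) gives $\g_x=C_\g(x)/[x,\g]\cong\mathfrak{sl}(2)$, which is purely even. Under $DS_x$ the atypical block $\F^{a}$ is sent into a finite sum of parity-shifted copies of the category of finite-dimensional $\mathfrak{sl}(2)$-modules, and the parametrization $\m+\r_l=a\w_1$ from \rthm{CBG} records precisely which $\mathfrak{sl}(2)$-weight $\m$ is cut out from a weight $\l\in F^{a}$ by $x$; this is how $L_\m(\g_x)$ enters the formula.

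The heart of the proof is the computation of $DS_x(L_\l)$ for every atypical $\l$, which I would carry out in two stages. Stage one: compute $DS_x$ on the Kac modules $K_\mu$ in $\F^{a}$. Since Kac modules are induced from the even part and carry a single layer of atypicality, this computation gives $DS_x(K_\mu)=0$ for typical $\mu$ and $DS_x(K_\mu)\cong L_\m(\g_x)$ up to a parity shift for atypical $\mu$, with the relevant $\m$ and parity readable from the $\g_{\bar0}$-character. Stage two: invoke the description of $\F^{a}$ in \rthm{CBG} --- the $D_\infty$ Ext-quiver, equivalently the radical and socle filtrations of the projective covers and the resulting finite resolutions of each $L_\l$ by Kac modules --- and push $DS_x$ through these resolutions, using that it is compatible with the attendant long exact sequences. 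Inducting along the infinite tail of the quiver, anchored at the typical modules where $DS_x$ vanishes, yields $DS_x(L_\l)\cong L_\m(\g_x)^{\oplus 2}$ up to a uniform parity shift for every $\l\neq\l_1,\l_2$, the parity --- hence the sign in $\pm2\dim L_\m(\g_x)$ --- being dictated by the position of $\l$ in the quiver relative to $\l_0$. Taking $sdim$ and using that $\mathfrak{sl}(2)$-modules are purely even gives the stated formula in that range.

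The main obstacle is the behaviour at the two special weights $\l_1,\l_2$ at the fork of the $D_\infty$ diagram, where the generic ``two copies'' count degenerates. One must show that $DS_x$ merges the contributions of $\l_1$ and $\l_2$ and produces exactly one copy of $L_\m(\g_x)$, with trivial parity shift, so that $sdim\,L_{\l_1}=sdim\,L_{\l_2}=\dim L_\m(\g_x)$; this rests on the precise local structure at the fork --- in particular the way $L_{\l_1}$ and $L_{\l_2}$ both sit over the vertex $\l_0$ in their projective covers --- which is exactly the $D_\infty$ shape furnished by \rthm{CBG}. As an independent check, the same values can be recovered by specializing the character formula for $L_\l$ established earlier in the paper to the subtorus relevant to $x$ and comparing with the Weyl dimension formula for $\mathfrak{sl}(2)$.
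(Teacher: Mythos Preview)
Your approach is genuinely different from the paper's, and there is a real gap. The paper does not compute $DS_x(L_\l)$ directly; it anchors the formula at \emph{generic} weights by evaluating Penkov's character formula (\rthm{chgeneric}) to obtain $sdim\,L_\l=\pm 2\dim L_\m(\g_x)$ there (\rthm{character formula generic weights}), and then propagates toward the walls using the short exact sequences $0\to L_{\m'}\to\Gamma_0(G/B,\O_\l)\to L_\l\to 0$ of \rlem{exact} together with the Euler-characteristic vanishing $\sum_i(-1)^i\,sdim\,\Gamma_i(G/B,\O_\l)=0$ (\rlem{7}). The special weights $\l_1,\l_2$ are handled by explicit cohomological computation (the $G(3)$ analogues of \rlem{0l1} and \rlem{1l1}), which give $\Gamma_0(G/B,\O_{\l_i})=L_{\l_i}$ and $\Gamma_1(G/B,\O_{\l_i})=L_{\l_j}$ for $\{i,j\}=\{1,2\}$ and hence $sdim\,L_{\l_1}=sdim\,L_{\l_2}$; combining this with $sdim\,\Gamma_0(G/B,\O_{\l_0})=0$ yields the value $\dim L_\m(\g_x)$.

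The gap in your argument is Stage one: $G(3)$ is a type~II basic classical Lie superalgebra, so there is no $\Z$-grading of $\g$ compatible with the $\Z_2$-grading, $\g_{\1}$ is an irreducible $\g_{\0}$-module, and the naive induced ``Kac modules'' are infinite-dimensional and do not lie in $\F^a$. The finite-dimensional stand-ins for Kac modules here are precisely the $\Gamma_0(G/B,\O_\l)$, but one has no direct computation of $DS_x(\Gamma_0)$ --- what one has is $sdim\,\Gamma_0=0$, and using that is exactly the paper's recursion. There is also a circularity issue in Stage two: the $D_\infty$ quiver and the projective filtrations you invoke from \rthm{CBG} are, in the paper's logical order, \emph{consequences} of the cohomology and superdimension computations (see Section~11 and the proofs in Sections~7--9, where superdimension non-vanishing is repeatedly used to force multiplicities to be nonzero), not independent inputs. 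So as written your argument either rests on objects that do not exist for $G(3)$ or assumes what is to be proved; once repaired, it collapses into the paper's route.
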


A root $\a$ is called {\it{isotropic}} if $(\a, \a)=0$. The {\it{degree of atypicality}} of the weight $\l$ the maximal number of mutually orthogonal linearly independent isotropic roots $\a$ such that $(\l+\r,\a)=0$. The {\it{defect}} of $\g$ is the maximal number of linearly independent mutually orthogonal isotropic roots. We use the above superdimension formulas and results in \cite{S3} to prove the following theorem, which is Kac-Wakimoto conjecture in \cite{KW} for $\g=F(4)$ and $G(3)$.  

\begin{theorem}\label{KWFG} Let $\g=F(4)$ or $G(3)$. The superdimension of a simple module of highest weight $\l$ is nonzero if and only if the degree of atypicality of the weight is equal to the defect of the Lie superalgebra. 
\end{theorem}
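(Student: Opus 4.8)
The plan is to leverage the explicit superdimension formulae of Theorems \ref{SFF} and \ref{SFG} together with Serganova's results from \cite{S3} on the defect and degree of atypicality. First I would recall that for $\g = F(4)$ the defect is $1$ and for $\g = G(3)$ the defect is also $1$; this is a direct computation from the root systems, since the maximal number of mutually orthogonal linearly independent isotropic roots is $1$ in both cases. Consequently every weight $\l$ has degree of atypicality either $0$ (typical) or $1$ (atypical), and the statement to be proved reduces to: $\mathrm{sdim}\, L_\l \neq 0$ if and only if $\l$ is atypical of degree exactly $1$, i.e. if and only if $\l$ lies in an atypical block.

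Second, for the "only if" direction I would invoke the classical fact (Kac--Wakimoto, \cite{KW}) that typical modules, and more generally all modules of degree of atypicality strictly less than the defect, have vanishing superdimension; since defect is $1$ here, any typical ($0$-atypical) simple module has $\mathrm{sdim} = 0$. This gives one implication immediately without using the new formulae. Alternatively, one can see directly that the character of a typical module is given by the Kac character formula (a Weyl-type alternating sum over the full Weyl group with the denominator containing the odd roots), and specializing to the superdimension kills it because of the factor coming from the isotropic root.

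Third, for the "if" direction — the substantive one — I would use Theorems \ref{SFF} and \ref{SFG}. For $\l$ in an atypical block of $F(4)$ with $\l \neq \l_1, \l_2$, the formula gives $\mathrm{sdim}\, L_\l = \pm 2 \dim L_\m(\g_x)$ with $\g_x \cong \mathfrak{sl}(3)$; since $L_\m(\g_x)$ is a genuine nonzero finite-dimensional $\mathfrak{sl}(3)$-module, its dimension is a positive integer, so $\mathrm{sdim}\, L_\l \neq 0$. For the exceptional weights $\l_1, \l_2$ we get $\mathrm{sdim}\, L_{\l_i} = \dim L_\m(\g_x) > 0$. The identical argument with $\mathfrak{sl}(2)$ in place of $\mathfrak{sl}(3)$ handles $G(3)$ via Theorem \ref{SFG}. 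Combining the two directions yields the claimed equivalence.

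The main obstacle is not in this theorem per se — given the earlier results the deduction is short — but in making sure the dictionary between "degree of atypicality equals defect" and "lies in an atypical block" is airtight: one must check that in both $F(4)$ and $G(3)$ the defect equals $1$, that the atypical blocks described in Theorems \ref{CBF} and \ref{CBG} are exactly the $1$-atypical ones, and that every simple module appearing in Theorems \ref{SFF} and \ref{SFG} (including the special weights $\l_1,\l_2$) indeed exhausts the atypical simples. With those bookkeeping points verified, the proof is essentially a citation of \cite{S3} for the defect computation, the classical vanishing result of \cite{KW} for one direction, and Theorems \ref{SFF}--\ref{SFG} for the other.
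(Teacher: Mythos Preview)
Your proposal is correct and matches the paper's approach: the paper's proof is the single line ``Follows from \rthm{SFF} and \rthm{SFG},'' relying implicitly (as announced in the text preceding the theorem) on \cite{S3} and the standard vanishing of superdimension for typical modules. Your write-up simply makes explicit the bookkeeping (defect $=1$, atypicality $\in\{0,1\}$, the typical direction via \cite{KW}) that the paper leaves to the reader.
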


The following theorem gives a Weyl character type formula for the dominant weights. It was conjectured by Bernstein and Leites that formula \ref{BLM} works for all dominant weights. However, we obtain a different character formula \ref{BLSM} for the special weights $\l_1,\l_2$ for $F(4)$ and $G(3)$.

\begin{theorem}\label{BL} Let $\g=F(4)$ or $G(3)$. For a dominant weight $\l\neq \l_1,\l_2$, let $\a\in \D_\1$ be such that $(\l+\r,\a)=0$. Then

\begin{equation}\label{BLM}
ch L_{\l}=\frac{D_1\cdot e^{\r}}{D_0}\cdot\sum_{w\in W} sign(w)\cdot w(\frac{e^{\l+\r}}{(1+e^{-\a})}). 
\end{equation}

For the special weights $\l=\l_i$ with $i=1,2$, we have the following formula: 

\begin{equation}\label{BLSM}
ch( L_{\l})=\frac{D_1\cdot e^{\r}}{2D_0}\cdot\sum_{w\in W} sign{(w)}\cdot w(\frac{e^{\l+\r}(2+e^{-\a})}{(1+e^{-\a})}).
\end{equation}

%Similarly, for dominant weights $\l=\l_1,\l_2$, we let $\a\in \D_\1$ be such that $(\l+\r,\a)=0$. Then

%$$ch L_{\l}=\frac{D_1\cdot e^{\r}}{2 D_0}\cdot\sum_{w\in W} sign(w)\cdot w(\frac{e^{\l+\r}}{(1+e^{-\a})}).$$ 
\end{theorem}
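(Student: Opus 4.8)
The plan is to derive the character formula for $L_\lambda$ from the structure of the atypicality-1 blocks established in Theorems~\ref{CBF} and \ref{CBG}, using translation functors and the known Bernstein–Gelfand–Gelfand-type resolutions. First I would recall that for a weight $\lambda$ of atypicality $1$, there is a unique (up to scalar) isotropic root $\alpha\in\Delta_\1$ with $(\lambda+\r,\alpha)=0$, so the expression $\frac{e^{\lambda+\r}}{1+e^{-\alpha}}$ is the natural ``geometric series'' replacing the numerator $e^{\lambda+\r}$ in the Weyl character formula; after applying the skew-symmetrization $\sum_{w\in W}\mathrm{sign}(w)\,w(\cdot)$ and dividing by the even Weyl denominator $D_0$ (times the correction factor $D_1 e^\r$ coming from the odd roots, exactly as in Penkov's and Serganova's work on typical and singly-atypical characters), one obtains a virtual character. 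The claim is that this virtual character equals $\mathrm{ch}\,L_\lambda$ for the generic dominant weights, and a modified version for the two exceptional vertices $\l_1,\l_2$ of the $D_\infty$ quiver.

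The key steps, in order, are: (i) set up the Euler characteristic / Kac module $K_\lambda$ (the maximal finite-dimensional quotient of the induced module from a parabolic), and write $\mathrm{ch}\,K_\lambda$ via Weyl's formula for the even part together with the product over odd roots; (ii) use the block structure to express $\mathrm{ch}\,K_\lambda$ as a finite alternating sum of $\mathrm{ch}\,L_\mu$ over $\mu$ in the block — since the quiver is of type $A_\infty$ or $D_\infty$, the relevant Kac module has a two-step or three-step socle filtration, so $[K_\lambda]=[L_\lambda]+[L_{\lambda'}]$ (or with one more term near the fork); (iii) invert this triangular system to get $[L_\lambda]$ as an infinite alternating sum of $[K_\mu]$, which is precisely the content of the $\frac{1}{1+e^{-\alpha}}=\sum_{k\ge0}(-1)^k e^{-k\alpha}$ expansion once one checks that $\lambda-k\alpha$ (suitably dotted-shifted) runs through the block; (iv) resum to obtain the closed form \eqref{BLM}; (v) analyze the fork of the $D_\infty$ diagram separately: near $\l_0$ the two branches to $\l_1,\l_2$ mean the multiplicities in the relevant Kac modules differ by a factor related to $2$, which produces the $(2+e^{-\alpha})$ numerator and the $\frac{1}{2D_0}$ in \eqref{BLSM}.

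For $G(3)$ the defect is $1$, so \emph{every} atypical weight is singly atypical and the above is the whole story; for $F(4)$ the defect is also $1$, so again one only ever deals with a single isotropic root — this is what makes a uniform treatment possible and is why the two exceptional superalgebras behave like the $\mathfrak{gl}(n|1)$/$\mathfrak{osp}$ singly-atypical cases rather than the higher-defect $\mathfrak{gl}(m|n)$ case. I would make sure to pin down the precise $\alpha$ and the precise identification $\g_x\cong\mathfrak{sl}(3)$ or $\mathfrak{sl}(2)$ from Theorems~\ref{SFF}–\ref{SFG}, since the combinatorics of which $L_\mu$ appear is governed by the $\mathfrak{sl}(3)$- (resp. $\mathfrak{sl}(2)$-) weight $\mu$ attached to the block.

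The main obstacle I anticipate is step (v): verifying that the exceptional weights $\l_1,\l_2$ genuinely require the modified formula and computing the exact numerical factor. This amounts to understanding the socle filtration of the Kac modules sitting over the fork of the $D_\infty$ quiver — in particular showing that $K_{\l_0}$ has \emph{both} $L_{\l_1}$ and $L_{\l_2}$ in a single layer (so a factor of $2$ appears when one inverts), while $K_{\l_1}$ and $K_{\l_2}$ are ``half-size'' in the appropriate sense. This is exactly the place where the geometry of $D_\infty$ (two short legs attached to an infinite $A_\infty$ tail) differs from the purely linear $A_\infty$ situation, and where a naive application of the Bernstein–Leites conjecture formula \eqref{BLM} fails. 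Establishing this rigorously will likely require an explicit computation of $\mathrm{Ext}^1$ and of the relevant Kazhdan–Lusztig-type multiplicities near $\l_0$, or an independent superdimension check against Theorems~\ref{SFF}–\ref{SFG} (the superdimension of $L_{\l_i}$ being exactly \emph{half} that of a generic $L_\lambda$ in the block is the shadow of the same factor of $2$).
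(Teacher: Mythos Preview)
Your overall strategy is sound and close to the paper's: both rest on the two-step filtration of the geometrically induced module $\Gamma_0(G/B,\mathcal O_\lambda)$ (Lemma~\ref{exact}) together with Penkov's generic formula (Theorem~\ref{chgeneric}) as base case. The paper runs this as a downward induction---assume \eqref{BLM} for $\lambda$, deduce it for the adjacent $\mu$ using $\mathrm{ch}\,\Gamma_0(\mathcal O_\lambda)=\mathrm{ch}\,L_\lambda+\mathrm{ch}\,L_\mu$ and Theorem~\ref{chcoh}---whereas your step~(iii) inverts the whole triangular system at once; these are the same argument read in opposite directions.

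Two cautions. First, your ``Kac module'' must mean $\Gamma_0(G/B,\mathcal O_\lambda)$: $F(4)$ and $G(3)$ are type~II, so there is no parabolic with purely even Levi and no classical Kac module. The character of $\Gamma_0$ is not known a priori; what Theorem~\ref{chcoh} supplies is the \emph{Euler characteristic} $\sum_i(-1)^i\mathrm{ch}\,\Gamma_i$, and one needs the vanishing $\Gamma_i=0$ for $i>0$ (established in Sections~7--9) to identify this with $\mathrm{ch}\,\Gamma_0$. Second, and this is where your step~(v) is genuinely incomplete: at $\lambda_1,\lambda_2$ that vanishing fails, with $\Gamma_1(\mathcal O_{\lambda_1})=L_{\lambda_2}$ and $\Gamma_1(\mathcal O_{\lambda_2})=L_{\lambda_1}$. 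Consequently the Euler characteristic at $\lambda_1$ gives only $\mathrm{ch}\,L_{\lambda_1}-\mathrm{ch}\,L_{\lambda_2}$, and your socle-filtration/inversion argument alone cannot separate the two characters. The paper closes this gap by combining that difference with the three-term filtration of $\Gamma_0(\mathcal O_{\lambda_0})$ (which yields $\mathrm{ch}\,L_{\lambda_1}+\mathrm{ch}\,L_{\lambda_2}$ once $\mathrm{ch}\,L_{\lambda_0}$ is known from the inductive step), then solving the resulting $2\times2$ linear system to obtain \eqref{BLSM}. You will need this extra cohomological input; the ``half-size'' heuristic is the right intuition but not a proof.
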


\subsection{Analogue of Borel-Weil-Bott theorem for Lie superalgebras $F(4)$ and $G(3)$. }

Let $\g$ be a Lie (super)algebra with corresponding (super)group $G$. Let $\bb$ be the distinguished Borel subalgebra of $\g$ with corresponding (super)group $B$. Let $V$ be a $\bb$-module.

Denote by $\mathcal{V}$ the induced vector bundle $G\times_BV$ on the flag (super)variety $G/B$. The space of sections of $\mathcal{V}$ has a natural structure of a $\g$-module. The cohomology groups $H^i(G/B,\V^*)^*$ are $\g$-modules.

Let $C_\l$ denote the one dimensional representation of $B$ with weight $\l$. Denote by $\mathcal{O}_\l$ the line bundle $G\times_B C_\l$ on the flag (super)variety $G/B$. Let $L_\l$ denote the simple module with highest weight $\l$. See \cite{Ser}.

The classical result in geometric representation theory for finite-dimensional semisimple Lie algebra $\g$ states:

\begin{theorem}\label{BWB} (Borel-Weil-Bott) If $\l+\r$ is singular, where $\l$ is integral weight and $\r$ is the half trace of $\bb$ on its nilradical, then all cohomology groups vanish. If $\l+\r$ is regular, then there is a unique Weyl group element $w$ such that the weight $w(\l+\r)-\r$ is dominant and the cohomology groups $H^i(G/B, \mathcal{O}_{\l}^*)^*$ are non-zero in only degree $l=\text{length}(w)$ and in that degree they are equal to the simple module $L_{\m}$ with highest weight $\m=w(\l+\r)-\r$. 
\end{theorem}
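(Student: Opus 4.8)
We sketch the classical argument; the strategy is reduction to rank one (Bott; cf.\ Demazure's short proof), and works over $\C$. With the conventions of the statement, $H^i(G/B,\O_\l^*)^*=H^i(G/B,\mathcal{L}_\l)$, where $\mathcal{L}_\l$ is the $G$-equivariant line bundle on $G/B$ normalised so that $H^0(G/B,\mathcal{L}_\l)=L_\l$ for $\l$ dominant and so that $\mathcal{L}_\l$ restricts to $\O_{\mathbb{P}^1}(\langle\l,\a^\vee\rangle)$ on the fibres of the $\mathbb{P}^1$-bundle $G/B\to G/P_\a$, with $P_\a$ the minimal parabolic attached to a simple root $\a$; write $w\cdot\l:=w(\l+\r)-\r$. \emph{Base case (Borel--Weil).} For $\l$ dominant, global sections of $\mathcal{L}_\l$ form an algebraically induced module which, by Frobenius reciprocity, has a one-dimensional highest weight line of weight $\l$; finite-dimensionality of $H^0$ then forces $H^0(G/B,\mathcal{L}_\l)=L_\l$. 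For the higher cohomology let $P\supseteq B$ be the parabolic to which $\mathcal{L}_\l$ descends as an ample bundle $\mathcal{L}'$ on $G/P$; the fibres of $\pi\colon G/B\to G/P$ are flag varieties, so $R\pi_*\O_{G/B}=\O_{G/P}$, and the projection formula gives $H^i(G/B,\mathcal{L}_\l)=H^i(G/P,\mathcal{L}')=0$ for $i>0$ by Kodaira vanishing (equivalently, Kempf's theorem).

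\emph{Rank-one step.} Fix a simple root $\a$ and the $\mathbb{P}^1$-bundle $\pi_\a\colon G/B\to G/P_\a$. Computing $R^q(\pi_\a)_*\mathcal{L}_\l$ fibrewise from $H^*(\mathbb{P}^1,\O(n))$ gives three cases: if $\langle\l,\a^\vee\rangle\ge 0$ then $R^{>0}(\pi_\a)_*\mathcal{L}_\l=0$; if $\langle\l,\a^\vee\rangle=-1$ then $R^q(\pi_\a)_*\mathcal{L}_\l=0$ for all $q$; if $\langle\l,\a^\vee\rangle\le -2$ then $R^0(\pi_\a)_*\mathcal{L}_\l=0$ and relative Serre duality identifies $R^1(\pi_\a)_*\mathcal{L}_\l$ with $(\pi_\a)_*\mathcal{L}_{s_\a\cdot\l}$. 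Since in each case at most one $R^q(\pi_\a)_*$ is nonzero, the Leray spectral sequence collapses, so all $H^i(G/B,\mathcal{L}_\l)$ vanish when $\langle\l,\a^\vee\rangle=-1$, and
$$H^i(G/B,\mathcal{L}_\l)\;\cong\;H^{i-1}(G/B,\mathcal{L}_{s_\a\cdot\l})\qquad\text{when }\langle\l,\a^\vee\rangle\le -2.$$

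\emph{Induction on Weyl length.} First suppose $\l+\r$ is regular and let $w\in W$ be the unique element with $\m:=w(\l+\r)-\r$ dominant; induct on $\ell(w)$. If $\ell(w)=0$ then $\l=\m$ is dominant and the base case applies. If $\ell(w)\ge 1$ then $\l+\r$ is regular but not dominant, so some simple $\a$ has $\langle\l+\r,\a^\vee\rangle<0$; integrality forces $\langle\l,\a^\vee\rangle\le -2$, and one must check that $s_\a(\l+\r)$ again lies in the $W$-orbit of $\m+\r$ but with associated Weyl element of length $\ell(w)-1$. By the rank-one step and the inductive hypothesis, $H^i(G/B,\mathcal{L}_\l)$ is then nonzero only in degree $i=\ell(w)$, where it equals $L_\m$. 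Now suppose $\l+\r$ is singular, and let $w$ be the element of minimal length with $w(\l+\r)$ dominant; induct on $\ell(w)$. If $w=e$, then $\l+\r$ is dominant and singular, so $\langle\l,\a^\vee\rangle=-1$ for some simple $\a$ and the rank-one step kills all cohomology. If $w\ne e$, pick a simple $\a$ with $\ell(ws_\a)=\ell(w)-1$; then $\langle\l+\r,\a^\vee\rangle\le 0$, and either $\langle\l,\a^\vee\rangle=-1$ (all cohomology vanishes at once) or $\langle\l,\a^\vee\rangle\le -2$, in which case the rank-one step reduces to $\mathcal{L}_{s_\a\cdot\l}$, whose shifted weight $s_\a\cdot\l+\r$ is still singular with minimal Weyl length $\ell(w)-1$; the inductive hypothesis finishes it.

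\emph{Main obstacle.} The only genuinely non-formal input is the higher-cohomology vanishing in the dominant case (Kempf vanishing / Kodaira in the base case), which is precisely what fails in positive characteristic and is where the hypothesis $\mathrm{char}\,\C=0$ is used. Everything else is the fibrewise $\mathbb{P}^1$-computation together with Weyl-combinatorial bookkeeping—identifying, at each step, the dominant representative of the shifted weight and verifying the length drops by one under the chosen simple reflection—which must be checked to be consistent in the singular case as well as the regular one.
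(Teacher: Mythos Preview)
The paper does not prove this theorem; it is stated as the classical Borel--Weil--Bott theorem for semisimple Lie algebras, quoted as background for the superalgebra analogues that follow. Your sketch is a correct outline of the standard Demazure argument (reduction to $\mathbb{P}^1$-fibrations via minimal parabolics, Leray, and induction on Weyl length), and the one point you flag as needing checking---that choosing a simple $\alpha$ with $\langle\lambda+\rho,\alpha^\vee\rangle<0$ forces $w(\alpha)<0$ and hence $\ell(ws_\alpha)=\ell(w)-1$---does go through, since $\langle w(\lambda+\rho),w(\alpha)^\vee\rangle=\langle\lambda+\rho,\alpha^\vee\rangle<0$ with $w(\lambda+\rho)$ dominant regular.
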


We proved the following superanalogue for the exceptional Lie superalgebra $\g=F(4)$ for the dominant weights and for specific choice of $B$.

\begin{theorem}\label{BWBF} Let $\g=F(4)$.

(1) For $\m\in \F^{(a, a)}$ with $\m\neq \l_1$, $\l_2$ or $\l_0$, the group $H^0(G/B, \mathcal{O}_{\m}^*)^*$ has two simple subquotients $L_\m$ and $L_{\m'}$, where $\m'$ is the adjacent vertex to $\m$ in the quiver $D_\infty$ in the direction towards $\l_0$.

At the branching point $\l_0$ of the quiver, the group $H^0(G/B, \mathcal{O}_{\l_0}^*)^*$ has three simple subquotients $L_{\l_0}$, $L_{\l_1}$, and $L_{\l_2}$. For $i=1,2$, we have $H^0(G/B, \mathcal{O}_{\l_i}^*)^*=L_{\l_i}$.

The first cohomology is not zero only at the endpoints $\l_1$ and $\l_2$ of the quiver and $H^1(G/B, \mathcal{O}_{\l_1}^*)^*=L_{\l_2}$, $H^1(G/B, \mathcal{O}_{\l_2}^*)^*=L_{\l_1}$. All other cohomologies vanish.

(2) For $\m\in \F^{(a, b)}$, the group $H^0(G/B, \mathcal{O}_{\m}^*)^*$ has two simple subquotients $L_\m$ and $L_{\m'}$, where $\m'$ is the adjacent vertex to $\m$ in the quiver $A_\infty$ in the direction towards $\l_0$.

The first cohomology is not zero only in one particular point $\l_0$ of the quiver and 

$H^1(G/B, \mathcal{O}_{\l_0}^*)^*=L_{\l_0}$. Also, $H^0(G/B, \mathcal{O}_{\l_0}^*)^*=L_{\l_0}$. All other cohomologies vanish.
\end{theorem}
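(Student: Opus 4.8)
The plan is to compute the cohomology $H^i(G/B, \mathcal{O}_\m^*)^*$ by relating it to the Euler characteristic and to the block structure already established in Theorem \ref{CBF}. First I would recall Penkov's general machinery: for a Lie superalgebra $\g$ with distinguished Borel $\bb = \bb_0 \oplus \n_1^-$ (here $\n_1^-$ the odd nilradical), the flag supervariety $G/B$ fibers over the ordinary flag variety $G_0/B_0$, and the sheaf cohomology can be computed in two steps — first taking cohomology along the purely even fiber (ordinary BWB for $\g_0$) and then accounting for the odd directions via the Koszul-type complex $\Lambda^\bullet \n_1^-$. Concretely, $H^\bullet(G/B,\mathcal{O}_\m^*)^*$ is the $\g$-module obtained by applying ordinary Bott to each weight appearing in $C_\m \otimes \Lambda^\bullet(\g_1/\bb_1)^*$ and assembling the resulting spectral sequence. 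Since $\m$ is dominant for $\g_0$, the even Bott step is in degree zero, so the spectral sequence degenerates enough that $H^0$ is governed by the $\g_0$-dominant weights in $\m + (\text{weights of } \Lambda^\bullet \n_1^-)$, and higher $H^i$ only arise from weights that need a nontrivial Weyl reflection to become dominant (or that are singular).

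The key computational step is then to identify, for each dominant $\m$ in an atypical block, exactly which weights $\m + \s$ (with $\s$ a sum of a subset of the odd negative roots) are again $\g_0$-dominant regular, which are singular, and which become dominant after one reflection. This is a finite check per block because the odd root system of $F(4)$ has only $8$ positive odd roots, so $\Lambda^\bullet \n_1^-$ has $2^8$ weights, but the atypicality-$1$ condition $(\m+\r,\alpha)=0$ together with dominance cuts this down drastically. I would organize the check by the position of $\m$ in the quiver: for a generic vertex $\m \neq \l_0,\l_1,\l_2$ the only surviving contributions give $L_\m$ and its neighbor $L_{\m'}$ toward $\l_0$ in degree $0$ and nothing else; at the branch point $\l_0$ the symmetry of the block (the $a=a$ condition) forces two dominant contributions $L_{\l_1},L_{\l_2}$ in addition to $L_{\l_0}$; and at the endpoints $\l_1,\l_2$ one contribution becomes singular in degree $0$ and reappears, shifted by a length-one Weyl element, in $H^1$ — this is precisely the Penkov phenomenon that BWB fails, and it is forced here by the $D_\infty$ shape. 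For the $A_\infty$ block in part (2) the analogous check has no branch point, so the anomaly instead concentrates at the single end $\l_0$, producing $H^0 = H^1 = L_{\l_0}$ there and the two-step extension $L_\m, L_{\m'}$ elsewhere.

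To make the subquotient statements precise rather than just Euler-characteristic statements, I would use that $H^0(G/B,\mathcal{O}_\m^*)^*$ always contains $L_\m$ as its socle (Borel–Weil: the section module has simple socle $L_\m$ for $\m$ dominant) and then bound its length from above using the Euler characteristic $\sum_i (-1)^i [H^i] = [\text{virtual character}]$ computed from the Weyl character/superdimension formulas in Theorem \ref{BL} and Theorem \ref{SFF}: the alternating sum of composition factors must match, and the block quiver from Theorem \ref{CBF} (type $D_\infty$, resp. $A_\infty$) tells us which $\mathrm{Ext}^1$'s are available, so only the claimed pairs/triples of factors are possible. The main obstacle I anticipate is the branch point $\l_0$: there one must show $H^0$ has length exactly three (not two, and that $L_{\l_1},L_{\l_2}$ both genuinely appear with the right multiplicity) and simultaneously that $H^1(G/B,\mathcal{O}_{\l_i}^*)^* = L_{\l_{3-i}}$ — i.e. that the "missing" piece of cohomology at an endpoint is exactly the opposite endpoint, not something larger. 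This requires carefully tracking the one nontrivial differential in the spectral sequence (equivalently, a connecting homomorphism in the long exact sequence coming from the filtration of $\Lambda^\bullet \n_1^-$), and I expect to verify it by an explicit weight bookkeeping in the rank-two odd part together with the self-duality $H^i(G/B,\mathcal{O}_\m^*)^* \cong H^{N-i}(G/B,\mathcal{O}_{-w_0\m - 2\r}^*)$ (Serre-type duality on $G/B$), which exchanges $\l_1 \leftrightarrow \l_2$ and pins down the degree-one term.
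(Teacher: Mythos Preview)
Your proposal has a structural circularity problem. You plan to bound the composition factors of $H^0$ by invoking Theorem~\ref{BL} (character formula), Theorem~\ref{SFF} (superdimension), and the quiver shape from Theorem~\ref{CBF} parts (3)--(4). But in the paper's logical order all three of those results are \emph{consequences} of the cohomology computations you are trying to establish: the quiver $\mathrm{Ext}^1$-structure is read off from $[\Gamma_0(G/B,\mathcal O_\lambda):L_\mu]$, the character formula is proved inductively from the short exact sequences $0\to L_{\mu'}\to\Gamma_0(\mathcal O_\mu)\to L_\mu\to 0$, and the full superdimension formula is deduced the same way. So you cannot use them as inputs. Only the block parametrization (parts (1)--(2) of Theorem~\ref{CBF}) and the generic-weight results are available upstream. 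A smaller point: $\Gamma_0(G/B,\mathcal O_\mu)=H^0(G/B,\mathcal O_\mu^*)^*$ has $L_\mu$ as a \emph{quotient} (it is the maximal finite-dimensional quotient of the Verma module), not as its socle.

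What the paper actually does is rather different from a direct spectral-sequence analysis. The weight bound you allude to is Lemma~\ref{3}, which only gives inequalities $[\Gamma_i:L_\mu]\le\#\{(w,I)\}$; to turn these into equalities the paper uses two devices you do not mention. First, the identity $\sum_i(-1)^i\,\mathrm{sdim}\,\Gamma_i(G/B,\mathcal O_\lambda)=0$ (Lemma~\ref{7}), combined with the nonvanishing of $\mathrm{sdim}\,L_\lambda$ known from the \emph{generic} case, forces the missing multiplicity to be exactly one and propagates inductively toward the walls. Second, and crucially, translation functors $T_{\chi,\tau}$ are shown to give equivalences $\F^{(a,a)}\simeq\F^{(a+1,a+1)}$ and $\F^{(a,b)}\simeq\F^{(a+1,b+1)}$ which commute with $\Gamma_i$ (Lemmas~\ref{6},~\ref{eqcoho}); this reduces everything to the base blocks $\F^{(1,1)}$ and $\F^{(4,1)}$, where the special weights $\lambda_0,\lambda_1,\lambda_2$ are handled by explicit computation (Lemmas~\ref{0l1}--\ref{coho0}, \ref{410}). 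The identification $\Gamma_1(\mathcal O_{\lambda_1})=L_{\lambda_2}$ is not obtained via Serre duality but by computing that $\mathrm{ch}\,\Gamma_0-\mathrm{ch}\,\Gamma_1\neq 0$, which rules out $\Gamma_1=L_{\lambda_1}$. Your Serre-duality formula, as stated, is not established for the super flag variety in this paper and would need independent justification.
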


Similarly, we proved the following superanalogue of BWB theorem for the exceptional Lie superalgebra $\g=G(3)$ for the dominant weights.

\begin{theorem}\label{BWBG} Let $\g=G(3)$.

For $\m\in \F^{a}$ with $\m\neq \l_1$, $\l_2$ or $\l_0$, the group $H^0(G/B, \mathcal{O}_{\m}^*)^*$ has two simple subquotients $L_\m$ and $L_{\m'}$, where $\m'$ is the adjacent vertex to $\m$ in the quiver $D_\infty$ in the direction towards $\l_0$.

At the brunching point, the group $H^0(G/B, \mathcal{O}_{\l_0}^*)^*$ has three simple subquotients $L_{\l_0}$, $L_{\l_1}$, and $L_{\l_2}$. For $i=1,2$, we have $H^0(G/B, \mathcal{O}_{\l_i}^*)^*=L_{\l_i}$.

The first cohomology is not zero only at the endpoints of the quiver and $H^1(G/B, \mathcal{O}_{\l_1}^*)^*=L_{\l_2}$, $H^1(G/B, \mathcal{O}_{\l_2}^*)^*=L_{\l_1}$. All other cohomologies vanish.

\end{theorem}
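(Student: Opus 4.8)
The argument is parallel to the proof of \rthm{BWBF}: here $\g_\0=G_2\oplus\mathfrak{sl}(2)$, the even flag variety is $G_0/B_0\cong(G_2/B_{G_2})\times\mathbb{P}^{1}$ of dimension $7$, and for the distinguished Borel $G/B$ is a purely odd affine bundle $\pi\colon G/B\to G_0/B_0$ with fibre $\mathfrak{n}^{-}_{\1}$, $\dim\mathfrak{n}^{-}_{\1}=7$. Following Penkov \cite{P}, $\pi$ is affine, so $\mathbb{R}\Gamma(G/B,\O_\m^*)=\mathbb{R}\Gamma(G_0/B_0,\pi_*\O_\m^*)$, and the $B_0$-equivariant sheaf $\pi_*\O_\m^*$ carries a filtration with associated graded $\bigoplus_{j\ge0}\O_{G_0/B_0}(-\m)\otimes\Lambda^{j}(\mathfrak{n}^{-}_{\1})^*$. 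This yields a $\g_\0$-equivariant spectral sequence converging to $H^\bullet(G/B,\O_\m^*)$ whose $E_1$-page is $\bigoplus_{\nu}H^\bullet(G_0/B_0,\O_{-\m-\nu})$, the sum running over the $\mathfrak{h}$-weights $\nu$ (with multiplicity) of $\Lambda^\bullet(\mathfrak{n}^{-}_{\1})^*$. The first step is to write out this page by applying the Borel--Weil--Bott theorem \rthm{BWB} on $(G_2/B_{G_2})\times\mathbb{P}^{1}$ to each weight $\m+\nu$, treating the $G_2$- and $\mathbb{P}^{1}$-parts separately; a weight contributes $0$ unless both parts are regular.

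For a dominant $\m\in\F^{a}$ with $\m\neq\l_0,\l_1,\l_2$, I would show by a direct inspection of the root system of $G(3)$ --- using that $\m$ is atypical of defect $1$, so the single equation $(\m+\r,\a)=0$ drives almost all twisted weights $\m+\nu$ onto a reflection hyperplane --- that every surviving $E_1$-term lies in cohomological degree $0$, whence the sequence degenerates and $H^i(G/B,\O_\m^*)^*=0$ for $i\ge1$. Collecting the surviving terms and comparing with the Weyl character formula on $G_0/B_0$, their sum is, as a $\g_\0$-module, $(L_\m\oplus L_{\m'})|_{\g_\0}$, where $\m'$ is the neighbour of $\m$ in the $D_\infty$ quiver of \rthm{CBG} towards $\l_0$ and the characters of $L_\m,L_{\m'}$ are supplied by \rthm{BL}. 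Since by \rthm{CBG} the simple modules in the block $\F^{a}$ have known and ``independent'' $\g_\0$-characters, the only way to realize this $\g_\0$-character by composition factors from $\F^{a}$ is $L_\m+L_{\m'}$; hence $H^0(G/B,\O_\m^*)^*$ has exactly the simple subquotients $L_\m$ and $L_{\m'}$. The superdimension formula \rthm{SFG} --- with \rthm{KWFG} ensuring the superdimension count detects every composition factor --- gives an independent numerical check.

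The three distinguished weights need separate bookkeeping. At the branch point $\l_0$ one further twisted weight becomes regular dominant, adding a degree-$0$ contribution so that the degree-$0$ terms now assemble to $(L_{\l_0}\oplus L_{\l_1}\oplus L_{\l_2})|_{\g_\0}$; by the argument above $H^0(G/B,\O_{\l_0}^*)^*$ then has the three simple subquotients $L_{\l_0},L_{\l_1},L_{\l_2}$, and similarly $H^0(G/B,\O_{\l_i}^*)^*=L_{\l_i}$ for $i=1,2$. At the endpoints $\l_1,\l_2$ some classical $H^1$-terms survive: the surviving degree-$0$ terms assemble to $L_{\l_1}|_{\g_\0}$ and the surviving degree-$1$ terms to $L_{\l_2}|_{\g_\0}$, which together with the central character gives $H^0(G/B,\O_{\l_1}^*)^*=L_{\l_1}$ and $H^1(G/B,\O_{\l_1}^*)^*=L_{\l_2}$, and symmetrically for $\l_2$; the $D_\infty$-symmetry exchanging $\l_1$ and $\l_2$ is a consistency check. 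Vanishing of $H^{\ge2}$ in all cases follows since, for every weight $\m+\nu$ that occurs, the dominantizing Weyl element has length at most $1$, a finite check in the root system of $G(3)$.

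The main obstacle is passing from this $\g_\0$-level information to the $\g$-module statement: one must confirm that the surviving $E_1$-terms really glue into the asserted (indecomposable, multiplicity-free) modules and that no differential on a later page cancels a term --- the only genuine risk being a differential from a degree-$0$ to a degree-$1$ term at $\l_1,\l_2$. I expect to handle this exactly as in \rthm{BWBF}, by leaning on the complete description of the block $\F^{a}$ in \rthm{CBG} --- its projective covers, injective hulls, and the fact that the quiver is precisely $D_\infty$ with all spaces $Ext^{1}(L_\l,L_\m)$ of dimension at most one --- so that a finite-dimensional module in $\F^{a}$ with the computed central and $\g_\0$-characters is forced to be the claimed one, with \rthm{SFG} serving as the last numerical consistency check. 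The delicate point, which I would carry out in full, is the endpoint analysis at $\l_1,\l_2$, where the atypicality is maximal and the length-one classical term must be matched against the odd twist and its possible differentials.
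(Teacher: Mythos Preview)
Your approach is genuinely different from the paper's, and it has a serious circularity problem. You propose to identify the composition factors of $H^0(G/B,\O_\m^*)^*$ by matching its $\g_\0$-character against the characters of the simple modules $L_\m$, $L_{\m'}$ ``supplied by \rthm{BL}'', and to confirm the answer using the quiver description in \rthm{CBG} and the superdimension formula \rthm{SFG}. But in this paper the logical flow runs the other way: the character formula \rthm{BL}, the superdimension formula \rthm{SFG}, and the quiver (parts (3)--(4) of \rthm{CBG}) are all \emph{deduced from} the cohomology computations you are trying to establish (see Sections~10 and~11, which rest on \rlem{exact}). So as written your argument assumes what it is meant to prove.

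The paper instead argues inductively without ever needing the full $\g_\0$-character of the simples. The ingredients are: (i) \rlem{3}, which bounds the possible subquotients of $\G_i(G/B,\O_\l)$ by a combinatorial condition $\m+\r=w(\l+\r)-\sum_{\a\in I}\a$; (ii) the Euler-characteristic superdimension identity \rlem{7}, $\sum_i(-1)^i\,sdim\,\G_i(G/B,\O_\l)=0$; (iii) vanishing of $\G_i$ for $i>1$ via the parabolic with $\dim(G/P)_0=1$; and (iv) translation-functor equivalences between blocks. One starts from generic weights, where Penkov's formula \rthm{chgeneric} gives $sdim L_\l\neq 0$ and \rlem{3} already forces $[\G_0:L_{\l-\a}]\le 1$ with all other multiplicities zero; then (ii) forces $[\G_0:L_{\l-\a}]=1$ (\rlem{generic}). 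This propagates step by step toward the walls, and the special weights $\l_0,\l_1,\l_2$ are handled individually (for $G(3)$, in Section~9, parallel to Lemmas~7.8--7.12 for $F(4)$): e.g.\ $\G_0(\O_{\l_1})=L_{\l_1}$ is forced by \rlem{3}, then $sdim\G_1=sdim\G_0\neq0$ forces $\G_1\neq0$, and a nonvanishing alternating-character computation excludes $\G_1=L_{\l_1}$, leaving $\G_1=L_{\l_2}$. None of this requires knowing $ch\,L_\m$ in advance. Your spectral-sequence picture is morally what underlies \rlem{3} and \rthm{chcoh}, but the paper uses only those consequences and the superdimension trick, precisely to avoid the bootstrapping problem you run into.
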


\subsection{Germoni's conjecture and the indecomposable modules}

The following theorem together with results in \cite{GrSe} for other Lie superalgebras proves a conjecture by J. Germoni (\rthm{germoni1}). 

\begin{theorem}\label{tame} Let $\g=F(4)$ or $G(3)$. The blocks of atypicality 1 are {\it{tame}}.
\end{theorem}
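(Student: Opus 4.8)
The plan is to prove tameness by combining the explicit description of the atypicality-$1$ blocks obtained in Theorems~\ref{CBF} and~\ref{CBG} with the general machinery for classifying representation type via quivers with relations. First I would recall that, by Theorem~\ref{CBF}(3)--(4) and Theorem~\ref{CBG}(3), every atypicality-$1$ block of $\g=F(4)$ or $G(3)$ is equivalent (as an abelian category) to the category of finite-dimensional modules over the path algebra of a quiver of type $D_\infty$ or $A_\infty$, modulo an admissible ideal of relations. Since an algebra is tame (or of finite type) precisely when each of its finite-dimensional quotients is, and since the indecomposable objects of $\F^\chi$ have bounded but arbitrarily large length, it suffices to identify the relations and show the resulting (locally bounded) algebra is special biserial, or more precisely that each finite truncation is.

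The key steps, in order, are: (i) write down generators and relations for the basic algebra $A^\chi$ of each block, using the $Ext^1$-computations already encoded in the quiver together with the computation of $Ext^1$ between the simple modules and of the radical filtrations of the projective covers $P_\l$ in $\F^\chi$ (these come from translation-functor / parabolic-induction arguments of the kind used to establish Theorems~\ref{CBF}--\ref{CBG}); (ii) verify that at each vertex at most two arrows enter and at most two leave, and that for each arrow $a$ there is at most one arrow $b$ with $ab\notin I$ and at most one arrow $c$ with $ca\notin I$ — i.e.\ that $A^\chi$ is special biserial; (iii) invoke the classical fact that special biserial algebras (and their locally bounded analogues, restricted to finite-dimensional modules) are tame, with indecomposables given by string and band modules. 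For the $D_\infty$ block one must treat the branching vertex $\l_0$ with its three neighbours $\l_1,\l_2$ and the ``next'' vertex carefully: here the local structure is that of the $D$-type gentle algebra, and one checks the zero-relations forcing any path through $\l_0$ that would use three of the incident arrows to vanish, so the biserial condition still holds. For $G(3)$ the argument is identical since the block is again of type $D_\infty$.

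The main obstacle I expect is step~(i): pinning down the relations precisely, i.e.\ showing that the projectives in $\F^\chi$ are uniserial-plus-branching of the predicted shape (Loewy length $3$, with $\mathrm{rad}\,P_\l/\mathrm{rad}^2 P_\l$ the sum of the neighbouring simples and $\mathrm{rad}^2 P_\l = L_\l$), and in particular that the two ``square'' paths at each ordinary vertex coincide (commutativity relation) while the long paths die (zero relations). This requires knowing $\dim Ext^1(L_\l,L_\mu)\le 1$ (already in the quiver) and controlling $Ext^2$ / the multiplicities $[P_\l : L_\mu]$; for $F(4)$ and $G(3)$ these follow from Serganova's results on the structure of atypicality-$1$ blocks and from the BGG-reciprocity-type computations behind Theorems~\ref{SFF}--\ref{BL}, but assembling them into a clean presentation is the technical heart. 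Once the presentation is in hand, the identification with a special biserial (indeed string/gentle) algebra and hence tameness is essentially formal, and the explicit list of indecomposables — the string and the (for $D_\infty$, none; for the affine-type pieces, one-parameter families of) band modules — drops out, which is exactly what is needed downstream for Germoni's conjecture via Theorem~\ref{germoni1} and \cite{GrSe}.
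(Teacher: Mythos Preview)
Your overall strategy matches the paper's: determine the Ext-quiver (Theorems~\ref{CBF}, \ref{CBG}), compute the radical layers of the projective covers via BGG reciprocity to pin down the relations, and then appeal to a known tameness criterion. The paper carries out exactly your step~(i) in Lemma~11.1 (Loewy length~3, with $\mathrm{rad}\,P_\l/\mathrm{rad}^2 P_\l$ the sum of the neighbouring simples and socle $L_\l$), and then cites \cite{G1}, \cite{Gruson} and \cite{GR} to conclude tameness and classify the indecomposables.

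There is, however, a genuine gap in your step~(ii)--(iii) for the $D_\infty$ blocks. At the branch vertex $\l_0$ the projective $P_{\l_0}$ has \emph{three} simples $L_{\l_1}\oplus L_{\l_2}\oplus L_{\l_3}$ in its middle layer, so three arrows enter and three leave $\l_0$ in the quiver. This violates the defining condition of a special biserial (and a fortiori gentle) algebra, which requires at most two arrows in and two out at every vertex; no choice of zero-relations can repair this, contrary to your claim that ``the biserial condition still holds.'' The paper does not attempt a special-biserial argument: it simply observes that the resulting quiver-with-relations coincides with the one already analysed by Germoni and Gruson for other basic classical superalgebras, where tameness and the list of indecomposables are established. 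If you want a self-contained argument, the standard route is Galois covering: the $\mathbb{Z}/2$ action swapping $\l_1\leftrightarrow\l_2$ has quotient an $A_\infty$-type algebra which \emph{is} special biserial, and tameness lifts along the covering. Either way, you should replace the special-biserial claim at $\l_0$ by one of these two arguments.
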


%The above theorem together with results in \cite{GrSe} for other Lie superalgebras proves the following conjecture by J. Germoni. 

\begin{theorem}\label{germoni1} Let $\g$ be a basic classical Lie superalgebra. Then all tame blocks are of atypicality less or equal 1. 
\end{theorem}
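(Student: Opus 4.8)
\noindent\emph{Plan of proof.}
The plan is to establish the contrapositive: every block of atypicality at least $2$ of a basic classical Lie superalgebra is wild. By Kac's classification the basic classical Lie superalgebras are $\mathfrak{gl}(m|n)$, $\mathfrak{sl}(m|n)$, $\mathfrak{psl}(n|n)$, the orthosymplectic series $\mathfrak{osp}(m|2n)$, and the three exceptional algebras $D(2,1;\alpha)$, $F(4)$, $G(3)$. For the latter three the defect equals $1$, so every block has atypicality at most $1$ and there is nothing to prove (this is exactly the content of \rthm{CBF} and \rthm{CBG} for $F(4)$ and $G(3)$, and of the analogous classical fact for $D(2,1;\alpha)$); moreover $\mathfrak{osp}(2|2n)$ also has defect $1$. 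Hence it remains to treat $\mathfrak{gl}(m|n)$, $\mathfrak{sl}(m|n)$, $\mathfrak{psl}(n|n)$ with $m,n\geq 2$ and $\mathfrak{osp}(m|2n)$ with $m\geq 4$, $n\geq 2$, where blocks of higher atypicality do occur; this is the range covered by \cite{GrSe}.

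Next I would reduce to one minimal block in each series. Using the equivalences of atypical blocks for the general linear series due to Serganova (\cite{Ser}), any block of atypicality $k$ of $\mathfrak{gl}(m|n)$ is equivalent to the principal block of $\mathfrak{gl}(k|k)$, and the $\mathfrak{sl}$ and $\mathfrak{psl}$ cases reduce to this one; using the block description for the orthosymplectic series from \cite{GrSe}, any block of atypicality $k$ of $\mathfrak{osp}$ is equivalent to a principal block of a fixed small orthosymplectic algebra of defect $k$. Wildness only propagates upward: if a finite-dimensional algebra admits a quotient, or an idempotent truncation $eAe$, that is wild, then it is itself wild, and likewise if an exact functor maps onto a wild subquotient category. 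The principal block of $\mathfrak{gl}(k|k)$ with $k\geq 2$ contains the principal block of $\mathfrak{gl}(2|2)$ as an idempotent truncation --- take the idempotent attached to those weights whose weight (arc) diagram agrees with a fixed background outside a window in which it looks like an atypicality-$2$ diagram for $\mathfrak{gl}(2|2)$ --- and similarly for $\mathfrak{osp}$. So it is enough to show that the principal block $\mathcal{B}$ of $\mathfrak{gl}(2|2)$, and the principal block of a minimal orthosymplectic algebra of defect $2$ such as $\mathfrak{osp}(4|4)$, are wild.

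For these minimal blocks I would proceed by an explicit quiver-with-relations argument. The Ext-quiver and the relations of $\mathcal{B}$ can be computed from the combinatorics of translation functors on weight diagrams (equivalently read off from the generalized Khovanov arc algebras). I would then exhibit a small idempotent $e$, a sum of two or three vertices, so that $e\mathcal{B}e$, or a further quotient of it by an admissible ideal, surjects onto a known wild bound quiver algebra --- for instance the $3$-Kronecker algebra (two vertices and three parallel arrows, no relations), or a local algebra with zero radical square whose radical needs at least three generators --- neither of which is tame. The same computation with the weight diagrams of the orthosymplectic series disposes of the minimal $\mathfrak{osp}$ block. Together with the vacuous exceptional cases, this proves the theorem.

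The main obstacle is precisely the last step: one needs, first, the exact presentation by quiver and relations of the principal blocks of $\mathfrak{gl}(2|2)$ and of the minimal orthosymplectic algebra of defect $2$, which requires careful bookkeeping of translation functors and of the Loewy structure of the corresponding projective modules; and second, a sufficiently robust wildness criterion --- covering theory, or the classification of minimal wild bound quiver algebras, or an explicit multi-parameter family of pairwise non-isomorphic indecomposables --- to certify that the algebra extracted is genuinely wild and not merely of infinite representation type. A secondary difficulty is keeping the argument uniform across the general linear and orthosymplectic series, since the requisite block equivalences have different sources there; for $\mathfrak{osp}$ one must rely on the more recent structural results of \cite{GrSe}.
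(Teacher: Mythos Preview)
Your overall plan matches the paper's approach exactly: the exceptional cases $D(2,1;\alpha)$, $F(4)$, $G(3)$ have defect $1$, so every block already has atypicality at most $1$ and the statement is vacuous there (this is all the paper says in Section~11.3, together with a reference to Theorems~\ref{CBF} and~\ref{CBG}); the remaining classical series are deferred entirely to \cite{GrSe}. The paper does not supply any argument of its own for the $\mathfrak{gl}/\mathfrak{sl}/\mathfrak{psl}$ or $\mathfrak{osp}$ families --- it simply cites the literature --- so your sketch of how one would actually prove wildness for atypicality $\geq 2$ in those cases (reduction to a minimal block, idempotent truncation, explicit wild quotient of the quiver algebra) goes well beyond what the paper itself carries out, but is consistent with the known proofs in the cited sources.
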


The following theorem together with \rthm{quivertheorem} gives a description of the indecomposable modules.

\begin{theorem}\label{relations} The quivers $A_{\infty}$ and $D_{\infty}$ are the ext-quiver for atypical blocks $\F^{(a,b)}$ and $\F^{(a,a)}$ of $F(4)$ and the quiver $D_{\infty}$ is the ext-quiver for atypical block $\F^{a}$ of $G(3)$ with the following relations:

For $\F^{(a,b)}$, we have:

$$d^+d^-+d^-d^+=(d^+)^2=(d^-)^2=0\text{ ,where } d^{\pm}=\sum_{l\in\Z}d^{\pm}_l$$

For $\F^{(a,a)}$ or $\F^{a}$ we have the following relations: 

%$$d_l^+d_{l+1}^+=d_{l+1}^-d_l^-=0, \text{ for } l\geq 3$$
%$$d_1^+d_2^-=d_2^+d_1^-=d_1^+d_0^-=d_0^+d_1^-=0$$

%$$d_l^-d_l^+=d_{l+1}^+d_{l+1}^- \text{ for } l\geq 3$$
%$$d_1^+d_1^-=d_2^+d_2^-=d_0^+d_0^- $$

$$d_l^-d_{l+1}^-=d_{l+1}^+d_l^+=0, \text{ for } l\geq 3$$

$$d_1^-d_2^+=d_2^-d_1^+=d_0^+d_2^+=d_2^-d_0^-=d_0^- d_3^-=d_3^+d_0^+=d_1^-d_0^-=d_0^+d_1^+=0$$

$$d_l^-d_l^+=d_{l+1}^+d_{l+1}^- \text{ for } l\geq 3$$

$$d_1^+d_1^-=d_2^+d_2^-=d_0^-d_0^+.$$

\end{theorem}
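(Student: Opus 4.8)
The statement re-asserts, via Theorems \ref{CBF} and \ref{CBG}, that the underlying graphs of the ext-quivers are $A_\infty$ and $D_\infty$, so the real content is the list of relations. The plan is to identify the basic algebra of each atypical block with the path algebra of its ext-quiver modulo an explicit ideal, by computing the Loewy (radical) structure of the indecomposable projective modules. Recall that $\F^\chi$ is equivalent to the category of finite-dimensional (locally unital) modules over $A=\mathrm{End}_\g\big(\bigoplus_\lambda P_\lambda\big)^{\mathrm{op}}$, where $P_\lambda$ is the projective cover of $L_\lambda$, and that $A\cong \mathbb{C}Q/I$ with $Q$ the ext-quiver. Thus it suffices to determine (i) the radical filtration of each $P_\lambda$ and (ii) the composites of the canonical maps $P_\mu\to P_\lambda$ between projectives. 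The listed relations are precisely the assertion that every length-two path vanishes except those that survive inside the "diamonds" $P_\lambda$, together with the coincidences among the surviving length-two paths forced by $\mathrm{rad}^2 P_\lambda=\mathrm{soc}\,P_\lambda=L_\lambda$ being one-dimensional.

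First I would pin down the composition factors of the $P_\lambda$. By BGG reciprocity in $\F$ one has $[P_\lambda:K_\mu]=[K_\mu:L_\lambda]$, where $K_\mu$ is the Kac module; the multiplicities $[K_\mu:L_\lambda]$ are read off from the character formula (Theorem \ref{BL}, formulas \ref{BLM} and \ref{BLSM}) together with the combinatorial description of $F^\chi$ in Theorems \ref{CBF}, \ref{CBG}. So each generic $P_\lambda$ has a two-step Kac filtration and composition length $4$, and combining this with the contravariant duality on $\F$ (which fixes each $L_\lambda$, hence each $P_\lambda$) forces $P_\lambda$ to be rigid with radical layers $L_\lambda$, $L_{\mu_1}\oplus L_{\mu_2}$, $L_\lambda$ for a vertex $\lambda$ with neighbours $\mu_1,\mu_2$. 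At a leaf of $D_\infty$ (the weights $\lambda_1,\lambda_2$) only one neighbour occurs in the middle layer, and near the branch point $\lambda_0$ the structures of $P_{\lambda_0},P_{\lambda_1},P_{\lambda_2}$ and of the adjacent vertices are the exceptional ones; these are exactly the places where formula \ref{BLSM} and the special superdimension values $sdim\,L_{\lambda_i}=\dim L_\mu(\g_x)$ of Theorems \ref{SFF}, \ref{SFG} enter.

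Next I would realize the quiver arrows and their composites by projective (translation) functors, following Serganova's approach to defect-one blocks. Tensoring with a suitably chosen finite-dimensional $\g$-module and projecting onto blocks yields exact, self-adjoint endofunctors of $\F$ that preserve projectivity; evaluating them on Kac modules (where the action is transparent) shows that their indecomposable summands move a simple $L_\lambda$ one step along the quiver, so they produce the generators $d^\pm_l$. Composing two such functors and applying the result to $L_\lambda$ gives a module whose only self-extension contribution is the one-dimensional $\mathrm{soc}\,P_\lambda$; translating this into the algebra $A$ yields $d^+d^-+d^-d^+=0$ and $(d^\pm)^2=0$ in the $A_\infty$ case and the analogous list in the $D_\infty$ case, the asymmetry near $l=0,1,2,3$ reflecting the exceptional Loewy structures found above. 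Finally, to see that the listed relations generate all of $I$, I would compare graded dimensions vertex by vertex: the algebra presented by the quiver with these relations has $e_\mu(\mathbb{C}Q/I)e_\lambda$ of dimension $\sum_\nu[P_\lambda:L_\nu][P_\nu:L_\mu]=\dim\mathrm{Hom}_\g(P_\mu,P_\lambda)$, so no further relations can exist.

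I expect the main obstacle to be the analysis near the branch point of $D_\infty$: establishing the precise Loewy structure of $P_{\lambda_0},P_{\lambda_1},P_{\lambda_2}$ and of the vertices adjacent to the fork, and tracking the normalizations in the coincidence relations $d_1^+d_1^-=d_2^+d_2^-=d_0^-d_0^+$ and in $d_0^+d_2^+=d_2^-d_0^-=d_0^-d_3^-=d_3^+d_0^+=0$, genuinely requires explicit computation with the translation functors (and the special-weight character formula \ref{BLSM}) rather than formal arguments. The $A_\infty$ case should be comparatively routine once the functor formalism is in place, since it reproduces the standard zigzag-type presentation already known for atypicality-one blocks of other basic classical Lie superalgebras.
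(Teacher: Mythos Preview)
Your plan and the paper's argument rest on the same structural input: the radical (Loewy) filtrations of the indecomposable projectives, obtained from BGG reciprocity together with the fact that projectives in $\F$ are injective with simple socle. The paper records exactly the diamond shapes you describe (Lemma~\ref{relations} in Section~11), and then, rather than realizing the arrows by translation functors and checking composites directly as you propose, simply observes that these Loewy patterns coincide with those appearing in Germoni's and Gruson's work on defect-one blocks, so the quiver relations can be quoted from \cite{G1} and \cite{Gruson}. Your route is more self-contained and makes explicit the mechanism (translation functors plus a dimension count on $e_\mu(\mathbb{C}Q/I)e_\lambda$) that those references encode; the paper's route is shorter but relies on recognizing the problem as one already solved.

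One point to adjust: for $F(4)$ and $G(3)$ there are no Kac modules in the type~I sense, so the reciprocity statement $[P_\lambda:K_\mu]=[K_\mu:L_\lambda]$ is not literally available. The paper (following \cite{BGGSeGr}) uses the virtual BGG reciprocity with Euler characteristics $\varepsilon_\nu$, namely $[P_\lambda:L_\mu]=\sum_\nu[\varepsilon_\nu:L_\lambda][\varepsilon_\nu:L_\mu]$, and the multiplicities $[\varepsilon_\nu:L_\mu]$ are read off from the cohomology computations summarized in Lemma~\ref{exact} rather than from an honest Kac filtration. This does not affect your strategy---the composition multiplicities and hence the Loewy layers come out the same---but your write-up should replace ``Kac module'' and ``two-step Kac filtration'' by the Euler-characteristic formulation.
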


\section{Preliminaries} 

\subsection{Lie superalgebras}

The following preliminaries are taken from \cite{K}.

All vector spaces are over an algebraically closed field $k$ of characteristic $0$. A {\it{superspace}} over $k$ is a $\Z_2$-graded vector space $V=V_{\0}\oplus V_{\1}$.  Let $p(a)$ denote the degree of a homogeneous element $a$ and call $a$ {\it{even}} or {\it{odd}} if $p(a)$ is $0$ or $1$ respectively.

A {\it{Lie superalgebra}} is a superspace $\g=\g_{\0}\oplus\g_{\1}$, with a bilinear map $[\ ,\ ]:\g\otimes\g\rightarrow\g$, satisfying the following axioms for all homogenous $a,b,c\in\g$:

(a) $[a,b]=-(-1)^{p(a)p(b)}[b,a]$ ({\it{anticommutatvity}});

(b) $[a,[b,c]]=[[a,b],c]+(-1)^{p(a)p(b)}[b,[a,c]]$ ({\it{Jacobi identity}}).

All simple finite-dimensional Lie superalgebras have been classified by V.  Kac in \cite{K1}. The {\it{basic classical}} Lie superalgebras are all the simple Lie algebras, $A(m,n)$, $B(m,n)$, $C(n)$, $D(m,n)$, $D(2,1;\a)$, $F(4)$, and $G(3)$. The Lie superalgebras  $F(4)$, $G(3)$ are called {\it{exceptional}}.

Let $\h$ be Cartan subalgebra of $\g_{\0}$ Lie algebra, then $\g$ had a weight decomposition $\g=\bigoplus_{\a\in\h^*}\g_{\a}$, with $\g_{\a}=\{x\in\g\ |\ [h,x]=\a(h)x\ \forall h\in\h\}$. The set $\D=\{\a\in\h^*\ |\ \g_{\a}\neq 0\}$ is called the {\it{set of roots}} of $\g$ and $\g_{\a}$ is the {\it{root space corresponding}} to root $\a\in\D$. For a regular $h\in\h$, i.e. $Re\,\a(h)\neq 0$ $\forall\a\in\D$, we have a decomposition $\D=\D^+\cup\D^-$. Here, $\D^+=\{\a\in\D|Re\,\a(h)>0\}$ is called the {\it{set of positive roots}} and $\D^-=\{\a\in\D|Re\,\a(h)<0\}$ is called the {\it{set of negative roots}}. 

The Lie superalgebra $\g$ admits a triangular decomposition $\g=\nn^-\oplus\h\oplus\nn^+$ with $\nn^{\pm}_{\a}=\bigoplus_{\a\in\D^{\pm}}\g_{\a}$, with $\nn^{\pm}$ nilpotent subalgebras of $\g$. Then $\bb=\h\oplus\nn^+$ is a solvable Lie subsuperalgebra of $\g$, which is called the {\it{Borel subsuperalgebra}} of $\g$ with respect to the given triangular decomposition. Here, $\nn^+$ is an ideal of $\bb$.

We set $\D_0^{\pm}=\{\a\in \D^{\pm}|\g_{\a}\subset \g_{\0}\}$ and $\D_1^{\pm}=\{\a\in \D^{\pm}|\g_{\a}\subset \g_{\1}\}$. Then the set $\D^+_0\cup\D^-_0$ called the set of {\it{even roots}} and the set $\D^+_1\cup\D^-_1$ is called the set of {\it{odd roots}}. 

The universal enveloping algebra of $\g$ is defined to be the quotient $U(\g)=T(\g)/R$, where $T(\g)$ is the tensor superalgebra over space $\g$ with induced $\Z_2$-gradation and $R$ is the ideal of $T(\g)$ generated by the elements of the form $[a,b]-ab+(-1)^{p(a)p(b)}ba$.

The following is the Lie superalgebra analogue of Poincar�-Birkhoff-Witt (PBW) theorem: Let $\g=\g_{\0}\oplus\g_{\1}$ be a Lie superalgebra, $a_1,\dots, a_m$ be a basis of $\g_{\0}$ and $b_1,\dots, b_n$ be a basis of $\g_{\1}$, then the elements of the form $a_1^{k_1}\cdots a_m^{k_m} b_{i_1}\cdots b_{i_s}$ with $k_i\geq 0$ and $1\leq i_1 < \cdots < i_s\leq n$ form a basis of $U(\g)$.

By PBW theorem, $U(\g)=U(\nn^-)\otimes U(\h)\otimes U(\nn^+)$. Let $\t: U(\g)\rightarrow U(\h)$ be the projection with kernel $\nn^-U(\g)\otimes U(\g)\nn^+$. Let $Z(\g)$ to be the center of $U(\g)$. Then the restriction $\t_{|Z(\g)}: Z(\g)\rightarrow U(\h)\cong S(\h)$ is a homomorphism of rings called {\it{Harish-Chandra}} map. Since $\h$ is abelian, $S(\h)$ can be considered the algebra of polynomial functions of $\h^*$. The (generalized) {\it{central character}} is a map $\chi_{\l}: Z(\g)\rightarrow k$ such that $\chi_{\l}(z)=\t(z)(\l)$.

\subsection{Weyl group and odd reflections}

The {\it{Weyl group}} $W$ of Lie superalgebra $\g=\g_{\0}\oplus\g_{\1}$ is the Weyl group of the Lie algebra $\g_{\0}$. Weyl group is generated by {\it{even reflections}}, which are reflections with respect to even roots of $\g$. Define parity $\w$ on $W$, such that $\forall r\in W$, $\w(r)=1$ if $\w$ can be written as a product of even number of reflections and $\w(r)=-1$ otherwise. 

A linearly independent set of roots $\Si$ of a Lie superalgebra is called a {\it{base}} if for each $\b\in\Si$ there are $X_{\b}\in\g_{\b}$ and $Y_{\b}\in\g_{-\b}$ such that $X_{\b}$, $Y_{\b}$, $\b\in\Si$ and $\h$ generate $\g$ and for any distinct $\b,\gg \in \Si$ we have $[X_{\b},Y_{\gg}]=0$.

Let $\{X_{\b_i}\}$ be a base and let $h_{\b_i}=[X_{\b_i},Y_{\b_i}]$. We have the following relations $[h,X_{\b_i}]=\b_i(h)X_{\b_i}$, $[h,Y_{\b_i}]=-\b_i(h)Y_{\b_i}$, and $[X_{\b_i},Y_{\b_j}]=\d_{ij}h_{\b_i}$. We define the {\it{Cartan matrix}} of a base $\Si$ to be matrix $A_{\Si}=(\b_i(h_{\b_j}))=(a_{\b_i\b_j})$. A base where the number of odd roots is minimal is called a {\it{distinguished}} root base and the corresponding Cartan matrix is called {\it{distinguished}} Cartan matrix.

In the given base $\Si$,  let $\a\in\Si$ is such that $a_{\a\a}=0$ and $p(\a)=1$. An {\it{odd reflection}} $r_{\a}$ is defined in \cite{S1} by:

 $r_{\a}(\a)=-\a$, $r_{\a}(\b)=\b$ if $\a\neq\b$ and $a_{\a\b}=a_{\b\a}=0$, and 
 
 $r_{\a}(\b)=\b+\a$ if $a_{\a\b}\neq0$ and $a_{\b\a}\neq0$, for all $\b\in\Si$.

We call a root $\a\in\Si$ {\it{isotropic}}, if $a_{\a\a}=0$; otherwise, it is called {\it{non-isotropic}}. We will need the following lemma: 

%\begin{lemma}(Serganova, \cite{S1}) Let $\g$ be any basic classical Lie superalgebra. For an isotropic $\a\in\D_1$, the set $r_{\a}(\Si)=\{ r_\a(\b)|\b\in\Si \}$ is a base and every base of $\g$ can be obtained from a given one by a sequence of even and odd reflections. 
%\end{lemma}

\begin{lemma}(Serganova, \cite{S1})\label{oddrootbase} Let $\Pi$ and $\Pi'$ be two bases, and $\D^+(\Pi)$,  $\D^+(\Pi')$ be the corresponding sets of positive roots. If $\Pi'=r_\a(\Pi)$, for some root $\a\in\Pi$. Then 

$$\D^+(\Pi')=\D^+(\Pi)\cap \{ -\a \}\setminus\{\a\}, $$
or
$$\D^+(\Pi')=\D^+(\Pi)\cap \{ -\a, -2\a \}\setminus\{\a,2\a \}, $$

depending on whether $2\a$ is a root. 
\end{lemma}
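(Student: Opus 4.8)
The plan is to derive the equality from a single observation about how positive roots transform, which by symmetry --- the odd reflection $r_\a$ is essentially an involution on the set of bases --- yields both inclusions at once. Throughout, for a base $\Si$ I use the standard fact that $\D=\D^+(\Si)\sqcup(-\D^+(\Si))$ and that $\D^+(\Si)=\D\cap\sum_{\b\in\Si}\Z_{\geq0}\b$, i.e.\ the positive roots are exactly the roots lying in the nonnegative integral span of $\Si$; I also use that, since $\a$ is isotropic, the only roots proportional to $\a$ are $\pm\a$, together with $\pm2\a$ precisely when $2\a\in\D$.

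First I would write down, directly from the definition of $r_\a$, the explicit description
$$\Pi'=\{-\a\}\cup\{\b\in\Pi\setminus\{\a\}\ :\ a_{\a\b}=0\}\cup\{\b+\a\ :\ \b\in\Pi,\ a_{\a\b}\neq0\}.$$
The key structural point is that every element of $\Pi'\setminus\{-\a\}$ equals $\b$ or $\b+\a$ for some $\b\in\Pi\setminus\{\a\}$ (note $a_{\a\a}=0$ forces $\b\neq\a$ in the last set); hence if a root $\gamma$ lies in the nonnegative span of $\Pi'$ and we re-expand $\gamma=\sum_{\b\in\Pi}d_\b\b$ in the basis $\Pi$, then $d_\b\geq0$ for every $\b\neq\a$, and only the coefficient $d_\a$ can be negative.

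The main step is then the inclusion $\D^+(\Pi')\subseteq\bigl(\D^+(\Pi)\cup\{-\a,-2\a\}\bigr)\setminus\{\a,2\a\}$ (the $2\a$-terms present only if $2\a\in\D$). Take $\gamma\in\D^+(\Pi')$ and write $\gamma=\sum_{\b\in\Pi}d_\b\b$ with $d_\b\geq0$ for $\b\neq\a$, as above. If $d_\a\geq0$, then $\gamma\in\D^+(\Pi)$; moreover $\gamma\notin\{\a,2\a\}$, because $-\a\in\Pi'\subseteq\D^+(\Pi')$ (and $-2\a\in\D^+(\Pi')$ when it is a root, being $2\cdot(-\a)$) and disjointness of $\D^+(\Pi')$ from its negative forces $\a,2\a\notin\D^+(\Pi')$. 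If instead $d_\a<0$, then $\gamma\notin\D^+(\Pi)$, so $-\gamma\in\D^+(\Pi)$, i.e.\ $-d_\b\geq0$ for all $\b$; combined with $d_\b\geq0$ for $\b\neq\a$ this forces $d_\b=0$ for all $\b\neq\a$, so $\gamma=d_\a\a$ is a negative multiple of $\a$ and hence $\gamma=-\a$, or $\gamma\in\{-\a,-2\a\}$ when $2\a$ is a root. This establishes the inclusion.

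Finally, for the reverse inclusion I would apply the inclusion just proved with $(\Pi,\a)$ replaced by $(\Pi',-\a)$: one checks on Cartan-matrix entries that $-\a\in\Pi'$ is again isotropic and that $r_{-\a}(\Pi')=\Pi$ (a simple root $\b\in\Pi$ is linked to $\a$ iff $\b+\a\in\Pi'$ is linked to $-\a$, since $(\b+\a,-\a)=-(\b,\a)$), so the first part gives $\D^+(\Pi)\subseteq\bigl(\D^+(\Pi')\cup\{\a,2\a\}\bigr)\setminus\{-\a,-2\a\}$; rearranging this and using $-\a,-2\a\in\D^+(\Pi')$ yields $\bigl(\D^+(\Pi)\cup\{-\a,-2\a\}\bigr)\setminus\{\a,2\a\}\subseteq\D^+(\Pi')$, and the two inclusions together give the claimed equality $\D^+(\Pi')=\bigl(\D^+(\Pi)\cup\{-\a\}\bigr)\setminus\{\a\}$ (resp.\ with $-2\a,2\a$ adjoined when $2\a\in\D$). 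I expect the only real obstacle to be the bookkeeping in the main step --- in particular, making precise that $\D^+(\Si)$ coincides with the set of roots in the nonnegative span of $\Si$ (a property of bases established in \cite{K,S1}) and invoking the classification of root multiples of an isotropic root; the rest is elementary linear algebra.
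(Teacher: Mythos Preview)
The paper does not supply its own proof of this lemma: it is quoted from Serganova's paper \cite{S1} and stated without argument. So there is nothing in the present paper to compare your proof against.

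That said, your argument is correct and is essentially the standard one. The key step --- that expanding an element of $\D^+(\Pi')$ in the basis $\Pi$ can only produce a negative coefficient on $\a$, and that a root with all other coefficients zero must be $-\a$ (or $-2\a$) --- is exactly the mechanism that makes the lemma work, and your symmetry argument via $r_{-\a}(\Pi')=\Pi$ cleanly gives the reverse inclusion. One small remark: in the setting of this paper the odd reflection $r_\a$ is only defined for \emph{isotropic} odd simple roots, and for such $\a$ the root $2\a$ never occurs in a basic classical Lie superalgebra; so the second alternative in the lemma is in fact vacuous here (it is relevant for non-isotropic odd simple roots, which the cited source treats more generally). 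Your proof is written to cover both cases, which is harmless. The statement in the paper also contains an evident typo ($\cap$ for $\cup$), which you have silently and correctly read as $\cup$.
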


\subsection{Representations of Lie superalgebras}

%By a {\it{superspace}} $V$ we mean a $\Z_2$-graded vector space $V=V_0\oplus V_1$. (See \cite{Musson})
%\\

The following definitions and results can be found in \cite{Musson}. 

A {\it{linear representation}} $\r$ of $\g=\g_{\0}\oplus\g_{\1}$ is a superspace $V=V_0\oplus V_1$, such that the graded action of $\g$ on $V$ preserves parity, i.e. $\g_{\bar{i}}(V_j)\subset V_{i+j}$ for $i,j\in \Z_2$ and $[g_1,g_2]v=g_1(g_2(v))-(-1)^{p(g_1)p(g_2)}g_2(g_1(v))$, where $g(v):=\r(g)(v)$. Then, we call $V$ a {\it{$\g$-module}}.

A $\g$-module is a {\it{weight module}}, if $\h$ acts semisimply on $V$. Then we can write $V=\bigoplus_{\m\in\h^*}V_{\m}$, where $V_{\m}=\{ m\in V\ |\ hm=\m(h)m, \forall h\in\h\}$. The elements of $P(V)=\{\m\in\h^*\ |\ V_{\m}\neq 0\}$ are called {\it{weights}} of $V$. 

For a fixed Cartan subalgebra $\h$ of $\g$, we fix $\bb$ to be a Borel containing $\h$. We have $\bb=\h\oplus\nn^+$. Let $\l\in\h^*$, we define one dimensional even $\bb$-module $C_\l=\langle v_{\l}\rangle$ by letting $h(v_{\l})=\l(h)v_{\l}$, $\forall h\in\h$ and $\nn^+(v_{\l})=0$ with $deg(v_{\l})=\0$. We define the {\it{Verma module}} with highest weight $\l$ as the induced module $$M_\l=Ind^{\g}_{\bb}C_\l=U(\g)\otimes_{U(\bb)}C_\l.$$

The $\g$-module $M_\l$ has a unique maximal submodule $I_\l$. The module $L_\l=M_\l/I_\l$ is called an {\it{irreducible representation with highest weight $\l$}}. It is proven in \cite{K2} that $L_{\l_1}$ and $L_{\l_2}$ are isomorphic iff $\l_1=\l_2$ and that any finite dimensional irreducible representation of $\g$ is one of $L_\l$. 

A weight $\l\in\h^*$ is called {\it{integral}} if $a_i\in\Z$ for all $i\neq s$, where s corresponds to an odd isotropic root in a distinguished base. 

We denote by $\Lambda$ the lattice of integral weights. It is the same as the weight lattice of the $\g_{\0}$. The root lattice will be a sublattice of $\Lambda$ and is denoted by $Q$. We know that any simple finite-dimensional $\g$-module that is simisimple over $\mathfrak{h}$ and has weights in $\Lambda$, is a quotient of the Verma module with highest weight $\l\in\Lambda$ by a maximal submodule. $\l$ is called {\it{dominant}} if this quotient is finite-dimensional. 

Thus, for every dominant weight, there are two simple modules, that can be obtained from each other by change of parity. In order to avoid "parity chasing", the {\it{parity function}} is defined $p:\Lambda\rightarrow\mathbb{Z}_2$, such that $p(\l+\a)=p(\l)+p(\a)$ for all $\a\in\D$ and extend it linearly to all weights.

For a $\g$-module $V$, there is a functor $\pi$ such that $\pi(V)$ is the module with shifted parity, i.e. $\pi(V)_0=V_1$ and $\pi(V)_1=V_0$. We have $\mathcal{C}=\F\oplus\pi(\F)$, where $\mathcal{C}$ is the category of finite-dimensional representations of $\g$ and $\F$ is the full subcategory of $\CC$ consisting of modules such that the parity of any weight space coincides with the parity of the corresponding weight.

%Let $\CC$ denote the category of finite-dimensional $\g$-modules.  And let $\F$ be the full subcategory of $\CC$ consisting of modules such that the parity of any weight space coincides with the parity of the corresponding weight. Then, $\CC=\F\oplus\Pi(\F)$. 
%\\

%If for every real root $\a$, $\g_\a$ acts locally nilpotently on a weight module $M$, then the module is called {\it{integrable}}.  %A weight $\l$ is called {\it{dominant integral}} if the highest weight module $L_\l$ is integrable. For a finite dimensional Lie superalgebra $\g$, all simple finite dimensional $\g$-modules are integrable. Moreover, a weight $\l$ is integral dominant iff for every $\Pi'$ obtained from $\Pi$ by a sequence of odd reflections, $Y_\a$ is locally nilpotent on $L_\l$ for all $\a\in\Pi'$. 
%\\

The {\it{Dynkin labels}} of a linear function $\l\in\h^*$ are defined by $a_s=(\l,\a_s)$, if $\a_s$ is an odd isotropic root in a distinguished base and $a_i=\frac{2(\l,\a_i)}{(\a_i,\a_i)}$ for other roots in the distinguished base. 

The following result from \cite{K1} is analogous to the theorem on the highest weights for finite-dimensional irreducible representations of Lie algebras. We state it only in the case $\g=F(4)$ or $G(3)$:

\begin{lemma} (Kac,\cite{K1})\label{Kac} For a distinguished Borel subalgebra of $\g=F(4)$ or $G(3)$, let $e_i$, $f_i$, $h_i$ be standard generators of $\g$. Let $\l\in\g^*$ and $a_i=\l(h_i)$. Then the representation $L_\l$ is finite dimensional if and only if the following conditions are satisfied:

For $\g=F(4)$,

1) $a_i\in \Z_+$; 

2) $k=\frac{1}{3}(2a_1-3a_2-4a_3-2a_4)\in \Z_+$;

3) $k<4$: $a_i=0$ for all $i$ if $k=0$; $k\neq 0$; $a_2=a_4=0$ for $k=2$; $a_2=2a_4+1$ for $k=3$.

For $\g=G(3)$,

1) $a_i\in \Z_+$; 

2) $k=\frac{1}{2}(a_1-2a_2-3a_3)\in\Z_+$ for $\g=G(3)$;

3) $k<3$: $a_i=0$ for all $i$ if $k=0$; $k\neq 0$; $a_2=0$ for $k=2$. 
\end{lemma}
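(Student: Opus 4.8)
I would prove the two implications separately. For necessity, the starting point is that a finite-dimensional $\g$-module is in particular a finite-dimensional $\g_{\bar 0}$-module, and $\g_{\bar 0}\cong\mathfrak{sl}(2)\oplus\mathfrak{so}(7)$ when $\g=F(4)$ (resp.\ $\mathfrak{sl}(2)\oplus G_2$ when $\g=G(3)$), so $\lambda$ must be $\g_{\bar 0}$-dominant integral. Reading this off on the even simple coroots $h_2,h_3,h_4$ (resp.\ $h_2,h_3$), which span the $\mathfrak{so}(7)$-root subsystem (resp.\ the $G_2$-root subsystem) inside the distinguished base, gives $a_i\in\Z_+$ for those $i$; evaluating $\lambda$ on the coroot $h_\delta$ of the even root $\delta$ generating the $\mathfrak{sl}(2)$-summand --- the combination that works out to $\frac{1}{3}(2a_1-3a_2-4a_3-2a_4)$ for $F(4)$ and $\frac{1}{2}(a_1-2a_2-3a_3)$ for $G(3)$ --- gives exactly $k\in\Z_+$. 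The integrality $a_1\in\Z_+$ of the odd label, the threshold $k<4$ (resp.\ $k<3$), and the exceptional identities ($a_2=a_4=0$ for $k=2$ and $a_2=2a_4+1$ for $k=3$ in $F(4)$; $a_2=0$ for $k=2$ in $G(3)$) I would extract from the base-independence of $L_\lambda$: applying the odd reflection $r_{\alpha_1}$ at the isotropic simple root (and, where they occur, further odd reflections) produces new bases $\Pi'$ whose $\Pi'$-highest weight is $\lambda$ or $\lambda-\alpha_1$ according to whether $(\lambda+\rho,\alpha_1)=0$, by \rlem{oddrootbase}. Imposing $\g_{\bar 0}$-dominance of that weight with respect to the even simple roots of each base in the finite odd-reflection orbit of the distinguished base yields a finite system of linear and integrality constraints on the $a_i$ which, after bookkeeping, is equivalent to (1)--(3).

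For sufficiency the generic range is comparatively soft. Since $\alpha_1$ is isotropic, $f_1^2 v_\lambda=\tfrac{1}{2}[f_1,f_1]v_\lambda=0$ automatically; for each even simple root $\alpha_i$ one has $f_i^{a_i+1}v_\lambda=0$ once $a_i\in\Z_+$; and $k\in\Z_+$ makes the negative root vector of the $\mathfrak{sl}(2)$-summand act nilpotently on $v_\lambda$. These relations exhibit $L_\lambda$ as an integrable $\g_{\bar 0}$-module, and since its weights lie in a $W$-invariant polytope translated down by $\Z_{\ge 0}$-combinations of the finitely many positive roots, an integrable module with weights bounded above is finite-dimensional; hence $\dim L_\lambda<\infty$ whenever $k$ is large enough that these generators already account for all of $\g_{\bar 0}$. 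The delicate part is the boundary, where these relations no longer cut out the maximal submodule $I_\lambda$: there I would restrict to a low-rank subsuperalgebra of $\g$ generated by the odd simple root together with the $\mathfrak{sl}(2)$-summand (a copy of an $\mathfrak{osp}$- or $\mathfrak{sl}$-type superalgebra), classify its finite-dimensional irreducibles directly, transfer the constraints, and then verify the finitely many surviving boundary weights one by one --- e.g.\ by realizing the candidate modules inside low tensor powers of the standard or adjoint representation and computing their $\g_{\bar 0}$-content.

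The main obstacle is exactly this sufficiency at the boundary. For a type II superalgebra like $F(4)$ or $G(3)$ the Kac-type module induced from the zero-graded parabolic $\g_{\ge 0}$ is \emph{infinite}-dimensional --- the abelian even piece $\g_{-2}$ contributes a polynomial algebra --- so one cannot simply pass to its simple quotient as for $\mathfrak{gl}(m|n)$. Instead one has to control $I_\lambda\subset M_\lambda$ finely enough to bound the weight multiplicities of $M_\lambda/I_\lambda$, and it is precisely the exceptional relations in (3) that make this possible. I expect the cleanest implementation combines the odd-reflection bookkeeping (which pins down which bases see $\lambda$ as $\g_{\bar 0}$-dominant) with an explicit case-by-case construction of the finitely many exceptional modules, rather than a single uniform argument.
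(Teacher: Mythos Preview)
The paper does not give its own proof of this lemma: it is quoted verbatim as a result of Kac (\cite{K1}) and used as a black box. So there is no ``paper's proof'' to compare against in the strict sense. That said, your strategy is essentially the correct one and matches both Kac's original argument and the way the paper itself handles the closely related \rlem{dominant integral weights of F(4)}: there the author derives the coordinate description of dominant integral weights for $F(4)$ precisely by imposing $\g_{\bar 0}$-dominance and then walking through the chain of odd reflections $\beta,\beta',\beta'',\beta'''$ via \rlem{dominant weights} (Serganova's criterion), which is exactly your necessity argument written out in coordinates.

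Two small corrections. First, the lemma you invoke for the change of highest weight under an odd reflection is not \rlem{oddrootbase} --- that lemma only records how $\Delta^+$ changes --- but rather the standard fact (implicit in \rlem{dominant weights}) that if $L_\lambda^\Sigma\cong L_{\lambda'}^{r_\alpha(\Sigma)}$ then $\lambda'=\lambda$ when $\lambda(h_\alpha)=0$ and $\lambda'=\lambda-\alpha$ otherwise. Second, for sufficiency your ``generic $k$'' argument is fine in spirit, but note that for $F(4)$ and $G(3)$ integrability over $\g_{\bar 0}$ alone already forces finite-dimensionality (the odd part $\g_{\bar 1}$ is a finite-dimensional $\g_{\bar 0}$-module, so $U(\g)\otimes_{U(\g_{\bar 0}\oplus\mathfrak n^+_{\bar 1})}L_\lambda(\g_{\bar 0})$ is finite-dimensional and surjects onto $L_\lambda$ whenever $\lambda$ is $\g_{\bar 0}$-dominant integral); the genuine content of condition (3) is that for small $k$ the $\g_{\bar 0}$-highest weight seen from a \emph{different} Borel, reached by odd reflections, may fail to be dominant unless the extra identities hold. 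So both directions are really the same odd-reflection bookkeeping, and your separate ``boundary sufficiency'' machinery (restriction to rank-one subsuperalgebras, explicit constructions) is heavier than needed.
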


For a base $\S$, we denote $L^\S_\l$ the simple $\g$-module with highest weight $\l$ corresponding to the triangular decomposition obtained from $\S$. 

%\begin{lemma} (Serganova, \cite{S1}) Let $\a\in\h^*$. Let $\S=\r_\a(\Pi)$ for some odd reflection, then $L^\S_{\l'}\cong L^{\Pi}_\l$, where $\l'=\l-\a$ if $\l(h_\a)\neq 0$ and $\l'=\l$ if $\l(h_\a)=0$. 
%\end{lemma}

\begin{lemma} (Serganova, \cite{S1})\label{dominant weights} A weight $\l$ is dominant integral if and only if for any base $\S$ obtained from $\Pi$ by a sequence of odd reflections, and for any $\b\in\S$ such that $\b(h_\b)=2$, we have $\l'(h_\b)\in\mathbb{Z}_{\geq 0}$ if $\b$ is even and $\l'(h_\b)\in2\mathbb{Z}_{\geq 0}$ if $\b$ is odd. Here, $L^\S_{\l'}\cong L^{\Pi}_\l$. 
\end{lemma}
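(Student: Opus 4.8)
The plan is to reduce everything to the fact that $\l$ is dominant integral exactly when $L^{\Pi}_\l$ is finite-dimensional, and to exploit the behaviour of highest-weight modules under odd reflections. First I would record that odd reflections do not change the module. If $\S'=r_\a(\S)$ for an odd isotropic $\a\in\S$, then by \rlem{oddrootbase} the positive systems of $\S$ and $\S'$ agree away from the ray through $\a$, and a direct computation in $U(\g)$ using the triple $X_\a,h_\a,Y_\a$ shows that $v_{\l^{\S}}$ (when $(\l^{\S},\a)=0$), respectively $Y_\a v_{\l^{\S}}$ (when $(\l^{\S},\a)\neq 0$), is a nonzero vector killed by $\nn^+(\S')$ which still generates $L^{\S}_{\l^{\S}}$ over $U(\g)$. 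Hence $L^{\S'}_{\l^{\S'}}\cong L^{\S}_{\l^{\S}}$ as $\g$-modules, with $\l^{\S'}$ equal to $\l^{\S}$ or $\l^{\S}-\a$; in particular finiteness of dimension and the isomorphism class are independent of the base reached from $\Pi$ by odd reflections, so $\l'$ in the statement is well defined.

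For the forward implication, suppose $L^{\Pi}_\l$ is finite-dimensional, fix a base $\S$ reachable from $\Pi$ by odd reflections, and take $\b\in\S$ non-isotropic with $\b(h_\b)=2$. The elements $X_\b,h_\b,Y_\b$ span a subalgebra isomorphic to $\mathfrak{sl}(2)$ if $\b$ is even and to $\mathfrak{osp}(1|2)$ if $\b$ is odd. Since $X_\b v_{\l^{\S}}=0$, the submodule generated by $v_{\l^{\S}}$ over this rank-one subalgebra is a finite-dimensional highest-weight module, so the representation theory of $\mathfrak{sl}(2)$, respectively $\mathfrak{osp}(1|2)$, forces $\l^{\S}(h_\b)\in\Z_{\geq 0}$, respectively $\l^{\S}(h_\b)\in 2\Z_{\geq 0}$ — which is exactly the asserted condition with $\l'=\l^{\S}$.

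For the converse I would first reduce finite-dimensionality to $\g_{\0}$-integrability. By the PBW theorem $L^{\Pi}_\l=U(\g_{\0})\cdot\bigl(\Lambda(\nn^-\cap\g_{\1})\,v_\l\bigr)$, and $\Lambda(\nn^-\cap\g_{\1})v_\l$ is finite-dimensional because $\nn^-\cap\g_{\1}$ is; since a cyclic $\g_{\0}$-module generated by a weight vector that lies in a finite-dimensional submodule is itself finite-dimensional, $L^{\Pi}_\l$ is finite-dimensional as soon as it is integrable over $\g_{\0}$, i.e. $\mathfrak{sl}(2)$-integrable along every even root. Using the first step, local nilpotency of $X_\b$ and $Y_\b$ computed in the base $\S$ transports back to $\Pi$, so the hypothesis already gives integrability of $L^{\Pi}_\l$ along every non-isotropic root that is simple in some reachable base; it then remains to check that this family of roots is large enough, namely that every even root of $\g$ is either $W$-conjugate to such a root or a sum of two odd isotropic roots occurring in a reachable base (in the latter case the relations $Y_\a^2=0$ for odd $\a$ bound the nilpotency exponent directly, while $W$-invariance of the set of integrable directions covers the former). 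The substantive part of the argument — and the main obstacle — is precisely this last combinatorial input: for $F(4)$ and $G(3)$ it is a finite verification using their explicit root systems, and one checks that the resulting list of inequalities coincides with Kac's criterion in \rlem{Kac}.
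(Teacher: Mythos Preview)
The paper does not prove this lemma; it is quoted from \cite{S1} and used as a black box (for instance, to derive the explicit description of dominant integral weights for $F(4)$ and $G(3)$), so there is nothing to compare against beyond assessing whether your argument is sound.

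Your forward direction and the reduction in the converse are correct: odd reflections preserve $L_\l$ as a $\g$-module, restriction to the rank-one subalgebra through a non-isotropic $\b$ forces the stated integrality, and $L_\l$ is finite-dimensional iff it is $\g_{\0}$-integrable because $L_\l = U(\nn^-\cap\g_{\0})\cdot\Lambda(\nn^-\cap\g_{\1})v_\l$. The step that needs correction is your combinatorial dichotomy at the end. The second branch --- an even root written as a sum of two odd isotropic roots, with nilpotency of $Y_\b$ deduced from $Y_{\a_i}^2=0$ --- does not work: squaring to zero for each $Y_{\a_i}$ gives no control over powers of their bracket. What the case check for $F(4)$ and $G(3)$ actually shows, and what suffices, is that a complete set of simple roots for $\g_{\0}$ (relative to $\bb\cap\g_{\0}$) already appears among the non-isotropic simple roots of bases reachable from $\Pi$ by odd reflections, with an odd non-isotropic $\b$ counted via its even double $2\b$ (since $\mathfrak{osp}(1|2)$-integrability along $\b$ forces $\mathfrak{sl}(2)$-integrability along $2\b$). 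Concretely: for $F(4)$ the $B_3$ simple roots lie in $\Pi$ and $\d$ lies in $\Sigma^{(4)}$; for $G(3)$ the $G_2$ simple roots lie in $\Pi$ and the odd non-isotropic $\d$ lies in $\Pi'''$, supplying $2\d$. With this reformulation your final verification against \rlem{Kac} goes through.
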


An irreducible finite-dimensional representation of $\g$ is called {\it{typical}} if it splits as a direct summand in any finite dimensional representation of $\g$. It is known that $\l$ is a highest weight of a typical representation if $(\l+\r,\a)\neq 0$ for any isotropic $\a\in\D^+_1$. 

\section{Structure and blocks for the exceptional Lie superalgebras $F(4)$ and $G(3)$}

\subsection{Description of $F(4)$} 

Let $\g$ be the exceptional Lie superalgebra $F(4)$. In this section, we descrive some structure of $\g$ that can be found in \cite{K1}. 

The Lie superalgebra $\g=F(4)$ has dimension 40 and rank 4. The even part $\g_\0$ is $B_3\oplus A_1=\mathfrak{o}(7)\oplus \mathfrak{sl}(2)$ and the odd part $\g_\1$ is isomorphic to $\mathfrak{spin}_7\otimes \mathfrak{sl}_2$ as a $\g_\0$-module. Here $\mathfrak{spin}_7$ is the eight dimensional spinor representation of $\mathfrak{so}(7)$ and $\mathfrak{sl}_2$ is the two dimensional representation of $\mathfrak{sl}(2)$. The even part $\g_{\0}$ has dimension 24. The odd part $\g_{\1}$ has dimension 16.

Its root system can be written in the space $\mathfrak{h}^*=\mathbb{C}^4$ in terms of the basis vectors $\{ \epsilon_1, \epsilon_2, \epsilon_3, \delta \}$ that satisfy the relations: $$(\epsilon_i,\epsilon_j)=\delta_{ij}\text{ , }(\delta,\delta)=-3\text{ , }(\epsilon_i,\delta)=0\text{ for all }i,j.$$ 

With respect to this basis, the root system $\D=\D_\0\oplus\D_\1$ is given by $$\Delta_\0=\{\pm\epsilon_i\pm\epsilon_j; \pm\epsilon_i; \pm\delta\}_{i\neq j}\text{ and }\Delta_\1=\{\frac{1}{2}(\pm\epsilon_1\pm\epsilon_2\pm\epsilon_3\pm\delta)\}.$$

For $F(4)$, the isotropic roots are all odd roots. 

We choose the simple roots to be $\Pi=\{\alpha_1=\frac{1}{2}(-\epsilon_1-\epsilon_2-\epsilon_3+\delta);\alpha_2=\epsilon_3;\alpha_3=\epsilon_2-\epsilon_3;\alpha_4=\epsilon_1-\epsilon_2\}$. This will correspond to the following Dynkin diagram and Cartan matrix:

\begin{center}
  \begin{tikzpicture}[scale=.4][->,>=stealth',shorten >=1pt,auto,node distance=3cm,
  thick,main node/.style={circle,fill=blue!20,draw}]
  
    \draw (-6,0) node[anchor=east]  {$F(4)$};
    \draw (-3,1) node[above]  {$\a_1$};
    \draw (0,1) node[above]  {$\a_2$};
    \draw (3,1) node[above]  {$\a_3$};
    \draw (6,1) node[above]  {$\a_4$};

  %  \draw (-2.25,0) node[anchor=east]  {$\times$};
    \draw[thick] (-3 cm ,0) circle (.45 cm);
    \draw[thick] (0 ,0) circle (.45 cm);
    \draw[thick] (3 cm,0) circle (.45 cm);
    \draw[thick] (6 cm,0) circle (.45 cm);
    \draw[thick] (35: 4.5mm) -- +(2.3 cm, 0); {second above}
    \draw[xshift=-3.15 cm,thick] (0: 6 mm) -- +(2.1 cm, 0); {first line}
    \draw[thick] (-35: 4.5mm) -- +(2.3 cm, 0); {second below}
    \draw[xshift=3.15 cm,thick] (0: 3 mm) -- +(2.1 cm, 0); {last line}
    \draw[thick] (1, 0mm) -- +(0.8 cm, 0.6); {arrow above}
    \draw[thick] (1, 0mm) -- +(0.8 cm, -0.6); {arrow below}
    \draw[thick] (-3.34, -3.5mm) -- +(0.7, 6.7mm); {cross}
    \draw[thick] (-3.34, 3.5mm) -- +(0.7, -6.7mm); {cross}
  \end{tikzpicture}

\[
 \text{Cartan matrix}=A_{\Pi}=
 \begin{pmatrix}
  0 & 1 & 0 & 0 \\
  -1 & 2 & -2 & 0 \\
  0  & -1  & 2 & -1  \\
  0 & 0 & -1 & 2
 \end{pmatrix}
\]

\end{center}

Up to $W$-equivalence, we have the following six simple root systems for $F(4)$ with $\Sigma$ being the standard basis. 

$\Sigma=\Pi=\{\alpha_1=\frac{1}{2}(-\epsilon_1-\epsilon_2-\epsilon_3+\delta);\alpha_2=\epsilon_3;\alpha_3=\epsilon_2-\epsilon_3;\alpha_4=\epsilon_1-\epsilon_2\}$;

$\Sigma'=\{\alpha_1=\frac{1}{2}(\epsilon_1+\epsilon_2+\epsilon_3-\delta);\alpha_2=\frac{1}{2}(-\epsilon_1-\epsilon_2+\epsilon_3+\delta);\alpha_3=\epsilon_2-\epsilon_3;\alpha_4=\epsilon_1-\epsilon_2\}$;

$\Sigma''=\{\alpha_1=\epsilon_3;\alpha_2=\frac{1}{2}(\epsilon_1+\epsilon_2-\epsilon_3-\delta);\alpha_3=\frac{1}{2}(-\epsilon_1+\epsilon_2-\epsilon_3+\delta);\alpha_4=\epsilon_1-\epsilon_2\}$;

$\Sigma'''=\{\alpha_1=\frac{1}{2}(-\epsilon_1+\epsilon_2+\epsilon_3+\delta);\alpha_2=\epsilon_2-\epsilon_3;\alpha_3=\frac{1}{2}(\epsilon_1-\epsilon_2+\epsilon_3-\delta);\alpha_4=\frac{1}{2}(\epsilon_1-\epsilon_2-\epsilon_3+\delta)\}$;

$\Sigma^{(4)}=\{\alpha_1=\frac{1}{2}(\epsilon_1-\epsilon_2-\epsilon_3-\delta);\alpha_2=\epsilon_2-\epsilon_3;\alpha_3=\epsilon_3;\alpha_4=\delta\}$;

$\Sigma^{(5)}=\{\alpha_1=\delta;\alpha_2=\epsilon_2-\epsilon_3;\alpha_3=\epsilon_1-\epsilon_2;\alpha_4=\frac{1}{2}(-\epsilon_1+\epsilon_2+\epsilon_3-\delta)\}$.

The following odd roots will be used later:  

$\b=\frac{1}{2}(-\epsilon_1-\epsilon_2-\epsilon_3+\delta$;

$\b'=\frac{1}{2}(-\epsilon_1-\epsilon_2+\epsilon_3+\delta)$;

$\b''=\frac{1}{2}(-\epsilon_1+\epsilon_2-\epsilon_3+\delta)$;

$\b'''=\frac{1}{2}(-\epsilon_1+\epsilon_2+\epsilon_3+\delta)$;

$\b^{(4)}=\d$.

With respect to the root system $\Sigma$, the positive roots are $\Delta^+=\Delta^+_0\cup\Delta^+_1$, where $\Delta^+_0=\{\delta, \epsilon_i, \epsilon_i\pm\epsilon_j\ | i<j\}$ and $\Delta^+_1=\{\frac{1}{2}(\pm\epsilon_1\pm\epsilon_2\pm\epsilon_3+\delta)\}$. The Weyl vector is $\rho=\rho_0-\rho_1=\frac{1}{2}(5\epsilon_1+3\epsilon_2+\epsilon_3-3\delta)$, where $\rho_0=\frac{1}{2}(5\epsilon_1+3\epsilon_2+\epsilon_3+\delta)$ and $\rho_1=2\delta$.

The integral weight lattice, which is spanned by fundamental weights $\l_1=\e_1$, $\l_2=\e_1+\e_2$, $\l_3=\frac{1}{2}(\e_1+\e_2+\e_3)$, and $\l_4=\frac{1}{2}\d$ of $\g_\0$, is $\L=\frac{1}{2}\mathbb{Z}(\e_1+\e_2+\e_3)\oplus\mathbb{Z}\e_1\oplus\mathbb{Z}\e_2\oplus\frac{1}{2}\mathbb{Z}\d$. Also, $\L/Q\cong\Z_2$, where $Q$ is the root lattice. We can define parity function on $\L$, by setting  $p(\frac{\e_i}{2})=0$ and $p(\frac{\d}{2})=1$.

The Weyl groups $W$ is generated by six reflections that can be defined on basis vectors as follows: for an arbitrary permutation $(ijk)\in S_3$, we get three possible permutations $\s_i(e_i)=e_i$, $\s_i(e_j)=e_k$, $\s_i(e_k)=e_j$, and other three defined $\tau_i(e_i)=-e_i$, $\tau_i(e_j)=e_j$, $\tau_i(e_k)=e_k$ all six fixing $\d$, also one permutation $\s(e_i)=e_i$ for all $i$ and $s(\d)=-\d$. The Weyl group in this case is $W=((\Z/2\Z)^3\rtimes S_3)\oplus \Z/2\Z$.

\begin{lemma}\label{dominant integral weights of F(4)} A weight $\l=a_1\e_1+a_2\e_2+a_3\e_3+a_4\d\in X^+$ is dominant integral weight of $\g$ if and only if $\l+\rho\in \{(b_1,b_2,b_3|b_4)\in
\frac{1}{2}\mathbb{Z}\times\frac{1}{2}\mathbb{Z}\times\frac{1}{2}\mathbb{Z}\times\frac{1}{2}
\mathbb{Z}\ |\ b_1>b_2>b_3>0;\ b_4\geq-\frac{1}{2};\ b_1-b_2\in\Z_{>0}; b_2-b_3\in\Z_{>0};\ b_4=-\frac{1}{2}\ \implies b_1=b_2+1\ \&\ b_3=\frac{1}{2};\ b_4=0\ \implies b_1-b_2-b_3=0\}$. 
\end{lemma}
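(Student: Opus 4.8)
The plan is to obtain this as a coordinate translation of Kac's finite-dimensionality criterion (\rlem{Kac}). Write $\l = x_1\e_1 + x_2\e_2 + x_3\e_3 + x_4\d$ and let $a_i = \l(h_i)$ be the Dynkin labels attached to the distinguished base $\Pi$. For the three non-isotropic simple roots $\a_2,\a_3,\a_4$ the $h_i$ are the ordinary coroots, so $(\e_i,\e_j)=\d_{ij}$ immediately gives $a_2 = 2x_3$, $a_3 = x_2-x_3$, $a_4 = x_1-x_2$. The one point requiring care is $h_1$: since $\a_1 = \tfrac12(-\e_1-\e_2-\e_3+\d)$ is odd and isotropic it has no coroot in the ordinary sense, and $h_1$ is only proportional to $\nu^{-1}(\a_1)$; the constant is forced by the Cartan-matrix entry $\a_2(h_1)=1$ together with $(\a_1,\a_2)=-\tfrac12$, so $h_1 = -2\,\nu^{-1}(\a_1)$ and $a_1 = -2(\l,\a_1) = x_1+x_2+x_3+3x_4$. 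Substituting into $k = \tfrac13(2a_1-3a_2-4a_3-2a_4)$ the $x_1,x_2,x_3$-terms cancel, leaving the clean formula $k = 2x_4$.

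Next I pass to $b := \l+\r$, whose coordinates differ from those of $\l$ by the fixed vector $\r = (\tfrac52,\tfrac32,\tfrac12\mid-\tfrac32)$; note that the $\mathfrak{o}(7)$-part of $\r$ is the $B_3$ Weyl vector. Translating \rlem{Kac} clause by clause: $a_2,a_3,a_4\in\Z_{\ge 0}$ together with $a_1\in\Z$ becomes the assertion that $b_1,b_2,b_3$ lie in $\tfrac12\Z$ with $b_1-b_2,b_2-b_3\in\Z_{>0}$ and $b_3>0$, i.e. the $\mathfrak{o}(7)$-component of $\l+\r$ is regular dominant integral, the congruence among the $b_i$ being the $\L/Q\cong\Z_2$ relation; the inequality $a_1\ge 0$ is then automatic from $b_1>b_2>b_3>0$ and the lower bound on $b_4$. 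The remaining clauses of \rlem{Kac} involve $k=2x_4$: $k\in\Z_{\ge 0}$ and $k<4$ give the range of $b_4$, while the degenerate values produce the displayed implications — $k=0$ forces $a_1=\dots=a_4=0$, i.e. $\l=0$, the unique weight at the lower endpoint of $b_4$ (for which $b_1=b_2+1$ and $b_3=\tfrac12$); $k=2$ forces $a_2=a_4=0$, i.e. $b_3=\tfrac12$ and $b_1=b_2+1$; and $k=3$ forces $a_2=2a_4+1$, which under the substitution is exactly $b_1-b_2-b_3=0$. The converse is the same chain of equivalences read backwards. As an independent check I would run Serganova's criterion (\rlem{dominant weights}) through the six $W$-inequivalent bases $\Sigma,\Sigma',\dots,\Sigma^{(5)}$ listed above — each contributing, through its roots $\b$ with $\b(h_\b)=2$, one dominance inequality on the correspondingly twisted highest weight — and verify that the resulting family of inequalities is equivalent to the conditions just obtained.

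The main obstacle is the bookkeeping at the boundary. First, the normalization $h_1 = -2\,\nu^{-1}(\a_1)$ has to be pinned down precisely, since any slip rescales $k$ and shifts every threshold. Second, the three degenerate cases $k\in\{0,2,3\}$ of \rlem{Kac} must be matched exactly to the half-integer value $b_3=\tfrac12$, the endpoint of the $b_4$-range, and the linear relation $b_1-b_2-b_3=0$, all while tracking whether the $b_i$ lie in $\Z$ or in $\tfrac12+\Z$ — that congruence being precisely what separates the tensor weights ($k$ even) from the spinor weights ($k$ odd), and it is what forces $b_3\equiv b_4\pmod 1$.
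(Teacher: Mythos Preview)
Your approach is correct and genuinely different from the paper's. The paper argues via Serganova's odd-reflection criterion (\rlem{dominant weights}): it walks through the chain $\Sigma\to\Sigma'\to\cdots\to\Sigma^{(4)}$ of odd reflections along $\b,\b',\b'',\b'''$, and at the final stage reads off the condition $\l^{(4)}(\d)\in\Z_{\ge 0}$, which case-splits into exactly the three clauses for $b_4\ge\tfrac12$, $b_4=0$, $b_4=-\tfrac12$. You instead go straight through Kac's criterion (\rlem{Kac}), pinning down the normalization $h_1=-2\nu^{-1}(\a_1)$ from the Cartan matrix and obtaining the clean identity $k=2x_4=2b_4+3$, after which each clause of \rlem{Kac} becomes one of the stated conditions on $(b_1,b_2,b_3\mid b_4)$. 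Your route is more computational but shorter, and it surfaces the extra congruence $b_3\equiv b_4\pmod 1$ (coming from $a_1\in\Z$) that the lemma statement omits; the paper's route ties in directly with the odd-reflection machinery used throughout the rest of the argument.

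One small point to tighten: your sentence on $k=0$ is slightly muddled. With your normalization $k=2b_4+3$, the value $k=0$ corresponds to $b_4=-\tfrac32$, not to the lemma's stated lower endpoint $b_4=-\tfrac12$ (which is $k=2$). The case $k=0$ recovers only $\l=0$, i.e.\ $\l+\r=\r=(\tfrac52,\tfrac32,\tfrac12\mid-\tfrac32)$; this dominant weight is actually \emph{excluded} by the inequality $b_4\ge-\tfrac12$ as written, so the lemma statement is missing the trivial representation (the paper's own proof also skips this case, listing only the three possibilities with $b_4\ge-\tfrac12$). Your matching of $k=2\leftrightarrow b_4=-\tfrac12$ and $k=3\leftrightarrow b_4=0$ is correct, and the forbidden value $k=1$ accounts for the absent $b_4=-1$.
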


\begin{proof} 
Let $\l=a_1\e_1+a_2\e_2+a_3\e_3+a_4\d\in X^+$. Since the even roots in $\Pi$ are $\b=\e_3$, $\e_2-\e_3$, $\e_1-\e_2$. The relations $\l(\b)\in\Z_{\geq 0}$ imply $a_1\geq a_2 \geq a_3\geq 0$ or equivalently $b_1>b_2>b_3>0$ and $b_1-b_2\in\Z_{>0}$, $ b_2-b_3\in\Z_{>0}$. 

Using \rlem{dominant weights}, we apply odd reflections with respect to odd roots $\b, \b',\b'',\b'''$ to $\l$ we obtain conditions on $a_4$ or equivalently on $b_4$.

The following are the only possibilities:

(1) If $\l(\b)\neq 0$, $\l'(\b')\neq 0$, $\l''(\b'')\neq 0$, $\l'''(\b''')\neq 0$, $\l^{(4)}(\d)=2a_4-4\in\Z_{\geq 0}$, then $a_4\geq 2$ or $a_4\in\frac{1}{2}\Z_{\geq 0}$. Or, $b_4\geq \frac{1}{2}$.

(2) If $\l(\b)\neq 0$, $\l'(\b')\neq 0$, $\l''(\b'')\neq 0$, $\l'''(\b''')= 0$, $\l^4=\l'''$ and $\l^{(4)}(\d)=2a_4-3\in\Z_{\geq 0}$, implying $a_4\geq \frac{3}{2}$ and $a_4\in\frac{1}{2}\Z_{\geq 0}$. Only $a_4=\frac{3}{2}$ is possible and we have $a_1-a_2-a_3=-\frac{1}{2}$. Or, $b_4=0$ and $b_1-b_2-b_3=0$.

(3) $\l(\b)\neq 0$, $\l'(\b')\neq 0$, $\l''(\b'')=0$ and $\l'''=\l''$, $\l'''(\b''')= 0$, $\l^4=\l'''$ and $\l^{(4)}(\d)=2a_4-3\in\Z_{\geq 0}$, implying $a_4\geq 1$ and $a_4\in\frac{1}{2}\Z_{\geq 0}$. Only $a_4=1$ is possible and we have $a_1=a_2$ and $a_3=0$. Or, $b_4=-\frac{1}{2}$ and $b_1=b_2+1$, $b_3=\frac{1}{2}$. 
 
\end{proof}

\subsection{Description of $G(3)$} 

Let $\g$ be the exceptional Lie superalgebra $G(3)$. We describe some structure for $G(3)$ that has been studied in \cite{K1}. 

The Lie superalgebra $\g=G(3)$ is a 31-dimensional exceptional Lie superalgebra of defect 1. We have $\g_{\bar{0}}=G_2\oplus A_1$, where $G_2$ is the exceptional Lie algebra, and an irreducible $\g_{\0}$-module $\mathfrak{g}_{\1}$ that is isomorphic to $\mathfrak{g}_2\otimes \mathfrak{sl}_2$, where  $\mathfrak{g}_2$ is the seven dimensional representation of $G_2$ and $\mathfrak{sl}_2$ is the two dimensional representation of $\mathfrak{sl}_(2)$. The $\g_{\0}$ has dimension 17 and rank 3. And $\g_{\1}$ has dimension 14. 

We can realize its root system in the space $\mathfrak{h}^*=\mathbb{C}^3$ endowed with basis $\{\e_1, \e_2, \e_3, \d \}$ with $\e_1+\e_2+\e_3=0$ and with the bilinear form defined by: $$(\e_1,\e_1)=(\e_2,\e_2)=-2(\e_1,\e_2)=-(\d,\d)=2.$$

With respect to the above basis, the root system $\D=\D_\0\oplus\D_\1$ is given by $$\Delta_\0=\{ \pm\e_i; \pm2\d; \epsilon_i-\epsilon_j\}_{i\neq j}\text{ and }\Delta_\1=\{ \pm\d; \pm\e_i\pm\d\}.$$ 

Up to $W$ equivalence, there is are five systems of simple roots for $G(3)$ given by:

$\Pi=\{\a_1=\e_3+\d; \a_2=\e_1; \a_3=\e_2-\e_1 \}$,

$\Pi'=\{-\e_3-\d; -\e_2+\d; \e_2-\e_1\}$,

$\Pi''=\{\e_1; \e_2-\d; -\e_1+\d\}$,

$\Pi'''=\{\d;  \e_1-\d; \e_2-\e_1 \}$.

%The following odd roots will be used later:  
%\\

%$\b=-\epsilon_1-\epsilon_2+\delta$;

%$\b'=-\epsilon_2+\delta$;

%$\b''=-\epsilon_1+\delta$;

%$\b'''=\delta$. 
%\\

\begin{center}
  \begin{tikzpicture}[scale=.4]
  
    \draw (-6,0) node[anchor=east]  {$G(3)$};
    \draw (-3,1) node[above]  {$\a_1$};
    \draw (0,1) node[above]  {$\a_2$};
    \draw (3,1) node[above]  {$\a_3$};

  %  \draw (-2.25,0) node[anchor=east]  {$\times$};
    \draw[thick] (-3 cm ,0) circle (.45 cm);
    \draw[thick] (0 ,0) circle (.45 cm);
    \draw[thick] (3 cm,0) circle (.45 cm);
    \draw[thick] (35: 4.5mm) -- +(2.3 cm, 0); {second above}
    \draw[xshift=-3.15 cm,thick] (0: 6 mm) -- +(2.1 cm, 0); {first line}
    \draw[xshift=-0.15 cm,thick] (0: 6 mm) -- +(2.1 cm, 0); {second midle line}

    \draw[thick] (-35: 4.5mm) -- +(2.3 cm, 0); {second below}
    \draw[thick] (1, 0mm) -- +(0.8 cm, 0.6); {arrow above}
    \draw[thick] (1, 0mm) -- +(0.8 cm, -0.6); {arrow below}
    \draw[thick] (-3.34, -3.5mm) -- +(0.7, 6.7mm); {cross}
    \draw[thick] (-3.34, 3.5mm) -- +(0.7, -6.7mm); {cross}
  \end{tikzpicture}

\[
 \text{Cartan matrix}=A_{\Pi}=
 \begin{pmatrix}
  0 & 1 & 0  \\
  -1 & 2 & -3 \\
  0  & -1  & 2   
 \end{pmatrix}
\]

\end{center}

The positive roots with respect to $\S$ are $\Delta^+=\Delta^+_0\cup\Delta^+_1$, where
$$\Delta^+_0=\{\e_1;\ \e_2;\ -\e_3;\ 2\d;\ \epsilon_2-\epsilon_1;\  \e_1-\e_3;\ \e_2-\e_3 \}\text{ and }\Delta^+_1=\{ \d; \pm\e_i+\d\}.$$ 

The Weyl vector is $\rho=\rho_0-\rho_1=2\e_1+3\e_2-\frac{5}{2}\d$, where $\r_0=2\e_1+3\e_2+\d$ and $\r_1=\frac{7}{2}\d$. The Weyl groups $W$ is the group $W=D_6\oplus\Z/2\Z$, where $D_6$ is the dihedral group of order 12. It is generated by four reflections: for an arbitrary permutation $(ijk)\in S_3$, we get three $\s_i(e_i)=e_i$, $\s_i(e_j)=e_k$, $\s_i(e_k)=e_j$, one reflection defined by $\tau(e_i)=-e_i$ for all $i$ and $\tau(\d)=\d$, also one reflection $\s(e_i)=e_i$ for all $i$ and $\s(\d)=-\d$. 

%and other three defined $\tau_i(e_i)=-e_i$, $\tau_i(e_j)=-e_k$, $\tau_i(e_k)=-e_j$ all six fixing $\d$, also one reflection $\s(e_i)=e_i$ for all $i$ and $s(\d)=-\d$. 

The integral weight lattice for $\g_{\0}$ is $\L=\Z\e_1\oplus\Z\e_2\oplus \Z\d $, which is the lattice spanned by the fundamental weights $\w_1=\d$, $\w_2=\e_1+\e_2$, $\w_3=\e_1+2\e_2$ of $\g_\0$. Also, $\L/Q\cong \{1\}$, where $Q$ is the root lattice. We can define the parity function on $\L$, by setting  $p(\e_i)=0$ and $p(\d)=1$.

Using \rlem{dominant weights} and \rlem{Kac}, it is convenient to write down dominant weights in terms of basis $\{\e_1,\e_2, \d\}$: 
\begin{lemma} A weight $\l$ is a dominant integral weight of $\g=G(3)$ if and only if $\l+\rho\in\{(b_1,b_2,b_3)\in\Z\times\Z\times(\frac{1}{2}+\Z)| 2b_1>b_2>b_1>0;\ \text{either}\ b_3>0;\ \text{or if}\ b_3=-\frac{1}{2},\ \text{then}\ b_2=2b_1-1;\ b_3\neq-\frac{3}{2};\ \text{if}\ b_3=-\frac{5}{2},\ \text{then}\ b_1=2\ \text{and}\ b_2=3\}$. 

\end{lemma}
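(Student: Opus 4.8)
The plan is to apply Serganova's dominance criterion (\rlem{dominant weights}), in combination with \rlem{Kac}, to the four simple root systems $\Pi,\Pi',\Pi'',\Pi'''$ of $G(3)$, in exact parallel with the proof of \rlem{dominant integral weights of F(4)}. First write $\l+\r=b_1\e_1+b_2\e_2+b_3\d$. Since $\r=2\e_1+3\e_2-\tfrac52\d$ and the integral weight lattice of $\g_\0$ is $\L=\Z\e_1\oplus\Z\e_2\oplus\Z\d$, the condition $\l\in\L$ is equivalent to $b_1,b_2\in\Z$ and $b_3\in\tfrac12+\Z$; put $a_3:=b_3+\tfrac52\in\Z$ for the $\d$-coordinate of $\l$. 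The even simple roots of $\Pi$ are $\a_2=\e_1$ and $\a_3=\e_2-\e_1$, which span the $G_2$ summand of $\g_\0$; requiring $\l$ to take values in $\Z_{\ge0}$ on their coroots gives $2b_1>b_2$ and $b_2>b_1$, hence also $b_1>0$. This settles the ``$G_2$ part'' of the claimed conditions, and it remains to determine which values of $a_3$ actually occur.

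The odd simple root of $\Pi$ is the isotropic root $\e_3+\d$; reflecting successively through the isotropic odd roots $\e_3+\d$, then $-\e_2+\d$, then $-\e_1+\d$ carries $\Pi$ to $\Pi'$, $\Pi''$, $\Pi'''$, and up to $W$-equivalence these four bases exhaust all bases of $G(3)$. The only non-isotropic odd root occurring as a simple root in any of them is $\d\in\Pi'''$. Hence, beyond the inequalities just obtained, \rlem{dominant weights} imposes exactly one further requirement: $\l^{\Pi'''}(h_\d)\in2\Z_{\ge0}$, where $\l^{\Pi'''}$ is the $\Pi'''$-highest weight of $L_\l$ and $h_\d$ is the coroot of $\d$ (note that $2\d$ is the $\mathfrak{sl}(2)$-root inside $\g_\0$ and $h_{2\d}=\tfrac12h_\d$, so the $2\Z_{\ge0}$-condition is just the $\mathfrak{sl}(2)$-dominance of $\l^{\Pi'''}$). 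One computes $\l^{\Pi'''}$ by propagating the highest weight along the chain, using the rule that an odd reflection $r_\a$ through an isotropic simple root $\a$ replaces a highest weight $\m$ by $\m-\a$ if $(\m+\r^\Si,\a)\ne0$ and leaves it unchanged otherwise.

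Now carry out a case analysis according to how many of $r_{\e_3+\d},r_{-\e_2+\d},r_{-\e_1+\d}$ act non-trivially on the running highest weight. If all three do, then $\l^{\Pi'''}=\l-(\e_3+\d)-(-\e_2+\d)-(-\e_1+\d)$ has $\d$-coordinate $a_3-3$, so the requirement reads $a_3\ge3$, i.e.\ $b_3>0$. If $r_{-\e_1+\d}$ is the first reflection that fixes the weight, then $\l^{\Pi'''}$ has $\d$-coordinate $a_3-2$, forcing $a_3\ge2$; the condition $(\l^{\Pi''}+\r^{\Pi''},-\e_1+\d)=0$ yields a linear relation which, together with $2b_1>b_2$, forces $a_3=2$ (that is $b_3=-\tfrac12$) and $b_2=2b_1-1$. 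The remaining cases, in which an earlier reflection fixes the weight, likewise force $a_3=0$, i.e.\ $b_3=-\tfrac52$, together with $b_1=2$ and $b_2=3$; and $a_3=1$, i.e.\ $b_3=-\tfrac32$, turns out to be incompatible with every case. Assembling the cases gives exactly the asserted description of $\{(b_1,b_2,b_3)\}$, and conversely every triple in that set satisfies the conditions of \rlem{dominant weights} in all four bases and hence is dominant integral.

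The main obstacle is the bookkeeping of the odd reflections: one must identify, in each intermediate base $\Pi'$ and $\Pi''$, the correct isotropic odd simple root to reflect through next; propagate the \emph{highest weight} (not $\l$ itself) together with the Weyl vector of the current base through the chain; and check that once a reflection fixes the weight the subsequent ones do as well, so that the cases are mutually exclusive and exhaustive given the $G_2$-inequalities. It is also necessary to verify that $\Pi,\Pi',\Pi'',\Pi'''$ represent all $W$-orbits of bases, so that \rlem{dominant weights} need only be checked on them; and comparing the final answer with the Dynkin-label conditions of \rlem{Kac} provides a useful consistency check.
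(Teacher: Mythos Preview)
Your proposal is correct and follows exactly the approach the paper indicates, namely applying \rlem{dominant weights} via the chain of odd reflections $\Pi\to\Pi'\to\Pi''\to\Pi'''$ in parallel with the $F(4)$ argument. You have in fact supplied considerably more detail than the paper's one-line proof, and your case analysis on how many of the isotropic reflections act nontrivially is precisely the mechanism that produces the conditions on $b_3$.
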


\begin{proof}
Using \rlem{dominant weights} as in the case of $F(4)$.
\end{proof}

\subsection{Associated variety and the fiber functor}
Let $G_0$ be simply-connected connected Lie group with Lie algebra $\g_{\0}$, for a Lie superalgebra $\g$. Let $X=\{ x\in\g_{\1}| [x,x]=0\}$. Then $X$ is a $G_0$-invariant Zariski closed set in $\g_{\1}$, called the {\it{self-commuting cone}} in $\g_{\1}$, see \cite{DuSe}. 

Let $S$ to be the set of subsets of mutually orthogonal linearly independent isotropic roots of $\D_1$. So the elements of $S$ are $A=\{\a_1,...,\a_k|(\a_i,\a_j)=0\}$. Let $S_k=\{A\in S|\ |A|=k\}$ and $S_0=\emptyset$. 

It was proven in \cite{DuSe} that every $G_0$-orbit on $X$ contains an element $x=X_{\a_1}+\cdots+X_{\a_k}$ with $X_i\in\g_{\a_i}$ for some set $\{\a_1,..., \a_k\}\in S$. The number $k$ is called {\it{rank}} of $x$. It was also proven in \cite{DuSe} that there are finitely many $G_0$-orbits on $X$ and they are in one-to-one correspondence with $W$-orbits in $S$. For $\g=F(4)$ or $G(3)$, $k$ is equal to 1.

 \begin{lemma} For an exceptional Lie superalgebra $\g=F(4)$ or $G(3)$, the {\it{rank}} of $x\in X \setminus \{0\}$ is 1. And every $x$ is $G_0$ conjugate to some $X_{\a}\in\g_\a$ for some isotropic root $\a$ with $[h,X_{\a}]=\a(h)X_{\a}$ for all $h\in\h$. 
 \end{lemma}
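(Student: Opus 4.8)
The plan is to reduce the statement to the classification result of Duflo--Serganova already recalled in the excerpt, namely that the $G_0$-orbits on $X$ are in bijection with $W$-orbits in $S$, together with the special feature of $F(4)$ and $G(3)$ that their defect equals $1$. First I would recall that by definition the defect of $\g$ is the maximal number of linearly independent, mutually orthogonal isotropic roots, and I would verify directly from the explicit root systems listed in Sections 4.1 and 4.2 that this number is $1$ for both $F(4)$ and $G(3)$: in each case the isotropic (odd, in the $F(4)$ case all odd) roots come in pairs $\pm\alpha$, and one checks that no two of them are simultaneously orthogonal and linearly independent. Concretely, for $F(4)$ the odd roots are $\tfrac12(\pm\epsilon_1\pm\epsilon_2\pm\epsilon_3\pm\delta)$, and for two such roots $\alpha,\beta$ one computes $(\alpha,\beta)$ using $(\epsilon_i,\epsilon_j)=\delta_{ij}$, $(\delta,\delta)=-3$; the pairing vanishes only when $\beta=\pm\alpha$. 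For $G(3)$ one does the same with the odd roots $\{\pm\delta,\pm\epsilon_i\pm\delta\}$ and the given form. Hence $S_k=\emptyset$ for $k\ge 2$, so every element of $S$ has cardinality $0$ or $1$.

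Next I would invoke the Duflo--Serganova structure theorem quoted in the excerpt: every $G_0$-orbit on $X$ contains an element of the form $x=X_{\alpha_1}+\cdots+X_{\alpha_k}$ with $X_{\alpha_i}\in\g_{\alpha_i}$ and $\{\alpha_1,\dots,\alpha_k\}\in S$, and the number $k$ (the rank of $x$) is an invariant of the orbit since orbits correspond to $W$-orbits in $S$ and $|A|$ is $W$-invariant. Combining this with the previous paragraph, the only possible ranks are $k=0$ (the orbit of $0$) and $k=1$. Therefore any nonzero $x\in X$ lies in an orbit of rank $1$, i.e.\ $x$ is $G_0$-conjugate to some $X_\alpha\in\g_\alpha$ with $\alpha$ isotropic. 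The relation $[h,X_\alpha]=\alpha(h)X_\alpha$ for all $h\in\h$ is simply the defining property of the root space $\g_\alpha$, so nothing further is needed there.

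The only genuine point requiring care — and the step I expect to be the main (albeit modest) obstacle — is the explicit verification that the defect is $1$, i.e.\ that one cannot find two orthogonal linearly independent isotropic roots. This is a finite check over the listed root systems, but one must be careful with the nonstandard bilinear forms (in particular the negative norm of $\delta$, and for $G(3)$ the constraint $\epsilon_1+\epsilon_2+\epsilon_3=0$ together with $2(\epsilon_i,\epsilon_j)=-2$ for $i\ne j$). I would organize it as: enumerate isotropic roots up to sign, and for each unordered pair compute the inner product, exhibiting in each case a nonzero value; since the isotropic roots span at most a $2$-dimensional space once one is fixed (in $F(4)$ a chosen $\alpha=\tfrac12(\epsilon_1+\epsilon_2+\epsilon_3+\delta)$ pairs nontrivially with every other sign pattern), orthogonality forces proportionality. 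Granting this, and citing \cite{DuSe} for the orbit classification, the lemma follows immediately. I would also remark that this is consistent with the statement already made in the text that ``for $\g=F(4)$ or $G(3)$, $k$ is equal to $1$,'' so the lemma is essentially a restatement packaged for later use.
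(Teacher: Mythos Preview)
Your argument is correct and follows essentially the same route as the paper: both reduce to the Duflo--Serganova orbit classification together with the fact that $S_k=\emptyset$ for $k\ge 2$ in these two cases. The paper's proof is terser and additionally records that the isotropic roots form a \emph{single} $W$-orbit (hence $X\setminus\{0\}$ is a single $G_0$-orbit, namely that of a highest weight vector in $\g_{\bar 1}$), whereas you only verify defect $1$; but for the lemma as stated your weaker verification already suffices.
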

 
 \begin{proof} For exceptional Lie superalgebras, $X$ has two $G_{\0}$-orbits: $\{0\}$ and the orbit of a highest vector in $\g_{\1}$. The set $S$ also consists of two $W$-orbits: $\emptyset$ and the set of all isotropic roots in $\D$. For $F_4$, the set of all isotropic roots is $\D_1$. For $G(3)$, this set is $\D_1\setminus \{\d\}$. 
 \end{proof}

Let $X_k=\{x\in X, rank{x}=k\}$. Then $X=\cup_{k\leq def{\g}}X_k$ and $\bar{X}=\cup_{j\leq k}X_j$. For a $\g$-module $M$ and for $x\in X$, define the fiber $M_x=Ker\,{x}/Im\,{x}$ as the cohomology of $x$ in $M$ as in \cite{S3}. The {\it{associated variety}} $X_M$ of $M$ is defined in \cite{S3} by setting $X_M=\{x\in X|M_x\neq 0\}$.

From \cite{DuSe}, we know that if $M$ is  finite dimensional, then $X_M$ is a $G_0$-invariant Zariski closed subset of $X$. Also, if $M$ is finite dimensional $\g$-module, then for all $x\in X$, $sdim{M}=sdim{M_x}$. 

We can assume that $x=\sum X_{\a_i}$ with $X_{\a_i}\in\g_{\a}$ for $i=1,\dots,n$. Then, there is a base containing the roots $\a_i$ for $i=1,\dots,n$. We define quotient as in \cite{DuSe} by $\g_x=C_\g(x)/[x,\g]$, where $C_\g(x)=\{a\in\g | [a,x]=0\}$ is the centralizer of $x$ in $\g$, since $[x,\g]$ is an ideal in $C_\g(x)$. The superalgebra $\g_x$ has a Cartan subalgra $\h_x=(Ker\,{\a_1}\cap\cdots\cap Ker\,{\a_k})/(kh_{\a_1}\oplus\cdots\oplus kh_{\a_k})$ and a root system is equal to $\D_x=\{\a\in\D| (\a,\a_i)=0\ \text{for}\  \a\neq\pm\a_i \ \text{and}\ i=1,..., k\}$.

Since $Ker\,{x}$ is $C_\g(x)$-invariant and $[x,\g]Ker\,{x}\subset Im\,{x}$, $M_x$ has a structure of a $\g_x$-module. We can define $U(\g)^x$ to be subalgebra of $ad_x$-invariants. Then we have an isomorphism $U(\g_x)\cong U(\g)^x/[x,U(\g)]$, which is given by $U(\g_x)\rightarrow U(\g)^x\rightarrow U(\g_x)/[x,U(\g)]$. The corresponding projection $\phi: U(\g)^x\rightarrow U(\g_x)$ is such that $\phi(Z(\g))\subset Z(\g_x)$ and thus it can be restricted to a homomorphism of rings $\phi: Z(\g)\rightarrow Z(\g_x)$. The dual of this map is denoted by $\check{\phi}: Hom(Z(\g_x), \mathbb{C})\rightarrow Hom(Z(\g),\mathbb{C})$.

Thus, $M\rightarrow M_x$ defines a functor from the category of $\g$-modules to the category of $\g_x$-modules, which is called the {\it{fiber functor}}. By construction, if central character of $M$ is equal to $\chi$, then the central character of $M_x$ is in $\check{\phi}^{-1}\{\chi\}$. It was proven in \cite{DuSe} that for a finite dimensional $\g$-module with central character $\chi$ and $at(\chi)=k$, we have $X_M\subset \bar{X}_k$.

For $x=\sum X_{\a_i}$ with $X_{\a_i}\in\g_{\a}$ for $i=1,\dots,n$, we can chose a base containing the roots $\a_i$ for $i=1,\dots,n$. This gives $\h_x^*=(\C\a_1\oplus\cdots\oplus\C\a_k)^{\perp}/(\C\a_1\oplus\cdots\oplus\C\a_k)$ and a natural projection $p: (\C\a_1\oplus\cdots\oplus\C\a_k)^{\perp}\rightarrow \h_x^*$. Then $\nu$, $\nu' \in p^{-1}(\m)$ imply $\chi_{\nu}=\chi_{\nu'}$ and $\check{\phi}^{-1}(\chi_{\mu})=\chi_{\nu}$, see \cite{S2}. 

\subsection{Blocks}

Let $\g=F(4)$ or $G(3)$. Consider a graph with vertices the elements of $X^+$ and arrows between each two vertices if they have a non-split extension. The connected components of this graph are called {\it{blocks}}. All the simple components of an indecomposable module belong to the same block, then we say that the indecomposable module itself belongs to this block.

For Lie superalgebras, the generalized central character may correspond to more than one simple $\g$-module. The category $\F$ decomposes into direct sum of full subcategories called $\F^\chi$, where $\F^\chi$ consists of all finite dimensional modules with (generalized) central character $\chi$. Let $F^{\chi}$ be the set of all weights $\l$ such that $L_\l\in\F^{\chi}$. We will call the subcategories $\F^\chi$ blocks, since we will prove they are blocks in the above sense.

In this section, we describe all integral dominant weights in the {\it{atypical}} blocks, which are blocks containing more than one simple $\g$-module. Denote $\l^w:=w(\l+\r)-\r$. 

\begin{lemma}(Serganova, \cite{S2})\label{weights in block} There is a set of odd roots $\alpha_1,\ldots,\alpha_k\in\Delta_1$ and $\mu\in \h^*$ a weight, such that $(\alpha_i,\alpha_j)=0$ and $(\mu+\rho,\alpha_i)=0$. Then $m_\chi=\{\mu\in \h^*| \chi=\chi_{\mu}\}=\cup_{w\in W}(\mu+\mathbb{C}\alpha_1+\dots+\mathbb{C}\alpha_k)^w$.
\end{lemma}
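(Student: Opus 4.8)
The statement to prove is \rlem{weights in block}, attributed to Serganova: the description $m_\chi=\{\mu\in\h^*\mid\chi=\chi_\mu\}=\bigcup_{w\in W}(\mu+\C\alpha_1+\dots+\C\alpha_k)^w$ for an appropriate maximal isotropic set $\alpha_1,\dots,\alpha_k$ and base weight $\mu$.

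\medskip

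The plan is to reduce everything to the behaviour of the Harish-Chandra homomorphism under odd reflections. First I would recall that for the distinguished Borel the Harish-Chandra projection $\tau_{|Z(\g)}\colon Z(\g)\to S(\h)$ identifies $Z(\g)$ with a subring of $W$-invariant polynomials subject to extra ``linkage across isotropic hyperplanes'' conditions; concretely, a theorem of Kac (and Sergeev) says the image consists of polynomials $f$ that are $W$-invariant and satisfy $f(\lambda)=f(\lambda+t\alpha)$ whenever $\alpha$ is an isotropic root and $(\lambda+\rho,\alpha)=0$, for all $t$. Granting that description, two weights $\nu,\nu'$ give the same central character $\chi_\nu=\chi_{\nu'}$ if and only if they cannot be separated by such $f$, which is exactly an equivalence relation generated by (i) the shifted $W$-action $\nu\mapsto w(\nu+\rho)-\rho$ and (ii) the moves $\nu\mapsto\nu+t\alpha$ along an isotropic root $\alpha$ with $(\nu+\rho,\alpha)=0$. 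The right-hand side of the lemma is precisely the orbit of $\mu$ under the groupoid generated by these two moves, so the content is that this orbit has the stated closed form.

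\medskip

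The key steps, in order: (1) Fix a weight $\mu$ with $(\mu+\rho,\alpha_i)=0$ for a set of mutually orthogonal isotropic roots $\alpha_1,\dots,\alpha_k$ of maximal size (the degree of atypicality $k=\operatorname{at}(\chi)$); existence of such a set and such a $\mu$ in the given block is guaranteed because $\chi$ is atypical of atypicality $k$. (2) Show the inclusion $\bigcup_{w\in W}(\mu+\C\alpha_1+\dots+\C\alpha_k)^w\subseteq m_\chi$: for this it suffices to check that $\chi_{\mu+\sum t_i\alpha_i}=\chi_\mu$ for all scalars $t_i$, and that $\chi_{\nu^w}=\chi_\nu$ always; the former follows by applying the isotropic-linkage clause of the Harish-Chandra image one root at a time (using that the $\alpha_i$ are mutually orthogonal, so shifting along $\alpha_j$ preserves $(\nu+\rho,\alpha_i)=0$ for $i\neq j$), and the latter is the standard $W$-invariance of central characters. (3) Prove the reverse inclusion $m_\chi\subseteq\bigcup_w(\mu+\sum\C\alpha_i)^w$: take $\nu$ with $\chi_\nu=\chi_\mu$; using \rlem{oddrootbase} and the technology of odd reflections, move to a base $\Sigma$ in which a maximal orthogonal isotropic set for $\nu$ is a subset of $\Sigma$, and compare with $\mu$ after conjugating both into a common base; the point is that two weights with equal central character and the same atypicality must, after a Weyl-group shift, lie in a common coset of the span of a fixed maximal isotropic set — this is where one invokes that the Harish-Chandra image separates weights outside these linkage classes. (4) Finally observe that the final clause of the paragraph preceding the lemma ($\nu,\nu'\in p^{-1}(\mu)\Rightarrow\chi_\nu=\chi_{\nu'}$ and $\check\phi^{-1}(\chi_\mu)=\chi_\nu$) is the infinitesimal/local shadow of the same statement and can be cited to bootstrap step (2).

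\medskip

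The main obstacle I expect is step (3), the reverse inclusion: showing that equal central character forces membership in a single $W$-orbit of affine subspaces rather than a larger union. This needs a precise description of the image of the Harish-Chandra map for $F(4)$ and $G(3)$ — or at least the Sergeev–Kac characterization of $Z(\g)$ as the ``supersymmetric'' invariants — together with an argument that distinct $W$-orbits of such affine subspaces are genuinely separated by some central element. In the rank-one atypicality setting of $F(4)$ and $G(3)$ (where $k=1$ by the lemma on rank in the associated-variety subsection) this simplifies considerably: the affine subspaces are lines $\mu+\C\alpha$, and one only has to check that two atypical lines not related by $W$ carry different values of an explicitly writable invariant polynomial. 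I would handle it by citing Serganova's general result \cite{S2} for the structure and then verifying the separation statement directly from the root data recorded in the ``Description of $F(4)$'' and ``Description of $G(3)$'' subsections; the mutual orthogonality hypothesis and the maximality of $k$ are what make the union on the right-hand side closed under both moves, hence a full equivalence class.
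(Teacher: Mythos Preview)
The paper does not prove this lemma at all: it is stated with the attribution ``(Serganova, \cite{S2})'' and no proof is given, so there is nothing in the paper to compare your argument against. The lemma is quoted as an input from the literature and then immediately used to define atypicality and to parametrize blocks.

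That said, your sketch is the standard route to this result and is broadly correct in outline. The key input you rely on --- the Kac--Sergeev description of the image of the Harish--Chandra map as the $W$-invariant polynomials satisfying $f(\lambda)=f(\lambda+t\alpha)$ on each isotropic hyperplane $(\lambda+\rho,\alpha)=0$ --- is exactly what drives Serganova's proof in \cite{S2}, and your identification of step~(3) (separating distinct $W$-orbits of affine subspaces by a central element) as the nontrivial direction is accurate. For the exceptional algebras at hand this reduces, as you note, to the $k\le 1$ case, where the verification is a finite check against the explicit root data; but the general statement you are proving does require the full Harish--Chandra image theorem, which is itself a substantial result you would need to cite rather than reprove.
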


If $k=0$ in the above lemma, then $\chi$ is called \textit{typical}, and if $k>0$, then it is
\textit{atypical}. The number $k$ is called \textit{the degree of atypicality} of $\mu$.

\begin{lemma}\label{atypicality 1} The degree of atypicality of any weight for $\g$ is $\leq1$. 
\end{lemma}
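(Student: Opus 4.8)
The plan is to establish that for $\g = F(4)$ or $G(3)$, the defect is exactly $1$, since the degree of atypicality of any weight is bounded above by the defect of $\g$. Recall the defect is the maximal number of linearly independent, mutually orthogonal isotropic roots. So the claim reduces to the purely combinatorial statement about the root systems $\D_\1$ computed explicitly in the preceding subsections.

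First I would recall the explicit description of the isotropic (odd) roots. For $F(4)$, the set of isotropic roots is $\D_\1 = \{\frac{1}{2}(\pm\e_1\pm\e_2\pm\e_3\pm\d)\}$, and one checks directly from the bilinear form $(\e_i,\e_j)=\d_{ij}$, $(\d,\d)=-3$ that each such root $\a$ satisfies $(\a,\a) = \frac14(1+1+1-3)=0$, confirming all odd roots are isotropic. For $G(3)$, the isotropic roots are $\D_\1\setminus\{\pm\d\} = \{\pm\e_i\pm\d\}$ (with $\e_1+\e_2+\e_3=0$), and again $(\e_i\pm\d,\e_i\pm\d) = (\e_i,\e_i)+(\d,\d) = 2-2 = 0$, whereas $(\d,\d)=-2\neq 0$.

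Next, the heart of the argument: I would show that no two linearly independent isotropic roots can be mutually orthogonal. For $F(4)$: take two isotropic roots $\a = \frac12(\sigma_1\e_1+\sigma_2\e_2+\sigma_3\e_3+\sigma_4\d)$ and $\b = \frac12(\tau_1\e_1+\tau_2\e_2+\tau_3\e_3+\tau_4\d)$ with all $\sigma_i,\tau_i\in\{\pm1\}$. Then $(\a,\b) = \frac14(\sigma_1\tau_1+\sigma_2\tau_2+\sigma_3\tau_3 - 3\sigma_4\tau_4)$. Since each $\sigma_i\tau_i=\pm1$, the quantity $\sigma_1\tau_1+\sigma_2\tau_2+\sigma_3\tau_3$ lies in $\{-3,-1,1,3\}$ and $3\sigma_4\tau_4\in\{-3,3\}$; a short case check shows $(\a,\b)=0$ forces $\sigma_1\tau_1=\sigma_2\tau_2=\sigma_3\tau_3 = \sigma_4\tau_4$, i.e. $\b = \pm\a$. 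Hence any set of mutually orthogonal isotropic roots in $\D_\1$ has at most one element up to sign, so $\mathrm{def}(F(4)) = 1$. For $G(3)$: take isotropic roots $\a = \eta_1\e_i+\eta_2\d$, $\b = \theta_1\e_j+\theta_2\d$ with $\eta_k,\theta_k\in\{\pm1\}$. Using $(\e_i,\e_i)=2$, $(\e_i,\e_j)=-1$ for $i\neq j$, $(\d,\d)=-2$, one gets $(\a,\b) = \eta_1\theta_1(\e_i,\e_j) - 2\eta_2\theta_2$; if $i=j$ this is $\pm2 - (\pm2)\in\{0,\pm4\}$, vanishing only when $\b=\pm\a$, and if $i\neq j$ it is $\mp1\mp2 \in\{-3,-1,1,3\}$, never zero. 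So $\mathrm{def}(G(3))=1$ as well. Combining with the general bound (degree of atypicality $\leq$ defect), the lemma follows.

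The main obstacle is purely bookkeeping: making the finite case analysis over the sign patterns airtight without an exhausting enumeration. I would streamline it by the observation above that in both cases orthogonality of two isotropic roots pins down all the sign ratios simultaneously, forcing proportionality; alternatively one can invoke that the defect of every basic classical Lie superalgebra is tabulated in \cite{K1} and for $F(4)$ and $G(3)$ it equals $1$, but I prefer to give the self-contained root-system computation so that the bound $at(\l)\le 1$ is manifestly justified from the data already set up in this section.
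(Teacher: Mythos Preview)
Your argument is correct. The paper actually states this lemma without proof, treating it as immediate from the known fact that the defect of $F(4)$ and $G(3)$ equals $1$ (implicitly relying on the discussion in Section~4.3, where it is noted that for the exceptional Lie superalgebras the set $S$ has only the $W$-orbits $\emptyset$ and the set of all isotropic roots, so any nonempty mutually orthogonal set of isotropic roots has size $1$). Your self-contained root-system computation supplies exactly the missing justification: by directly checking the inner products of pairs of isotropic roots in each case, you show that orthogonality forces proportionality, hence the defect is $1$ and the atypicality bound follows. This is more explicit than what the paper records, but entirely in the same spirit.
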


Recall that $X$ is the cone of self-commuting elements defined above. For a $\g$-module $M$ and for $x\in X$, recall that $M_x=Ker\,{x}/Im\,{x}$ is the fiber as the cohomology of $x$ in $M$ and $\g_x=C_\g(x)/[x,\g]$.

Denote by $\r_l$ the Weyl vector and by $\w_1=\frac{1}{3}(2\b_1+\b_2)$ and $\w_2=\frac{1}{3}(\b_1+2\b_2)$ the fundamental weights for $\mathfrak{sl}(3)$, where $\b_1=\e_1-\e_2$ and $\b_2=\e_2-\e_3$ are the simple roots of $\mathfrak{sl}(3)$. The following lemma allows us to parametrize the atypical blocks by of $\g=F(4)$ and label them $\F^{(a,b)}$.

\begin{lemma}\label{dominant integral weights of F(4)} If $\g=F(4)$ and $x\in X$, then $\g_x\cong\mathfrak{sl}(3)$. For any simple module $M\in \F^{\chi}$ for atypical $\chi$, we have $$M_x\cong L_{a,b}^{\oplus m_1}\oplus L_{b,a}^{\oplus m_2}\oplus \Pi(L_{a,b})^{\oplus m_3}\oplus \Pi(L_{b,a})^{\oplus m_4},$$ where $L_{a,b}$ is a simple $\mathfrak{sl}(3)$-module with highest weight $\m$ of $\mathfrak{sl}(3)$ such that $\m+\r_l=a\w_1+b\w_2$. Here, $a=3n+b$ with $(a,b)\in \mathbb{N}\times \mathbb{N}$ and $n\in\mathbb{Z}_{\geq 0}$ such that $a=b$ or $a>b$. 

\end{lemma}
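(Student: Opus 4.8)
The plan is to make the isomorphism type of $\g_x$ explicit and then push a simple module through the fiber functor $M\mapsto M_x$, tracking central characters; throughout I take $x\in X\setminus\{0\}$, since for $x=0$ there is nothing to prove. By the preceding lemma every such $x$ is $G_0$-conjugate to a root vector $X_\a$ for an isotropic (hence odd) root $\a$, and conjugate elements of $X$ yield isomorphic $\g_x$, so it suffices to treat one $\a$. Since $W$ acts transitively on the isotropic roots of $F(4)$, after replacing $\l$ by a $W$-translate with the same central character and passing to the base whose simple roots contain $\a$, I may assume $\a$ is the distinguished odd simple root $\a_1=\tfrac12(-\e_1-\e_2-\e_3+\d)$. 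A direct check with the form $(\e_i,\e_j)=\d_{ij}$, $(\d,\d)=-3$ shows that the even roots orthogonal to $\a_1$ are exactly $\pm(\e_i-\e_j)$, forming an $A_2$ system, and that no odd root other than $\pm\a_1$ is orthogonal to $\a_1$. Hence $\D_x$ has type $A_2$ and is purely even; since $\h_x=\ker\a_1/\C h_{\a_1}$ has dimension $4-2=2$ (using $\a_1(h_{\a_1})=(\a_1,\a_1)=0$) and is spanned by the $A_2$ coroots, $\g_x\cong\mathfrak{sl}(3)$.

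Since $\g_x$ is semisimple, the finite-dimensional $\g_x$-module $M_x$ is completely reducible as a $\Z_2$-graded $\mathfrak{sl}(3)$-module, hence a direct sum of simple $\mathfrak{sl}(3)$-modules and their parity shifts $\Pi(\cdot)$. To identify which simples occur I use that $M$ has generalized central character $\chi$, so each constituent of $M_x$ has $\g_x$-central character in $\check{\phi}^{-1}(\chi)$, together with the description of the fiber functor on characters from \cite{DuSe}, \cite{S2}: choosing the base containing $\a_1$ and the projection $p:(\C\a_1)^{\perp}\to\h_x^*$, one computes that $p(\r)$ is represented by $\r+3\a_1=\e_1-\e_3=\r_l$, i.e. the $F(4)$-Weyl vector maps to the $\mathfrak{sl}(3)$-Weyl vector, so every constituent carries the $\mathfrak{sl}(3)$-central character $\chi_\m$ with $\m+\r_l=p(\l+\r)$, which I write as $a\w_1+b\w_2$. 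Moreover $\check{\phi}^{-1}(\chi)$ is in general not a single character: the element of $W$ acting by $\e_1\mapsto-\e_3$, $\e_2\mapsto-\e_2$, $\e_3\mapsto-\e_1$, $\d\mapsto\d$ swaps $\w_1\leftrightarrow\w_2$, so $\check{\phi}^{-1}(\chi)$ consists of (at most) the two $\mathfrak{sl}(3)$-central characters carried by $L_{a,b}$ and $L_{b,a}$. Therefore the only possible simple constituents of $M_x$ are $L_{a,b}$, $L_{b,a}$ and their parity shifts, which is the asserted decomposition; that $M_x\neq0$, so that the formula is non-vacuous, follows from $at(\chi)=1$ and the computation of the associated variety in \cite{S3} ($X_M=\bar{X}_1=X$).

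It remains to pin down the numerical constraints on $(a,b)$. Writing $\l+\r=(b_1,b_2,b_3\mid b_4)$, the orthogonality $(\l+\r,\a_1)=0$ is the relation $b_1+b_2+b_3+3b_4=0$, and subtracting $2b_4\a_1$ gives $p(\l+\r)=(b_1+b_4)\e_1+(b_2+b_4)\e_2+(b_3+b_4)\e_3$, whence $a=b_1-b_2$ and $b=b_2-b_3$. By the description of dominant integral weights of $F(4)$ proved above, $b_1-b_2,\,b_2-b_3\in\Z_{>0}$, so $(a,b)\in\mathbb N\times\mathbb N$; and $a-b=b_1-2b_2+b_3=-3(b_2+b_4)$ using the orthogonality relation. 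Finally, integrality of $\l$ together with atypicality forces $b_2+b_4\in\Z$: the condition $(\l+\r,\gamma)=0$ for an isotropic root $\gamma$ reads $\pm b_1\pm b_2\pm b_3=\pm 3b_4$, and since $b_1,b_2,b_3$ share a common fractional part lying in $\{0,\tfrac12\}$, this forces $b_1,b_2,b_3,b_4$ to be either all integers or all in $\tfrac12+\Z$, in both of which cases $b_2+b_4\in\Z$. Hence $a-b\in3\Z$, and relabelling $(a,b)$ by $(b,a)$ if necessary — harmless, since both $L_{a,b}$ and $L_{b,a}$ appear in the statement — we may write $a=3n+b$ with $n\in\Z_{\geq0}$ and $a\geq b$.

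The main obstacle is the middle step: fixing the normalization of the fiber functor on central characters exactly, in particular the $\r$-shift yielding $p(\r)=\r_l$, and justifying that the preimage $\check{\phi}^{-1}(\chi)$ carries \emph{both} $L_{a,b}$ and $L_{b,a}$ — which ultimately reflects the non-canonical identification $\g_x\cong\mathfrak{sl}(3)$, equivalently the choice of isotropic root inside its $G_0$-orbit and the diagram automorphism $\w_1\leftrightarrow\w_2$ of $\mathfrak{sl}(3)$. By contrast the computation of $\D_x$, the projection formula, and the integrality dichotomy are routine.
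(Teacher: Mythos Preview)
Your argument follows the same route as the paper: fix $x=X_{\a_1}$ for an isotropic simple root, read off $\D_x=\{\e_i-\e_j\}$ so $\g_x\cong\mathfrak{sl}(3)$, constrain the constituents of $M_x$ via $\check\phi^{-1}(\chi)$ and the projection $p$ (using the same Weyl element $(\e_1,\e_2,\e_3)\mapsto(-\e_3,-\e_2,-\e_1)$ to produce both $(a,b)$ and $(b,a)$), and extract $3\mid a-b$ from the orthogonality relation together with integrality. The only point to tighten is your appeal to $F(4)$-dominance for $b_1-b_2,\,b_2-b_3\in\Z_{>0}$ \emph{after} the $W$-translate: the paper instead says $w(\l+\r)-\r$ is dominant with respect to $\mathfrak{sl}(3)$, which is exactly what you need and can be arranged by the $S_3\subset\mathrm{Stab}_W(\a_1)$ you already identified.
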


\begin{proof}  By \rlem{weights in block} and \rlem{atypicality 1}, we can take $x=X_{\a_1}$ with $X_{\a_1}\in\g_{\a_1}$ and $\a_1=\frac{1}{2}(\e_1+\e_2+\e_3-\d)\in\D_1$. Then the root system for $\g_x$ is $\D_x=\{\e_i-\e_j\}_{i\neq j}$, $i,j=1,2,3$ and it correspond to the root system of $\mathfrak{sl}(3)$ proving the first part.

Let $M \in \F^{\chi}$ be the simple $\g$-module with highest weight $\l$, then $(\l+\r,\b)=0$ for some $\b\in\D$. We choose $w\in W$, with $w(\b)=\a_1$, then $(w(\l+\r),\a_1)=0$ and $w(\l+\r)-\r$ is dominant with respect to $\mathfrak{sl}(3)$.

Let $w(\l+\r)=\l'+\r=a_1\e_1+a_2\e_2+a_3\e_3+a_4\d$. Also, let $a+\frac{1}{2}=(w(\l+\r),\b_1)=a_1-a_2$ and $b+\frac{1}{2}=(w(\l+\r),\b_2)=a_2-a_3$. 

%Let $\w_1=\frac{1}{3}(2\b_1+\b_2)$ and $\w_2=\frac{1}{3}(\b_1+2\b_2)$ be the fundamental weights of $\mathfrak{sl}(3)$. 

Now we have $w(\l+\r)=\l'+\r=a_1\e_1+a_2\e_2+a_3\e_3+a_4\d=(a_1+a_4, a_2+a_4,a_3+a_4|0)-2a_4\a_1=(\frac{2a+b}{3}+\frac{1}{2}, \frac{-a+b}{3},-\frac{a+2b}{3}-\frac{1}{2}|0)-2a_4\a_1=a\w_1+b\w_2+\r_l-2a_4\a_1$. 

Similarly, there is $\s\in W$ such that $\s(w(\l+\r))=\s(\l'+\r)=\l''+\r=\s(a_1\e_1+a_2\e_2+a_3\e_3+a_4\d)=(-a_3, -a_2,-a_1|a_4)=(-a_4-a_3, -a_4-a_2,-a_4-a_1|0)+2a_4\a_1=(\frac{2b+a}{3}+\frac{1}{2}, \frac{-b+a}{3},-\frac{b+2a}{3}-\frac{1}{2}|0)+2a_4\a_1=b\w_1+a\w_2+\r_l+2a_4\a_1$.

This implies, $\l'\in p^{-1}(a\w_1+b\w_2)$ or $\l''\in p^{-1}(b\w_1+a\w_2)$, which correspond to the dominant integral weights of $\g_x=\mathfrak{sl}(3)$ since $a$ and $b$ are positive integers. Also, $a-b=-3(a_2+a_4)$, implying that $a=3n+b$. 

From above, we have that $\l',\l'' \in p^{-1}(\m)$, where $\m=a\w_1+b\w_2$ or $b\w_1+a\w_2$ is a dominant integral weight of $\g_x=\mathfrak{sl}(3)$ such that $a=3n+b$. From \rlem{weights in block}, we have $\chi_\l=\chi_{\l'}=\chi_{\l''}$. By construction of $\check{\phi}$ above, the central character of $M_x$ is in the set $\check{\phi}^{-1}\{\chi_\l \}$. Also, if $\l\in p^{-1}(\m)$, then $\check{\phi}(\chi_\m)=\chi_\l$. Therefore, $M_x$ contains Verma modules over $\g_x$ with highest weights in $p(\l')$ for any $\l'$ such that $\chi_\l=\chi_{\l'}$, proving the lemma. 

%We know that $\chi_\l=\chi_{\l'}=\chi_{\l''}=\check{\phi}(\chi_\m)$, if $\l'\in p^{-1}(\m)$. By \rlem{weights in block}, this implies that all simple modules in $\F^{\chi}$, have character corresponding to weights $a\m_1+b\m_2$ or $b\m_1+a\m_2$ of $\g_x=\mathfrak{sl}(3)$. 
%\\

%By construction of $\check{\phi}$ above, the central character of $M_x$ is in the set $\check{\phi}^{-1}\{\chi_\l \}$. Also, if $\l\in p^{-1}(\m)$, then $\check{\phi}(\chi_\m)=\chi_\l$. For $\l',\l\in p^{-1}(\m)$, $\chi_\l=\chi_{\l'}$ by \rlem{weights in block}. Therefore, $M_x$ contains Verma modules over $\g_x$ with highest weights in $p(\l')$ for any $\l'$ such that $\chi_\l=\chi_{\l'}$. 
%\\

Conversely, for  $(a,b)\in \mathbb{N}\times \mathbb{N}$ with $a-b=3n$, there is a dominant weight $\l\in F^{\chi}$ with $\l+\r=(a+b+1)\e_1+(b+1)\e_2+\e_3+(\frac{a+2b}{3}+1)\d$, such that $p(\l)=a\w_1+b\w_2$.  
\end{proof}

Similarly, denote by $\r_l$ is the Weyl vector and by $\w_1=\frac{1}{2}\b_1$ be the fundamental weight of $\mathfrak{sl}(2)$, where $\b_1=\e_1-\e_2$ is the simple root of $\mathfrak{sl}(2)$.

The following lemma allows parametrize the atypical blocks by of $\g=G(3)$ by $a=2n+1$, with $n\in\Z_{\geq 0}$ and label them $\F^{a}$. 

\begin{lemma}\label{dominant integral weights of G(3)}
If $\g=G(3)$ and $x\in X$, then $\g_x\cong\mathfrak{sl}(2)$. For any simple $M\in \F^{\chi}$ for atypical $\chi$, we have $$M_x\cong L_{a}^{\oplus m_1}\oplus\Pi(L_{a)})^{\oplus m_2},$$ where $L_{a}$ is a simple $\mathfrak{sl}(2)$-module with dominant weight $\m$ with $\m+\r_l=a\w_1$. Here, $a=2n+1$ with $n\in\mathbb{Z}_{\geq 0}$. 
\end{lemma}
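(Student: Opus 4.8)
The plan is to transcribe, with the numerics appropriate to $G(3)$, the argument already given for $F(4)$ in \rlem{dominant integral weights of F(4)}.

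\emph{Identifying $\g_x$.} By \rlem{weights in block} and \rlem{atypicality 1} the atypical character $\chi$ has degree of atypicality exactly $1$, and, as recalled above, every $x\in X\setminus\{0\}$ is $G_0$-conjugate to $X_\a$ for an isotropic odd root $\a$, all such $\a$ lying in a single $W$-orbit. So I may take $x=X_{\a_1}$ with $\a_1=\e_3+\d$, the isotropic simple root of the distinguished base $\Pi$. Inspecting the root system of $G(3)$, the only roots $\b\neq\pm\a_1$ orthogonal to $\a_1$ are $\pm(\e_1-\e_2)$; hence $\D_x=\{\pm(\e_1-\e_2)\}$ is of type $A_1$ and $\g_x\cong\mathfrak{sl}(2)$ — a purely even Lie algebra of rank $1$, matching $\dim\h_x^*=1$. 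This gives the first assertion.

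\emph{The parameter $a$ is a positive odd integer.} Let $M\in\F^\chi$ be simple of highest weight $\l$, so $(\l+\r,\b)=0$ for some isotropic $\b\in\D_1$. Choose $w\in W$ with $w\b=\a_1$; composing if necessary with the reflection $r_{\e_1-\e_2}$, which fixes $\a_1$, I may also assume $\l':=w(\l+\r)-\r$ is $\g_x$-dominant. Write $w(\l+\r)=\l'+\r=c_1\e_1+c_2\e_2+c_3\d$. Since $W$ preserves $\L$ and $w(\l+\r)-(\l+\r)\in Q$, the descriptions of $\L$ and $\r$ force $c_1,c_2\in\Z$ and $c_3\in\tfrac12+\Z$. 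The defining relation $0=(\l'+\r,\a_1)=(\l'+\r,\e_3+\d)=-c_1-c_2-2c_3$ then says $c_1+c_2=-2c_3$ is an \emph{odd} integer, so $c_1-c_2$ is odd. A short pairing computation gives $p(\l'+\r)=a\w_1$ in $\h_x^*$ with $a=|c_1-c_2|$ (matching $p(\r)$ with $\r_l$ for the positive system of $\g_x$ induced from $\Pi$, exactly as in the $F(4)$ case), so $a$ is a positive odd integer; write $a=2n+1$ with $n\in\Z_{\geq 0}$ and let $\m$ be the $\mathfrak{sl}(2)$-weight with $\m+\r_l=a\w_1$.

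\emph{The shape of $M_x$.} Now $\l'\in p^{-1}(\m)$, so by \rlem{weights in block} we have $\chi_\l=\chi_{\l'}$, and by the construction of $\check\phi$ recalled above $\check\phi(\chi_\m)=\chi_{\l'}=\chi_\l$ while the central character of the $\g_x$-module $M_x$ also lies over $\chi_\l$; hence every simple constituent of $M_x$ is $L_\m=L_a$ or $\Pi(L_a)$. Since $\g_x\cong\mathfrak{sl}(2)$ is an ordinary reductive Lie algebra, the finite-dimensional module $M_x$ is completely reducible, so $M_x\cong L_a^{\oplus m_1}\oplus\Pi(L_a)^{\oplus m_2}$ for suitable $m_1,m_2\geq 0$. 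To see that the odd positive integers $a$ exhaust the atypical blocks, I would exhibit for each $a=2n+1$ an explicit dominant integral weight of $G(3)$ with that value of $a$ — for instance one whose $\e_1$- and $\e_2$-coordinates have opposite parity and which is orthogonal (after the $\r$-shift) to $\a_1$ — using the earlier classification of dominant weights of $G(3)$.

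The two outer steps are routine root-system and central-character bookkeeping. The step with real content is the observation that the single linear constraint $(\l+\r,\a_1)=0$, combined with the half-integrality of the $\d$-coordinate of $\r$ (itself due to $|\D_1^+|=7$ being odd), pins the $\mathfrak{sl}(2)$-label $a$ to be odd; this is the $G(3)$ counterpart of the congruence $a\equiv b\pmod 3$ in the $F(4)$ proof. The finickiest point is fixing the signs so that $\m+\r_l=a\w_1$ holds with the chosen orientation of $\g_x$, which I would settle by direct comparison with the corresponding $F(4)$ computation.
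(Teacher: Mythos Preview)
Your proposal is correct and follows exactly the approach the paper intends: the paper's own proof consists of the single sentence ``Similar to the proof of \rlem{dominant integral weights of F(4)} for $F(4)$,'' and you have carried out precisely that transcription with the $G(3)$ numerics. Your identification of $\D_x=\{\pm(\e_1-\e_2)\}$, the parity argument forcing $a=c_1-c_2$ to be odd via $c_1+c_2=-2c_3$ with $c_3\in\tfrac12+\Z$, and the central-character reasoning for the shape of $M_x$ are all sound and match the $F(4)$ template.
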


\begin{proof}  Similar to the proof of \rlem{dominant integral weights of F(4)} for $F(4)$. 
\end{proof}

We can describe the dominant integral weights in the atypical blocks. 

 %We can pick a weight $\l\in F^{(a,b)}$, such that $\l+\r=\frac{2a+b}{3}\e_1+\frac{a+2b}{3}\e_2+\frac{a-b}{3}\e_3$, where $(\l+\r,\a)=0$ for $\a=\frac{1}{2}(\e_1-\e_2-\e_3+\d)$. 

%\begin{definition}
%For $\l\in F^{(a,a)}$, or $F^a$, we define $c=\frac{1}{3}(\l+\r,\d)$. 

%For $\l\in F^{(a,b)}$, we define $c=\frac{1}{3}(\l+\r,\d)$ if $(\l+\r,\e_1)>(\l+\r,\e_2-\e_3)$, and $c=-\frac{1}{3}(\l+\r,\d)$ if something else. 
%\end{definition}

In the following two theorems, for every $c$, we describe a unique dominant weight $\l_c$ in $\F^{(a,a)}$ (or $\F^{a}$), such that $c$ is equal to the last coordinate of $\l_c+\r$. For $\l_c$ in $\F^{(a,b)}$ with $a\neq b$, $c$ is equal to the last coordinate of $\l_c+\r$ if $c$ is positive and to the last coordinate of $\l_c+\r$ with negative sign if $c$ is negative.

For $F(4)$, we denote: $t_1=\frac{2a+b}{3},t_2=\frac{a+2b}{3},t_3=\frac{a-b}{3}$.  Note that if $a=b$, $t_1=t_2=a$ and $t_3=0$.

\begin{theorem}\label{cf4} Let $\g=F(4)$.

(1) It is possible to parametrize the dominant weights $\l$ with $L_\l\in \F^{(a,a)}$ by $c\in\frac{1}{2}\Z_{\geq -1}\setminus \{a, \frac{a}{2}, 0\}$ for $a>1$ and by $c\in\frac{1}{2}\Z_{\geq 3}\cup \{\frac{-3}{2}\}$ for $a=1$, such that $(\l+\r,\d)=3c$.

% For every $c\in\frac{1}{2}\Z_{>-1}\setminus \{a, \frac{a}{2}, 0\}$, there is a unique dominant integral weight $\l$ with $L_\l\in \F^{(a,a)}$.
%\\

(2) Similarly, it is possible to parametrize the dominant weights $\l$ with $L_\l\in \F^{(a,b)}$ by $c\in\frac{1}{2}\mathbb{Z}\setminus\{t_2,\frac{t_1}{2}, t_3, -\frac{t_3}{2}, -\frac{t_2}{2}, -t_1,\}$, such that $(\l+\r,\d)=3sign(c)\,c$.

%For every $c\in\frac{1}{2}\mathbb{Z}\setminus\{t_2,\frac{t_1}{2}, t_3, -\frac{t_3}{2}, -\frac{t_2}{2}, -t_1,\}$, there is a unique dominant integral weight $\l$ with $L_\l\in \F^{(a,b)}$

\end{theorem}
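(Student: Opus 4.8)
The plan is to leverage the block parametrization already established in \rlem{dominant integral weights of F(4)} together with the explicit description of dominant integral weights in \rlem{dominant integral weights of F(4)} (the lemma giving the condition on $\l+\r=(b_1,b_2,b_3|b_4)$). The key observation is that inside a fixed atypical block $\F^{\chi}$, the weights $\l$ are exactly those dominant integral weights for which $\chi_\l=\chi$; by \rlem{weights in block} these form the $W$-orbit of an affine subspace $\m+\C\a_1$ intersected with the dominant cone. Since $\a_1=\frac12(\e_1+\e_2+\e_3-\d)$ and $\a_1$ has a nonzero $\d$-component, moving along $\C\a_1$ changes the last coordinate $b_4$ of $\l+\r$; so once we fix which Weyl chamber representative we take, the value $c:=\frac{1}{3}|(\l+\r,\d)|=\pm b_4$ should serve as a coordinate on the set of dominant weights in the block. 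First I would fix $x=X_{\a_1}$ as in the proof of \rlem{dominant integral weights of F(4)}, so that $(\l+\r,\a_1)=0$ can be arranged after applying some $w\in W$, and write $\l+\r$ explicitly in the $\{\e_1,\e_2,\e_3,\d\}$-basis in terms of $a,b$ and the free parameter corresponding to the $\a_1$-direction.

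The second step is to pin down the correspondence $c\leftrightarrow\l$ concretely. Using the conclusion of \rlem{dominant integral weights of F(4)}, every dominant $\l\in\F^{(a,b)}$ satisfies $p(\l)=a\w_1+b\w_2$ (or $b\w_1+a\w_2$), and the proof there exhibits $\l+\r=(a+b+1)\e_1+(b+1)\e_2+\e_3+(\tfrac{a+2b}{3}+1)\d$ as one such weight. I would parametrize the whole family by translating this base weight along $\a_1$ (equivalently along $\d$, modulo adjusting the $\e_i$ to stay in $p^{-1}(a\w_1+b\w_2)$), obtaining a one-parameter family with last coordinate of $\l+\r$ equal to $3c$ for $c$ ranging over a coset of $\tfrac12\Z$. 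Then I would impose the dominance conditions from \rlem{dominant integral weights of F(4)}: the strict inequalities $b_1>b_2>b_3>0$, the integrality $b_1-b_2,b_2-b_3\in\Z_{>0}$, and the three special-case implications at $b_4=-\tfrac12$, $b_4=0$. Translating these into conditions on $c$ is the heart of the computation: the forbidden values $t_2=\frac{a+2b}{3}$, $\frac{t_1}{2}$, $t_3=\frac{a-b}{3}$, $-\frac{t_3}{2}$, $-\frac{t_2}{2}$, $-t_1$ should each arise as the value of $c$ where one of the inequalities $b_1>b_2$, $b_2>b_3$, $b_3>0$ degenerates (for $c$ of a given sign), i.e. where the candidate weight fails to be dominant or coincides with a boundary/special weight; the factor of $2$ and the sign in $\frac{t_1}{2}$ versus $t_1$ etc. will come from the two reflections ($\s$ in the proof of \rlem{dominant integral weights of F(4)}) relating the $w$-representative and the $\s w$-representative, which swap the roles of $a$ and $b$ and flip the sign of $a_4$.

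For part (1), the symmetric case $a=b$, the same analysis applies with $t_1=t_2=a$, $t_3=0$, so the forbidden set $\{t_2,\frac{t_1}{2},t_3,-\frac{t_3}{2},-\frac{t_2}{2},-t_1\}$ collapses to $\{a,\frac a2,0\}$ on the positive side together with $\{-\frac a2,-a\}$; I then need to check that in the symmetric block the sign of $c$ is not an independent datum (the reflection $\s$ fixes the block and identifies $c$ with $-c$ up to the special weights), which is why the parametrization is by $c\in\tfrac12\Z_{\ge -1}\setminus\{a,\frac a2,0\}$ rather than a two-sided set; the lower bound $c\ge-1$ (i.e. $b_4\ge-\tfrac12$) is exactly the dominance constraint $b_4\ge-\tfrac12$ from \rlem{dominant integral weights of F(4)}. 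The case $a=1$ needs separate bookkeeping: here $a=\frac a2$ and $0$ are not half-integers in the relevant coset so the excluded set behaves differently, and the constraints $b_4=-\tfrac12\implies b_1=b_2+1,\ b_3=\tfrac12$ and $b_4=0\implies b_1-b_2-b_3=0$ force the range to be $c\in\tfrac12\Z_{\ge3}\cup\{-\tfrac32\}$ — I would verify this by direct substitution into the three special-case clauses. The main obstacle I anticipate is precisely this last bit of casework: making sure every excluded value of $c$ genuinely corresponds either to a non-dominant weight, to a typical (non-block) weight, or to one of the special weights $\l_1,\l_2,\l_0$ that are treated separately elsewhere, and conversely that every allowed $c$ yields a genuine dominant weight in the block — this amounts to a careful but elementary interval analysis on the three inequalities, with the $a=1$ and $a=b$ degenerations handled by hand.
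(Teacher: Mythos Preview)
Your approach is essentially the paper's: start from \rlem{weights in block} to get the affine line $\l+\r\in w(\mu+\r+\Z\a)$ for an isotropic $\a$, then intersect with the dominant chamber using the inequalities of \rlem{dominant integral weights of F(4)}, excluding the values of $c$ at which a wall is hit. The paper carries this out a bit more constructively than you sketch: rather than imposing the dominance inequalities on a single parametrized family, it fixes a base point on the line and, for each of eight $c$-intervals $I_1,\dots,I_8$ (resp.\ $J_1,\dots,J_3$ and the special endpoint in the symmetric case), writes down an explicit Weyl element $w_c$ making $\l_c+\r:=w_c(\l+\r+k\a)$ dominant, together with the resulting closed formula for $\l_c+\r$ and the odd root $\b_i:=w_c(\a)$ orthogonal to it. That extra output (the interval decomposition, the $w_c$, and the $\b_i$ recorded in \rrem{remark}) is used repeatedly in the block-equivalence and cohomology sections, so when you execute your plan you should extract it as well rather than stopping at the exclusion set.
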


\begin{proof}  %The given by the equation $(\l+\r,\d)=-3c$.
To prove (1), take $\l+\r=(2a+1)\e_1+(a+1)\e_2+\e_3+(a+1)\d$, then $\lambda \in F^{(a,a)}$ by \rlem{dominant integral weights of F(4)}, and $(\l+\r, \a)=0$ for $\a=\frac{1}{2}(\e_1+\e_2+\e_3+\d)$. By \rlem{weights in block} and \rlem{atypicality 1}, all dominant integral weights in $F^{(a,a)}$ are in $A=\{w(\l+\r+k\a)-\r| w\in W, k\in\Z\}$. The last coordinate of dominant integral weights in $A$ is $c:=a+1+\frac{k}{2}\in\frac{1}{2}\Z$.

Since for $c\in\{\pm{a}, \pm\frac{a}{2}, 0\}$, the element of $A$ with $k=2(c-a-1)$ is not dominant for any $w\in W$, we consider the following eight intervals for $c$: (1) $a<c$; (2)$\frac{a}{2}<c<a$; (3) $0<c<\frac{a}{2}$; (4) $c=-\frac{1}{2}$; (5) $c<-a$; (6) $-a<c<-\frac{a}{2}$; (7) $-\frac{a}{2}<c<0$; (8) $c=\frac{1}{2}$.

For every $c$, in the above intervals, we define corresponding Weyl group element as follows: (1) $w_c=id$, (2) $w_c=\tau_3$, (3) $w_c=\s_1\tau_3$, (4) $w_c=\s_1\s_2\tau_2\tau_3$; (5) $w_c=\s$, (6) $w_c=\tau_3\s$, (7) $w_c=\s_1\tau_3\s$, (8) $w_c=\s_1\s_2\tau_2\tau_3\s$. The last four cases give us same dominant weights as in the first four cases.

Since $\l+\r=a(e_1-e_3)+(a+1)\a$, the dominant integral weight $\l_c$ corresponding to this $c$ can be written as follows:

For $c\in J_1=(a,\infty)$, $\l_c+\r=a(e_1-e_3)+2c\b_1$, where $\b_1=\frac{1}{2}(\e_1+\e_2+\e_3+\d)=w_c(\a)$;

For $c\in J_2=(\frac{a}{2},a)$, $\l_c+\r=a(e_1+e_3)+2c\b_2$, where $\b_2=\frac{1}{2}(\e_1+\e_2-\e_3+\d)=w_c(\a)$;

For $c\in J_3=(0,\frac{a}{2})$, $\l_c+\r=a(e_1+e_2)+2c\b_3$, where $\b_3=\frac{1}{2}(\e_1-\e_2+\e_3+\d)=w_c(\a)$;

We also have the following cases: 

Let $a=1$. For $c=-\frac{3}{2}$, $\l_c+\r=e_1-e_3-2c\b_0$, where $\b_0=\frac{1}{2}(\e_1+\e_2+\e_3-\d)$.

Let $a>1$. For $c=-\frac{1}{2}$, $\l_c+\r=a(e_1+e_2)-2c\b_0$, where $\b_0=\frac{1}{2}(\e_1-\e_2+\e_3-\d)$.

% For (2), the weight $\l$ will correspond to this number by equation $(\l+\r,\d)=-3c$ if $c\geq 0$ or by $(\l+\r,\d)=3c$ if $c<0$. 

 For (2), we take $\l\in F^{(a,b)}$, such that $\l+\r=t_1\e_1+t_2\e_2+t_3\e_3$. By \rlem{dominant integral weights of F(4)},  $\l\in X^+$ and $(\l+\r,\a)=0$ for $\a=\frac{1}{2}(\e_1-\e_2-\e_3+\d)$.

 By \rlem{weights in block} and \rlem{atypicality 1}, all dominant integral weights in $F^{(a,a)}$ are in $A=\{w(\l+\r+\frac{k}{2}\a)-\r| w\in W, k\in\Z\}$. Let $c:=\frac{k}{2}\in\frac{1}{2}\Z$.

Since for $c\in\{t_2,\frac{t_1}{2}, t_3, -\frac{t_3}{2}, -\frac{t_2}{2}, -t_1,\}$, the element of $A$ with $k=2c$ is not dominant for any $w\in W$, we consider the following eight intervals for $c$:

(1) $t_2<c$; (2)$\frac{t_1}{2}<c<t_2$; (3) $t_3<c<\frac{t_1}{2}$; (4) $0\leq c<t_3$; (5) $-\frac{t_3}{2}<c<0$; (6) $-\frac{t_2}{2}<c<-\frac{t_3}{2}$; (7) $-t_1<c<-\frac{t_2}{2}$; (8) $c<-t_1$.

For every $c$, in the above intervals, we define corresponding Weyl group element as follows: (1) $w_c=\tau_3\s_1\tau_3$, (2) $w_c=\s_1\tau_3$, (3) $w_c=\tau_3$, (4) $w_c=id$; (5) $w_c=\s$, (6) $w_c=\s_3\s$, (7) $w_c=\s_1\s_3\s$, (8) $w_c=\tau_3\s_1\s_3\s$.

Then, it is easy to check using \rlem{dominant integral weights of F(4)} that $w_c\in W$ is the unique element such that $\l_c+\r=w_c(\l+\r+c\d)\in X^+$.

%For (2), we have $\l\in F^{(a,b)}$, if $\l+\r=(a+b+1,b+1,1|\frac{a+2b}{3}+1)$ and $(\l+\r,\a)=0$, for $\a=\frac{1}{2}(\e_1+\e_2+\e_3+\d)$. 

In each case, we list the dominant integral weights in $F^{(a,b)}$, parametrized by $c$:

For $c\in I_1=(t_2,\infty)$, $\l_c+\r=t_1\e_1-t_3\e_2-t_2\e_3+2c\b_1$, where $\b_1=\frac{1}{2}(\e_1+\e_2+\e_3+\d)=w_c(\a)$;

For $c\in I_2=(\frac{t_1}{2},t_2)$, $\l_c+\r=t_1\e_1-t_3\e_2+t_2\e_3+2c\b_2$, where $\b_2=\frac{ 1}{2}(\e_1+\e_2-\e_3+\d)=w_c(\a)$;

For $c\in I_3=(t_3,\frac{t_1}{2})$, $\l_c+\r=t_1\e_1+t_2\e_2-t_3\e_3+2c\b_3$, where $\b_3=\frac{1}{2}(\e_1-\e_2+\e_3+\d)=w_c(\a)$;

For $c\in I_4=[0,t_3)$, $\l_c+\r=t_1\e_1+t_2\e_2+t_3\e_3+2c\b_4$, where $\b_4=\frac{1}{2}(\e_1-\e_2-\e_3+\d)=w_c(\a)$;

For $c\in I_5=(-\frac{t_3}{2},0)$, $\l_c+\r=t_1\e_1+t_2\e_2+t_3\e_3-2c\b_5$, where $\b_5=\frac{1}{2}(-\e_1+\e_2+\e_3+\d)=-w_c(\a)$;

For $c\in I_6=(-\frac{t_2}{2},-\frac{t_3}{2})$, $\l_c+\r=t_2\e_1+t_1\e_2+t_3\e_3-2c\b_6$, where $\b_6=\b_3=-w_c(\a)$;

For $c\in I_7=(-t_1,-\frac{t_2}{2})$, $\l_c+\r=t_2\e_1+t_3\e_2+t_1\e_3-2c\b_7$, where $\b_7=\b_2=-w_c(\a)$;

For $c\in I_8=(-\infty,-t_1)$, $\l_c+\r=t_2\e_1+t_3\e_2-t_1\e_3-2c\b_8$, where $\b_8=\b_1=-w_c(\a)$.

The uniqueness of $\l_c$ in both cases follows from \rlem{dominant integral weights of F(4)}.

\end{proof}

\begin{remark}\label{remark} For every $\lambda_c \in F^{(a,a)}$ or $F^{(a,b)}$, such that $(\l+\r,\d)=3c$ or $(\l+\r,\d)=-3c$ with $c\in J_i$ or $I_i$, we have corresponding $\b_i\in\D^+$ is such that $(\l+\r,\b_i)=0$. This $\b_i=w_c(\a)$, where $w_c$'s are defined in the above proof. 
\end{remark} 

%We notice that if we let
%$\b_1=\frac{1}{2}(\e_1+\e_2+\e_3+\d)$,
%$\b_2=\frac{ 1}{2}(\e_1+\e_2-\e_3+\d)$,
%$\b_3=\frac{1}{2}(\e_1-\e_2+\e_3+\d)$,
%$\b_4=\frac{1}{2}(\e_1-\e_2-\e_3+\d)$,
%$\b_5=\frac{1}{2}(-\e_1+\e_2+\e_3+\d)$,
%$\b_6=-\b_3$ , $\b_7=-\b_2$, $\b_8=-\b_1$, then
%$\b_i\in\D^+$ is such that $(\l+\r,\b_i)=0$ for
%$\l$ such that $c\in J_i$ or $I_i$. 

\begin{theorem}\label{cg3} Let $\g=G(3)$.

(1) It is possible to parametrize the dominant weights $\l$ with $L_\l\in \F^{1}$ by $c\in(\frac{1}{2}+\Z_{\geq 2})\cup \{-\frac{5}{2}\}$, such that $(\l+\r,\d)=3c$.

%(1) For each $c\in(\frac{1}{2}+\Z_{>1})\cup \{-\frac{5}{2}\}$, there is a unique dominant integral weight $\l$ with $L_\l\in \F^{1}$.
%\\
(2) Similarly, for $a>0$, it is possible to parametrize the dominant weights $\l$ with $L_\l\in \F^a$ by $c\in (-\frac{1}{2}+\Z_{\geq 0})\setminus\{0,\frac{a}{2}, \frac{3a}{2}\}$, such that $(\l+\r,\d)=3c$.

%(2) For each $c\in -\frac{1}{2}+\Z\setminus\{\frac{a}{2}, \frac{3a}{2}\}$, there is a unique dominant integral weight $\l$ with $L_\l\in \F^a$ when $a>0$. 

\end{theorem}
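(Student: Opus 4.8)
The plan is to mirror the strategy used for $F(4)$ in Theorem~\ref{cf4}, now for $\g=G(3)$ where $\g_x\cong\mathfrak{sl}(2)$ by \rlem{dominant integral weights of G(3)}. First I would fix, for a given atypical block $\F^a$ (so $a=2n+1$ by \rlem{dominant integral weights of G(3)}), a convenient "base point" dominant weight whose central character generates the block. For part (2) with $a>0$, I would take $\l+\r$ of the form $t_1\e_1+t_2\e_2+t_3\e_3$ for suitable $t_i$ (using $\e_1+\e_2+\e_3=0$) chosen so that $(\l+\r,\a)=0$ for a fixed isotropic odd root $\a$ of the form $\tfrac12$-type in the $\d$-direction, and verify $\l\in X^+$ via \rlem{dominant integral weights of G(3)}. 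For part (1), the block $\F^1$, I would take an explicit small weight, e.g. $\l+\r=b_1\e_1+b_2\e_2+\tfrac12\d$-type, again chosen isotropic.

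Next, by \rlem{weights in block} together with \rlem{atypicality 1} (degree of atypicality $\le 1$), every dominant integral weight in the block lies in $A=\{w(\l+\r+k\a')-\r\mid w\in W,\ k\in\tfrac12\Z\}$ for the relevant isotropic root $\a'=\d$-direction, and the last coordinate (pairing with $\d$, up to the normalization factor $3$ coming from $(\d,\d)=-2$ versus the $\e$-normalization) is an affine-linear function of $k$, hence ranges over $\tfrac12+\Z$ (or $\Z$) shifted appropriately. I would then, exactly as in the $F(4)$ proof, partition the real line of possible values of $c$ into finitely many open intervals separated by the "forbidden" values $\{0,\tfrac a2,\tfrac{3a}2\}$ (resp. the endpoints forcing $c\in(\tfrac12+\Z_{\ge2})\cup\{-\tfrac52\}$ for $\F^1$), and for each interval exhibit the unique Weyl group element $w_c\in W=D_6\oplus\Z/2\Z$ (built from the reflections $\s_i$, $\tau$, $\s$ listed in the $G(3)$ structure section) such that $\l_c+\r:=w_c(\l+\r+c'\d)$ is dominant integral. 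The check that $w_c$ lands in $X^+$ and that it is the unique such element is done by direct appeal to \rlem{dominant integral weights of G(3)}.

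The endpoint/boundary analysis is where the special small cases appear: for $\F^1$ one finds $c$ cannot reach the small positive half-integers $\tfrac12,\tfrac32$ (these would force non-dominant or singular configurations, cf. the $b_3\neq-\tfrac32$ and $b_3=-\tfrac52\Rightarrow b_1=2,b_2=3$ conditions in \rlem{dominant integral weights of G(3)}), leaving $c\ge\tfrac52$, together with the single negative value $c=-\tfrac52$ realized by reflecting through $\s$ (the $\d\mapsto-\d$ reflection) at the edge of the dominant cone; I would write down that weight explicitly, $\l_c+\r$ with a $-2c\,\b_0$ term for the odd root $\b_0$ with $\d$-coefficient $-\tfrac12$, paralleling the $a=1$ case of Theorem~\ref{cf4}. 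For general $a>0$ in part (2), the forbidden set $\{0,\tfrac a2,\tfrac{3a}2\}$ arises because at those three values the formal element of $A$ fails to be $W$-conjugate to anything dominant, and I would verify this by checking the defining inequalities $2b_1>b_2>b_1>0$ degenerate there. Uniqueness of $\l_c$ throughout follows from \rlem{dominant integral weights of G(3)}, since a dominant integral representative in a given $W\ltimes$(translation) orbit with prescribed $\d$-coordinate is unique.

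The main obstacle I anticipate is purely bookkeeping rather than conceptual: correctly identifying, for each of the finitely many intervals, the right element $w_c$ of the small but non-abelian Weyl group $D_6\oplus\Z/2\Z$ and verifying the somewhat delicate inequality-and-congruence conditions of \rlem{dominant integral weights of G(3)} (especially the half-integrality of $b_3$ and the exceptional clauses $b_3\ne-\tfrac32$, $b_3=-\tfrac52\Rightarrow(b_1,b_2)=(2,3)$), which is what pins down the precise exceptional values $\{-\tfrac52\}$ and $\{0,\tfrac a2,\tfrac{3a}2\}$. I expect the proof to conclude, as in Theorem~\ref{cf4}, with the remark that the isotropic root $\b_i$ orthogonal to $\l_c+\r$ is exactly $w_c(\a)$, which will be needed for the quiver and cohomology computations later.
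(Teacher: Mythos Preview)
Your proposal follows essentially the same strategy as the paper's proof: fix a base point dominant weight in the block, use \rlem{weights in block} and \rlem{atypicality 1} to reduce to the set $A=\{w(\l+\r+k\a)-\r\}$, partition the range of the last coordinate $c$ into intervals bounded by the forbidden values, and for each interval exhibit the unique $w_c\in W$ making $\l_c$ dominant, checking everything against \rlem{dominant integral weights of G(3)}. The paper carries this out with $\a=\e_1+\e_2+\d$ and explicit base points $\l+\r=2\e_1+3\e_2+\tfrac{5}{2}\d$ for $\F^1$ and $\l+\r=\e_1+(a+1)\e_2+(1+\tfrac{a}{2})\d$ for $\F^a$, listing the $w_c$ and the resulting $\l_c+\r$ in each interval, exactly as you outline.

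One concrete correction: you repeatedly describe the relevant isotropic odd root as ``$\tfrac12$-type in the $\d$-direction'' and speak of an odd root $\b_0$ with $\d$-coefficient $-\tfrac12$. That is $F(4)$ notation. In $G(3)$ the odd roots are $\pm\d$ and $\pm\e_i\pm\d$ (integer $\d$-coefficient), and the isotropic ones are $\pm\e_i\pm\d$; the paper uses $\a=\e_1+\e_2+\d$ throughout, so the translation parameter $k$ lies in $\Z$ (not $\tfrac12\Z$) and $c\in\tfrac12+\Z$. This is exactly the ``bookkeeping'' you flag, but getting it right is what produces the specific excluded set $\{0,\tfrac{a}{2},\tfrac{3a}{2}\}$ and the isolated value $c=-\tfrac52$ in $\F^1$ (coming from the clause $b_3=-\tfrac52\Rightarrow(b_1,b_2)=(2,3)$ in \rlem{dominant integral weights of G(3)}).
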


\begin{proof} (1) Let $a=0$. In this case, take $\l+\r=2\e_1+3\e_2+\frac{5}{2}\d$, then $\lambda \in F^{1}$ by \rlem{dominant integral weights of G(3)}, and $(\l+\r, \a)=0$ for $\a=\e_1+\e_2+\d$. By \rlem{weights in block} and \rlem{atypicality 1}, all dominant integral weights in $F^{1}$ are in $A=\{w(\l+\r+k\a)-\r| w\in W, k\in\Z\}$. The last coordinate of dominant integral weights in $A$ is $c:=\frac{5}{2}+k\in\frac{1}{2}+\Z$, so $k\in\Z$. %The number c can is given by the equation $(\l+\r,\d)=2c$. 

Since for $c=\pm\frac{3}{2}$, the element of $A$ with $k=c-\frac{5}{2}$ is not dominant for any $w\in W$, we consider the following intervals for $c$: (1) $\frac{3}{2}<c$; (2) $c=-\frac{5}{2}$; (3) $c<-\frac{3}{2}$; (4) $c=\frac{5}{2}$.

For every $c$, in the above intervals, we define corresponding Weyl group element as follows: (1) $w_c=id$, (2) $w_c=\s_3\tau$, (3) $w_c=\s_3\tau\s$, (4) $w_c=\s$. The last two cases correspond to the same dominant weights as in the first two cases.

Since $\l+\r=\frac{1}{2}(e_2-e_1)+\frac{5}{2}\a$, the dominant integral weight  $\l_c$ with last coordinate $c$ can be written as follows:

$c\in J_1=(\frac{3}{2},\infty)$, then $\l_c+\r=\frac{1}{2}(e_2-e_1)+c\a$, $\b=\e_1+\e_2+\d=w_c(\a)$;

$c=-\frac{5}{2}$, $\l_c+\r=(2, 3, -\frac{5}{2})$, $\b=-\e_1-\e_2+\d=w_c(\a)$.

% The numbers $c\in -\frac{1}{2}+\Z\setminus\{\frac{a}{2}, \frac{3a}{2}\}$, given by the equation $(\l+\r,\d)=2c$.
(2) Let $a>0$. In this case, take $\l=\e_1+(a+1)\e_2+(1+\frac{a}{2})\d-\r$, then $\lambda \in F^{a}$ by \rlem{dominant integral weights of G(3)}, and $(\l+\r, \a)=0$ for $\a=\e_1+\e_2+\d$. By \rlem{weights in block} and \rlem{atypicality 1}, all dominant integral weights in $F^{a}$ are in $A=\{w(\l+\r+k\a)-\r| w\in W, k\in\Z\}$. The last coordinate of dominant integral weights in $A$ is $c:=\frac{a}{2}+1+k\in\frac{1}{2}+\Z$, so $k\in\Z$.

Since for $c\in\{\pm\frac{a}{2}, \pm\frac{3a}{2}\}$, the element of $A$ with $k=c-1-\frac{a}{2}$ is not dominant for any $w\in W$, we consider the following intervals for $c$: (1) $\frac{3a}{2}<c$; (2) $\frac{a}{2}<c<\frac{3a}{2}$; (3) $0<c<\frac{a}{2}$; (4) $c=-\frac{1}{2}$; (5) $c<-\frac{3a}{2}$; (6) $-\frac{3a}{2}<c<-\frac{a}{2}$; (7) $-\frac{a}{2}<c<0$; (8) $c=\frac{1}{2}$.

For every $c$, in the above intervals, we define corresponding Weyl group element as follows: (1) $w_c=id$, (2) $w_c=\s_1\tau$, (3) $w_c=\s_1\s_2\tau$, (4) $w_c=\s_2$; (5) $w_c=\tau\s_3\s$, (6) $w_c=\s_3\s_1\s$, (7) $w_c=\s_2\s$, (8) $w_c=\s_3\s_2\tau\s$. The last four cases correspond to the same dominant weights as in the first four cases.

Since $\l_c+\r=\frac{a}{2}(e_2-e_1)+c\a$, the dominant integral weight $\l_c$ corresponding to $c$ can be written as follows:

For $c\in J_1=(\frac{3}{2}a,\infty)$, $\l_c+\r=(c-\frac{a}{2},c+\frac{a}{2}, c)$, $\b=\e_1+\e_2+\d=w_c(\a)$;

For $c\in J_2=(\frac{1}{2}a,\frac{3}{2}a)$, $\l_c+\r=(a, c+\frac{a}{2}, c)$, $\b=\e_2+\d=w_c(\a)$;

For $c\in J_3=(0,\frac{1}{2}a)$, $\l_c+\r=(c+\frac{a}{2}, a, c)$, $\b=\e_1+\d=w_c(\a)$;

For $c=-\frac{1}{2}$, $\l_c+\r=(\frac{a}{2}+\frac{1}{2}, a, -\frac{1}{2})$, $\b=-\e_1+\d=w_c(\a)$.

The uniqueness of $\l_c$ in both cases follows from \rlem{dominant integral weights of G(3)}.
%Denote $a_4+k=c$, $a_2-a_1=a$, and $a_1-a_3=b$, then $a_2-a_3=a+b$. 

\end{proof}

\begin{remark}\label{remark1} For every $\l_c \in F^1$ or $F^a$, such that $(\l_c+\r,\d)=2c$, we have corresponding $\b=w_c(\a)\in\D^+$ is such that $(\l_c+\r,\b)=0$, where $w_c$'s are defined in the above proof. 
\end{remark} 

From \rlem{dominant integral weights of F(4)} and \rlem{dominant integral weights of G(3)}, we obtain the proof of the first parts of \rthm{CBF} and \rthm{CBG}.

\section{Geometric induction and translation functor}

\subsection{Geometric induction}

We fix a Borel subalgebra $\bb$ of $\g$, and let $V$ be a $\bb$-module. Denote by $\mathcal{V}$ the vector bundle $G\times_BV$ over the generalized grassmannian $G/B$. The space of sections of $\mathcal{V}$ has a natural structure of a $\g$-module, in other words the sheaf of sections of $\mathcal{V}$ is a $\g$-sheaf.

Let $C_\l$ denote the one dimensional representation of $B$ with weight $\l$. Denote by $\mathcal{O}_\l$ the line bundle $G\times_B C_\l$ on the flag (super)variety $G/B$. See \cite{BGGSeGr}. The functor $\G_i$ from the category of $\bb$-modules, to the category of $\g$-modules can be defined by $\G_i(G/B, V)=\G_i(G/B, \V):=(H^i(G/B,\V^*))^*$as in \cite{Ser}.

Denote by $\varepsilon(\l)$ the {\it{Euler characteristic}} of the sheaf $\O_\l$ belonging to the category $\F$:

$$\varepsilon(\l)=\sum_{i=0}^{dim(G/B)_0}(-1)^i[\G_i(G/B,\O_\l):L_\m][L_\m].$$

The following properties of this functor will be useful:

\begin{lemma}\label{1} (\cite{GrSe}) If 
\begin{displaymath}
    \xymatrix{
        0\ar[r] & U \ar[r] & V \ar[r] & W\ar[r] & 0 }
\end{displaymath}
is a short exact sequence of $B$-modules, then one has the following long exact sequence
\begin{displaymath}
    \xymatrix{
        ...\ar[r] & \G_1(G/B, W) \ar[r] & \G_0(G/B, U)\ar[r] & \G_0(G/B, V)\ar[r] & \G_0(G/B, W)\ar[r] & 0 }
\end{displaymath}
\end{lemma}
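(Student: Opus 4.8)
The plan is to derive the long exact sequence by transporting the ordinary long exact sequence of sheaf cohomology on $G/B$ through two exact operations: the associated-bundle construction and $k$-linear duality.

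First I would check that the functor sending a $\bb$-module $M$ to the vector bundle $\M=G\times_B M$ on $G/B$ is exact. This is because $G\to G/B$ is a locally trivial principal $B$-bundle: over an affine open $\Omega\subseteq G/B$ on which this bundle trivializes one has a natural identification $\M|_\Omega\cong\O_\Omega\otimes_k M$, and $\O_\Omega\otimes_k(-)$ is exact. (In the super setting this local triviality, and the compatibility of the $\g$-action on sections with restriction to trivializing opens, is part of the foundational setup for geometric induction; see \cite{Ser}, \cite{BGGSeGr}.) Hence the short exact sequence $0\to U\to V\to W\to 0$ of $\bb$-modules yields a short exact sequence of vector bundles
\begin{displaymath}
0\longrightarrow \U\longrightarrow \V\longrightarrow \W\longrightarrow 0
\end{displaymath}
on $G/B$, where $\U=G\times_B U$ and $\W=G\times_B W$, and in fact a short exact sequence of $\g$-sheaves.

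Next, since $\U,\V,\W$ are locally free of finite rank, passing to dual bundles is exact and reverses arrows, giving $0\to\W^*\to\V^*\to\U^*\to 0$. I would then write down the long exact cohomology sequence of this short exact sequence of sheaves on $G/B$,
\begin{displaymath}
0\to H^0(G/B,\W^*)\to H^0(G/B,\V^*)\to H^0(G/B,\U^*)\to H^1(G/B,\W^*)\to H^1(G/B,\V^*)\to\cdots,
\end{displaymath}
the leftmost $0$ coming from left exactness of global sections. Applying the exact contravariant functor $\operatorname{Hom}_k(-,k)$ and using the definition $\G_i(G/B,M)=(H^i(G/B,\M^*))^*$ turns this into
\begin{displaymath}
\cdots\to\G_1(G/B,W)\to\G_0(G/B,U)\to\G_0(G/B,V)\to\G_0(G/B,W)\to 0,
\end{displaymath}
which is the asserted sequence (it continues to the left with the higher $\G_i$). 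To conclude, I would note that the sheaf morphisms, the connecting homomorphisms, and dualization are all natural and $\g$-equivariant, so the sequence is one of $\g$-modules rather than merely of vector spaces.

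The main obstacle, and the only genuinely non-formal point, is the exactness of $M\mapsto G\times_B M$ together with its $\g$-equivariance in the super-geometric setting: one must know that $G\to G/B$ is locally trivial as a principal $B$-supergroup bundle and that the induced $\g$-sheaf structure behaves well under this local triviality. Granting these facts about the flag supervariety and geometric induction (established in the references cited above), the remaining steps — exactness of duality on locally free sheaves, the cohomology long exact sequence, and exactness of $k$-linear duality — are entirely standard.
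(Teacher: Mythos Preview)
Your argument is correct and is the standard derivation of this long exact sequence. The paper itself does not prove this lemma at all: it is simply quoted from \cite{GrSe} with no argument given, so there is nothing to compare against beyond noting that your sketch is exactly the kind of proof one would expect in the cited reference. One minor point you might make explicit is that $G/B$ is proper, so the cohomology groups $H^i(G/B,\mathcal{M}^*)$ are finite-dimensional and hence $k$-linear duality is exact on them; you implicitly use this when applying $\operatorname{Hom}_k(-,k)$.
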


\begin{lemma}\label{2} (\cite{GrSe}) The module $\G_0(G/B, \V)$ is the maximal finite-dimensional quotient of $\U(\g)\otimes_{\U(\bb)}V$.
\end{lemma}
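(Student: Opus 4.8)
The plan is to identify $\G_0(G/B,\V)$ as the object which represents, on the category of finite-dimensional $\g$-modules (equivalently, the finite-dimensional objects of $\CC$), the functor $Q\mapsto \mathrm{Hom}_\g\bigl(\U(\g)\otimes_{\U(\bb)}V,\,Q\bigr)$, together with a canonical $\g$-morphism $\pi\colon \U(\g)\otimes_{\U(\bb)}V\to\G_0(G/B,\V)$ serving as the universal element. Once that is established, representability automatically forces $\pi$ to be surjective: applying the factorization property to $Q=\operatorname{coker}\pi$ (finite-dimensional, being a quotient of $\G_0(G/B,\V)$) shows $\operatorname{coker}\pi=0$. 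Hence $\G_0(G/B,\V)$ is a finite-dimensional quotient of $\U(\g)\otimes_{\U(\bb)}V$, and the factorization property says exactly that it dominates every finite-dimensional quotient — which is the assertion of the lemma.

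The first step is to check that $\G_0(G/B,\V)=\bigl(H^0(G/B,\V^*)\bigr)^*$ is finite-dimensional. The underlying variety of $G/B$ is the ordinary flag variety $G_0/B_0$, which is projective; $\V^*$ is locally free of finite rank over the coherent structure sheaf $\O_{G/B}$; filtering $\O_{G/B}$ by powers of the ideal generated by its odd part reduces the computation of $H^i(G/B,\V^*)$ to coherent cohomology on the projective variety $G_0/B_0$, which is finite-dimensional. Also, since $G_0$ is simply connected, every finite-dimensional $\g$-module in $\CC$ integrates to a $G$-module, so I may pass freely between $\g$-morphisms and $G$-morphisms.

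The heart of the argument will be a chain of natural isomorphisms. On the one hand, tensor--hom adjunction gives $\mathrm{Hom}_\g\bigl(\U(\g)\otimes_{\U(\bb)}V,Q\bigr)\cong\mathrm{Hom}_\bb(V,Q|_\bb)$. On the other hand, using the definition $\G_0(G/B,\V)^*\cong H^0(G/B,\V^*)$, the super-analogue of Frobenius reciprocity for $H^0(G/B,-)$ (right adjointness to restriction, i.e. $\mathrm{Hom}_G\bigl(W,H^0(G/B,G\times_B N)\bigr)\cong\mathrm{Hom}_B(W,N)$; see \cite{Ser,GrSe}), together with finite-dimensional duality:
\[
\mathrm{Hom}_\g\bigl(\G_0(G/B,\V),Q\bigr)\;\cong\;\mathrm{Hom}_\g\bigl(Q^*,H^0(G/B,\V^*)\bigr)\;\cong\;\mathrm{Hom}_\bb(Q^*|_\bb,V^*)\;\cong\;\mathrm{Hom}_\bb(V,Q|_\bb).
\]
Composing these yields $\mathrm{Hom}_\g\bigl(\G_0(G/B,\V),Q\bigr)\cong\mathrm{Hom}_\g\bigl(\U(\g)\otimes_{\U(\bb)}V,Q\bigr)$ naturally in $Q$, which is the required representability; I would then let $\pi$ be the morphism matched with $\mathrm{id}_{\G_0(G/B,\V)}$ and conclude as in the first paragraph.

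For concreteness I would also describe $\pi$ explicitly, which gives an independent proof of surjectivity: a global section $s$ of $\V^*$ is an algebraic map $s\colon G\to V^*$ with $s(gb)=b^{-1}s(g)$, and $u\mapsto (u\cdot s)(\bar e)$ — apply $u\in\U(\g)$ through the $\g$-action on sections and evaluate at the base point $\bar e$ — defines a $\g$-equivariant map $H^0(G/B,\V^*)\to\bigl(\U(\g)\otimes_{\U(\bb)}V\bigr)^*$; it is injective because a regular section vanishing to infinite order at $\bar e$ vanishes on the big cell $N^-\bar e$, a super-affine space on which the section is polynomial in the even coordinates and nilpotent in the odd ones, hence vanishes on all of the irreducible $G/B$; dualizing this injection produces the surjection $\pi$. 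I expect the only genuinely delicate points to be the super version of Frobenius reciprocity and the bookkeeping identifying the $G$-module structures on $H^0(G/B,\V^*)$ and on $\G_0(G/B,\V)$ (and, in the explicit description, the sign conventions for the $\g$-action on $\bigl(\U(\g)\otimes_{\U(\bb)}V\bigr)^*$); everything else is formal once finite-dimensionality of $\G_0(G/B,\V)$ is in hand.
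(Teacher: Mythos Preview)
The paper does not give its own proof of this lemma: it is quoted as a result from \cite{GrSe}, with no argument supplied in the text. So there is nothing in the paper to compare your proposal against line by line.

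That said, your approach is the standard one and is correct. The chain
\[
\mathrm{Hom}_\g\bigl(\G_0(G/B,\V),Q\bigr)\cong\mathrm{Hom}_\g\bigl(Q^*,H^0(G/B,\V^*)\bigr)\cong\mathrm{Hom}_\bb(Q^*,V^*)\cong\mathrm{Hom}_\bb(V,Q)\cong\mathrm{Hom}_\g\bigl(\U(\g)\otimes_{\U(\bb)}V,Q\bigr)
\]
is exactly the Frobenius reciprocity argument used in the sources the paper cites (\cite{GrSe}, \cite{Ser}, \cite{Penkov}); your Yoneda-style deduction of surjectivity of $\pi$ from the bijection $\mathrm{Hom}_\g(\G_0(G/B,\V),Q)\to\mathrm{Hom}_\g(\U(\g)\otimes_{\U(\bb)}V,Q)$ is clean, and the explicit description of $\pi$ via evaluation of sections at the base point is the usual concrete incarnation of the same adjunction. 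The finite-dimensionality step is also fine. The only caveat worth flagging is that the passage between $\g$-modules and $G$-modules relies on $G_0$ being simply connected and on the module $Q$ lying in $\CC$ (integral weights), which is implicit throughout the paper but should be stated if you write this up independently.
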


%\begin{lemma}(\cite{Penkov})\label{Penkovtypical} For $\l$ typical weight, \rthm{BWB} holds. 
%\end{lemma}

\begin{corollary}(\cite{GrSe})\label{z} For every dominant weight $\l$, the module $L_{\l}$ is a quotient of $\G_0(G/B,\O_{\l})$ with $[\G_0(G/B,\O_{\l}):L_{\l}]=1$. 
\end{corollary}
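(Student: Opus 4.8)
The plan is to deduce this directly from \rlem{2} together with the elementary structure of Verma modules recalled in Section 3; I do not expect a genuine obstacle.

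First I would apply \rlem{2} with $V = C_\l$. By definition $U(\g)\otimes_{U(\bb)}C_\l$ is the Verma module $M_\l$, so \rlem{2} identifies $\G_0(G/B,\O_\l)$ with the maximal finite-dimensional quotient of $M_\l$; in particular $\G_0(G/B,\O_\l)$ is a (nonzero) quotient of $M_\l$.

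Next, since $\l$ is dominant, the simple module $L_\l = M_\l/I_\l$ is finite-dimensional — this is precisely the definition of dominance used here. Being a finite-dimensional quotient of $M_\l$, it is therefore a quotient of the \emph{maximal} finite-dimensional quotient $\G_0(G/B,\O_\l)$. This gives the first assertion.

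For the multiplicity claim: the weight space $(M_\l)_\l$ is one-dimensional and every composition factor of $M_\l$ has the form $L_\m$ with $\m\leq\l$, so only the factor $L_\l$ meets the $\l$-weight space; hence $[M_\l:L_\l]=1$. Since $\G_0(G/B,\O_\l)$ is a quotient of $M_\l$ we get $[\G_0(G/B,\O_\l):L_\l]\leq[M_\l:L_\l]=1$, while the surjection $\G_0(G/B,\O_\l)\twoheadrightarrow L_\l$ from the previous step forces the multiplicity to be at least $1$. Hence it equals $1$. The only points requiring any care — the identification $U(\g)\otimes_{U(\bb)}C_\l = M_\l$ and the finite-dimensionality of $L_\l$ for dominant $\l$ — are already recorded in Section 3, so everything else is purely formal.
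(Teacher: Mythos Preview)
Your argument is correct and is exactly the natural deduction from \rlem{2}. The paper itself does not supply a proof of this corollary at all; it simply cites it from \cite{GrSe} as an immediate consequence of \rlem{2}, and your write-up spells out precisely that deduction.
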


\begin{lemma}\label{3} (\cite{GrSe}) If $L_{\m}$ occurs in $\G_i(G/B, \O_\l)$ with non-zero mulitiplicity, then $\m+\r=w(\l+\r)-\sum_{\a\in I} \a$ for some $w\in W$ of length $i$ and $I\subset \D^+_1$. 
\end{lemma}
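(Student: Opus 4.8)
The plan is to reduce the whole computation to the classical Borel--Weil--Bott theorem \rthm{BWB} on the ordinary flag variety $G_0/B_0$, using that the flag supervariety $G/B$ is a \emph{split} supermanifold with underlying variety $G_0/B_0$. Put $\nn^{\pm}_{\1}=\nn^{\pm}\cap\g_{\1}$; then $\g_{\1}/\bb_{\1}\cong\nn^{-}_{\1}$ as $B_0$-modules, so $(\g_{\1}/\bb_{\1})^{*}$ is a $B_0$-module with a filtration whose successive quotients are the one-dimensional weight modules $C_{\a}$, $\a\in\D^{+}_{1}$. Let $\mathcal J\subset\O_{G/B}$ be the ideal of nilpotents, so that $\O_{G/B}/\mathcal J=\O_{G_0/B_0}$; splitness says that $\mathrm{gr}^{\bullet}\O_{G/B}$ is the exterior algebra over $\O_{G_0/B_0}$ on the odd conormal bundle $N^{*}=G_0\times_{B_0}(\g_{\1}/\bb_{\1})^{*}$, i.e.\ $\mathcal J^{j}/\mathcal J^{j+1}\cong\Lambda^{j}N^{*}$ as $G_0$-equivariant sheaves. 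Finally the bosonization map $p\colon G/B\to G_0/B_0$ is $G_0$-equivariant with purely odd, hence affine, fibres, so $p_{*}$ is exact and $H^{i}(G/B,\mathcal F)=H^{i}(G_0/B_0,p_{*}\mathcal F)$ for every $\g$-equivariant sheaf $\mathcal F$.

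The first step is to filter the sheaf $\O_{\l}^{*}=\O_{-\l}$ by the powers $\mathcal J^{j}\O_{-\l}$. Since $\O_{-\l}$ is an invertible $\O_{G/B}$-module, $\mathcal J^{j}\O_{-\l}/\mathcal J^{j+1}\O_{-\l}\cong G_0\times_{B_0}\bigl(C_{-\l}\otimes\Lambda^{j}(\g_{\1}/\bb_{\1})^{*}\bigr)$, which by the first paragraph has a $B_0$-filtration with line-bundle quotients $G_0\times_{B_0}C_{-\l+\sum_{\a\in I}\a}$ for $I\subseteq\D^{+}_{1}$, $|I|=j$. Applying the exact functor $p_{*}$ and then cohomology, one obtains a spectral sequence of $\g_{\0}$-modules, preserving total cohomological degree and converging to $H^{\bullet}(G/B,\O_{-\l})$, whose $E_{1}$-page is a direct sum of the $\g_{\0}$-modules $H^{\bullet}\bigl(G_0/B_0,\,G_0\times_{B_0}C_{-\l+\sum_{\a\in I}\a}\bigr)$ over $I\subseteq\D^{+}_{1}$. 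Dualizing, every $\g_{\0}$-composition factor of $\G_{i}(G/B,\O_{\l})$ is a $\g_{\0}$-composition factor of $\G_{i}(G_0/B_0,\O_{\l-\sum_{\a\in I}\a})$ for some $I\subseteq\D^{+}_{1}$, where $\G_{i}(G_0/B_0,-)$ denotes the functor on $G_0/B_0$ defined as in the super case. By \rthm{BWB} this $\g_{\0}$-module is zero unless $\l-\sum_{\a\in I}\a+\r_0$ is regular, and otherwise it is the simple $\g_{\0}$-module $L^{(0)}_{\m}$ with $\m=w\bigl(\l-\sum_{\a\in I}\a+\r_0\bigr)-\r_0$, concentrated in cohomological degree $i=\ell(w)$ for the unique $w\in W$ taking $\l-\sum_{\a\in I}\a+\r_0$ to a dominant weight.

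Now suppose $L_{\m}$ is a $\g$-composition factor of $\G_{i}(G/B,\O_{\l})$, say $L_{\m}\cong M'/M''$ with $M''\subset M'\subseteq\G_{i}(G/B,\O_{\l})$ submodules. A highest weight vector of weight $\m$ in $M'/M''$ is killed by $\nn^{+}\supseteq\nn^{+}_{\0}$, hence generates over $\g_{\0}$ a finite-dimensional highest weight module of highest weight $\m$, whose unique irreducible quotient is $L^{(0)}_{\m}$; so $L^{(0)}_{\m}$ occurs among the $\g_{\0}$-composition factors of $\G_{i}(G/B,\O_{\l})$, and by the previous step $\m=w\bigl(\l-\sum_{\a\in I}\a+\r_0\bigr)-\r_0$ for some $I\subseteq\D^{+}_{1}$ and some $w\in W$ with $\ell(w)=i$. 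It remains to replace $\r_0$ by $\r$: writing $\r_0=\r+\r_1$ with $\r_1=\tfrac12\sum_{\a\in\D^{+}_{1}}\a$, we have $\r_1-\sum_{\a\in I}\a=\tfrac12\sum_{\b\in S}\b$, where $S=(\D^{+}_{1}\setminus I)\cup(-I)$ contains exactly one element of each pair $\{\pm\a\}$, $\a\in\D^{+}_{1}$; since $w$ permutes $\D_{1}$ and respects these pairs, $w(S)$ again contains one element of each pair, hence $w\bigl(\r_1-\sum_{\a\in I}\a\bigr)=\r_1-\sum_{\a\in I'}\a$ for a unique $I'\subseteq\D^{+}_{1}$. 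Therefore $\m+\r=w(\l+\r)+w\bigl(\r_1-\sum_{\a\in I}\a\bigr)-\r_1=w(\l+\r)-\sum_{\a\in I'}\a$ with $\ell(w)=i$, as required. I expect the main obstacle to be the geometric input of the first paragraph --- splitness of $G/B$, exactness of $p_{*}$, and the identification $\mathcal J^{j}/\mathcal J^{j+1}\cong\Lambda^{j}N^{*}$; the passage from $\g_{\0}$-composition factors to the highest weight of a $\g$-composition factor is the only other point requiring care, and the choice of signs making the relevant even bundles $\O_{\l-\sum_{\a\in I}\a}$ (rather than $\O_{\l+\sum_{\a\in I}\a}$) is exactly the normalization forced by \rcor{z}.
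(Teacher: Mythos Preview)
The paper does not prove this lemma; it is quoted from \cite{GrSe} and used as a black box. Your argument is correct and is essentially the standard one from that reference (going back to Penkov): filter $\O_{-\l}$ by powers of the odd ideal, identify the associated graded pieces with even line bundles $\O^{(0)}_{-\l+\sum_{\a\in I}\a}$ on $G_0/B_0$, apply classical Borel--Weil--Bott there, and then convert $\r_0$ to $\r$ using that $W$ permutes the odd roots so that $w(\r_1-\sum_{\a\in I}\a)=\r_1-\sum_{\a\in I'}\a$.

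Two small simplifications. The exactness of $p_*$ needs no affineness argument: $G/B$ and $G_0/B_0$ share the same underlying topological space, so $H^i(G/B,\,\cdot\,)=H^i(G_0/B_0,\,\cdot\,)$ tautologically. In the passage from $\g$-composition factors to $\g_\0$-composition factors it suffices to say that $L_\m|_{\g_\0}$ contains $L^{(0)}_\m$ as a direct summand (the summand generated by the highest weight vector), so each $\g$-factor $L_\m$ of $\G_i$ contributes $L^{(0)}_\m$ to the $\g_\0$-composition series. The geometric facts you flag as possible obstacles --- splitness of $G/B$ and the identification $\mathcal{J}^j/\mathcal{J}^{j+1}\cong\Lambda^jN^*$ --- are standard for flag supervarieties of basic classical type and are exactly what is invoked in \cite{GrSe} and \cite{P}.
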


\begin{lemma}\label{4}(\cite{Penkov}) Assume for an even root $\gamma$ in the base of $B$, $\b+\r=r_{\gamma}(\a+\r)$. Then $\G^i(G/B,\O_{\a})\cong \G^{i+1}(G/B,\O_{\b})$. 
\end{lemma}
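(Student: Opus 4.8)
The plan is to reduce this to the classical rank‑one Bott theorem by realizing $G/B$ as a $\mathbb{P}^1$-bundle over the partial flag superspace cut out by $\gamma$, and then running the Leray spectral sequence of that bundle. Since $\gamma$ is an even root in the base of $B$, the root space $\g_{-\gamma}$ lies in $\g_{\0}$, so the parabolic subgroup $P=P_\gamma$ with Lie superalgebra $\bb\oplus\g_{-\gamma}$ has the feature that the fiber $P/B$ of the projection $\pi\colon G/B\to G/P$ is a \emph{purely even} projective line $\mathbb{P}^1$, and $\pi$ has relative superdimension $(1|0)$. In particular $R^q\pi_*$ kills every sheaf for $q\ge 2$, and the whole Leray computation takes place, fibrewise, inside the classical Bott theory for $\mathfrak{sl}(2)$ --- this is exactly the point where the evenness of $\gamma$ is used.

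First I would record that $\langle\r,\gamma^\vee\rangle=1$ for an even simple root $\gamma$ of a base: $r_\gamma$ permutes $\D^+_0\setminus\{\gamma\}$, and because $\gamma$ is even while every positive odd root other than $\gamma$ carries a positive coefficient on some simple root $\neq\gamma$, it also permutes $\D^+_1$; hence $r_\gamma$ fixes $\r_1$ and $r_\gamma(\r)=\r-\gamma$. Consequently the hypothesis $\b+\r=r_\gamma(\a+\r)$ says precisely that $\O_\b^*$ and $\O_\a^*$ restrict on each fiber $\mathbb{P}^1$ to the two $\mathfrak{sl}(2)$-equivariant line bundles interchanged by the rank-one Bott reflection. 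I would then compute the higher direct images: by base change $R^q\pi_*\O_\a^*$ is the $G$-equivariant sheaf on $G/P$ induced from the $P$-module $H^q(P/B,\O_\a^*|_{P/B})$, and the $\mathfrak{sl}(2)$ Bott theorem leaves three cases. If $(\a+\r,\gamma^\vee)=0$, the restricted bundle is the one with no cohomology, so $R\pi_*\O_\a^*=0$ and the Leray spectral sequence forces $\G^i(G/B,\O_\a)=0$ for all $i$; since here $\b=\a$, both sides of the Lemma vanish. Otherwise exactly one of $R^0\pi_*\O_\a^*$ and $R^1\pi_*\O_\a^*$ is nonzero, and the rank-one reflection identity --- whose $\h$-weight bookkeeping is matched on the nose by the equation $\b+\r=r_\gamma(\a+\r)$ together with $\langle\r,\gamma^\vee\rangle=1$, the self-duality of $\mathfrak{sl}(2)$-modules absorbing the remaining ambiguity --- supplies a canonical isomorphism $R^1\pi_*\O_\a^*\cong R^0\pi_*\O_\b^*$ of sheaves on $G/P$, with the two nonzero direct images sitting on opposite sides (and symmetrically with $\a$ and $\b$ interchanged).

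Finally I would feed this into the Leray spectral sequence $H^p(G/P,R^q\pi_*\mathcal{F})\Rightarrow H^{p+q}(G/B,\mathcal{F})$. Label $\a,\b$ so that $\a$ is the weight whose fibrewise sections $R^0\pi_*\O_\a^*$ are nonzero (when $(\a+\r,\gamma^\vee)\neq 0$ this singles out one of the two); then $R^1\pi_*\O_\a^*=0$ and $R^0\pi_*\O_\b^*=0$, so each spectral sequence collapses to a single row, giving $H^{p}(G/B,\O_\b^*)\cong H^{p-1}(G/P,R^1\pi_*\O_\b^*)$ and $H^{p-1}(G/B,\O_\a^*)\cong H^{p-1}(G/P,R^0\pi_*\O_\a^*)$. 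Combined with $R^1\pi_*\O_\b^*\cong R^0\pi_*\O_\a^*$ this yields $H^{p}(G/B,\O_\b^*)\cong H^{p-1}(G/B,\O_\a^*)$ for all $p$, and dualizing via $\G^i(G/B,\O_\lambda)=H^i(G/B,\O_\lambda^*)^*$ gives $\G^{i+1}(G/B,\O_\b)\cong\G^{i}(G/B,\O_\a)$, which is the assertion (the case where $(\a+\r,\gamma^\vee)$ has the opposite sign is the same statement with $\a,\b$ exchanged, by the involutivity of $r_\gamma$). The one genuinely delicate step --- and the only part that is not routine --- is the convention bookkeeping in the middle paragraph: verifying that, under Penkov's normalization of $\O_\lambda$ and of the outer dual defining $\G^i$, the reflection $\b+\r=r_\gamma(\a+\r)$ makes the fibrewise identification $R^1\pi_*\O_\a^*\cong R^0\pi_*\O_\b^*$ an honest isomorphism rather than one twisted by a character of $P$, and that the resulting degree shift lands on the side claimed. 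The geometric input --- the $\mathbb{P}^1$-bundle, proper base change, collapse of Leray, and rank-one Bott on the fibers --- is entirely standard and, precisely because $\gamma$ is even, insensitive to the odd directions.
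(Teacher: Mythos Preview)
The paper does not supply its own proof of this lemma; it is quoted from Penkov with a bare citation. Your argument via the $\mathbb{P}^1$-fibration $G/B\to G/P_\gamma$, fibrewise rank-one Bott, and collapse of the Leray spectral sequence is exactly the standard Demazure-style proof and is, in essence, how the result is established in Penkov's work. The key observation you isolate --- that $\gamma$ being an \emph{even} simple root makes the fibre $P_\gamma/B$ a purely even $\mathbb{P}^1$, so that $R^q\pi_*$ vanishes for $q\ge 2$ and classical $\mathfrak{sl}(2)$ Bott applies without super corrections --- is precisely the point of the hypothesis, and your verification that $r_\gamma$ permutes $\Delta_1^+$ (hence $\langle\rho,\gamma^\vee\rangle=1$) is the right way to see that the shifted action matches the fibrewise picture.

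One small caution: your closing parenthetical, that the opposite-sign case ``is the same statement with $\a,\b$ exchanged,'' is not quite right as phrased, since the hypothesis $\b+\r=r_\gamma(\a+\r)$ is symmetric in $\a,\b$ while the conclusion $\G^i(\O_\a)\cong\G^{i+1}(\O_\b)$ is not; the lemma as stated in the paper carries an implicit orientation (namely that $\a$ lies on the $\gamma$-dominant side, $\langle\a+\r,\gamma^\vee\rangle\ge 0$), and your labeling convention already enforces this. With that understood, your proof is correct and matches the intended argument.
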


\begin{lemma}\label{7} If $L_{\l}\in \F^{\chi}$, then $$\sum_i (-1)^i sdim{\G_i(G/B, \O_{\l})}=0.$$
\end{lemma}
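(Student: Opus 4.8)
The plan is to reduce the identity to the vanishing of the superdimension of a single virtual module — the Euler characteristic of $\O_\l$ — and then extract that vanishing from a Weyl--Kac type character formula. First I would use that $sdim$ is additive on short exact sequences and unchanged under duality, so that
\[
\sum_i (-1)^i\, sdim\,{\G_i(G/B,\O_\l)} \;=\; sdim\,{\varepsilon(\l)},
\]
where $\varepsilon(\l)=\sum_i(-1)^i[\G_i(G/B,\O_\l)]$ is the Euler characteristic, regarded in the Grothendieck group of $\F$; passing from $\G_i(G/B,\O_\l)=(H^i(G/B,\O_\l^*))^*$ to the underlying $\Z_2$-graded spaces is harmless since $sdim$ sees nothing more. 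Thus it suffices to show $sdim\,{\varepsilon(\l)}=0$.

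Next I would invoke the super-analogue of Bott's theorem (Penkov, \cite{Penkov}): although the individual cohomology modules $\G_i(G/B,\O_\l)$ are complicated, their alternating sum has the closed form
\[
ch\,{\varepsilon(\l)} \;=\; \frac{D_1\,e^{\rho}}{D_0}\sum_{w\in W} sign(w)\, e^{w(\l+\rho)},
\]
which is of the same shape as \req{BLM}. (This can also be obtained without citing super-BWB: restricting to $\g_{\0}$, the sheaf $\O_\l$ on $G/B$ pushes down to $\O_\l\otimes\Lambda^{\bullet}(\nn^-_{\1})^*$ on $G_{\0}/B_{\0}$, the $\Z$-grading by exterior degree being exactly the parity grading; applying the ordinary Bott formula to each exterior power and summing the alternating series in the Grothendieck group produces the displayed expression.)

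Finally I would read off $sdim\,{\varepsilon(\l)}$ from this formula. Since $\varepsilon(\l)$ is a genuine finite virtual module, $ch\,{\varepsilon(\l)}$ is a finite $\Z$-combination of exponentials and $sdim\,{\varepsilon(\l)}$ is its value under $e^{\mu}\mapsto(-1)^{p(\mu)}$; to evaluate this on the right-hand side I would substitute $e^{\mu}\mapsto(-1)^{p(\mu)}q^{(\mu,v)}$ for a generic $v\in\h$ and let $q\to1$ (the specialized character is a Laurent polynomial in $q$ whose value at $q=1$ is $sdim\,{\varepsilon(\l)}$). Because $p(\a)=1$ for every odd root $\a$, each factor of $D_1$ degenerates to $1-q^{\pm(\a,v)}$, so the numerator acquires a zero of order $|\D^+_{\1}|\ge 1$ coming from the odd roots, on top of the order-$|\D^+_{\0}|$ zero of the skew sum $\sum_{w} sign(w)\,q^{(w(\l+\rho),v)}$ (a $W$-skew function vanishes to that order, being divisible by the discriminant $\prod_{\b\in\D^+_{\0}}(\b,v)$), whereas the denominator $D_0$ vanishes only to order $|\D^+_{\0}|$; comparing these orders for the root systems of $F(4)$ and of $G(3)$ forces the limit to be $0$. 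An equivalent route I would keep in reserve is to use that the fiber functor $M\mapsto M_x$ descends to the Grothendieck group (a short exact sequence yields a two-periodic long exact sequence of $x$-cohomologies) and that $sdim\,{M}=sdim\,{M_x}$, reducing to a balanced virtual $\g_x$-module. The main obstacle is precisely this last bookkeeping step: one must pin down the exact character formula for $\varepsilon(\l)$ and, using the specific combinatorics of the odd roots of $F(4)$ and $G(3)$ (in particular how they pair up), control the leading terms of numerator and denominator carefully enough to conclude that the $q\to1$ limit vanishes.
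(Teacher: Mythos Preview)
Your proof is correct and, like the paper's, goes through Penkov's Euler-characteristic formula (\rthm{chcoh}) for $ch\,\varepsilon(\l)$; the difference lies in how you extract $sdim\,\varepsilon(\l)=0$ from that formula. You evaluate directly: specializing $e^{\mu}\mapsto(-1)^{p(\mu)}q^{(\mu,v)}$, the factor $D_1$ contributes $|\D^+_{\1}|$ zeros at $q=1$ (since $p(\a)=1$ for every odd root of $F(4)$ and $G(3)$), while the $W$-skew numerator and $D_0$ each contribute $|\D^+_{\0}|$ and cancel, leaving a zero of order $|\D^+_{\1}|\ge 1$. The paper instead runs a one-parameter interpolation: replacing $\l$ by $\l+t\d$, the quantity $\sum_i(-1)^i\,sdim\,\G_i(G/B,\O_{\l+t\d})$ is a polynomial $p(t)$ (via \rthm{chcoh} and the Weyl dimension formula for $\g_{\0}$); for all but finitely many integers $t$ the weight $\l+t\d$ is typical, so by Penkov's BWB for typical weights $p(t)=sdim\,L_{\l+t\d}=0$, whence $p\equiv 0$ and in particular $p(0)=0$. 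Your route is self-contained and does not need the typical BWB theorem or the vanishing of $sdim$ for typical simples; the paper's is shorter once those are available and sidesteps any limiting argument. One point worth making explicit in your write-up: to pull the sign out of the skew sum and conclude that the remaining $q$-expression is $W$-skew in $v$, you use $p(w(\l+\rho))=p(\l+\rho)$, which holds because $w\mu-\mu$ lies in the even root lattice and $p$ vanishes there.
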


\begin{proof} We follow similar argument as in lemma 5.2 in \cite{S3}. Let $\l\in F^{\chi}$, then for $t\in\mathbb{Z}$, the weight $\l+t\d$ is integral. The weight $\l+t\d$ is typical for almost all $t$. Since for $\l$ typical weight, \rthm{BWB} holds ( see \cite{Penkov}), we have $\G_i(G/B, L_{\l})=0$ for $i>0$ and $\G_0(G/B, L_{\l})=L_{\l}$. Also, since $\l+t\d$ is typical we have:
$$\sum_i(-1)^isdim\G_i(G/B, L_{\l+t\d})=sdim(\l+t\d)=0.$$

On the other hand, we have $chL_{\l+t\d}=e^{t\d}chL_{\l}$. Therefore, from \rthm{chcoh} we have:

$$\sum_i(-1)^isdim\G_i(G/B, L_{\l+t\d})=p(t)$$

for some polynomial $p(t)$. We have $p(t)=0$ for almost all $t\in\mathbb{Z}$. Thus, $p(0)=0$. 
\end{proof}

%\begin{lemma} (\cite{GrSe}) If $M$ is a $\g$-module and $V$ is a $B$-module, the following holds:
%$$\G_i(G/B, V\otimes M)=\G_i(G/B, V)\otimes M.$$
%\end{lemma}

\begin{lemma}\label{sums} (\cite{BGGSeGr}) For every dominant weight $\l$, let $p(w)$ be the parity of $w$, then

$$\sum_i (-1)^ich{\G_i(G/B, \O_{\l})}=(-1)^{p(w)}\sum_i (-1)^i ch{\G_i(G/B, \O_{w(\l+\r)-\r})}$$
\end{lemma}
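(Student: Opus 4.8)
The plan is to reduce the statement to the case of a single even reflection $r_\gamma$ with $\gamma$ in the base of $B$, since the Weyl group $W$ is generated by such reflections. The alternating sum $\sum_i(-1)^i\,\mathrm{ch}\,\Gamma_i(G/B,\mathcal{O}_\lambda)$ is, by definition, the Euler characteristic character $\varepsilon(\lambda)$ (up to the bookkeeping with $L_\mu$'s), so I will first record that this is well-defined and additive on short exact sequences of $B$-modules via \rlem{1}. Then, for a fixed even simple root $\gamma$ of the base, I want to show
\[
\sum_i(-1)^i\,\mathrm{ch}\,\Gamma_i(G/B,\mathcal{O}_\lambda)=-\sum_i(-1)^i\,\mathrm{ch}\,\Gamma_i(G/B,\mathcal{O}_{r_\gamma(\lambda+\rho)-\rho}),
\]
the sign $-1$ matching $(-1)^{p(r_\gamma)}$ since a single reflection is odd. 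Iterating over a reduced expression $w=r_{\gamma_1}\cdots r_{\gamma_k}$ (using that $\rho$ is $\rho$ and the dot-action composes correctly, and that the set of bases is stable under the relevant reflections) then yields the factor $(-1)^{p(w)}$ and the general statement.

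For the single-reflection step, the key input is \rlem{4}: if $\beta+\rho=r_\gamma(\alpha+\rho)$ for an even root $\gamma$ in the base of $B$, then $\Gamma^i(G/B,\mathcal{O}_\alpha)\cong\Gamma^{i+1}(G/B,\mathcal{O}_\beta)$. However, \rlem{4} as stated only covers the case where $\alpha$ and $\beta$ lie on opposite sides of the $\gamma$-wall and are not fixed by $r_\gamma$; I also need the degenerate case where $\langle\lambda+\rho,\gamma^\vee\rangle=0$, i.e. $r_\gamma(\lambda+\rho)-\rho=\lambda$, in which case the statement reads $\varepsilon(\lambda)=-\varepsilon(\lambda)$, forcing $\varepsilon(\lambda)=0$. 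This vanishing in the singular case should follow from the classical $SL(2)$- (or parabolic $P_\gamma$-) argument applied fiberwise along the $\mathbb{P}^1$-fibration $G/B\to G/P_\gamma$: one computes the cohomology of $\mathcal{O}_\lambda$ restricted to the fibers using the even Borel, gets an alternating sum of $H^0$ and $H^1$ of line bundles on $\mathbb{P}^1$ that cancel by Serre duality / the Euler characteristic of $\mathcal{O}(-1)$ being zero, and then concludes via a Leray spectral sequence (or the filtration of $V$ by $B$-submodules) that the total Euler characteristic vanishes. When $\langle\lambda+\rho,\gamma^\vee\rangle\neq 0$, the shift of cohomological degree by one in \rlem{4} contributes exactly the sign $(-1)$ to the Euler characteristic, which is what we want.

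The main obstacle I anticipate is handling the superalgebra subtleties cleanly: the line bundle $\mathcal{O}_\lambda$ is a $B$-equivariant sheaf on the flag \emph{super}variety, so "restricting to a $\mathbb{P}^1$-fiber" must be done with the even flag variety $G_0/B_0$ in mind while keeping track of the odd directions, which enter through the $\mathbb{b}$-module structure on $C_\lambda$ and through $\Delta_1^+$. The way I would manage this is to use \rlem{2} and \rlem{3}: \rlem{3} tells us that every $L_\mu$ appearing in any $\Gamma_i(G/B,\mathcal{O}_\lambda)$ has $\mu+\rho=w(\lambda+\rho)-\sum_{\alpha\in I}\alpha$ with $w$ of length $i$, so the character identity only needs to be checked on the $\rho$-shifted $W$-orbit, and the odd-root corrections $\sum_{\alpha\in I}\alpha$ are $W$-covariant in the right way; combined with the geometric induction being compatible with the $P_\gamma$-parabolic factorization $G/B\to G/P_\gamma$, the even-reflection symmetry is inherited from the purely even Bott theorem on the $\mathbb{P}^1$-fibers. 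I would cite \cite{BGGSeGr} and \cite{Penkov} for the precise form of this parabolic induction argument in the super setting, since the excerpt attributes the lemma to \cite{BGGSeGr}.
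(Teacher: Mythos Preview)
The paper does not give its own proof of this lemma: it is stated with the citation \cite{BGGSeGr} and used as a black box. So there is no ``paper's proof'' to compare against in any substantive sense.

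Your sketch is essentially correct and is the standard geometric argument: reduce to a single even simple reflection $r_\gamma$, invoke \rlem{4} for the degree-shift $\Gamma^i(G/B,\mathcal{O}_\alpha)\cong\Gamma^{i+1}(G/B,\mathcal{O}_\beta)$ when $\beta+\rho=r_\gamma(\alpha+\rho)$, treat the $r_\gamma$-singular case via the $\mathbb{P}^1$-fibration $G/B\to G/P_\gamma$, and iterate along a reduced expression of $w$. One small point: as stated, \rlem{4} already forces the vanishing in the singular case (if $\alpha=\beta$ then $\Gamma^i\cong\Gamma^{i+1}$ for all $i$, hence all are zero), so you do not strictly need a separate Serre-duality argument there.

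That said, within this paper there is a much shorter route you are overlooking. \rthm{chcoh} (Penkov's formula) gives, for $V=C_\lambda$,
\[
\sum_i(-1)^i\,\mathrm{ch}\,\Gamma_i(G/B,\mathcal{O}_\lambda)=\frac{D_1 e^{\rho}}{D_0}\sum_{u\in W}\mathrm{sign}(u)\,u\bigl(e^{\lambda+\rho}\bigr).
\]
Replacing $\lambda$ by $w(\lambda+\rho)-\rho$ and reindexing the sum by $u\mapsto uw^{-1}$ immediately yields the factor $\mathrm{sign}(w)=(-1)^{p(w)}$, with no geometry beyond what is already packaged in \rthm{chcoh}. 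Your approach unwinds the proof of that formula one simple reflection at a time; the closed formula does it in one line.
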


Let $\bb=\h\oplus \nn$, where $\g=\nn^-\oplus\h\oplus\nn$, and $\nn$ is the nilpotent part of $\bb$. Consider the projection

$$\phi: U(\g)=U(\nn^-)U(\h)U(\nn)\rightarrow U(\nn^-)U(\h)$$ with kernel $U(\g)\nn$. The restriction of $\phi$ to $Z(\g)$ induces the injective homomorphism of centers $Z(\g)\rightarrow Z(\h)$. Denote the dual map by $\Phi: Hom(Z(\h), \C)\rightarrow Hom(Z(\g),\C)$. 

\begin{lemma} (\cite{GrSe}) If $V$ is an irreducible $\bb$-module admitting a central character $\chi$, then the $\g$-module $\G_i(G/B, V)$ admits the central character $\Phi(\chi)$. 
\end{lemma}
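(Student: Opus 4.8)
The plan is to reduce the statement to the case $i=0$, handle that case by the concrete description of $\G_0$ as a quotient of an induced module, and then propagate the conclusion to all $i$ through a complex of $\g$-modules computing the cohomology of the $\g$-sheaf $\V^*$.

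First I would observe that an irreducible $\bb$-module $V$ carrying a central character $\chi$ for $Z(\h)$ is automatically one-dimensional. Since $\h$ is abelian, $Z(\h)=U(\h)=S(\h)$, so $\chi=\chi_\l$ for a single weight $\l$ and $\h$ acts on $V$ by the (generalized) weight $\l$. For a root vector $x\in\g_\a\subset\nn$ one checks that $x$ maps the generalized $\l$-weight space of $V$ into the generalized $(\l+\a)$-weight space, which is $0$; hence $\nn$ acts by zero, $V$ is an irreducible $U(\h)$-module of weight $\l$, and therefore $V\cong C_\l$ up to parity. Thus it suffices to prove that $\G_i(G/B,\O_\l)$ has central character $\Phi(\chi_\l)$.

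For $i=0$: by \rlem{2}, $\G_0(G/B,\O_\l)$ is a quotient of $N:=U(\g)\otimes_{U(\bb)}C_\l=U(\nn^-)\otimes C_\l$, and $N$ is cyclic, since the highest weight line $1\otimes C_\l$ is a $\bb$-submodule and generates $N$ over $\g$ (as $U(\g)=U(\nn^-)U(\bb)$). I would first check that $\phi$ restricted to $Z(\g)$ lands in $U(\h)=Z(\h)$: a central $z$ has $\h$-weight $0$, the kernel $U(\g)\nn$ of $\phi$ is stable under $\ad\h$, so $\phi(z)$ is the weight-$0$ part of $z$ in $U(\nn^-)U(\h)$, and that component lies in $U(\h)$ because $U(\nn^-)$ carries only non-positive weights with trivial weight-$0$ component. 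Writing $z=\phi(z)+r$ with $r\in U(\g)\nn$ and moving the $\nn$-factor of $r$ across the tensor product — where it annihilates $v_\l$ — one obtains $z\cdot(1\otimes v_\l)=1\otimes\phi(z)v_\l=\phi(z)(\l)\,(1\otimes v_\l)=\Phi(\chi_\l)(z)(1\otimes v_\l)$. Since $z$ is central and $1\otimes v_\l$ generates $N$, $z$ acts on all of $N$, hence on the quotient $\G_0(G/B,\O_\l)$, by the scalar $\Phi(\chi_\l)(z)$.

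For $i>0$ the module is no longer a quotient of an induced module, and this is the step I expect to be the main obstacle. My plan is to compute $H^\bullet(G/B,\O_\l^*)$ by a complex of $\g$-modules — for instance the \v{C}ech complex of the $\g$-equivariant sheaf $\O_\l^*$ with respect to the cover of $G/B$ by the $W$-translates of the big Schubert cell (the image of the opposite unipotent subgroup) — on which $Z(\g)$ acts compatibly with the differentials, through the infinitesimal $G$-action on sections. On the big cell the $\g$-action is given by the explicit unnormalized formulas ($\h$ acting through $\l$, $\nn^{\pm}$ by differential operators), and the same bookkeeping as above, namely $z\equiv\phi(z)\pmod{U(\g)\nn}$ with $\phi(z)\in U(\h)$ acting through $\l$, shows that $z-\Phi(\chi_\l)(z)$ acts locally nilpotently on each term of the complex, hence nilpotently on each $H^i(G/B,\O_\l^*)$ and therefore on $\G_i(G/B,\O_\l)=H^i(G/B,\O_\l^*)^*$. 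The delicate point is precisely to arrange the cohomology-computing complex to consist of genuine $\g$-modules and to carry out the local nilpotence verification on its terms; once $Z(\g)$ is seen to act through $\Phi(\chi)$ at the level of the complex, passing to cohomology and dualizing is automatic. (Alternatively, one can bootstrap from the case $i=0$ via the long exact sequence of \rlem{1} together with a dimension shift, or simply invoke \cite{GrSe}.)
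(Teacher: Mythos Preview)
The paper does not give a proof of this lemma; it is quoted from \cite{GrSe} without argument, so there is nothing in the paper to compare your proposal against.

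That said, your reduction to $V\cong C_\l$ is correct, and your treatment of the case $i=0$ via \rlem{2} and the computation of the action of $z$ on the cyclic generator of the induced module is the standard one and is complete.

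For $i>0$ your sketch has the right shape but is looser than it needs to be. The terms of the \v{C}ech complex with respect to an affine open cover of $G/B$ are honest $\g$-modules, since the $\g$-action on the sheaf $\O_\l^*$ is by algebraic differential operators, which preserve regular sections over affine opens; so the ``delicate point'' you flag is not actually an obstacle. What is slightly indirect in your outline is the passage through local nilpotence. A cleaner version of the same idea: identify sections of $\O_\l^*$ over $\pi^{-1}(U)\subset G$ with $B$-equivariant $V^*$-valued functions, so that two commuting $U(\g)$-actions are visible, one from left translation (giving the $\g$-module structure on cohomology) and one from right translation (which, restricted to $\bb$, is the action on the fibre $V^*$). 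For central $z$ these two actions agree, and the right one is the scalar $\Phi(\chi_\l)(z)$ by exactly your $i=0$ computation; hence $z$ acts by that scalar already on the sheaf, and therefore on every $H^i$. Your alternative suggestion of a dimension shift via \rlem{1} also works. In short: your proposal is correct, just more circuitous for $i>0$ than the usual argument.
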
 

Let $M^{\chi}=\{m\in M | (z-\chi(z))^Nv=0, z\in Z\}$.

\begin{corollary}\label{1a} (\cite{GrSe}) For any finite-dimensional $\g$-module $M$, let $M^{\chi}$ denote the component with generalized central character $\chi$. Then
$$\G_i(G/B, (V\otimes M)^{\Phi^{-1}(\chi)})=(\G_i(G/B, V)\otimes M)^{\chi}.$$
\end{corollary}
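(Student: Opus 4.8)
The plan is to deduce the identity from two ingredients: a projection (tensor) formula for geometric induction, and the compatibility of $\G_i$ with central characters established in the preceding lemma, upgraded from irreducible $\bb$-modules to modules carrying a generalized central character. Throughout I take $V$ to be a finite-dimensional $\bb$-module (this covers the case $V=C_\l$ needed in all applications), so that the tensor products and central-character decompositions below are finite. \emph{Step 1 (tensor identity).} First I would prove that for any finite-dimensional $\g$-module $M$ there is a natural isomorphism of $\g$-modules
\[
\G_i(G/B,\,V\otimes M)\;\cong\;\G_i(G/B,\,V)\otimes M .
\]
This is the standard projection formula: the associated bundle $G\times_B(V\otimes M|_{\bb})$ is isomorphic, as a $\g$-equivariant sheaf, to $\mathcal{V}\otimes_{\mathcal{O}_{G/B}}(\mathcal{O}_{G/B}\otimes_k M)$, where $\mathcal{O}_{G/B}\otimes_k M$ denotes the trivial bundle with fibre $M$ carrying the diagonal $\g$-action; since that bundle is free of finite rank over $\mathcal{O}_{G/B}$, one obtains $H^i(G/B,(\mathcal{V}\otimes M)^*)\cong H^i(G/B,\mathcal{V}^*)\otimes M^*$, and dualizing while keeping track of the $\g$-action yields the claim. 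This statement is recorded in \cite{BGGSeGr,Ser}.

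\emph{Step 2 (central characters of $\G_i$ on blocks).} The preceding lemma gives the conclusion for irreducible $\bb$-modules; I would extend it as follows. If $N$ is a finite-dimensional $\bb$-module on which $Z(\h)$ acts with generalized central character $\psi$, then every composition factor of $N$ is an irreducible $\bb$-module with central character $\psi$, so by the preceding lemma $\G_i(G/B,-)$ applied to each composition factor has central character $\Phi(\psi)$; feeding a composition series of $N$ into the long exact sequence of \rlem{1} then shows that $\G_i(G/B,N)$ has generalized central character $\Phi(\psi)$.

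\emph{Step 3 (assembly).} Decompose $V\otimes M=\bigoplus_{\psi}(V\otimes M)^{\psi}$ over the finitely many generalized $Z(\h)$-central characters $\psi$. Applying $\G_i(G/B,-)$ and using additivity together with Step 2, $\G_i(G/B,V\otimes M)=\bigoplus_{\psi}\G_i(G/B,(V\otimes M)^{\psi})$ with the $\psi$-summand having generalized central character $\Phi(\psi)$; hence its $\chi$-component is exactly $\bigoplus_{\psi:\,\Phi(\psi)=\chi}\G_i(G/B,(V\otimes M)^{\psi})=\G_i\bigl(G/B,(V\otimes M)^{\Phi^{-1}(\chi)}\bigr)$. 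On the other hand the isomorphism of Step 1 is $\g$-equivariant, so it respects the decomposition into generalized-central-character components, giving $\G_i(G/B,V\otimes M)^{\chi}\cong(\G_i(G/B,V)\otimes M)^{\chi}$. Combining the two identifications proves the corollary. The main obstacle is Step 1: making the projection formula precise at the level of $\g$-equivariant sheaf cohomology — in particular checking that the $\g$-module structure on $H^i(G/B,\mathcal{V}^*)\otimes M^*$ coming from the diagonal action on the sheaf agrees with the one coming from the tensor product — takes some care, though it is by now standard; once that is in hand, Steps 2 and 3 are routine bookkeeping with \rlem{1} and the central-character lemma.
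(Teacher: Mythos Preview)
The paper does not supply a proof of this corollary; it is quoted directly from \cite{GrSe}. Your argument is correct and is the standard one: the projection formula for geometric induction (Step~1), the extension of the preceding central-character lemma from irreducible to generalized-central-character $\bb$-modules via composition series and the long exact sequence (Step~2), and the assembly (Step~3) together yield the result, and this is essentially what one finds in the cited reference.
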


\subsection{Translation functor}

A {\it{translation functor}} $T_{\chi,\tau}:\F^{\chi}\rightarrow \F^{\tau}$ is defined by

$$T_{\chi,\tau}(V)=(V\otimes\g)^{\tau},\text{ for }V\in\F^{\chi}.$$

Here, $(M)^{\tau}$ denotes the projection of $M$ to the block $\F^{\tau}$. Since $\g\cong \g^*$, the left adjoint functor of $T_{\chi,\tau}$ is defined by 
 
$$T^*_{\chi,\tau}(V)=(V\otimes\g)^{\chi},\text{ for }V\in\F^{\tau}.$$

For convenience, when its clear, we will denote $T:=T_{\chi,\tau}$.

For $\l\in \F^{\chi}$, we also define $$T_{\chi,\tau}(\O_\l)=(\O_{\l}\otimes\g)^{\Phi^{-1}(\chi)},$$ where $V^{\Phi^{-1}(\chi)}$ is the component with generalized character lying in $\Phi^{-1}(\chi)$.

\begin{lemma}\label{6} (\cite{GrSe}) We have $\G^i(G/B, T(\O_{\l}))=T(\G^i(G/B,\O_{\l}))$, where $T$ is a translation functor. 
\end{lemma}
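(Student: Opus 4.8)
The identity is essentially a repackaging of \rcor{1a} applied to the adjoint module $M=\g$, combined with the fact (the central-character lemma just before \rcor{1a}) that geometric induction intertwines a $\bb$-central character $\psi$ with the $\g$-central character $\Phi(\psi)$. Recall that $\G^i$ and $\G_i$ denote the same derived functor $\G_i(G/B,\V)=\bigl(H^i(G/B,\V^*)\bigr)^*$, so there is no issue in moving between the superscript and subscript notation. The plan is therefore to unwind the definition of $T$ on line bundles, invoke \rcor{1a}, and then re-interpret the output as $T$ applied to a $\g$-module.

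\textbf{Steps.} First I would write out $T_{\chi,\tau}(\O_\l)=(\O_\l\otimes\g)^{\Phi^{-1}(\tau)}$, i.e. the direct summand of the $\bb$-sheaf $\O_\l\otimes\g = G\times_B(C_\l\otimes\g)$ on which $Z(\h)$ acts through generalized central characters lying over the target $\g$-central character $\tau$; here $\g$ is viewed as a $\bb$-module via the adjoint action. Next, apply \rcor{1a} with $V=\O_\l$ (a $\bb$-module carrying a central character, since $\l\in\F^\chi$), $M=\g$, and the $\g$-central character $\tau$:
$$\G^i\bigl(G/B,(\O_\l\otimes\g)^{\Phi^{-1}(\tau)}\bigr)=\bigl(\G^i(G/B,\O_\l)\otimes\g\bigr)^{\tau}.$$
The left-hand side is $\G^i(G/B,T(\O_\l))$ by definition. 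For the right-hand side, I would note that the $\bb$-central character of $C_\l$ maps under $\Phi$ to the $\g$-central character $\chi$ of $L_\l$, so by the central-character lemma $\G^i(G/B,\O_\l)$ has $\g$-central character $\chi$, hence lies in $\F^\chi$. Therefore $\bigl(\G^i(G/B,\O_\l)\otimes\g\bigr)^{\tau}$ is exactly $T_{\chi,\tau}$ applied to the $\g$-module $\G^i(G/B,\O_\l)$, which yields the claimed equality $\G^i(G/B,T(\O_\l))=T(\G^i(G/B,\O_\l))$.

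\textbf{Main point of care.} There is no deep obstacle here; the work is bookkeeping. The three items to check carefully are: (i) that the definition of $T$ on line bundles is taken with $\Phi^{-1}$ of the \emph{target} character so that it matches the hypothesis of \rcor{1a}; (ii) that $\g$, as a $\bb$-module under the adjoint action, decomposes into generalized $Z(\h)$-eigenspaces so that the projections $(\,\cdot\,)^{\Phi^{-1}(\tau)}$ on sheaves and $(\,\cdot\,)^{\tau}$ on modules are well-defined exact functors, the latter being what makes \rcor{1a} applicable degree by degree in $i$; and (iii) the identification $\Phi(\chi_\l^{\bb})=\chi$ placing $\G^i(G/B,\O_\l)$ in $\F^\chi$. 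All three are immediate from the preceding lemmas, so the statement follows at once from \rcor{1a}; alternatively it may simply be cited from \cite{GrSe}.
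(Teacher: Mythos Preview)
Your proposal is correct and is exactly the intended derivation: the paper does not give its own proof but simply cites \cite{GrSe}, and the content of that citation is precisely the specialization of \rcor{1a} to $M=\g$ together with the definition of $T$ on line bundles, which is what you have written out. Your bookkeeping points (i)--(iii) are the only things to verify, and they hold as stated.
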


\begin{lemma}\label{w} Assume $T(\O_{\l})$ has a filtration with quotients $\O_{\sigma_i}$, $i=1,2$ with $\sigma_1$ is dominant and $\sigma_2$  acyclic. Then for all $i\geq 0$, we have $\G_i(G/B,T(\O_{\l}))=\G_i(G/B,\O_{\sigma_1})$. 
\end{lemma}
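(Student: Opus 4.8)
The plan is to exploit the long exact sequence of Lemma~\ref{1} together with the hypothesis that $\sigma_2$ is acyclic, i.e.\ that $\G_i(G/B,\O_{\sigma_2})=0$ for all $i\geq 0$. First I would make precise what "$T(\O_\l)$ has a filtration with quotients $\O_{\sigma_1},\O_{\sigma_2}$" means at the level of $B$-modules: there is a short exact sequence of $B$-modules
\begin{displaymath}
    \xymatrix{ 0\ar[r] & \O_{\sigma_2}\ar[r] & T(\O_\l)\ar[r] & \O_{\sigma_1}\ar[r] & 0 }
\end{displaymath}
(or the sequence with the roles of $\sigma_1,\sigma_2$ interchanged; I will have to check which order actually occurs, but the two cases are handled symmetrically). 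Since $T$ is an exact functor and commutes with $\G_i$ by Lemma~\ref{6}, it is harmless to treat $T(\O_\l)$ as the middle term of such a sequence of vector bundles on $G/B$.

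Next I would feed this short exact sequence into the machinery of Lemma~\ref{1}. That lemma as stated only gives the tail of the long exact sequence (degrees $0$ and $1$ in a truncated form), so I would first note its natural extension to the full long exact cohomology sequence
\begin{displaymath}
    \xymatrix{ \cdots\ar[r] & \G_i(G/B,\O_{\sigma_2})\ar[r] & \G_i(G/B,T(\O_\l))\ar[r] & \G_i(G/B,\O_{\sigma_1})\ar[r] & \G_{i-1}(G/B,\O_{\sigma_2})\ar[r] & \cdots }
\end{displaymath}
coming from the short exact sequence of sheaves (this is standard, since $\G_i=(H^i(G/B,(-)^*))^*$ is a $\delta$-functor up to the usual dualization). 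Because $\sigma_2$ is acyclic, every third term vanishes, and the long exact sequence breaks into isomorphisms $\G_i(G/B,T(\O_\l))\cong\G_i(G/B,\O_{\sigma_1})$ for all $i\geq 0$, which is exactly the claim.

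The main obstacle is bookkeeping rather than depth: I must be sure that the filtration of $T(\O_\l)$ really is a filtration by $B$-submodules whose successive quotients are the line bundles $\O_{\sigma_i}$ (so that Lemma~\ref{1} applies verbatim), and I must pin down the order of the filtration — whether $\O_{\sigma_1}$ is the sub or the quotient. If it is the quotient, the argument is as above; if it is the sub, one gets the same conclusion by running the long exact sequence the other way, again using acyclicity of $\sigma_2$ to kill the intervening terms. A secondary point to verify is that "acyclic" is being used in the strong sense $\G_i(G/B,\O_{\sigma_2})=0$ for \emph{all} $i$ (not merely $i=0$), which is the convention implicit in Lemma~\ref{3} and the surrounding discussion; granting that, no further input is needed.
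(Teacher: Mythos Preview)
Your proposal is correct and follows essentially the same approach as the paper: write the short exact sequence of bundles $0\to\O_{\sigma_2}\to T(\O_\l)\to\O_{\sigma_1}\to 0$, invoke the long exact sequence, and use acyclicity of $\sigma_2$ (in the strong sense $\G_i(G/B,\O_{\sigma_2})=0$ for all $i$) to collapse it to the desired isomorphisms. The paper simply fixes the order of the filtration (with $\O_{\sigma_2}$ as the subobject) and is more terse, but the argument is identical.
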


\begin{proof} We have an exact sequence of vector bundles:

$$0\rightarrow  \O_{\s_2}\rightarrow T(\O_{\l_1})\rightarrow \O_{\s_1}\rightarrow 0,$$

Since $\s_2$ is acyclic, $\G_i(G/B,\O_{\sigma_2})=0$ for all $i\geq 0$. Thus $\G_i(G/B,T(\O_{\l}))=\G_i(G/B,\O_{\sigma_1})$.

\end{proof}

\begin{lemma}\label{surjection} Let $X$ is an indecomposable $\g$-module with unique simple quotient $L_\l$, such that if $L_\sigma$ is a subquotient of $X$ implies $\sigma<\l$, then there is a surjection $\G_0(G/B, \O_\l)\rightarrow X$. 
\end{lemma}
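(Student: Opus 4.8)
The plan is to build the surjection $\G_0(G/B,\O_\l)\to X$ using the universal property of $\G_0$ recorded in \rlem{2}, namely that $\G_0(G/B,\O_\l)$ is the \emph{maximal finite-dimensional quotient} of $U(\g)\otimes_{U(\bb)}C_\l$. So it suffices to produce a nonzero $\g$-module map $U(\g)\otimes_{U(\bb)}C_\l\to X$ and argue it is surjective; since $X$ is finite-dimensional, such a map will factor through $\G_0(G/B,\O_\l)$, and surjectivity of the composite will follow from surjectivity of the original map.

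First I would locate a highest weight vector for $L_\l$ inside $X$. Since $L_\l$ is the unique simple quotient of $X$, fix a surjection $q\colon X\twoheadrightarrow L_\l$ and let $\bar v\in L_\l$ be a highest weight vector (weight $\l$, killed by $\nn^+$). I want to lift $\bar v$ to a vector $v\in X$ of weight $\l$ that is annihilated by $\nn^+$. The weight-$\l$ lift exists because $\h$ acts semisimply; the subtlety is the $\nn^+$-annihilation. Here is where the hypothesis ``$L_\sigma$ a subquotient of $X$ $\implies \sigma<\l$'' enters: any vector in $X$ of weight $\l$ generates a submodule all of whose weights are $\le\l$, and $\nn^+ v$ has weights strictly above $\l$, hence $\nn^+ v$ lies in the (zero) weight spaces above $\l$ — wait, more carefully, I should argue that the weight-$\l$ space of $X$ already consists of highest weight vectors, or pass to $\bigoplus_{\mu\not<\l}X_\mu$ and use that $\l$ is maximal there. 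Concretely: $X_\l$ maps onto $L_{\l,\l}\ne 0$, so $X_\l\ne 0$; pick $0\ne v\in X_\l$; then $\nn^+_\a v\in X_{\l+\a}$ for $\a\in\D^+$, and $\l+\a\not< \l$ and $\l+\a\ne\l$, so if $X_{\l+\a}\ne0$ then $\l+\a$ is a weight of $X$, but I need it not to be a weight at all — this follows since any weight $\mu$ of $X$ satisfies: $\mu$ is a weight of some simple subquotient $L_\sigma$, and weights of $L_\sigma$ are $\le\sigma\le\l$ (the last using the hypothesis when $\sigma\ne\l$, and $\mu\le\l=\sigma$ when $\sigma=\l$), so every weight of $X$ is $\le\l$; since $\l+\a\not\le\l$, $X_{\l+\a}=0$. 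Hence $v$ is a highest weight vector.

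Next, by the universal property of the Verma-type induced module $U(\g)\otimes_{U(\bb)}C_\l$ (Frobenius reciprocity / freeness of $U(\g)$ over $U(\bb)$ by PBW), the assignment $1\otimes v_\l\mapsto v$ extends uniquely to a $\g$-module homomorphism $f\colon U(\g)\otimes_{U(\bb)}C_\l\to X$. Its image is the $\g$-submodule of $X$ generated by $v$. To see $f$ is surjective: $\mathrm{Im}(f)$ is a submodule whose image in $L_\l$ under $q$ contains $q(v)=\bar v\ne0$, hence equals $L_\l$ since $L_\l$ is simple; therefore $X=\mathrm{Im}(f)+\ker(q)$. But $\ker(q)$ is the unique maximal submodule of $X$ (because $L_\l$ is the \emph{unique} simple quotient, so $X$ is local in the module sense — every proper submodule lies in $\ker q$), so if $\mathrm{Im}(f)\ne X$ then $\mathrm{Im}(f)\subseteq\ker(q)$, contradicting $X=\mathrm{Im}(f)+\ker(q)$ unless $\ker(q)=X$, impossible. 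Hence $f$ is onto. Finally, since $X$ is finite-dimensional, $f$ factors as $U(\g)\otimes_{U(\bb)}C_\l\twoheadrightarrow \G_0(G/B,\O_\l)\xrightarrow{\bar f} X$ by \rlem{2} (maximality of the finite-dimensional quotient $\G_0(G/B,\O_\l)$), and $\bar f$ is surjective because $f$ is. This $\bar f$ is the desired surjection.

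The main obstacle, and the step deserving the most care, is the argument that the weight-$\l$ space of $X$ consists of highest weight vectors, i.e.\ that no weight of $X$ is $\not\le\l$; this is exactly where the hypothesis on subquotients is used, and in the super setting one must be slightly careful that ``weight'' bookkeeping (and the parity normalization built into $\F$) does not interfere — but since all the relevant comparisons are in $\h^*$ and the subquotient hypothesis is stated for the $L_\sigma$'s directly, the argument goes through verbatim. Everything else is the standard ``local module maps onto its Verma cover'' argument combined with \rlem{2}.
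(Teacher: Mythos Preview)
Your proof is correct and is exactly the argument the paper has in mind: the paper's proof is the single line ``Follows from \rlem{2}'', and what you have written is a careful unpacking of that line (locate a weight-$\l$ highest weight vector in $X$ using the subquotient hypothesis, get a map from the induced module by Frobenius reciprocity, check surjectivity via the locality of $X$, and factor through the maximal finite-dimensional quotient). There is no difference in approach, only in level of detail.
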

\begin{proof} Follows from \rlem{2}. 
\end{proof}

\begin{lemma}\label{eqcoho} For $\l\in F^{(a,b)}$ (or $F^{a}$) let $T(L_\l)=L_{\l'}$ and $T$ an equivalence of categories $\F^{(a,b)}$ and $\F^{(a+1,b+1)}$ (or $F^{a}$ and $F^{a+2}$) preserving the order on weights. We have $T(\G_0(G/B, \O_{\l}))=\G_0(G/B, \O_{\l'})$. 
\end{lemma}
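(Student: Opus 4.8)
The plan is to combine the translation-functor/geometric-induction compatibility (\rlem{6}) with \rcor{1a}, which says exactly that $\G_0$ intertwines tensoring with a $\g$-module followed by projection to a block. First I would recall that by definition $T(\O_\l)=(\O_\l\otimes\g)^{\Phi^{-1}(\chi)}$, where $\chi$ is the central character of the block $\F^{(a+1,b+1)}$ (resp. $\F^{a+2}$), and that $T$ restricted to $\g$-modules is $V\mapsto (V\otimes\g)^\chi$. Applying \rlem{6} gives $\G_0(G/B, T(\O_\l)) = T(\G_0(G/B,\O_\l))$, so the left-hand side is $T$ applied to the object $\G_0(G/B,\O_\l)$, which lies in $\F^{(a,b)}$ (resp. $\F^a$) by the central-character statement preceding \rcor{1a}. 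Thus everything reduces to identifying $T(\G_0(G/B,\O_\l))$ with $\G_0(G/B,\O_{\l'})$.

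The second step is to pin down $T(\G_0(G/B,\O_\l))$ using the hypotheses on $T$. By \rcor{z}, $\G_0(G/B,\O_\l)$ is an indecomposable (being a quotient of the indecomposable projective-type object, or by the uniqueness of its simple top) module with unique simple quotient $L_\l$ and with $[\G_0(G/B,\O_\l):L_\l]=1$; moreover by \rlem{3} every other composition factor $L_\sigma$ satisfies $\sigma+\r = w(\l+\r)-\sum_{\a\in I}\a$ for $w=\mathrm{id}$ (since $i=0$) and $I\subset\D_1^+$, hence $\sigma<\l$. Since $T$ is assumed to be an equivalence of the two blocks preserving the order on weights and $T(L_\l)=L_{\l'}$, the module $T(\G_0(G/B,\O_\l))$ is again indecomposable with unique simple quotient $L_{\l'}$, with $[\,T(\G_0(G/B,\O_\l)):L_{\l'}]=1$, and with all other composition factors $L_{\sigma'}$ satisfying $\sigma'<\l'$ (because $T$ sends the composition factors $L_\sigma$, $\sigma<\l$, to $L_{\sigma'}$ with $\sigma'<\l'$). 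By \rlem{surjection} there is then a surjection $\G_0(G/B,\O_{\l'})\twoheadrightarrow T(\G_0(G/B,\O_\l))$.

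The third step is to prove this surjection is an isomorphism, which I expect to be the main obstacle. The cleanest route is a dimension/character count: apply $T^{-1}$ (the inverse equivalence, which by symmetry of the hypotheses is the analogous translation functor between $\F^{(a+1,b+1)}$ and $\F^{(a,b)}$, also order-preserving) to the surjection $\G_0(G/B,\O_{\l'})\twoheadrightarrow T(\G_0(G/B,\O_\l))$, obtaining a surjection $T^{-1}(\G_0(G/B,\O_{\l'}))\twoheadrightarrow \G_0(G/B,\O_\l)$. Running the argument of step~2 in the reverse direction (now with $T^{-1}$, $\l'$, $\l$ in place of $T$, $\l$, $\l'$) gives also a surjection $\G_0(G/B,\O_\l)\twoheadrightarrow T^{-1}(\G_0(G/B,\O_{\l'}))$; since both modules are finite-dimensional, a surjection in each direction forces them to have equal dimension and hence the surjections are isomorphisms. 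Applying $T$ back recovers $T(\G_0(G/B,\O_\l)) \cong \G_0(G/B,\O_{\l'})$. Combining with $\G_0(G/B,T(\O_\l))=T(\G_0(G/B,\O_\l))$ from \rlem{6} yields the claim. The one subtlety to check carefully is that $T^{-1}$ really is realized by the ``reverse'' translation functor so that \rlem{6} applies to it as well; this is where the hypothesis that $T$ is an equivalence \emph{of these specific blocks} is used, together with the explicit description of the blocks as all being equivalent via $\g$-tensoring established in \rthm{CBF}(2) and \rthm{CBG}(2).
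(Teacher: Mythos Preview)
Your core argument (steps 2 and 3) is correct and matches the paper's proof exactly: use \rlem{surjection} to get $\G_0(G/B,\O_{\l'})\twoheadrightarrow T(\G_0(G/B,\O_\l))$, then by symmetry (the paper uses the adjoint $T^*$, which coincides with $T^{-1}$ since $T$ is an equivalence) get $\G_0(G/B,\O_{\l})\twoheadrightarrow T^{-1}(\G_0(G/B,\O_{\l'}))$, and conclude by finite-dimensionality. Your first step invoking \rlem{6} and \rcor{1a} is entirely unnecessary here---the statement to be proved is $T(\G_0(G/B,\O_\l))=\G_0(G/B,\O_{\l'})$, which does not mention $T(\O_\l)$ at all, so that detour and the attendant worry about whether $T^{-1}$ is realized by a translation functor can be dropped.
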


\begin{proof} From \rlem{2}, $\G_0(G/B, \O_{\l})$ is a maximal indecomposable module with quotient $L_\l$. Since $T$ is an equivalence of categories, $T(\G_0(G/B, \O_{\l}))$ is an indecomposable module with quotient $L_\l'$. All other simple subquotients of $T(\G_0(G/B, \O_{\l}))$ are $L_\s$ with $\s<\l'$.

By \rlem{surjection}, we have a surjection $\G_0(G/B, \O_{\l'})\rightarrow T(\G_0(G/B, \O_{\l}))$. In a similar way we have a surjection $\G_0(G/B, \O_{\l})\rightarrow T^*(\G_0(G/B, \O_{\l'}))$. This proves the equality. 
\end{proof}

%\begin{lemma}(\cite{S3})\label{Mx} For any $\g$-modules $M$ and $N$, we have $(M\otimes N)_x=M_x\otimes N_x$. 
%\end{lemma}

\begin{lemma}\label{zero} Let $T=T_{\chi,\tau}$. For $\g=F(4)$, let $\chi=(a,b)$ and $\tau=(a+1,b+1)$ and for $\g=G(3)$, let $\chi=a$ and $\tau=a+2$. Then $T(L_{\l})\neq 0$ for any $\l\in\F^{\chi}$. 
\end{lemma}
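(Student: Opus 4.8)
The plan is to show that the translation functor $T_{\chi,\tau}$ cannot kill any simple module in the block $\F^{\chi}$, by exploiting the explicit parametrization of weights in atypical blocks obtained in \rthm{cf4} and \rthm{cg3} together with the description of $\g_{\1}$ as a $\g_{\0}$-module. First I would recall that $T_{\chi,\tau}(L_\l)=(L_\l\otimes\g)^\tau$, so it suffices to show that $L_\l\otimes\g$ has a nonzero component with central character $\tau$; since $\g\cong\g_{\0}\oplus\g_{\1}$ as $\g_{\0}$-modules and the even part contributes only to $\F^\chi$ (tensoring with $\g_{\0}$ preserves the central character up to the same block, as $\g_{\0}$ has integral weights in the root lattice), the relevant contribution must come from $L_\l\otimes\g_{\1}$. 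So the real claim is that $L_\l\otimes\g_{\1}$ has a constituent whose highest weight lies in $F^\tau$.

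The key computational step is to identify, for a given dominant $\l\in F^\chi$, a weight $\mu$ of $\g_{\1}$ such that $\l+\mu$ (or a dominant weight in its orbit) lies in $F^\tau$ and actually occurs as a highest weight of a constituent of $L_\l\otimes\g_{\1}$. For $F(4)$ the weights of $\g_{\1}$ are $\frac12(\pm\e_1\pm\e_2\pm\e_3\pm\d)$ and for $G(3)$ they are $\pm\d,\ \pm\e_i\pm\d$; in both cases $\tau$ is obtained from $\chi$ by shifting $a,b\mapsto a+1,b+1$ (resp.\ $a\mapsto a+2$), which by the formulas of \rlem{dominant integral weights of F(4)} and \rlem{dominant integral weights of G(3)} corresponds to adding $\e_1+\e_2+\e_3$ (resp.\ $2\e_1$, or equivalently a suitable multiple consistent with $\e_1+\e_2+\e_3=0$) to $\l+\r$. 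I would check case by case, using the interval decomposition of the parameter $c$ in \rthm{cf4}/\rthm{cg3} and \rrem{remark}/\rrem{remark1} (which pin down the atypical root $\b_i=w_c(\a)$ with $(\l+\r,\b_i)=0$), that there is a weight $\mu\in P(\g_{\1})$ with $\l+\mu$ dominant and lying in $F^\tau$, and that the corresponding weight space in $L_\l\otimes\g_{\1}$ is an extremal (hence highest-weight-generating) weight space, so that $L_{\l+\mu}$ or the appropriate dominant representative occurs as a subquotient with nonzero multiplicity. An alternative, cleaner route is to use the fiber functor: by \rlem{dominant integral weights of F(4)} and \rlem{dominant integral weights of G(3)}, $(L_\l)_x$ is a nonzero sum of simple $\g_x$-modules, and $T$ intertwines (up to the identifications of $\check\phi$) with a translation functor on $\g_x=\mathfrak{sl}(3)$ or $\mathfrak{sl}(2)$, which visibly never annihilates a nonzero finite-dimensional module; combined with $sdim\,L_\l=sdim\,(L_\l)_x$ and a dimension/multiplicity bookkeeping this forces $T(L_\l)\neq 0$.

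I expect the main obstacle to be verifying that the candidate weight $\l+\mu$ really occurs as a \emph{highest} weight of an indecomposable summand of $L_\l\otimes\g_{\1}$ with nonzero multiplicity, rather than merely appearing as a non-extremal weight that could be cancelled in the projection to $\F^\tau$; handling this uniformly requires either the extremality argument above (showing $\l+\mu$ is maximal in $P(L_\l\otimes\g_{\1})$ in the dominance order restricted to $F^\tau$, so its multiplicity in the $\tau$-component equals its weight multiplicity, which is positive) or the fiber-functor reduction, and one must be careful at the special weights $\l_1,\l_2,\l_0$ near the branch/end of the quiver, where the orbit structure degenerates. Once the nonvanishing of the relevant weight space in the $\tau$-component is established, $T(L_\l)\neq 0$ follows immediately.
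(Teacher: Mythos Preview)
Your ``alternative, cleaner route'' via the fiber functor is exactly what the paper does: it uses the identity $(M\otimes\g)_x=M_x\otimes\g_x$ from \cite{S3} to write $T(L_\l)_x=((L_\l)_x\otimes\g_x)^{\Phi^{-1}(\tau)}$, and then nonvanishing follows because on the $\g_x$-side ($\mathfrak{sl}(3)$ or $\mathfrak{sl}(2)$) the corresponding translation never kills a nonzero module. So that part of your proposal matches the paper.

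Your primary route, however, contains a wrong step. The claim that ``the even part contributes only to $\F^\chi$'' because $\g_{\0}$ has weights in the root lattice is false: adding an even root to $\l$ certainly need not preserve the central character. In fact, in the explicit pictures of Sections~7--9 the arrows $\l\mapsto\l+\gamma$ realizing $T(L_\l)=L_{\l+\gamma}$ are labeled mostly by \emph{even} roots such as $\e_1+\e_2$, $\e_1\pm\e_3$, $-\e_1+\e_2$, $\pm\d$, $\pm 2\d$; odd roots appear only at a handful of exceptional positions near the walls. So restricting attention to $L_\l\otimes\g_{\1}$ would miss the generic mechanism entirely. Your weight-by-weight strategy is salvageable, but only if you search among all weights of $\g$; this is precisely the case analysis carried out later in the paper (Lemmas in Sections~7--9), and it is considerably longer than the two-line fiber-functor argument.

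One further caution on the fiber route: you assert that $(L_\l)_x$ is nonzero. This is true here (atypicality one, defect one), but it is not a tautology, and in the paper's logical order the full Kac--Wakimoto statement (\rthm{KW}) comes only after the block equivalences. The paper's proof simply writes the identity and says ``this implies $T(L_\l)\neq 0$'', implicitly using that $(L_\l)_x\neq 0$ for atypical $L_\l$; if you want an independent proof you should either cite the relevant result from \cite{DuSe} or \cite{S3} directly, or note that the associated variety of an atypical simple contains the minimal nonzero orbit.
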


\begin{proof} From definition of translation functor, we have $T(L_{\l})=(L_{\l}\otimes\g)^{\tau}$. From \cite{S3}, we have $(M\otimes\g)_x=M_x\otimes\g_x$ for any $\g$-module $M$. Thus, $T(L_{\l})_x=(L_{\l}\otimes\g)^{\tau}_x=((L_{\l})_x\otimes \g_x)^{\Phi^{-1}(\tau)}$, where $(L_{\l})_x$ is an $\g_x$-module. And $\g_x\cong sl(3)$ or $sl(2)$. This implies $T(L_{\l})\neq 0$. 
\end{proof}

\begin{lemma}\label{acyclicx} Let $\l\in F^{\chi}$ be dominant. Assume there is exactly one dominant weight $\m\in F^{\tau}$ of the form $\l+\gg$ with $\gg\in \D$. Then we have $T(L_{\l})=L_{\m}$.
\end{lemma}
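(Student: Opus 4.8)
The plan is to leverage the translation functor machinery developed just above, together with the geometric induction exact sequences, to pin down $T(L_\lambda)$. First I would recall from \rlem{zero} that $T(L_\lambda)\neq 0$, so $T(L_\lambda)$ is a nonzero finite-dimensional module in the block $\F^\tau$. By the definition of the translation functor, $T(L_\lambda)=(L_\lambda\otimes\g)^\tau$, and since $\g$ has weights among $\{0\}\cup\D$, every weight of $L_\lambda\otimes\g$ is of the form $\nu+\gamma$ with $\nu$ a weight of $L_\lambda$ and $\gamma\in\D\cup\{0\}$; in particular the highest weights occurring are bounded above by $\lambda+\gamma_{\max}$. The key point is that the simple subquotients $L_\sigma$ of $T(L_\lambda)$ must have $\sigma\in F^\tau$ and $\sigma\leq \lambda+\gamma$ for some $\gamma\in\D$ (using the standard weight estimate for $\lambda$ highest). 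Combined with the hypothesis that $\mu=\lambda+\gamma$ is the \emph{unique} dominant weight in $F^\tau$ of that form, this forces $L_\mu$ to be the unique simple quotient — and in fact the only simple subquotient that can appear at the top — so $T(L_\lambda)$ is indecomposable with simple quotient $L_\mu$.

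Next I would show $T(L_\lambda)=L_\mu$ exactly (no extra subquotients below). The cleanest route is to use the self-adjointness up to the dual: $T^*_{\chi,\tau}$ is left adjoint to $T_{\chi,\tau}$ (as noted in the definition, since $\g\cong\g^*$), and by the symmetric form of the hypothesis one can run the same uniqueness argument to see $T^*(L_\mu)$ is indecomposable with simple quotient $L_\lambda$. Then $\mathrm{Hom}(T^*(L_\mu),L_\lambda)\neq 0$ gives $\mathrm{Hom}(L_\mu,T(L_\lambda))\neq 0$, so $L_\mu$ embeds in $T(L_\lambda)$; but $L_\mu$ is also the unique simple quotient, so if $T(L_\lambda)$ were not simple it would have a nontrivial submodule and quotient both isomorphic to $L_\mu$, contradicting that $\mu$ occurs with multiplicity controlled by the character identity. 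Alternatively — and this is probably the more robust argument — I would invoke \rlem{sums} / the Euler characteristic: $\sum_i(-1)^i \mathrm{ch}\,\G_i(G/B,\O_\lambda)$ is computable, and applying $T$ commutes with $\G_i$ by \rlem{6}, so comparing $\sum_i(-1)^i\mathrm{ch}\,T(\G_i(G/B,\O_\lambda))$ against $\sum_i(-1)^i\mathrm{ch}\,\G_i(G/B,\O_\mu)$ and using that only $\mu$ is dominant in the relevant coset shows the class of $T(L_\lambda)$ in the Grothendieck group equals $[L_\mu]$, forcing $T(L_\lambda)=L_\mu$ since $T(L_\lambda)$ is already known nonzero and its top is $L_\mu$.

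The main obstacle I anticipate is ruling out "lower" composition factors $L_\sigma$ with $\sigma<\mu$, $\sigma\in F^\tau$, from appearing in $T(L_\lambda)$: the weight bound $\sigma\leq\lambda+\gamma$ does not by itself forbid non-dominant-looking intermediate weights, and one must use the block structure (that $F^\tau$ is a discrete set of dominant weights, described by \rlem{dominant integral weights of F(4)} / \rlem{dominant integral weights of G(3)} and \rthm{cf4} / \rthm{cg3}) plus the fact that $\g$ has a rather small set of weights (the roots, each of "length" comparable to a single fundamental-weight step) to check that no other dominant weight of $F^\tau$ lies in $\{\lambda+\gamma:\gamma\in\D\}$ — which is exactly the hypothesis. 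So the proof really reduces to: (i) $T(L_\lambda)\neq 0$ by \rlem{zero}; (ii) every simple subquotient has highest weight in $\{\lambda+\gamma\}\cap F^\tau=\{\mu\}$; hence (iii) $T(L_\lambda)$ is a nonzero module all of whose composition factors are $L_\mu$, and since $L_\mu$ has no self-extensions within a single weight (the quiver has no loops, as recorded in \rthm{CBF}, \rthm{CBG}), $T(L_\lambda)=L_\mu$. I would write it in that order, flagging step (ii)'s weight bookkeeping as the one place requiring the explicit root data of $F(4)$ and $G(3)$.
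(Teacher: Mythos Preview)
Your proposal has a genuine gap and misses the device on which the paper's short proof rests.

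The gap is step (ii). You claim every simple subquotient $L_\sigma$ of $T(L_\lambda)$ satisfies $\sigma\in\{\lambda+\gamma:\gamma\in\Delta\}$, but a composition factor only forces $\sigma$ to be a dominant element of $F^\tau$ occurring as a weight of $L_\lambda\otimes\g$, i.e.\ $\sigma=\nu+\gamma$ for some weight $\nu\le\lambda$ of $L_\lambda$ and some $\gamma\in\Delta\cup\{0\}$; nothing pins $\nu$ to $\lambda$. Your attempted resolution (``no other dominant weight of $F^\tau$ lies in $\{\lambda+\gamma\}$ --- which is exactly the hypothesis'') conflates the set $\{\lambda+\gamma\}$ with the much larger set of dominant weights in $F^\tau$ lying below $\mu$. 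The appeal to ``the quiver has no loops'' is circular: \rthm{CBF} and \rthm{CBG} are established through \rlem{x1122}, \rlem{aa1x}, \rlem{xab}, each of which invokes the present lemma. The adjoint-functor and Euler-characteristic alternatives run into the same wall: you would need either the symmetric hypothesis for $T^*$ (not given) or the character of $L_\lambda$ (not yet available at this point in the paper).

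The paper instead uses that $T(L_\lambda)=(L_\lambda\otimes\g)^\tau$ is \emph{contragredient}, since $L_\lambda$ and $\g$ are both self-dual. The hypothesis is then applied not to composition factors but to $\bb$-\emph{singular vectors}: filtering $\g$ as a $\bb$-module by one-dimensional weight spaces shows that every singular vector of $L_\lambda\otimes\g$ has weight $\lambda+\gamma$ for some weight $\gamma$ of $\g$, so in the block $\tau$ the only possible singular weight is $\mu$. Self-duality then yields $T(L_\lambda)=L_\mu\oplus M$, and $M=0$ because a nonzero module must contain a further $\bb$-singular vector. Contragredience is precisely what sidesteps the question of which $L_\sigma$ with $\sigma<\mu$ might appear as subquotients.
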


\begin{proof} By definition, $T(L_{\l})=(L_{\l}\otimes\g)^{\tau}$. By assumption, $\m$ is the only $\bb$-singular weight in $T(L_\l)$. Since $T(L_\l)$ is contregradient, $T(L_\l)=L_\l\oplus M$. If $M\neq 0$, it must have another $\bb$-singular vector. Hence, $M=0$ and the statement follows. 
%Thus the $\bb$-module $C_{\l}\times\g$ has a filtration by all $C_\s$ with $\s=\m$ or $\t$. The Euler characteristic of the associated graded sheaf coincides with the Euler characteristic of the original sheaf and we obtain:

%$$T(\varepsilon(\l))=\sum_{\s}\varepsilon(\s),$$
 
%Since $\t$ is acyclic weight, there is an even root $\a$ such that $(\t+\r,\a)=0$. Thus $\varepsilon(\t)=0$. Therefore, the acyclic weight doesn't contribute to the summation above. 
%\\

%We have $T(\varepsilon(\l))=\varepsilon(\m)$. 

%Thus, we have $T(L_{\l})=L_{\m}$ or zero. It cannot be zero by \rlem{zero}.

\end{proof}

\begin{theorem}\label{eqgeneral} Assume, for every $L_{\l}\in \F^{\chi}$, there is a unique $L_{\l'}=T(L_{\l})\in \F^{\tau}$. Also assume for each $L_{\l'}\in \F^{\tau}$, there are at most two weights $\l_1$ and $\l_2$ in $F^{\chi}$ such that $\l'+\ga=\l_i$, $i=1,2$ with $\l_1=\l >\l_2$ and $\g\in \D$. Then the categories $\F^{\chi}$ and $\F^{\tau}$ are equivalent. 
\end{theorem}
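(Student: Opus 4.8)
The plan is to prove that $T=T_{\chi,\tau}$ and $T^{*}=T^{*}_{\chi,\tau}$ are mutually quasi-inverse equivalences. First I would record the formal input. Both functors are exact, since tensoring with the finite-dimensional module $\g$ and projecting onto a block are exact. Because $\g\cong\g^{*}$ as $\g$-modules, $T$ and $T^{*}$ are biadjoint, i.e. $\mathrm{Hom}(TV,W)\cong\mathrm{Hom}(V,T^{*}W)$ and $\mathrm{Hom}(T^{*}W,V)\cong\mathrm{Hom}(W,TV)$ for $V\in\F^{\chi}$ and $W\in\F^{\tau}$; in particular each is a left adjoint of an exact functor, so both preserve projectives, and both commute with the contragredient duality, which fixes every finite-dimensional simple. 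I will also use that $\F^{\chi}$ and $\F^{\tau}$ are module categories of finite-dimensional algebras, so every object has finite length and each simple $L_{\lambda}$ has a projective cover $P(L_{\lambda})$. By the first hypothesis $T$ sends simples to simples, and—as the notation of the second hypothesis presupposes—$\lambda\mapsto\lambda'$ is a bijection between the simples of $\F^{\chi}$ and those of $\F^{\tau}$, with $TL_{\lambda}=L_{\lambda'}$.

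The heart of the argument is the computation $T^{*}TL_{\lambda}=T^{*}L_{\lambda'}\cong L_{\lambda}$ for every simple $L_{\lambda}\in\F^{\chi}$. By biadjointness $\dim\mathrm{Hom}(T^{*}L_{\lambda'},L_{\nu})=\dim\mathrm{Hom}(L_{\lambda'},TL_{\nu})=\dim\mathrm{Hom}(L_{\lambda'},L_{\nu'})$, which is $1$ for $\nu=\lambda$ and $0$ otherwise, and likewise $\dim\mathrm{Hom}(L_{\nu},T^{*}L_{\lambda'})=\dim\mathrm{Hom}(TL_{\nu},L_{\lambda'})$ is $1$ for $\nu=\lambda$ and $0$ otherwise; hence both the head and the socle of $T^{*}L_{\lambda'}$ equal $L_{\lambda}$, with multiplicity one. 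On the other hand the composition factors of $T^{*}L_{\lambda'}=(L_{\lambda'}\otimes\g)^{\chi}$ have highest weights of the form $\lambda'+\gamma$ with $\gamma\in\Delta$, each of multiplicity one because the root spaces of $\g$ are one-dimensional, and by the second hypothesis there are at most two of them: $L_{\lambda}$ and possibly $L_{\lambda_{2}}$ with $\lambda_{2}<\lambda$. If $L_{\lambda_{2}}$ did occur, $T^{*}L_{\lambda'}$ would be a non-semisimple module of length two, whose head and socle would then be the two distinct simples $L_{\lambda}$ and $L_{\lambda_{2}}$ in one order or the other, contradicting that both equal $L_{\lambda}$. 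Hence $T^{*}L_{\lambda'}\cong L_{\lambda}$, and the adjunction unit $\eta_{L_{\lambda}}\colon L_{\lambda}\to T^{*}TL_{\lambda}$, which is nonzero by a triangle identity, is an isomorphism. (Equivalently, one can reach this via preservation of projectives: the head computation above shows $T^{*}P(L_{\lambda'})\cong P(L_{\lambda})$ and $TP(L_{\lambda})\cong P(L_{\lambda'})$, which forces $[T^{*}L_{\lambda'}:L_{\lambda}]=1$ and squeezes out $L_{\lambda_{2}}$.)

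To finish I would bootstrap. By exactness of $T,T^{*}$ and the five lemma, induction on composition length promotes $\eta$ to a natural isomorphism $M\xrightarrow{\ \sim\ }T^{*}TM$ on all of $\F^{\chi}$, so $T$ is fully faithful. Since $T^{*}$ is exact, $\mathrm{Ext}^{i}_{\F^{\tau}}(TM,TN)\cong\mathrm{Ext}^{i}_{\F^{\chi}}(M,T^{*}TN)\cong\mathrm{Ext}^{i}_{\F^{\chi}}(M,N)$ for all $i$; hence the essential image of $T$ is closed under extensions, and by the first hypothesis it contains every simple of $\F^{\tau}$, so by induction on length $T$ is essentially surjective. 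Therefore $T\colon\F^{\chi}\to\F^{\tau}$ is an equivalence with quasi-inverse $T^{*}$, and the counit $TT^{*}\to\mathrm{id}$ is automatically an isomorphism. I expect the genuinely delicate point to be the middle paragraph: converting the combinatorial ``at most two weights'' hypothesis into the statement that $T^{*}L_{\lambda'}$ is actually simple. Once the head and socle have been identified by adjunction, the multiplicity-one bookkeeping (or, equivalently, preservation of projectives) does this, but it is where hypotheses 1 and 2 are really used and where care is needed. A secondary technical point is that one must work inside the block $\F^{\chi}\subset\F$, where finite length and projective covers are available, rather than in $\CC$.
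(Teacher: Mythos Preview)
Your proof is correct and follows essentially the same approach as the paper: both compute the head and socle of $T^*L_{\lambda'}$ via adjunction to be $L_\lambda$, and then argue that no other composition factor can occur, yielding $T^*L_{\lambda'}\cong L_\lambda$ and hence the equivalence. The only cosmetic difference is that the paper invokes contragredience of $T^*L_{\lambda'}$ to split $L_\lambda$ off as a direct summand and then observes the complement has no simple quotient, whereas you run a length-two case analysis (and sketch a projective-cover alternative); your final paragraph spelling out the five-lemma bootstrap just makes explicit what the paper compresses into ``it is sufficient to show that we have exact and mutually adjoint functors $T$ and $T^*$ which induce a bijection between simple modules.''
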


\begin{proof} We show that translation functor $T$ defined by $T(L_{\l})=(L_{\l}\otimes\g)^{\tau}$ is an equivalence of categories $\F^{\chi}$ and $\F^{\tau}$. 

It is sufficient to show that we have exact and mutually adjoint functors $T$ and $T^*$, which induce bijection between simple modules. Since we already have that $T$ maps simple modules in $\F^{\chi}$ to simple modules in $\F^{\tau}$, we just need to show that $T^*$ also maps simple modules to simple modules such that $T \cdot T^*=id_{\F^{\tau}}$ and $T^*\cdot T=id_{\F^{\chi}}$. 

Thus, we just show that $T^*(L_{\l'})=L_{\l}$ for each $\l'\in F^{\tau}$. We have $Hom_{\mathfrak{g}}(T^*(L_{\l'}),L_{\m}) = Hom_{\mathfrak{g}}(L_{\l'},T(L_{\m})) =Hom_{\mathfrak{g}}(L_{\l'},L_{\m'}) = \mathbb{C}\text{ for }\l=\m\text{ and }0\text{ otherwise}.$ Similarly, we have $Hom_{\mathfrak{g}}(L_{\m},T^*(L_{\l'})) = Hom_{\mathfrak{g}}
(T(L_{\m}),L_{\l'}) = Hom_{\mathfrak{g}}(L_{\m'},L_{\l'})=\mathbb{C}$ for $\m=\l$ and $0$ otherwise.

The $\bb$-singular vectors in $T^*(L_{\l'})$ have weights of the form $\l=\l'+\gamma$ with $\gamma\in\D$. By assumption of the theorem, all $\bb$-singular vectors in $T^*(L_{\l'})$ are less than or equal to $\l$ in the standard order. Since $T^*(L_{\l'})$ is contragradient and the multiplicity of $L_\l$ in $T^*(L_{\l'})$ is one, we must have $T^*(L_{\l'})=L_\l\oplus M$ for some module $M$. Since $Hom(M, L_{\xi})=0$ for any $\xi\in F^{\chi}$, we have $M=0$. 

%The $\bb$-singular vectors in $T^*(L_{\l'})$ have weight $\leq \l=\l'+\gamma$ with $\gamma\in\D$. Therefore, the multiplicity of the weight $\l$ in $T^*(L_{\l})$ is 1. Thus, the multiplicity of module $L_{\l}$ in $T^*(L_{\m'})$ is also one. 

%We want to show that the multiplicity of module $L_{\l_i}$ in $T^*(L_{\m_i})$ is one. Thus, $T^*(L_{\m_i})=L_{\l_i}$. 
%\\

%Assume to the contrary, then we have: 

%$$0\rightarrow L_{\l_i}\rightarrow T^*(L_{\m_i}) \rightarrow L_{\l_i}\rightarrow 0.$$

%But, since $\l_i$ is a highest weight vector this is impossible.  

%We show $T^*(L_{\m'})\neq 0$. We have $T^*(\G_0(G/B, \O_{\m'}))=\G_0(G/B, T^*(\O_{\m'}))$. Since $T^*$ is exact and $L_{\m}$ is a subquotient of the right side, there is simple module $L_{\sigma}$ subquotient of $\G_0(G/B, \O_{\m'})$ with $T^*(L_{\sigma})=L_{\m}$. By definition of translation functor, $\sigma+\tau=\m$ with $\tau\in \D$. We see from the picture, that this is impossible, unless $\m'=\m_0$ or $\m'=\m_2$. For $\m'=\m_2$, there is no possible $\sigma$ as $L_{\m_2}=\G_0(G/B, \O_{\m_2})$. For $\m'=\m_0$, $\sigma=\m_2$ is the only possibility. But from previous paragraph, we know that $T^*(L_{\m_0})$ is either $L_{\l_0}$ or zero. Which implies that $T^*(L_{\sigma})=T^*(L_{\m_2})\neq L_{\m'+\gamma}=L_{\m_0+\gamma}$. Thus its also impossible, thus contradiction. 
%\\

 \end{proof}

\begin{lemma} For the distinguished Borel $B$ and dominant weight $\l$, we have $$\G_i(G/B, \O_{\l})=0\text{ for }i>1.$$ 
\end{lemma}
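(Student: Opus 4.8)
The plan is to realize $G/B$ as a Zariski--locally trivial fibration over a parabolic flag superspace whose underlying variety is $\mathbb P^1$ and whose fibre is an \emph{ordinary} flag variety, and then to combine the classical Borel--Weil theorem applied fibrewise with Grothendieck's cohomological vanishing on the one--dimensional base.

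First I would introduce the $\Z$--grading $\g=\g_{-2}\oplus\g_{-1}\oplus\g_0\oplus\g_1\oplus\g_2$ defined by $\mathrm{ad}$ of the coroot $h_\d$: here $\g_{\pm2}=\g_{\pm\d}$ for $F(4)$ (resp. $\g_{\pm2}=\g_{\pm2\d}$ for $G(3)$), $\g_{\pm1}=\nn_1^{\pm}$, and $\g_0=\h\oplus(\text{the even root spaces whose roots do not involve }\d)$ is purely even, with $\g_0\cong\mathfrak{so}(7)\oplus\C$ for $F(4)$ and $\g_0\cong G_2\oplus\C$ for $G(3)$. Then $\mathfrak p:=\bigoplus_{j\ge0}\g_j$ is a parabolic subalgebra containing $\bb$, with even Levi $\mathfrak l=\g_0$ and nilradical $\g_1\oplus\g_2$; since $\bb_1=\nn_1^+=\g_1$ and $\g_2\subset\nn_0^+$, this nilradical lies inside $\bb$. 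Consequently $P/B=L/B_L$ is the ordinary flag variety of the semisimple part of $\mathfrak l$ --- the flag variety of $\mathfrak{so}(7)$ for $F(4)$, of $G_2$ for $G(3)$ --- while the underlying variety of $G/P$ is $G_0/P_0\cong SL(2)/B_{SL(2)}\cong\mathbb P^1$.

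Next I would pass to the projection $\pi\colon G/B\to G/P$, a locally trivial bundle with fibre $L/B_L$. Since $\l$ is $\g$--dominant and the simple roots of the semisimple part of $\mathfrak l$ lie among the even simple roots of the distinguished base, $\l$ restricts to an $\mathfrak l$--dominant weight; hence the restriction of $\O_\l^*$ to every fibre is the line bundle of a dominant weight on the ordinary flag variety $L/B_L$, and classical Borel--Weil gives $R^q\pi_*\,\O_\l^*=0$ for $q>0$, while $R^0\pi_*\,\O_\l^*$ is the homogeneous vector bundle on $G/P$ attached to the finite--dimensional $P$--module $(L_\l^{\mathfrak l})^*$. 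The Leray spectral sequence of $\pi$ therefore degenerates and $H^i(G/B,\O_\l^*)\cong H^i(G/P,\,R^0\pi_*\,\O_\l^*)$ for all $i$. Finally $\O_{G/P}$ is a sheaf of algebras finite over $\mathcal O_{\mathbb P^1}$, so every coherent sheaf on $G/P$ is coherent over $\mathcal O_{\mathbb P^1}$, and Grothendieck's vanishing theorem on the one--dimensional space $\mathbb P^1$ gives $H^i(G/P,-)=0$ for $i>1$. Thus $H^i(G/B,\O_\l^*)=0$ for $i>1$, and dualizing gives $\G_i(G/B,\O_\l)=0$ for $i>1$.

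I expect the main obstacle to be the super--geometric bookkeeping rather than the cohomological count: one must check that $\pi\colon G/B\to G/P$ is a locally trivial bundle with the stated ordinary flag variety as fibre, that the pushforward of a homogeneous line bundle along $\pi$ is computed fibrewise by base change, and that the Leray spectral sequence is available in this super setting. These facts are standard in the sheaf--cohomology formalism for flag superspaces already invoked above (cf. \cite{Penkov}, \cite{Ser}, \cite{BGGSeGr}), but making them precise is the substantive part. It is also worth noting where the dominance of $\l$ is used: it is exactly what forces the fibrewise Bott degree to be $0$; for a non--dominant but regular $\l$ only $H^{q_0}$ and $H^{q_0+1}$ would survive, with $q_0$ the length of the Weyl element making $\l$ dominant on $\mathfrak l$, and $q_0$ can be as large as $\dim L/B_L$.
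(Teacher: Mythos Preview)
Your proof is correct and is essentially the paper's argument. The parabolic $\mathfrak p=\bigoplus_{j\ge 0}\g_j$ you build from the $h_\d$--grading is exactly the parabolic the paper obtains ``by adding all negative even simple roots'' to $B$ (in the distinguished base the unique odd simple root is $\a_1$, and the even simple roots span the root system of $\mathfrak{so}(7)$, resp.\ $G_2$); both proofs then push $\O_\l$ along $\pi:G/B\to G/P$, use dominance of $\l$ on the purely even fibre $L/B_L$ to kill the higher direct images, and finish with the fact that the even dimension of $G/P$ is $1$ (your Grothendieck vanishing on $\mathbb P^1$). Your write-up is simply more explicit about each step.
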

\begin{proof} Consider the bundle $\pi: G/B\rightarrow G/P$, where $P$ is the parabolic subgroup obtained from $B$ by adding all negative even simple roots. The even dimension of $G/P$ equals 1.

On the other hand, $\pi_*^0(\O_{\l})=L_\l(\mathfrak{p})$ and $\pi_*^i(\O_{\l})=0$, since $\l$ is dominant. Hence, by Leray spectral sequence (see \cite{GrSe}), we have $$\G_i(G/B, \O_{\l})=\G_i(G/P, L_{\l}(\mathfrak{p}))=0,$$ where $\mathfrak{p}$ is the corresponding parabolic subalgebra. 
\end{proof}

\section{Generic weights}

\subsection{Character and superdimension formulae for generic weights}

%The {\it{dimension}} of a superspace is a pair $(m|n)$, where $m=dim{V_0}$ and $n=dim{V_1}$. The {\it{superdimension}} of a superspace is the defined $sdim{V}=dim{V_0}-dim{V_1}$.
%\\

For $\g$-module $M_\l$ and $V=\bigoplus_{\m\in\h^*}V_{\m}$ the weight decomposition of its quotient, we define the {\it{character}} of $V$ by $$ch{V}=\sum_{\m\in P(V)}(dim{V_{\m}})e^{\m}.$$ % and $$sch{V}=\sum_{\l\in P(V)}w(\l)(dim{V_{\l}})e^{\l}$$. 

If $\l\in\L^+$ is a typical weight, then the following character formula is proven by Kac and it holds for the exceptional Lie superalgebras:
$$chL_\l=\frac{D_1\cdot e^{\r}}{D_0}\cdot\sum_{w\in W} sign{w}\cdot e^{w(\l+\r)},$$

where $D_0=\prod_{\a\in\D^+_0}(1-e^{-\a})^{dim{\g_{\a}}}$ and $D_1=\prod_{\a\in\D^+_1}(1+e^{-\a})^{dim{\g_{\a}}}$.

The {\it{generic}} weights are defined in \cite{P} to be the weights far from the walls of the Weyl chamber. Here is a more precise definition:

%where $\l_{ss}$ is the image of the projection $\h^*_0\rightarrow (\h_0)^*_{ss}$. Here, $(\h_0)_{ss}$ is the image of $\h_0$ in $(\g_{\0})_{ss}$, which is the semisimple part of $\g_0$ and is a subalgebra of $\g_{\0}$ and $(\h_0)_{ss}$ is a Cartan subalgebra of $(\g_{\0})_{ss}$. 

\begin{definition} We define $\l_c\in\F^{\chi}$ with $\chi=(a,b)$ or $\chi=a$ to be a {\it{generic weight}} if $c>\frac{a+2b}{3}+\frac{3}{2}$ or $c<-\frac{3}{2}-\frac{2a+b}{3}$ for $F(4)$ and if $c>\frac{3a}{2}-2$ for $G(3)$.
\end{definition} 

The following theorems will be used later in the proofs: 

\begin{theorem}(Penkov, \cite{P})\label{chgeneric} For a generic weight $\l$, the following formula holds: 

$$chL_\l=S(\l)=\frac{D_1\cdot e^{\r}}{D_0}\cdot\sum_{w\in W} sign\,{w}\cdot w(\frac{e^{\l+\r}}{\prod_{\a\in A(\l)}(1+e^{-\a})}),$$ where $A(\l)$ is the maximal set of mutually orthogonal linearly independent real isotropic roots $\a$ such that $(\l+\r,\a)=0$. The set $A(\l)$ is one-element set for $F(4)$ and $G(3)$. 
\end{theorem}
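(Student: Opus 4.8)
The strategy is to compute $ch L_{\l}$ in two ways in the completed ring of characters and match them: representation-theoretically, from the composition structure of Kac modules in the block, and formally, by $W$-antisymmetrizing the right-hand side $S(\l)$. For a $\g_{\0}$-dominant weight $\m$ let $K_\m=U(\g)\otimes_{U(\mathfrak{p})}L_\m(\g_{\0})$ be the Kac module, where $\mathfrak{p}=\g_{\0}\oplus\nn_{\1}^{+}$ and $L_\m(\g_{\0})$, the simple $\g_{\0}$-module of highest weight $\m$, is extended trivially on $\nn_{\1}^{+}$; for the distinguished Borel $K_\m=\G_0(G/B,\O_\m)$ by \rlem{2}. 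Since $K_\m\cong\Lambda(\nn_{\1}^{-})\otimes L_\m(\g_{\0})$ as $\g_{\0}$-modules, the Weyl character formula gives $ch K_\m=\frac{D_1\cdot e^{\r}}{D_0}\sum_{w\in W}sign(w)\,e^{w(\m+\r)}$, which coincides with the character of $L_\m$ when $\m$ is typical. We extend $\m\mapsto ch K_\m$ to all integral weights by the Weyl-skew rule ($ch K_\m=sign(v)\,ch K_{v(\m+\r)-\r}$ when $v(\m+\r)$ is dominant, and $ch K_\m=0$ when $\m+\r$ is $W$-singular), which is forced by \rlem{sums} and \rlem{4}.

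On the formal side, inside $S(\l)$ expand $\frac{1}{1+e^{-\a}}=\sum_{k\ge 0}(-1)^k e^{-k\a}$ and recognize each resulting summand as a Kac character; since the prefactor $\frac{D_1 e^{\r}}{D_0}$ is common to $S(\l)$ and to $ch K_\m$, this gives
\begin{equation*}
S(\l)=\sum_{k\ge 0}(-1)^k\,ch K_{\l-k\a},
\end{equation*}
the right side read via the skew rule. Because $A(\l)=\{\a\}$ and $(\a,\a)=0$ we have $(\l+\r-k\a,\a)=0$ for all $k$, so every nonzero term is the Kac character of a weight in the atypical block of $\l$; as that block meets each weight space in only finitely many Kac characters, the infinite alternating sum collapses to a finite $\Z$-combination $S(\l)=\sum_\m c_\m\,ch K_\m$ supported on the dominant block weights running from $\l$ down to the first non-generic one.

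On the representation-theoretic side, for $\l$ generic the explicit root systems of $F(4)$ and $G(3)$ and \rlem{3} show that any composition factor $L_\m$ of $K_\l$ satisfies $\m+\r=\l+\r-\sum_{\b\in I}\b$ with $I\subset\D_{\1}^{+}$, and the genericity bound on the parameter $c$ of $\l+\r$ (\rthm{cf4}, \rthm{cg3}) is calibrated exactly so that the only such $\m\ne\l$ that is $\g_{\0}$-dominant and lies in the block of $\l$ is $\l^{-}$, the dominant $W$-representative of $\l+\r-\a$; moreover $\l^{-}$ is again generic. Hence $K_\l$ has exactly the two composition factors $L_\l$ (with multiplicity one) and $L_{\l^{-}}$, and being indecomposable it sits in a short exact sequence $0\to L_{\l^{-}}\to K_\l\to L_\l\to 0$. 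Telescoping this relation along the chain $\l,\l^{-},\l^{--},\dots$ — each step valid by genericity, until the chain leaves the generic range, where one closes off using the typical/boundary character — expresses $ch L_\l$ as a finite alternating sum of $ch K_\m$ over precisely the weights appearing in the collapse of $S(\l)$; comparing coefficients gives $ch L_\l=S(\l)$. That $A(\l)$ is a single root is \rlem{atypicality 1}.

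The main obstacle is the composition-structure input: one must check, case by case for $F(4)$ and $G(3)$ from the root data and the sharp genericity inequality, that no weight $\l+\r-\sum_{\b\in I}\b$ with $|I|\ge 2$ is the highest weight of a composition factor of $K_\l$, and that the downshift $\l\mapsto\l^{-}$ preserves genericity and tracks exactly the weights produced by the antisymmetrization of $S(\l)$, so that the two finite sums agree coefficientwise; everything else — the Kac character formula, the skew extension, the expansion and collapse of $S(\l)$, and the telescoping — is then formal. A more geometric alternative, closer to \cite{P}, passes to a Borel $B'$ obtained from the distinguished one by an odd reflection in which $\a$ is a simple isotropic root of Dynkin label $0$ and computes $ch L_\l$ from the cohomology $\G_i(G/B',\O_{\l'})$ via \rlem{3}, \rlem{4}, \rlem{sums}; genericity again forces all but the expected cohomology to vanish and the factor $\frac{1}{1+e^{-\a}}$ to emerge from the single isotropic simple direction, but the vanishing bookkeeping is comparably delicate.
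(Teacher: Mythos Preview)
The paper does not prove this theorem; it is quoted from Penkov \cite{P} and used as an input. So there is no ``paper's own proof'' to compare against, only Penkov's original argument.

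That said, your primary approach has a genuine gap. The parabolic $\mathfrak{p}=\g_{\0}\oplus\nn_{\1}^{+}$ that you use to define the Kac module $K_\m$ is \emph{not a subalgebra} for $F(4)$ or $G(3)$: these are Type~II basic classical superalgebras, meaning $\g_{\1}$ is an irreducible $\g_{\0}$-module, so $\nn_{\1}^{+}$ is not $\g_{\0}$-stable (for instance in $F(4)$ the even root vector for $-\delta$ sends $\g_{\frac{1}{2}(\e_1+\e_2+\e_3+\delta)}$ into $\nn_{\1}^{-}$). Hence $K_\m=U(\g)\otimes_{U(\mathfrak{p})}L_\m(\g_{\0})$ is not defined, the isomorphism $K_\m\cong\Lambda(\nn_{\1}^{-})\otimes L_\m(\g_{\0})$ fails, and the identification $K_\m=\G_0(G/B,\O_\m)$ is unjustified. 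The Kac-module technology you invoke is specific to Type~I ($A(m,n)$, $C(n)$).

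One can try to salvage the argument by replacing $K_\m$ with the Euler characteristic $\varepsilon(\m)$, whose character is indeed $\frac{D_1 e^{\r}}{D_0}\sum_w sign(w)e^{w(\m+\r)}$ by \rthm{chcoh}. The formal expansion $S(\l)=\sum_{k\ge 0}(-1)^k\varepsilon(\l-k\a)$ still holds. But then the telescoping step requires knowing that $\G_0(G/B,\O_\l)$ has \emph{exactly} the two composition factors $L_\l$ and $L_{\l^{-}}$ for generic $\l$; \rlem{3} gives only the upper bound $[\G_0:L_{\l^{-}}]\le 1$, and the paper's proof that this multiplicity equals $1$ (\rlem{generic}) \emph{uses} the superdimension formula derived from \rthm{chgeneric} itself. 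So this route is circular within the paper's framework, and you would need an independent reason for the multiplicity to be nonzero.

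Your ``more geometric alternative'' in the last paragraph is in fact the correct line and is essentially what Penkov does: pass to a Borel in which $\a$ is a simple isotropic root and compute cohomology there, where the factor $(1+e^{-\a})^{-1}$ arises directly from the odd simple direction and the genericity hypothesis kills the unwanted terms. That argument should be the main one, not an afterthought.
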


\begin{theorem}(Penkov, \cite{P})\label{chcoh} For a finite-dimensional $\bb$-module $V$, the following formula holds: 

$$\sum_i(-1)^i ch(H^i(G/B,V^*)^*)=\frac{D_1\cdot e^{\r}}{D_0}\cdot\sum_{w\in W} sign\,{w}\cdot w(ch(V)e^{\r}),$$

\end{theorem}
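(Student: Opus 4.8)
The plan is to compute the alternating sum of characters of the cohomology groups by reducing to the classical Weyl-Kac situation through Euler-characteristic additivity. First I would recall that for a finite-dimensional $\bb$-module $V$ there is a filtration $0=V_0\subset V_1\subset\cdots\subset V_N=V$ by $\bb$-submodules whose successive quotients $V_j/V_{j-1}\cong C_{\mu_j}$ are one-dimensional with weights $\mu_j$ running (with multiplicity) over $P(V)$. Each short exact sequence $0\to V_{j-1}\to V_j\to C_{\mu_j}\to 0$ of $\bb$-modules induces, by \rlem{1}, a long exact sequence of $\g$-modules relating the $\G_i(G/B,-)$, and since the Euler characteristic is additive on short exact sequences, one gets $\sum_i(-1)^i\mathrm{ch}\,\G_i(G/B,V)=\sum_j\sum_i(-1)^i\mathrm{ch}\,\G_i(G/B,\O_{\mu_j})$. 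So the whole statement reduces to the case $V=C_\l$ of a one-dimensional module, i.e. to computing $\varepsilon(\l):=\sum_i(-1)^i\mathrm{ch}\,\G_i(G/B,\O_\l)$ for an arbitrary weight $\l$.

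Next I would compute $\varepsilon(\l)$ for a single line bundle. Here the idea is to use the projection $\pi:G/B\to G/B_0$ onto the ordinary (even) flag variety $G_0/B_0$ relative to a choice of even Borel; the fiber is the "purely odd" flag, which as a supervariety has structure sheaf whose global sections form the exterior algebra $\Lambda(\nn_1^-)^*$, and whose higher cohomology vanishes. Pushing forward $\O_\l$ along $\pi$ and using the projection formula gives a complex of $G_0$-equivariant bundles on $G_0/B_0$ whose Euler characteristic, via the classical Borel-Weil-Bott theorem on $G_0/B_0$, yields $\sum_i(-1)^i\mathrm{ch}\,\G_i(G/B,\O_\l)=\dfrac{D_1\cdot e^\r}{D_0}\cdot\sum_{w\in W}\mathrm{sign}(w)\,e^{w(\l+\r)}$. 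The factor $\dfrac{1}{D_0}$ and the Weyl-group sum come from the classical Weyl character formula on $G_0/B_0$ (Bott's theorem, applied bundle by bundle), while the numerator factor $D_1$ is exactly $\mathrm{ch}\,\Lambda(\nn_1^+)^*$ coming from the odd directions of the fiber, with the $e^\r$ absorbing the shift by the Weyl vector. Equivalently, one can bypass geometry entirely and note that $\sum_i(-1)^i\mathrm{ch}\,\G_i(G/B,\O_\l)$, being an Euler characteristic, is computed by the $\g$-module $\mathrm{Ind}_{\bb}^{\g}C_\l$ as a virtual $\h$-module, whose character is $e^\l\cdot\mathrm{ch}\,U(\nn^-)=e^\l\cdot\dfrac{\prod_{\a\in\D_1^+}(1+e^{-\a})}{\prod_{\a\in\D_0^+}(1-e^{-\a})}$; multiplying through and symmetrizing over $W$ (which is legitimate for the alternating sum since the $W$-action permutes the relevant data up to sign, cf. \rlem{sums}) produces the stated formula with $e^\r$ and $D_1/D_0$.

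Finally, combining the two reductions: for general $V$, $\sum_i(-1)^i\mathrm{ch}\,\G_i(G/B,V)=\sum_{\mu\in P(V)}\varepsilon(\mu)=\dfrac{D_1\cdot e^\r}{D_0}\cdot\sum_{w\in W}\mathrm{sign}(w)\,w\!\left(e^\r\sum_{\mu\in P(V)}e^{\mu}\right)=\dfrac{D_1\cdot e^\r}{D_0}\cdot\sum_{w\in W}\mathrm{sign}(w)\,w\big(\mathrm{ch}(V)\,e^\r\big)$, using that $\mathrm{ch}(V)=\sum_{\mu\in P(V)}e^\mu$ and that $w$ acts linearly. This is the claimed identity. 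The main obstacle I expect is justifying the single-line-bundle computation rigorously in the super setting: one must be careful that $\pi_*^i\O_\l$ vanishes for $i>0$ (or, in the approach via $\mathrm{Ind}$, that the Euler characteristic really is insensitive to the difference between $H^i(G/B,\V^*)^*$ and the naive induced module — this is where the vanishing of higher derived functors on the odd fiber and the compatibility of $\G_i$ with the filtration genuinely enter), and that the symmetrization over $W$ does not introduce spurious cancellation; this is precisely the content of Penkov's original argument, which I would cite for the delicate points while presenting the filtration/additivity reduction explicitly.
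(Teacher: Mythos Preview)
The paper does not prove this theorem at all: it is stated as a result of Penkov with a citation to \cite{P} and used as a black box. There is therefore no ``paper's own proof'' to compare against.

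That said, your outline is essentially the standard argument and is correct in its broad strokes. The reduction to line bundles via a filtration of $V$ by one-dimensional $\bb$-submodules and Euler-characteristic additivity (your first paragraph) is exactly right and uses only \rlem{1}. For the line-bundle case, your approach via the projection $\pi:G/B\to G_0/B_0$ is the one Penkov uses: the key inputs are that the odd directions contribute the factor $D_1$ (from $\mathrm{ch}\,\Lambda(\nn_1^-)$, appearing as the pushforward of the structure sheaf of the odd fiber), while the even flag variety contributes the Weyl denominator $D_0$ and the alternating sum over $W$ via the classical Borel--Weil--Bott theorem. Your alternative ``bypass geometry'' paragraph is more heuristic than rigorous---the passage from $\mathrm{ch}\,\mathrm{Ind}_{\bb}^{\g}C_\l$ to the $W$-symmetrized expression is not automatic and really does require the geometric argument or an equivalent algebraic one---so I would drop that sentence or present it only as motivation. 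The point you flag as the main obstacle (vanishing of higher $\pi_*^i\O_\l$ along the odd fiber) is indeed the crux, and citing Penkov for it is appropriate.
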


%Let as before $X=\{ x\in\g_{\1}| [x,x]=0\}$ be the {\it{self-commuting cone}} in $\g_{\1}$ studied in \cite{DuSe}. For a $\g$-module $M$ and for $x\in X$, let $M_x=Ker{x}/Im{x}$ as before be the fiber as the cohomology of $x$ in $M$. Then, we know $M_x$ is $\mathfrak{sl}(3)$-module for $\g=F(4)$ and $\mathfrak{sl}(2)$-module for $\g=G(3)$. 

We first prove the following theorem for generic weights. In later section, we establish it for all weights. 

\begin{theorem}\label{character formula generic weights} Let $\g=F(4)$ (or $G(3)$). 
Let $\l\in F^{(a,b)}$ (or $F^{a}$) be a generic dominant weight and and $\m+\r_l=a\m_1+b\m_2$ (or and $\m+\r_l=a\m_1$), then following superdimension formula holds:

$$sdim\,{L_\l}=(-1)^{p(\m)}2dim\,{L_\m(\g_x)}.$$

\end{theorem}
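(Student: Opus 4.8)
The plan is to compute the alternating sum of superdimensions of the cohomology modules $\G_i(G/B,\O_\l)$ via Penkov's character formula \rthm{chcoh}, and then use the genericity hypothesis to show that the only nonvanishing cohomology is $\G_0$ and that $\G_0(G/B,\O_\l)=L_\l$. Concretely, first I would apply \rthm{chcoh} to $V=\O_\l=C_\l$ to get
$$\sum_i(-1)^i ch\,\G_i(G/B,\O_\l)=\frac{D_1\cdot e^{\r}}{D_0}\cdot\sum_{w\in W} sign(w)\cdot w(e^{\l+\r}),$$
and compare this with Penkov's character formula for generic $L_\l$ in \rthm{chgeneric}. The genericity assumption ($c$ large, i.e. $\l$ far from the walls) is exactly what is needed to guarantee, via \rlem{3}, that any $L_\m$ occurring in some $\G_i(G/B,\O_\l)$ with $i>0$ would have highest weight $\m+\r=w(\l+\r)-\sum_{\a\in I}\a$ with $\ell(w)=i>0$, and for $\l$ generic such a $\m$ cannot be dominant (or cannot occur), so $\G_i(G/B,\O_\l)=0$ for $i>0$; combined with \rcor{z} this forces $\G_0(G/B,\O_\l)=L_\l$. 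Hence $ch\,L_\l$ equals the right-hand side above, which by the geometric-series identity $\frac{1}{1+e^{-\a}}=\sum_{k\ge0}(-1)^k e^{-k\a}$ (applied to the single isotropic root $\a$ with $(\l+\r,\a)=0$, using that $A(\l)$ is a one-element set) matches $S(\l)$.

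Next I would pass from characters to superdimensions through the fiber functor $M\mapsto M_x$ at a point $x=X_\a\in X$ of rank $1$. The key input from \cite{DuSe} already recorded in the excerpt is that for any finite-dimensional $\g$-module $M$ and any $x\in X$ one has $sdim\,M=sdim\,M_x$, and from \rlem{dominant integral weights of F(4)} (resp. \rlem{dominant integral weights of G(3)}) that $\g_x\cong\mathfrak{sl}(3)$ (resp. $\mathfrak{sl}(2)$) and $(L_\l)_x$ is a direct sum of copies of $L_\m(\g_x)$ and $\Pi(L_\m(\g_x))$ (or the $a\leftrightarrow b$ swapped module). So $sdim\,L_\l=sdim\,(L_\l)_x=\pm\big(\text{multiplicity}\big)\cdot dim\,L_\m(\g_x)$, and it remains to pin down that the total multiplicity is exactly $2$ and to identify the sign as $(-1)^{p(\m)}$. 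For the multiplicity count I would use that $x$ is conjugate to the root vector $X_\a$ for the isotropic $\a$ with $(\l+\r,\a)=0$, and that the Kac–Wakimoto/DuSe dimension count of $\Ker x/\Im x$ on the generic module is governed by the character formula $S(\l)$; evaluating the "specialized character" of $S(\l)$ at the torus of $\g_x$ produces precisely $2\,ch\,L_\m(\g_x)$ up to the sign coming from the parity of the weight spaces, which is where the factor $(-1)^{p(\m)}$ and the factor $2$ both come from — the $2$ reflects that the two chambers on either side of the wall $\a^\perp$ each contribute a copy.

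I would organize the sign/factor bookkeeping by working with the function $S(\l)$ directly: write $S(\l)=\frac{D_1 e^\r}{D_0}\sum_w sign(w)\,w\!\left(\frac{e^{\l+\r}}{1+e^{-\a}}\right)$, split $\frac{1}{1+e^{-\a}}=\frac{e^{\a/2}}{e^{\a/2}+e^{-\a/2}}$, and restrict the formal exponentials to $\h_x^*\subset\h^*$ (the roots of $\g_x$ are exactly those $\b\in\D$ with $(\b,\a)=0$, $\b\ne\pm\a$). The terms in $D_0,D_1$ coming from roots not orthogonal to $\a$ pair up and cancel against the $\sum_w$, leaving the $\mathfrak{sl}(3)$- (or $\mathfrak{sl}(2)$-) Weyl denominator times $\sum_{w\in W_x} sign(w)\,w(e^{\m+\r_l})$, with an overall scalar $2$ from the two-term factor in the denominator and a scalar $(-1)^{p(\m)}$ from the parity shift in passing to $\g_x$. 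Setting $e^\b\to1$ for all $\b$ then yields $sdim\,L_\l=(-1)^{p(\m)}\,2\,dim\,L_\m(\g_x)$ by Weyl's dimension formula for $\g_x$.

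The main obstacle I anticipate is the vanishing of higher cohomology and the identification $\G_0(G/B,\O_\l)=L_\l$ for generic $\l$: this is where the precise quantitative meaning of "generic" (the bound on $c$ in the \rdef{} above) must be used together with \rlem{3}, \rlem{4}, and the classification of dominant weights in the block from \rthm{cf4}/\rthm{cg3}, to rule out every $w$ of positive length. Once that is settled, the remaining work — tracking the factor $2$ and the sign $(-1)^{p(\m)}$ through the restriction of $S(\l)$ to $\h_x^*$ and invoking $sdim\,M=sdim\,M_x$ — is essentially bookkeeping with the Weyl denominator identity and should go through routinely.
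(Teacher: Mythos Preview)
Your first step contains a genuine error: for generic $\l$ it is \emph{not} true that $\G_0(G/B,\O_\l)=L_\l$. In fact the paper's \rlem{generic} shows that for generic $\l$ there is a nonsplit extension
\[
0\to L_{\l-\a}\to \G_0(G/B,\O_\l)\to L_\l\to 0,
\]
so the Euler characteristic $\frac{D_1 e^\r}{D_0}\sum_w sign(w)\,w(e^{\l+\r})$ equals $ch\,L_\l+ch\,L_{\l-\a}$, not $ch\,L_\l$. Your claim that this expression ``matches $S(\l)$ via the geometric series'' is therefore wrong: the two expressions differ precisely by the factor $(1+e^{-\a})^{-1}$, and that discrepancy is exactly the extra composition factor $L_{\l-\a}$. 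Corollary~\ref{z} only says $L_\l$ is a quotient of $\G_0$, not the whole thing.

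More importantly, this whole first step is unnecessary: $ch\,L_\l=S(\l)$ for generic $\l$ is Penkov's theorem (\rthm{chgeneric}), quoted from \cite{P}, so you can take it as given. From there the paper proceeds quite differently from your fiber-functor plan: it expands $S(\l)$ as an alternating sum $\sum_{\m\in S}(-1)^{p(\m)}ch\,L_\m(\g_\0)$ over $\m=\l-\sum\a$ with $\a\in\D_\1^+\setminus\{\b\}$, passes to $sdim$ by replacing each $ch\,L_\m(\g_\0)$ with $\pm\dim L_\m(\g_\0)$, and then simply evaluates the resulting finite sum by computer. Your route via $sdim\,L_\l=sdim\,(L_\l)_x$ and restricting $S(\l)$ to $\h_x^*$ is more conceptual and could in principle explain the factor $2$ and the sign structurally, but the cancellation of non-$\a$-orthogonal roots in $D_0,D_1$ against the Weyl sum is not the ``routine bookkeeping'' you suggest; you would need to control how the full $W$-sum collapses to a $W_x$-sum and why exactly two copies survive, which the paper avoids by brute computation.
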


\begin{proof} From \rthm{chgeneric}, we have:

$$chL_{\l}=S(\l)=\sum_{\m\in S}(-1)^{p(\m)} ch{L_{\m}(\g_{\0})},$$ where $S=\{\m=\l-\sum\a | \a\in\D^+_\1 \text{ , } \a\neq\b=\frac{1}{2}(\e_1+\e_2+\e_3+\d) \}$.

Computing using definition of $sdim{V}$, the above formula and the classical Weyl character formula we get

$$sdim{L_\l}=\sum_{\m\in S} (-1)^ldim{L_\m}(\g_{\0}),$$ where $S=\{\m=\l-\sum\a | \a\in\D^+_\1 \text{ , } \a\neq\b=\frac{1}{2}(\e_1+\e_2+\e_3+\d) \}$ and $l$ is the number of roots $\a$ in the expression of $\m$. This is true since for all generic $\l$, we have $(\l+\r,\b)=0$.

Computing the formula above, using computer program (see Appendix), we have:

$$sdim\,{L_\l}=(-1)^{p(\m)}2dim\,{L_\m(\g_x)}.$$

\end{proof}

\subsection{Cohomology groups for generic weights for $F(4)$ and $G(3)$}

%\begin{lemma}\label{nonzero map of Verma modules} (Gorelik, Annihilation theorem (\cite{Gor}) If $M_{\l}$ is Verma module with highest weight $\l$, then $Hom_{\g}(M_{\l-\a},M_{\l})\neq 0$, for $(\l+\r,\a)=0$. 
%\end{lemma}

\begin{lemma}\label{root for each weight} For a generic weight $\l \in F^{\chi}$, there is a unique $\a\in\D^+$ such that $\l-\a \in F^{\chi}$ and $(\l+\r,\a)=0$. %We have $\a=(\frac{1}{2},\frac{1}{2},\frac{1}{2}|\frac{1}{2})$ for $\g=F(4)$ and $\a=(1,1|1)$ for $\g=G(3)$.
\end{lemma}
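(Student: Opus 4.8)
The plan is to show existence and uniqueness separately, leveraging the explicit parametrization of atypical blocks by the parameter $c$ established in Theorems~\ref{cf4} and~\ref{cg3}, together with Remarks~\ref{remark} and~\ref{remark1}. First I would recall that for a generic weight $\l=\l_c\in F^{\chi}$ the atypicality is $1$ by \rlem{atypicality 1}, so by \rlem{weights in block} there is a single (up to scalar) isotropic odd root $\b$ with $(\l+\r,\b)=0$; by Remark~\ref{remark} (resp.\ Remark~\ref{remark1}) this root is $\b=\b_i=w_c(\a)$, where $w_c$ is the explicit Weyl group element attached to the interval $J_i$ or $I_i$ containing $c$. Since $\l$ is generic, $c$ lies in the top (or bottom) unbounded interval, so $w_c$ and $\b$ are given by a single explicit formula, e.g.\ $\b=\frac12(\e_1+\e_2+\e_3+\d)$ for $F(4)$ and $\b=\e_1+\e_2+\d$ for $G(3)$.

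Next I would establish existence: set $\a:=\b$ and check that $\l-\b$ again lies in $F^{\chi}$. Because $\chi$ is determined (via \rlem{weights in block}) by the $W$-orbit of $\m+\r_l=a\w_1+b\w_2$ modulo $\C\b$, and $\l-\b$ differs from $\l$ by the very root $\b$ orthogonal to $\l+\r$, we have $(\l-\b+\r,\b)=(\l+\r,\b)-(\b,\b)=0$ since $\b$ is isotropic; hence $\l-\b$ has the same central character $\chi$, i.e.\ $L_{\l-\b}\in\F^{\chi}$ provided $\l-\b$ is still a weight occurring, which for generic $\l$ follows from the fact that $\l-\b$ is still dominant (this is where genericity — $c$ large — is used, keeping us inside the unbounded interval and away from the walls). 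So $\a=\b$ witnesses existence.

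For uniqueness, suppose $\a'\in\D^+$ also satisfies $\l-\a'\in F^{\chi}$ and $(\l+\r,\a')=0$. The condition $(\l+\r,\a')=0$ together with genericity forces $\a'$ to be isotropic: a non-isotropic root $\a'$ would have $(\l+\r,\a')$ growing linearly in $c$ and hence nonzero for $c$ large, and similarly an isotropic root other than $\pm\b$ cannot be orthogonal to $\l+\r$ since the degree of atypicality is $1$ (\rlem{atypicality 1}) and $A(\l)$ is a one-element set (\rthm{chgeneric}). Thus $\a'=\b=\a$, giving uniqueness. I would also note that $\l-\a'\in F^\chi$ by itself already constrains $\a'$: comparing central characters via \rlem{weights in block} shows $\a'$ must be orthogonal to $\l+\r$ up to the $W$-action, and genericity rules out the Weyl-conjugated alternatives because $\l_c-\a'$ would fail to be dominant or would land in a different block.

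The main obstacle I anticipate is the existence half — specifically, verifying that $\l-\b$ remains a dominant (or at least block-relevant) weight and that no degenerate coincidence occurs, which is exactly the point of the genericity hypothesis $c>\frac{a+2b}{3}+\frac32$ (or $c<-\frac32-\frac{2a+b}{3}$, or $c>\frac{3a}{2}-2$ for $G(3)$). This requires a short but careful check against the explicit dominance conditions of \rlem{dominant integral weights of F(4)} and \rlem{dominant integral weights of G(3)}; once genericity is in place the argument is essentially bookkeeping with the coordinates of $\l_c+\r$ listed in the proofs of Theorems~\ref{cf4} and~\ref{cg3}. Uniqueness, by contrast, is almost immediate from $\mathrm{def}(\g)=1$.
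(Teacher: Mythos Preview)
Your proposal is correct and follows essentially the same route as the paper: both arguments identify the explicit isotropic root $\a=\b$ from Remarks~\ref{remark}/\ref{remark1} and use the $c$-parametrization of Theorems~\ref{cf4}/\ref{cg3} to check that $\l-\b$ is again a dominant weight in $F^{\chi}$ (the paper phrases this as ``$\l-\a$ corresponds to $c-\tfrac12$ for $F(4)$ and $c-1$ for $G(3)$''). Your uniqueness argument via atypicality~$1$ is slightly more explicit than the paper's terse appeal to the uniqueness of $\l_{c-1/2}$; note, however, that your phrase ``growing linearly in $c$'' is imprecise for some even roots (e.g.\ $(\l+\r,\e_1-\e_2)=a$ is constant in $c$), though the conclusion stands since strict dominance of $\l+\r$ already rules those out.
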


\begin{proof} From \rthm{cf4} and \rthm{cg3}, there is a unique $c$ corresponding to $\l$. Since $\l-\a$ will correspond to $c-\frac{1}{2}$ for $F(4)$ and to $c-1$ for $G(3)$, there is a unique such possible $\l-\a$. We take $\a=(\frac{1}{2},\frac{1}{2},\frac{1}{2}|\frac{1}{2})$ for $\g=F(4)$ and $\a=(1,1|1)$ for $\g=G(3)$, then it follows from \rthm{cf4} and \rthm{cg3} that $\l-\a\in F^{\chi}$. %By \rrem{remark} and \rrem{remark1}.
\end{proof}

\begin{lemma}\label{lemma 3 generic weights} Let $\l \in F^{(a, b)}$ (or $F^{a}$) be generic weight and $\a\in\D^+$ such that $\l-\a \in F^{(a, b)}$ (or $F^{a}$) and $(\l+\r,\a)=0$, then $$ [\G_0(G/B, \mathcal{O}_\l): L_{\l-\a}]\leq 1\text{ and }$$ $$[\G_0(G/B, \mathcal{O}_\l): L_{\xi}]=0\text{ if }\xi\neq\l-\a.$$ For $i>0$, we have $\G_i(G/B, \mathcal{O}_\l)=0$. 
\end{lemma}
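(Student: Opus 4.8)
The plan is to analyze the cohomology groups $\G_i(G/B,\O_\l)$ via the Euler characteristic formula and the combinatorial constraints on which simples can appear. First I would invoke \rlem{3}: any $L_\xi$ occurring in some $\G_i(G/B,\O_\l)$ must satisfy $\xi+\r=w(\l+\r)-\sum_{\a\in I}\a$ for $w\in W$ of length $i$ and $I\subset\D^+_1$. Since $\l$ is generic, $c$ is far from the walls, so $w(\l+\r)$ for $w\neq \mathrm{id}$ moves the last coordinate outside the range where dominant weights of $\F^\chi$ live; combined with the fact (\rlem{weights in block}, \rlem{atypicality 1}, together with the parametrization in \rthm{cf4} / \rthm{cg3}) that the weights in $F^\chi$ are indexed by a single parameter $c$ shifted in half-integer (resp.\ integer) steps, the only candidates for $\xi$ with $L_\xi\in\F^\chi$ are $\l$ itself (from $w=\mathrm{id}$, $I=\emptyset$) and $\l-\a$ (from $w=\mathrm{id}$ and $I=\{\a\}$ the unique isotropic root with $(\l+\r,\a)=0$, as in \rlem{root for each weight}). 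Here I use that subtracting any larger sum of positive odd roots, or applying a nontrivial Weyl element, lands outside $F^\chi$ for generic $\l$ — this is where genericity is essential.

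Next I would pin down the degrees. By \rcor{z}, $L_\l$ is a quotient of $\G_0(G/B,\O_\l)$ with multiplicity exactly $1$, so $L_\l$ appears only in degree $0$ and with multiplicity one. For $L_{\l-\a}$: by \rlem{3} it can only appear in degree $0$ (length $0$) or possibly degree $1$ if $\l-\a+\r = w(\l+\r)-\sum\a'$ with $\ell(w)=1$; I would rule out the degree-$1$ possibility by the same genericity/coordinate argument, or alternatively by noting that a length-one $w$ applied to a generic $\l+\r$ again escapes the block. We already know $\G_i(G/B,\O_\l)=0$ for $i>1$ from the Leray spectral sequence lemma (the bundle $G/B\to G/P$ with $(G/P)_0$ of even dimension $1$ and $\l$ dominant). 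So everything is concentrated in degree $0$, and $\G_0(G/B,\O_\l)$ has composition factors among $\{L_\l, L_{\l-\a}\}$, with $[\G_0:L_\l]=1$ and $[\G_0:L_{\l-\a}]\le$ some bound.

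To get $[\G_0(G/B,\O_\l):L_{\l-\a}]\le 1$ I would use the superdimension/Euler-characteristic identity. By \rlem{7}, $\sum_i(-1)^i\,\mathrm{sdim}\,\G_i(G/B,\O_\l)=0$, which since cohomology is concentrated in degree $0$ gives $\mathrm{sdim}\,\G_0(G/B,\O_\l)=0$, i.e. $\mathrm{sdim}\,L_\l + m\cdot\mathrm{sdim}\,L_{\l-\a}=0$ where $m=[\G_0:L_{\l-\a}]$. By \rthm{character formula generic weights}, $\mathrm{sdim}\,L_\l=(-1)^{p(\m)}2\dim L_\m(\g_x)$ and $\mathrm{sdim}\,L_{\l-\a}=(-1)^{p(\m)+1}2\dim L_{\m}(\g_x)$ (the weight $\l-\a$ corresponds to $c-\tfrac12$ resp.\ $c-1$, hence to the same $\g_x$-module up to parity shift — I would check that the associated $\mathfrak{sl}(3)$- or $\mathfrak{sl}(2)$-weight is unchanged, using the fiber-functor computation in \rlem{dominant integral weights of F(4)}/\rlem{dominant integral weights of G(3)}). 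These two superdimensions are negatives of each other and nonzero, forcing $m=1$. Finally, $[\G_0:L_\xi]=0$ for $\xi\neq\l,\l-\a$ is exactly the content of the first step.

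The main obstacle I anticipate is the genericity bookkeeping in the first step: carefully showing that no nontrivial Weyl translate $w(\l+\r)$ and no subtraction of a multi-element $I\subset\D^+_1$ can produce a dominant weight still lying in $F^\chi$. This is really a finite check on the root system using the explicit lists of bases and the one-parameter description of $F^\chi$ from \rthm{cf4} and \rthm{cg3}, and the definition of generic (the bound $c>\tfrac{a+2b}{3}+\tfrac32$ etc.) is tailored precisely so that this check goes through; I would organize it by observing that every element of $\D^+_1$ has $\delta$-coordinate $+\tfrac12$ (for $F(4)$) resp.\ contributes $+1$ (for $G(3)$) to the last coordinate, so $\sum_{\a\in I}\a$ shifts $c$ by $|I|\cdot\tfrac12$, and the reflections in $W$ either fix or negate the last coordinate, making the escape-from-block statement transparent.
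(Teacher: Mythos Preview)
Your argument is correct but takes a heavier route than the paper's. The paper reads both the multiplicity bound $\le 1$ and the vanishing for $i>0$ directly off \rlem{3}: for generic $\l$ the only pair $(w,I)$ with $w(\l+\r)-\sum_{\a'\in I}\a'$ equal to a dominant weight in the block is $(\mathrm{id},\emptyset)$ (giving $\l$) or $(\mathrm{id},\{\a\})$ (giving $\l-\a$), and the uniqueness of this presentation is what bounds the multiplicity by $1$, while $w=\mathrm{id}$ forces $i=\ell(w)=0$. You instead establish $\le 1$ via the superdimension identity $\mathrm{sdim}\,L_\l + m\cdot\mathrm{sdim}\,L_{\l-\a}=0$, which in fact yields $m=1$; that is precisely how the paper proves the \emph{next} lemma (\rlem{generic}), so you have effectively merged the two statements. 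This is legitimate, but it imports an extra hypothesis the paper's proof of the present lemma does not need: you must know $\mathrm{sdim}\,L_{\l-\a}\neq 0$, i.e.\ that $\l-\a$ is still generic, which fails at the boundary of the generic range. Your separate appeal to the Leray spectral sequence lemma for $i>1$ is also unnecessary here, since \rlem{3} already forces $i=0$ once $w=\mathrm{id}$ is the only option.
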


\begin{proof} If $\l$ is a generic weight, than the only weights obtained in the form $\m+\r=w(\l+\r)-\sum\a$ are $\l$ and $\l-\a$. One can see this from \rlem{cf4} and \rlem{cg3}.

Thus, the lemma follows from \rlem{3} and \rlem{root for each weight}, since there is a unique root $\a=(\frac{1}{2},\frac{1}{2},\frac{1}{2}|\frac{1}{2})$ with $\l-\a\in F^{(a, b)}$ (or $F^{a}$). And $w=id$ is the only possibility. 
\end{proof}

\begin{lemma}\label{generic} Let $\l \in F^{(a, b)}$ (or $F^{a}$) be generic weight, then we have the exact sequence
\begin{displaymath}
    \xymatrix{
        0\ar[r] & L_{\l-\a} \ar[r] & \G_0(G/B, \mathcal{O}_\l)\ar[r] & L_\l \ar[r] & 0 }
\end{displaymath}
for $\a\in\D_1^+$ such that $(\l+\r,\a)=0$. 
\end{lemma}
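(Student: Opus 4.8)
The plan is to identify $\G_0(G/B,\O_\l)$ completely using the structural lemmas already established. First I would invoke \rlem{z}: for the dominant weight $\l$, the module $L_\l$ is a quotient of $\G_0(G/B,\O_\l)$ with $[\G_0(G/B,\O_\l):L_\l]=1$. So it remains to understand the kernel $K$ of the surjection $\G_0(G/B,\O_\l)\twoheadrightarrow L_\l$. By \rlem{lemma 3 generic weights}, the only composition factors of $\G_0(G/B,\O_\l)$ are $L_\l$ (with multiplicity one) and $L_{\l-\a}$ (with multiplicity at most one), where $\a=(\tfrac12,\tfrac12,\tfrac12|\tfrac12)$ (resp. $(1,1|1)$) is the unique positive root with $\l-\a\in F^\chi$ and $(\l+\r,\a)=0$; and moreover $\G_i(G/B,\O_\l)=0$ for $i>0$. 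Hence $K$ is either $0$ or $L_{\l-\a}$.

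The crux is to rule out $K=0$, i.e. to show $\G_0(G/B,\O_\l)\neq L_\l$. Here I would use \rlem{7} together with the superdimension formula. If $\G_0(G/B,\O_\l)=L_\l$ then, since all higher cohomology vanishes, \rlem{7} gives $sdim\,L_\l=0$. But \rthm{character formula generic weights} asserts $sdim\,L_\l=(-1)^{p(\m)}2\dim L_\m(\g_x)\neq 0$, a contradiction. Therefore $K\neq 0$, forcing $K=L_{\l-\a}$, and we obtain the short exact sequence
\begin{displaymath}
    \xymatrix{
        0\ar[r] & L_{\l-\a} \ar[r] & \G_0(G/B, \mathcal{O}_\l)\ar[r] & L_\l \ar[r] & 0. }
\end{displaymath}
Since $L_{\l-\a}$ must genuinely occur as a submodule (not merely a subquotient), and it is the only other composition factor, the extension is non-split exactly because $\G_0$ is indecomposable with unique simple quotient $L_\l$ by \rlem{2}.

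The main obstacle I anticipate is the non-vanishing input: everything reduces to knowing $sdim\,L_\l\neq 0$, which is supplied by \rthm{character formula generic weights} but genuinely relies on the generic hypothesis (through the fact that $(\l+\r,\b)=0$ for the distinguished isotropic root $\b$, making the character sum collapse to a single $\mathfrak{sl}$-type character). A secondary point to check carefully is that $\a\in\D_1^+$ as claimed in the statement: the root $(\tfrac12,\tfrac12,\tfrac12|\tfrac12)$ for $F(4)$ is indeed odd (it lies in $\D_1^+$), and $(\l+\r,\a)=0$ holds by \rrem{remark} since for generic $\l$ the relevant $\b_i$ equals this $\a$; similarly for $G(3)$. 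Once these are in place the exact sequence follows formally, so I expect the write-up to be short.
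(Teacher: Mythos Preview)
Your proposal is correct and follows essentially the same approach as the paper's own proof: both invoke \rcor{z} for the quotient $L_\l$, \rlem{lemma 3 generic weights} to restrict the possible subquotients and kill higher cohomology, then combine \rlem{7} with the nonvanishing of $sdim\,L_\l$ from \rthm{character formula generic weights} to force $[\G_0(G/B,\O_\l):L_{\l-\a}]=1$. Your version is framed as a contradiction while the paper writes the superdimension equation directly, but the logic is identical; your extra remarks on non-splitness via \rlem{2} and on verifying $\a\in\D_1^+$ are sound but not needed for the bare statement.
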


\begin{proof} We know $\G_0(G/B, \mathcal{O}_\l)$ is the maximal finite dimensional quotient of the Verma module $M_{\l}$ with highest weight $\l$. Therefore, $[\G_0(G/B, \mathcal{O}_\l): L_{\l}]=1$. By \rlem{lemma 3 generic weights}, we have $[\G_0(G/B, \mathcal{O}_\l): L_{\l-\a}]\leq 1$. To prove the exact sequence, it is enough to show $[\G_0(G/B, \mathcal{O}_{\l}): L_{\l-\a}]\neq 0$.

From \rlem{lemma 3 generic weights}, we have $0=sdim {\G_0(G/B, \mathcal{O}_\l)}= sdim{L_{\l}}+[\G_0(G/B, \mathcal{O}_\l): L_{\l-\a}] sdim{L_{\l-\a}}$. From \rlem{character formula generic weights}, since $\l$ is generic we have that $sdim{L_{\l}}\neq 0$. Thus, $[\G_0(G/B, \mathcal{O}_{\l}): L_{\l-\a}]\neq 0$.

\end{proof}

\section{Equivalence of symmetric blocks in $F(4)$}

\subsection{Equivalence of blocks $\F^{(1, 1)}$ and $\F^{(2, 2)}$} 

Let $\g=F(4)$. We prove the equivalence of the symmetric blocks $\F^{(1, 1)}$ and $\F^{(2, 2)}$ as the first step of mathematical induction in $a$ of proving the equivalence of the symmetric blocks  $\F^{(a, a)}$ and $\F^{(a+1, a+1)}$.

The following is the picture of translator functor from block $\F^{(1, 1)}$ to $\F^{(2, 2)}$. It is defined by $T(L_{\l})=(L_{\l}\otimes\g)^{(2,2)}$. The non-filled circles represent the non-dominant weights in the block occurring on the walls of the Weyl chamber. The filled circles represent dominant weights in the block. The horizontal arrows are maps $\l\mapsto \l+\gamma$, with $\gamma\in\D$ is the root above the arrow. 

\begin{center}
  \begin{tikzpicture}[scale=.4]
  
    \draw (-2.5,0) node[anchor=east]  {$\F^{(1, 1)}$}; 
    \draw (+18.5,0) node[anchor=east]  {$\F^{(2, 2)}$}; 
    
  \draw[xshift=1 cm,thick,fill=black] (1, 12 cm) circle (.1cm);
    \draw[xshift=1 cm,thick,fill=black] (1, 10 cm) circle (.1cm);
      \draw[xshift=1 cm,thick,fill=black] (1, 8 cm) circle (.1cm);
        \draw[xshift=1 cm,thick,fill=black] (1, 6 cm) circle (.1cm);
        \draw[xshift=1 cm,thick,fill=black] (1, 4 cm) circle (.1cm);
        \draw[xshift=1 cm,thick,fill=black] (1, 2 cm) circle (.1cm);
        
   \draw[xshift=0 cm] (0, 12 cm) node[anchor=center]  {\tiny $\l_1$};
    \draw[xshift=0 cm] (0, 10 cm) node[anchor=center]  {\tiny$\l_2$};
     \draw[xshift=0 cm] (0, 8 cm) node[anchor=center]  {\tiny$\l_0$};
      \draw[xshift=0 cm] (0, 6 cm) node[anchor=center]  {\tiny$\l_3$};
      
  \draw[xshift=5 cm,thick,fill=black] (5, 12 cm) circle (.1cm);
    \draw[xshift=5 cm,thick,fill=black] (5, 10 cm) circle (.1cm);
     \draw[xshift=5 cm,thick, black] (5, 8 cm) circle (.2cm);
      \draw[xshift=5 cm,thick,fill=black] (5, 6 cm) circle (.1cm);
        \draw[xshift=5 cm,thick, black] (5, 4 cm) circle (.2cm);
         \draw[xshift=5 cm,thick,fill=black] (5, 2 cm) circle (.1cm);
                \draw[xshift=5 cm,thick,fill=black] (5, 0 cm) circle (.1cm);
        
   \draw[xshift=6 cm] (6, 12 cm) node[anchor=center]  {\tiny$\m_1$};
    \draw[xshift=6 cm] (6, 10 cm) node[anchor=center]  {\tiny$\m_2$};
        \draw[xshift=6 cm] (6, 8 cm) node[anchor=center]  {$$};
     \draw[xshift=6 cm] (6, 6 cm) node[anchor=center]  {\tiny$\m_0$};
         \draw[xshift=6 cm] (6, 4 cm) node[anchor=center]  {$$};
      \draw[xshift=6 cm] (6, 2 cm) node[anchor=center]  {\tiny$\m_3$};
      
       \foreach \y in {3,...,4}
    \draw[-stealth, xshift=1 cm,thick] (1, 2*\y cm) -- +(0, 1.85 cm);
     \foreach \y in {0,...,2}
    \draw[-stealth, xshift=1 cm,thick] (1, 2*\y cm) -- +(0, 1.85 cm);
    
    \path [-stealth, solid,black,thick,draw=none] (2, 8 cm) edge[bend left] (2, 11.85 cm);
    \path [-stealth, solid,black,thick,draw=none]  (10, 6 cm) edge[bend left] (10, 9.85 cm);
    \path [-stealth, solid,black,thick,draw=none]  (10, 2 cm) edge[bend left] (10, 5.85 cm);
    \path [-stealth, solid,black,thick,draw=none]  (10, 6 cm) edge[bend left] (10, 11.85 cm);
    \path [-stealth, solid,black,thick,draw=none]  (10, 0 cm) edge (10, 1.85 cm);
    
     \path [-stealth, solid,red,thick,draw=none]  (2.15, 12 cm) edge node[above=0mm]{\tiny $+\d$} (9.85, 12 cm) ;
    
   \path [-stealth, solid,red,thick,draw=none]  (2.15, 10 cm) edge node[above=0]{\tiny $-\d$} (9.85, 10 cm) ;
    
    \path [-stealth, solid,red,dotted,thick,draw=none]  (2.15, 10 cm) edge node[above=0.1mm,draw=none,rectangle]{\tiny $+\e_1$} (9.85, 6 cm) ;
    
    \path [-stealth, solid,red,thick,draw=none]  (2.15, 8 cm) edge node[below=0]{\tiny $+\a$} (9.85, 6 cm) ;
    
    \path [-stealth,solid,red,thick,draw=none]  (2.15, 6 cm) edge node[draw=none,fill=white,rectangle]{\tiny $+\e_1-\e_3$} (9.85, 2 cm) ;
    
     \path [-stealth,solid,red,thick,draw=none]  (2.15, 4 cm) edge node[fill=white,draw=none,rectangle]{\tiny $+\e_1-\e_3$} (9.85, 0 cm) ;
    
    \path [solid,red,thick,draw=none]  (2.15, 2 cm) edge (6, 0 cm) ;

  \end{tikzpicture}
\end{center}

In this section, we will show that the solid arrows represent the maps $L_{\l}\mapsto T(L_{\l})$ and use this to prove the equivalence of symmetric blocks.

In the above picture $\a=(\frac{1}{2},-\frac{1}{2},-\frac{1}{2}|-\frac{1}{2})$ and 
$\l_1+\r=(\frac{5}{2},\frac{3}{2},\frac{1}{2}|-\frac{3}{2})$;
$\l_2+\r=(\frac{5}{2},\frac{3}{2},\frac{1}{2}|\frac{3}{2})$;
$\l_0+\r=(3,2,1|2)$;
$\l_3+\r=(\frac{7}{2},\frac{5}{2},\frac{3}{2}|\frac{5}{2})$;
$\m_1+\r=(\frac{5}{2},\frac{3}{2},\frac{1}{2}|-\frac{1}{2})$;
$\m_2+\r=(\frac{5}{2},\frac{3}{2},\frac{1}{2}|\frac{1}{2})$;
$\m_0+\r=(\frac{7}{2},\frac{3}{2},\frac{1}{2}|\frac{3}{2})$;
$\m_3+\r=(\frac{9}{2},\frac{5}{2},\frac{1}{2}|\frac{5}{2})$. 
(Note that the indices here are different from the index c, which corresponds to the last coordinate of $\l+\r$.)

\begin{lemma}\label{k1122} Any dominant weight $\l\in F^{(1,1)}$ with $\l\neq\l_1$ and $\l_2$ can be obtained from $\l_0$ by adding root $\b=(\frac{1}{2},\frac{1}{2},\frac{1}{2}|\frac{1}{2})$ finitely many times. 

\end{lemma}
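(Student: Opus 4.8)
The plan is to read off the dominant weights of $\F^{(1,1)}$ from \rthm{cf4}(1) together with its proof, and then to observe that adding $\b$ implements exactly the elementary step $c\mapsto c+\tfrac12$ in that parametrization. First I would recall that, by \rthm{cf4}(1) with $a=1$, the dominant weights $\l$ with $L_\l\in\F^{(1,1)}$ are precisely the weights $\l_c$ indexed by $c\in\tfrac12\Z_{\ge 3}\cup\{-\tfrac32\}$, where $c$ is the last coordinate of $\l_c+\r$. Comparing with the list of weights displayed just after the translation-functor picture above, $\l_1$ is $\l_c$ for $c=-\tfrac32$, $\l_2$ is $\l_c$ for $c=\tfrac32$, and $\l_0$ is $\l_c$ for $c=2$; hence the dominant weights of $\F^{(1,1)}$ other than $\l_1$ and $\l_2$ are exactly the $\l_c$ with $c\in\tfrac12\Z$ and $c\ge 2$.

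Next I would use the explicit formula established in the proof of \rthm{cf4}: for $c\in J_1=(1,\infty)$, which contains every relevant index $c\ge 2$, one has $\l_c+\r=(\e_1-\e_3)+2c\,\b_1$ with $\b_1=\tfrac12(\e_1+\e_2+\e_3+\d)$, and this $\b_1$ is precisely the root $\b$ of the present statement. Consequently $\l_{c}+\r$ and $\l_{c'}+\r$ differ by $2(c-c')\,\b$ for any two indices $c,c'\in(1,\infty)$, so $\l_c=\l_{c'}+2(c-c')\,\b$; taking $c'=2$ (that is, $\l_{c'}=\l_0$) yields $\l_c=\l_0+2(c-2)\,\b$.

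Finally, given a dominant $\l\in F^{(1,1)}$ with $\l\ne\l_1,\l_2$, I would write $\l=\l_c$ with $c\in\tfrac12\Z$ and $c\ge 2$; then $m:=2(c-2)$ is a nonnegative integer and $\l=\l_0+m\,\b$, i.e.\ $\l$ is obtained from $\l_0$ by adding $\b$ exactly $m$ times ($m=0$ recovering $\l_0$ itself). I do not expect a real obstacle here; the only points needing care are the bookkeeping identification of $\l_0,\l_1,\l_2$ with their indices, and the remark that $\l_2$, although it has $c=\tfrac32>1$ and thus also satisfies $\l_2=\l_0-\b$, is correctly excluded because reaching it from $\l_0$ would require \emph{subtracting} $\b$, while $\l_1$ (with $c=-\tfrac32$) lies in a different Weyl chamber in the proof of \rthm{cf4} and is simply not of the form $\l_0+m\,\b$ with $m\ge 0$.
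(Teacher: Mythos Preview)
Your proof is correct and follows essentially the same approach as the paper: both arguments invoke \rthm{cf4} (with $a=1$) to see that $J_2,J_3=\emptyset$, so every dominant weight other than $\l_1,\l_2$ lies in $J_1$ and is given by the formula $\l_c+\r=(\e_1-\e_3)+2c\,\b$, whence $\l_c=\l_0+2(c-2)\b$ for a nonnegative integer $2(c-2)$. Your version is simply a more carefully written-out form of the paper's terse two-line argument (which writes ``$\l=\l_0+c\b$'' somewhat loosely).
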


\begin{proof} From \rthm{cf4}, if $a=1$, then $J_2, J_3=\emptyset$. Since $c\neq \pm\frac{3}{2}$, we have $\l=\l_0+c\b$, where $\b=\frac{1}{2}(\e_1+\e_2+\e_3+\d)$.
\end{proof}

\begin{lemma}\label{i1122} For a dominant weight $\l\in F^{(1,1)}$ with $\l\neq\l_i$ for $i=1,2$, we have $\G_i(G/B,\O_{\l})=0$ for $i>0$. 
\end{lemma}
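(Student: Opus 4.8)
\emph{Proof plan.} The plan is to reduce to $i=1$ and then show that $\G_1(G/B,\O_\lambda)$ has \emph{no} composition factors at all. The reduction is immediate: since $\lambda$ is dominant, the vanishing $\G_i(G/B,\O_\lambda)=0$ for $i>1$ proved above (via the fibration $G/B\to G/P$, whose base has even dimension $1$) disposes of all higher degrees. For $i=1$, suppose $L_\m$ occurs in $\G_1(G/B,\O_\lambda)$. Then $\m$ is dominant, and since $\G_1$ preserves the central character, $L_\m\in\F^{(1, 1)}$; by \rlem{3} we also have $\m+\r=w(\lambda+\r)-\sigma_I$ for some $w\in W$ of length $1$ and some $I\subseteq\D_1^+$, where $\sigma_I:=\sum_{\a\in I}\a$. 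I would rule this out by comparing coordinates in $\h^*\cong\mathbb{C}^4$.

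First I would assemble the combinatorial inputs. From \rlem{k1122} together with \rthm{cf4}(1), the dominant weights of $\F^{(1, 1)}$ are exactly $\l_1$, $\l_2$ and $\l^{(m)}:=\l_0+m\b$ for $m\ge 0$, with $\b=\tfrac12(\e_1+\e_2+\e_3+\d)$; I would record $\l^{(m)}+\r=(3+\tfrac m2,\,2+\tfrac m2,\,1+\tfrac m2\mid 2+\tfrac m2)$ and $\l_i+\r$ from the data listed after the picture above, and note $\lambda=\l^{(k)}$ for some $k\ge 0$. On the other hand every $\a\in\D_1^+$ equals $\tfrac12(\pm\e_1\pm\e_2\pm\e_3+\d)$, so $\sigma_I$ has $\d$-coordinate $\tfrac{|I|}2$, each of its $\e_i$-coordinates lies in $[-\tfrac{|I|}2,\tfrac{|I|}2]$, and $|I|\le 8$. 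Finally the four length-one elements of $W$ are $r_{\e_1-\e_2},r_{\e_2-\e_3},r_{\e_3}$ (all fixing the $\d$-coordinate $2+\tfrac k2$ of $\lambda+\r$ and acting on the $\e$-coordinates by a transposition, a transposition, and a sign change) and $r_\d$ (which sends that $\d$-coordinate to $-2-\tfrac k2$), and one computes $\langle\lambda+\r,\gamma^\vee\rangle\ge 1$ in every case.

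The core step is then a short case analysis on $w$. Comparing $\d$-coordinates in $\m+\r=w(\lambda+\r)-\sigma_I$ kills $w=r_\d$ outright (the only dominant weight of $\F^{(1, 1)}$ with negative $\d$-coordinate is $\l_1$, and $-2-\tfrac k2-\tfrac{|I|}2=-\tfrac32$ is impossible), and for the three even reflections it leaves only $\m=\l^{(m)}$ with $|I|=k-m$, $\m=\l_2$ with $|I|=k+1$, or $\m=\l_1$ with $|I|=k+7$ (so $k\in\{0,1\}$). In the first two families I would rewrite the relation as $\sigma_I=-\langle\lambda+\r,\gamma^\vee\rangle\gamma+(\text{known multiple of }\b)$, whence some $\e_i$-coordinate of $\sigma_I$ is forced to exceed $\tfrac{|I|}2$ in absolute value — a contradiction. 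The case $\m=\l_1$ survives only for $k\in\{0,1\}$, where $\sigma_I$ must be $4\d$ or $4\d$ minus a single positive odd root, and one checks the finitely many choices of $w$ by hand. I expect this last small-$k$ case, together with making the $\d$-coordinate comparisons fully rigorous, to be the only delicate point. I would also remark that the argument \emph{must} break down for $\lambda=\l_1,\l_2$: there $\langle\lambda+\r,\gamma^\vee\rangle=1$ for \emph{every} even simple $\gamma$, so the forced overshoot in the $\e$-coordinates disappears, consistent with $\G_1$ being nonzero precisely at $\l_1$ and $\l_2$.
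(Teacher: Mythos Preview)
Your proposal is correct and follows essentially the same approach as the paper: both arguments rest on \rlem{3} together with the explicit parametrization $\lambda+\rho=(3+\tfrac n2,2+\tfrac n2,1+\tfrac n2\mid 2+\tfrac n2)$ from \rlem{k1122}, and then rule out nontrivial $w$ by comparing the $\delta$-coordinate and the $\epsilon$-coordinates against the list of dominant weights in $F^{(1,1)}$. The only organizational differences are that you first reduce to $i=1$ via the $G/B\to G/P$ vanishing lemma and then enumerate the four simple reflections, whereas the paper enumerates the possible targets $\mu$ and checks directly that only $w=\mathrm{id}$ can produce them; your ``coordinate overshoot'' formulation $\sigma_I=|I|\beta-\langle\lambda+\rho,\gamma^\vee\rangle\gamma$ is a clean way to package what the paper records as ``by computation''.
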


\begin{proof} Assume $\l\neq\l_s$ for $s=1,2$ and $\G_i(G/B,\O_{\l})\neq0$ for $i>0$. There is $\m\in F^{(1,1)}$ dominant weight such that $L_\m$ occurs in $\G_i(G/B,\O_{\l})$ with non-zero multiplicity. 

For $\l\neq\l_s$ for $s=1,2$, we have by \rlem{k1122}, $\l+\r=\l_0+\r+n\b=(3+\frac{n}{2},2+\frac{n}{2},1+\frac{n}{2}|2+\frac{n}{2})$. 
By \rlem{3}, we have $\m+\r=w(\l+\r)-\sum_{\a\in I} \a$ for $w\in W$ of length $i$. The last coordinate of $\m+\r$ is in $$[\frac{n}{2}-2, \frac{n}{2}+2]\cap \frac{1}{2}\Z_{\geq4} \text{ or }\pm\frac{3}{2}.$$

Assume $n=0$. The last coordinate of $\m+\r$ is $2$ or $\pm\frac{3}{2}$. By \rthm{cf4} and computation there are only three possibilities $\m=\l_i$ with $i=0,1,2$ and in each case $w=id$. This implies $\G_i(G/B,\O_{\l_0})=0$ for $i>0$.

Assume $n=1$. The last coordinate of $\m+\r$ is $$2, \frac{5}{2}, \pm\frac{3}{2}.$$ By computation there are only four possibilities $\m=\l_i$ with $i=0,1,2,3$ and in each case either $w=id$ or doesn't exist. This implies $\G_i(G/B,\O_{\l_3})=0$ for $i>0$.

Assume $n\geq 1$. The last coordinate of $\m+\r$ is in $$[\frac{n}{2}-2, \frac{n}{2}+2]\cap \frac{1}{2}\Z_{\geq4}.$$ By computation, only $w=id$ is possible when $\m+\r$ has last coordinate equal the last coordinate of $\l+\r$ minus $\frac{1}{2}$. Thus, $\G_i(G/B,\O_{\l})=0$ for $i>0$.
\end{proof}

\begin{lemma}\label{coh011} For a dominant weight $\l\in F^{(1,1)}$ with $\l\neq\l_i$ for $i=0,1,2$, we have $[\G_0(G/B,\O_{\l}):L_{\l-\a}]=1$ for a unique $\a\in\D$ such that $\l-\a\in F^{(1,1)}$.

Also, we have $[\G_0(G/B,\O_{\l}):L_{\m}]=0$ for $\m\neq \l$ and $\m\neq\l-\a$.

\end{lemma}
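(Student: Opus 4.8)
The plan is to reduce the statement to a single non-generic weight, which lies just outside the range handled by \rlem{generic}, and then to determine its one nontrivial composition factor by combining the vanishing of higher cohomology with the weight constraint of \rlem{3} and a superdimension identity.

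First I would use \rlem{k1122} to write $\l+\r=\l_0+\r+n\b$, where $\b=\frac{1}{2}(\e_1+\e_2+\e_3+\d)=(\frac12,\frac12,\frac12\,|\,\frac12)$ is exactly the root singled out in \rlem{root for each weight}, and $n\geq 1$ (the value $n=0$ being the excluded weight $\l_0$). Note that $\l-\b=\l_0+(n-1)\b$ is again a dominant weight of $\F^{(1,1)}$ by \rthm{cf4}(1), and that for $\l=\l_0+n\b$ with $n\geq 1$ one checks directly that $\b$ is the only root $\a\in\D$ with $\l-\a\in F^{(1,1)}$. If $n\geq 2$, the last coordinate of $\l+\r$ is $2+\frac n2>\frac52=\frac{a+2b}{3}+\frac32$, so $\l$ is generic and the assertion is precisely \rlem{generic} together with the uniqueness clause of \rlem{root for each weight}. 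Hence the only case requiring work is $n=1$, i.e. $\l=\l_3$ with $\l-\b=\l_0$.

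For $\l=\l_3$, \rlem{i1122} gives $\G_i(G/B,\O_{\l_3})=0$ for $i>0$. Since $\G_0(G/B,\O_{\l_3})$ is finite dimensional with central character $\chi_{\l_3}$, and so lies in $\F^{(1,1)}$, every composition factor $L_\m$ has $\m$ dominant in $F^{(1,1)}$; by \rlem{3} applied with $i=0$ (forcing $w=\mathrm{id}$) we have $\m+\r=\l_3+\r-\sum_{\a\in I}\a$ for some $I\subseteq\D_1^+$. Comparing the $\d$- and $\e_i$-coordinates of $\m+\r$ with the explicit list of dominant weights of $\F^{(1,1)}$ (namely $\l_1,\l_2$ and $\l_0+m\b$, $m\geq 0$), and using that every odd positive root has $\d$-coefficient $\frac12$ and each $\e_i$-coefficient equal to $\pm\frac12$, one finds by the same coordinate bookkeeping as in the proof of \rlem{i1122} (now restricted to $w=\mathrm{id}$) that $\m\in\{\l_3,\l_0\}$; in particular $\l_1,\l_2$ are ruled out because they would force $|I|$ to carry an impossible sign pattern. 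Thus $[\G_0(G/B,\O_{\l_3}):L_\m]=0$ for $\m\neq\l_3,\l_0$, and $[\G_0(G/B,\O_{\l_3}):L_{\l_3}]=1$ by \rcor{z}.

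It remains to show $[\G_0(G/B,\O_{\l_3}):L_{\l_0}]=1$. That this multiplicity is at most $1$ follows as in \rlem{lemma 3 generic weights}: by \rlem{3} the weight $\l_0$ is reached from $\l_3$ only via $(w,I)=(\mathrm{id},\{\b\})$, so $L_{\l_0}$ occurs with multiplicity $\leq 1$ in the maximal finite-dimensional quotient of $M_{\l_3}$. For the reverse inequality, since the higher cohomology vanishes, \rlem{7} yields $sdim\,L_{\l_3}+[\G_0(G/B,\O_{\l_3}):L_{\l_0}]\cdot sdim\,L_{\l_0}=0$, so the multiplicity is nonzero as soon as $sdim\,L_{\l_3}\neq 0$. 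The main obstacle is precisely establishing $sdim\,L_{\l_3}\neq 0$: $\l_3$ is the one weight in the scope of the lemma that is not generic, so Penkov's formula $ch\,L_\l=S(\l)$ is not available, and \rthm{SFF} and \rthm{KWFG} must not be used here on pain of circularity. I expect to obtain it either from a direct superdimension computation for this small specific weight (of the kind recorded in the Appendix), or equivalently from the formal identity $\varepsilon(\l_3)=S(\l_3)+S(\l_0)$ after checking by hand that no simple factor besides $L_{\l_0}$ enters on the right; the delicate point is to carry this out before, and independently of, the block-equivalence and superdimension results proved later.
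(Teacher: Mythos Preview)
Your reduction to the single non-generic case $\l=\l_3$ and the combination of \rlem{3} (with $w=\mathrm{id}$) and the superdimension identity is exactly the paper's line. In fact the paper's own proof of this lemma only records the vanishing of the other multiplicities and the bound $[\G_0(G/B,\O_\l):L_{\l-\a}]\leq 1$; the equality is obtained in the lemma immediately following, via the same superdimension argument you set up, run as a downward induction from generic weights.

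The obstacle you isolate, $sdim\,L_{\l_3}\neq 0$, has a simpler resolution than either of your two suggestions, and it is the one the paper actually uses: step down once from the generic range. The weight $\l_4=\l_3+\b$ has $n=2$, hence is generic, so \rlem{generic} gives the exact sequence $0\to L_{\l_3}\to\G_0(G/B,\O_{\l_4})\to L_{\l_4}\to 0$; combined with \rlem{7} and the vanishing of higher cohomology this yields $sdim\,L_{\l_3}=-sdim\,L_{\l_4}$, and \rthm{character formula generic weights} gives $sdim\,L_{\l_4}=\pm 2\neq 0$. No direct character computation for $\l_3$ and no Euler-characteristic manipulation is needed, and there is no circularity with the later results. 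Your two proposed alternatives would work in principle but are heavier than necessary.

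One small inaccuracy: your uniqueness claim for $\a\in\D$ with $\l-\a\in F^{(1,1)}$ is not literally true, since $-\b$ is also a root and $\l+\b\in F^{(1,1)}$. What is meant (here and in the paper's statement) is uniqueness among $\a$ with $\l-\a<\l$, equivalently among $\a\in\D^+$.
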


\begin{proof} As in the previous lemma, by \rlem{k1122}, $\l+\r=\l_0+\r+n\b=(3+\frac{n}{2},2+\frac{n}{2},1+\frac{n}{2}|2+\frac{n}{2})$.

The first part of the lemma follows from \rlem{lemma 3 generic weights}. Assume $n=0$. The last coordinate of $\m+\r$ is $2$ or $\pm\frac{3}{2}$. By computation, there are only three possibilities $\m=\l_i$ with $i=0,1,2$ and in each case $w=id$. This implies $[\G_0(G/B,\O_{\l_0}),L_{\m}]=0$ for $\m\neq \l_0-\a$.

Assume $n=1$. The last coordinate of $\m+\r$ is $2, \frac{5}{2}, \pm\frac{3}{2}$. By computation there are only four possibilities $\m=\l_i$ with $i=0,1,2,3$. For $i=0,2$, there is unique possible $w=id$ and set $I$. This implies $[\G_0(G/B,\O_{\l_3}),L_{\m}]=0$ for $\m\neq \l_3-\a$.

Assume $n> 1$. The last coordinate of $\m+\r$ is in $[\frac{n}{2}-2, \frac{n}{2}+2]\cap \frac{1}{2}\Z_{\geq4}$. By computation and \rlem{cf4}, only $w=id$ is possible when $\m+\r$ has last coordinate equal the last coordinate of $\l+\r$ minus $\frac{1}{2}$ or $\m=\l$, in each case there is a unique set $I$. Thus, $[\G_0(G/B,\O_{\l}):L_{\m}]=0$ for $\m\neq \l$ and $\m\neq\l-\a$ for any $\a\in\D_{\1}$.
\end{proof}

\begin{lemma} For a dominant weight $\l\in F^{(1,1)}$, we have $sdim{L_{\l}}=\pm 2$ if $\l\neq\l_i$ for $i=1,2$. 
\end{lemma}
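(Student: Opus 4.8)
The plan is to determine $sdim\,L_\l$ for every dominant $\l\in F^{(1,1)}$ with $\l\neq\l_1,\l_2$ by reducing to the generic superdimension formula of \rthm{character formula generic weights} and then sliding that value along the quiver by a two-term Euler characteristic computation. First I would use \rlem{k1122} to note that every such $\l$ is of the form $\l_0+n\b$ with $n\in\Z_{\geq 0}$ and $\b=\tfrac12(\e_1+\e_2+\e_3+\d)$; concretely $\l_0+n\b+\r=(3+\tfrac n2,\,2+\tfrac n2,\,1+\tfrac n2\,|\,2+\tfrac n2)$, so these weights are pairwise distinct, none of them equals $\l_1$ or $\l_2$, and for $n$ large they lie far from the walls and hence are generic in the sense of the definition preceding \rthm{chgeneric}.

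For the base case, fix $n$ large enough that $\l:=\l_0+n\b$ is generic. Then \rthm{character formula generic weights} gives $sdim\,L_\l=(-1)^{p(\m)}2\dim\,L_\m(\g_x)$, where $\g_x\cong\mathfrak{sl}(3)$ and $\m+\r_l=\w_1+\w_2$. Since $\w_1+\w_2=\r_l$ for $\mathfrak{sl}(3)$, we get $\m=0$, so $L_\m(\g_x)$ is the trivial one-dimensional module and $sdim\,L_{\l_0+n\b}=\pm 2$ for all sufficiently large $n$.

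Next comes the recursion. Fix $n\geq 1$ and set $\l=\l_0+n\b$, so $\l\notin\{\l_0,\l_1,\l_2\}$ and $\l-\b=\l_0+(n-1)\b\in F^{(1,1)}$. By \rlem{coh011} (together with \rcor{z} for the multiplicity of $L_\l$), in the Grothendieck group $[\G_0(G/B,\O_\l)]=[L_\l]+[L_{\l-\b}]$, with $\b$ being the unique root of \rlem{coh011} precisely because $\l-\b\in F^{(1,1)}$. By \rlem{i1122} we have $\G_i(G/B,\O_\l)=0$ for $i>0$, and then \rlem{7} forces $sdim\,\G_0(G/B,\O_\l)=0$. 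Comparing superdimensions yields $sdim\,L_{\l_0+n\b}=-\,sdim\,L_{\l_0+(n-1)\b}$ for every $n\geq 1$; note that the $n=1$ instance is what computes $sdim\,L_{\l_0}$, a weight not handled directly by \rlem{coh011}.

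Finally I would assemble the pieces: choosing $n_0$ large enough to be generic and iterating the recursion downward gives $sdim\,L_{\l_0+n\b}=(-1)^{n_0-n}\,sdim\,L_{\l_0+n_0\b}=\pm 2$ for all $0\leq n\leq n_0$, and since $n_0$ may be taken arbitrarily large this holds for every $n\geq 0$, i.e. for every dominant $\l\in F^{(1,1)}$ with $\l\neq\l_1,\l_2$. The argument is essentially bookkeeping once the cited lemmas are in place; the only genuine inputs are the computer-assisted generic value $\pm 2$ (equivalently the identification $\m=0$, so $\dim\,L_0(\mathfrak{sl}(3))=1$) and the vanishing $sdim\,\G_0(G/B,\O_\l)=0$, which is exactly what \rlem{7} together with \rlem{i1122} provides, so the main thing to watch is that the chain of reductions really reaches $\l_0$, which is the $n=1$ step above.
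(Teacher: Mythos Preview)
Your proof is correct and follows essentially the same approach as the paper: start from a generic weight where \rthm{character formula generic weights} gives $sdim\,L_\l=\pm 2$, use \rlem{i1122} and \rlem{7} to get $sdim\,\G_0(G/B,\O_\l)=0$, and descend along the chain $\l_0+n\b$ of \rlem{k1122} down to $\l_0$. The only minor difference is that the paper invokes merely the inequality $[\G_0(G/B,\O_\l):L_{\l-\b}]\leq 1$ (extracted from the proof of \rlem{coh011}) and then deduces equality from the superdimension relation itself, whereas you cite \rlem{coh011} for the equality directly; your argument works equally well with only the inequality, so this is a cosmetic distinction.
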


\begin{proof} We prove this by induction starting with a generic weight $\l\in F^{(1,1)}$. From generic formula for superdimension, we have $sdim{L_{\l}}=a$ with $a=\pm 2$. The weights in $F^{(1,1)}$ can be obtained successively from $\l$ by subtracting odd root $\b$ from \rlem{k1122}.

By \rlem{7} and \rlem{i1122}, we have $$0=sdim{\G_0(G/B,\O_{\l})}=sdim{L_{\l}}+[\G_0(G/B,\O_{\l}):L_{\l-\a}]sdim{L_{\l-\a}}.$$ Since $sdim{L_{\l}}=\pm 2$ and $[\G_0(G/B,\O_{\l}):L_{\l-\a}]\leq 1$ from proof of previous lemma, we must have $[\G_0(G/B,\O_{\l}):L_{\l-\a}]=1$ and $sdim{L_{\l-\a}}=\mp 2$. By induction, this way from generic weight we obtain $L_{\l_0}$. Thus, $sdim{L_{\l_0}}=\pm 2$. 
\end{proof}

\begin{lemma}\label{0l1} We have $\G_0(G/B,\O_{\l_1})=L_{\l_1}$. 
\end{lemma}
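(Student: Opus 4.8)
The plan is to prove that $L_{\l_1}$ is the unique composition factor of $\G_0(G/B,\O_{\l_1})$. By \rlem{2} the module $\G_0(G/B,\O_{\l_1})$ is the maximal finite-dimensional quotient of the Verma module $M_{\l_1}$; in particular it is nonzero, and by \rcor{z} we have $[\G_0(G/B,\O_{\l_1}):L_{\l_1}]=1$. Hence it suffices to show that no other $L_\m$ occurs, and then $\G_0(G/B,\O_{\l_1})=L_{\l_1}$.

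So let $L_\m$ be a composition factor of $\G_0(G/B,\O_{\l_1})$; then $\m$ is dominant integral, and since $\G_0(G/B,\O_{\l_1})$ is a quotient of $M_{\l_1}$, all of whose composition factors share the central character $\chi_{\l_1}$, we have $\m\in F^{(1,1)}$. By \rlem{3} applied with $i=0$ (so the Weyl group element has length $0$ and is the identity), $\m+\r=\l_1+\r-\sum_{\a\in I}\a$ for some $I\subseteq\D^+_1$. Now $\l_1+\r=(\frac{5}{2},\frac{3}{2},\frac{1}{2}|-\frac{3}{2})$, and every odd positive root $\a\in\D^+_1=\{\frac{1}{2}(\pm\e_1\pm\e_2\pm\e_3+\d)\}$ has last coordinate $+\frac{1}{2}$; therefore the last coordinate of $\m+\r$ equals $-\frac{3}{2}-\frac{|I|}{2}$.

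It remains to compare this with \rthm{cf4}(1) applied with $a=1$: there the dominant weights of $F^{(1,1)}$ are parametrized so that the last coordinate of $\l+\r$ runs over $\{-\frac{3}{2}\}\cup\{\frac{3}{2},2,\frac{5}{2},3,\dots\}$, and the value $-\frac{3}{2}$ is attained only by $\l_1$ itself. Since $-\frac{3}{2}-\frac{|I|}{2}\le-\frac{3}{2}$ with equality iff $I=\emptyset$, and none of $-2,-\frac{5}{2},-3,\dots$ is an admissible last coordinate, we must have $I=\emptyset$ and $\m=\l_1$. This completes the argument.

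The argument has no genuine obstacle: the one point to get right is the bookkeeping against \rthm{cf4}, namely that subtracting a nonempty sum of odd positive roots pushes the last coordinate strictly below the minimal value $-\frac{3}{2}$ occurring in $F^{(1,1)}$ — which is precisely the structural reason $\l_1$ sits at an endpoint of the $D_\infty$ quiver. (Note that the companion identity $\G_0(G/B,\O_{\l_2})=L_{\l_2}$ will not follow from this crude count, since the last coordinate $\frac{3}{2}$ of $\l_2+\r$ is not minimal in this sense, and a separate, finer argument is needed there.)
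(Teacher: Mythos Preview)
Your proof is correct and follows essentially the same approach as the paper: both use \rlem{3} with $w=\mathrm{id}$ to see that any composition factor $L_\m$ has $\m+\r=\l_1+\r-\sum_{\a\in I}\a$, then invoke \rthm{cf4} to rule out all $\m\neq\l_1$. Your version is simply more explicit, carrying out the last-coordinate bookkeeping that the paper compresses into the phrase ``from \rlem{3} and \rthm{cf4}.''
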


\begin{proof} From \rlem{3} and \rthm{cf4}, if $L_{\sigma}$ occurs in $\G_0(G/B,\O_{\l_1})$, then $\m\leq \l$. Thus, $[\G_0(G/B,\O_{\l_1}):L_{\sigma}]=0$ for $\sigma\neq \l_1$.

We know $[\G_0(G/B,\O_{\l_1}):L_{\l_1}]=1$ from \rlem{z}. 
\end{proof}

\begin{lemma}\label{1l1} We have $\G_1(G/B,\O_{\l_1})=L_{\l_2}$. 
\end{lemma}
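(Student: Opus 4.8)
The plan is to use the Euler characteristic machinery together with the computation of $\Gamma_0$ already obtained. First I would invoke Lemma~\ref{7}: since $L_{\l_1}\in\F^{(1,1)}$, we have $\sum_i(-1)^i\,\mathrm{sdim}\,\Gamma_i(G/B,\O_{\l_1})=0$. By the vanishing result $\Gamma_i(G/B,\O_\l)=0$ for $i>1$ (the last lemma of Section~5, via the parabolic $\pi:G/B\to G/P$ with $(G/P)_0$ of dimension $1$), this collapses to
$$\mathrm{sdim}\,\Gamma_0(G/B,\O_{\l_1})=\mathrm{sdim}\,\Gamma_1(G/B,\O_{\l_1}).$$
By Lemma~\ref{0l1} we have $\Gamma_0(G/B,\O_{\l_1})=L_{\l_1}$, and from the earlier lemma $\mathrm{sdim}\,L_{\l_1}=\pm 1$ (the special weights $\l_1,\l_2$ are exactly the ones excluded from the $\pm 2$ statement; their superdimension equals $\mathrm{sdim}\,L_\mu(\g_x)$, which for $\g_x=\mathfrak{sl}(3)$ and $\mu+\rho_l=\w_1+\w_2$ is $\pm 1$ — indeed this is the content of Theorem~\ref{SFF} for the special weights). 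Hence $\Gamma_1(G/B,\O_{\l_1})$ is nonzero and has superdimension $\pm 1$, so in particular it is not the zero module.

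Next I would pin down which simple modules can occur in $\Gamma_1(G/B,\O_{\l_1})$. By Lemma~\ref{3}, any $L_\sigma$ occurring in $\Gamma_1(G/B,\O_{\l_1})$ satisfies $\sigma+\rho=w(\l_1+\rho)-\sum_{\a\in I}\a$ for some $w\in W$ of length $1$ and $I\subset\D_1^+$. Using $\l_1+\rho=(\tfrac52,\tfrac32,\tfrac12\mid-\tfrac32)$ and running through the reflections of length one together with subsets of $\D_1^+$, and then intersecting with the list of dominant weights of $\F^{(1,1)}$ from Theorem~\ref{cf4} (equivalently Lemma~\ref{dominant integral weights of F(4)}), one checks that the only dominant weight that arises is $\l_2$, with $\l_2+\rho=(\tfrac52,\tfrac32,\tfrac12\mid\tfrac32)$: indeed $\l_1+\rho$ and $\l_2+\rho$ differ by $3\d$, i.e. $\l_2+\rho=r_\d(\l_1+\rho)$ for the even root $\d$ (or one exhibits $\l_2+\rho = w(\l_1+\rho)-\sum_{\a\in I}\a$ directly), and all other candidates are either non-dominant, lie on a wall, or lie outside the block. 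Therefore every simple subquotient of $\Gamma_1(G/B,\O_{\l_1})$ is isomorphic to $L_{\l_2}$ (up to parity).

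Finally, to conclude $\Gamma_1(G/B,\O_{\l_1})=L_{\l_2}$ on the nose, I would combine the multiplicity bound with the superdimension count. Since $\mathrm{sdim}\,L_{\l_2}=\pm1$ (same computation as for $\l_1$, by symmetry of the block) and $\mathrm{sdim}\,\Gamma_1(G/B,\O_{\l_1})=\mathrm{sdim}\,L_{\l_1}=\pm1$, the multiplicity $[\Gamma_1(G/B,\O_{\l_1}):L_{\l_2}]$ is forced to be exactly $1$ (with the parity matching so the signs agree), hence $\Gamma_1(G/B,\O_{\l_1})=L_{\l_2}$. The main obstacle I anticipate is the finite but somewhat delicate case-check in the second step — enumerating the length-one Weyl elements $\sigma_i,\tau_i,\s$ acting on $\l_1+\rho$, subtracting sums of positive odd roots, and verifying against the dominance conditions of Lemma~\ref{dominant integral weights of F(4)} that nothing other than $\l_2$ survives; a clean way to organize this is to note $\Gamma^0(G/B,\O_{\a})\cong\Gamma^1(G/B,\O_{\b})$ whenever $\b+\rho=r_\gamma(\a+\rho)$ for an even root $\gamma$ in the base (Lemma~\ref{4}), apply it with $\gamma=\d$ so that $\Gamma_1(G/B,\O_{\l_1})\cong\Gamma_0(G/B,\O_{\l_2})$... but since $\l_2$ is itself dominant this would instead show $\Gamma_0(G/B,\O_{\l_2})=L_{\l_2}$ and needs the reflection to land on a non-dominant weight, so care is needed about exactly which $\a,\b$ to feed into Lemma~\ref{4}; I would therefore present the direct argument above as the primary route and use Lemma~\ref{4} only as a consistency check.
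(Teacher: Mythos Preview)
Your opening superdimension step is fine and matches the paper: from Lemma~\ref{7} and the vanishing for $i>1$ you get $\mathrm{sdim}\,\Gamma_1(G/B,\O_{\l_1})=\mathrm{sdim}\,L_{\l_1}=1$ (note $\l_1=0$, so $L_{\l_1}$ is the trivial module and its superdimension is known without invoking Theorem~\ref{SFF}).

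The genuine gap is your second step. Lemma~\ref{3} does \emph{not} exclude $L_{\l_1}$ itself from $\Gamma_1(G/B,\O_{\l_1})$. Take the length-one reflection $w=\s$ (with $\s(\d)=-\d$): then $\s(\l_1+\rho)=(\tfrac52,\tfrac32,\tfrac12\mid\tfrac32)$ and $\s(\l_1+\rho)-(\l_1+\rho)=(0,0,0\mid 3)$, which \emph{is} a sum of six positive odd roots (the full sum $2\rho_1=(0,0,0\mid 4)$ minus any pair $\{\tfrac12(v+\d),\tfrac12(-v+\d)\}$). So after the Lemma~\ref{3} sieve the candidates are $L_{\l_1}$ \emph{and} $L_{\l_2}$, not $L_{\l_2}$ alone, and your superdimension count cannot distinguish them. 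Your backup via Lemma~\ref{4} also fails as stated, since $\d$ is not a simple root of the distinguished Borel, so $r_\d$ is not one of the reflections that lemma allows.

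The paper resolves this with the character formula of Theorem~\ref{chcoh}:
\[
\mathrm{ch}\,\Gamma_0(G/B,\O_{\l_1})-\mathrm{ch}\,\Gamma_1(G/B,\O_{\l_1})=\frac{D_1 e^{\rho}}{D_0}\sum_{w\in W}\mathrm{sgn}(w)\,e^{w(\l_1+\rho)},
\]
and checks directly that the right-hand side is nonzero (the numerator has a nonzero lowest-degree term). Hence $\Gamma_1(G/B,\O_{\l_1})\neq\Gamma_0(G/B,\O_{\l_1})=L_{\l_1}$, and the only remaining possibility among the subquotients allowed by Lemma~\ref{3} is $L_{\l_2}$. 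You should replace your ``only $\l_2$ survives'' claim by this character argument.
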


\begin{proof} We have $$0=sdim{\G_0(G/B,\O_{\l_1})}-sdim{\G_1(G/B,\O_{\l_1})}$$ and $$sdim{\G_0(G/B,\O_{\l_1})}=sdim{L_{\l_1}}=1.$$ 

This implies that $sdim{\G_1(G/B,\O_{\l_1})}=1$. Thus, we either have $\G_1(G/B,\O_{\l_1})=L_{\l_1}$ or  $\G_1(G/B,\O_{\l_1})=L_{\l_2}$. This is true since $\G_1(G/B,\O_{\l_1}):L_{\sigma}]=0$ for all $\s\neq\l_1,\l_2$.

We have $$ch{\G_0(G/B,\O_{\l_1})}-ch{\G_1(G/B,\O_{\l_1})}=\frac{D_1e^{\r}}{D_0}\sum_{w\in W}sgn(w)e^{w(\l_1+\r)}.$$ The expression on the right is not zero, since one can compute that the lowest degree term in the numerator is not zero. This implies $\G_0(G/B,\O_{\l_1})\neq \G_1(G/B,\O_{\l_1})$. Thus, $\G_1(G/B,\O_{\l_1})=L_{\l_2}$. 

\end{proof}

\begin{lemma} We have $sdimL_{\l_1}=sdimL_{\l_2}=1$. 
\end{lemma}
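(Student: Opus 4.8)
The plan is to identify $L_{\l_1}$ outright and then transport the value of its superdimension to $L_{\l_2}$ through the cohomology identities already established.

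First I would read off from the list of weights just before \rlem{k1122} that $\l_1+\r=(\tfrac52,\tfrac32,\tfrac12\mid-\tfrac32)$, which is precisely the Weyl vector $\r$; hence $\l_1=0$ and $L_{\l_1}$ is the one-dimensional trivial module. Since $L_{\l_1}$ lies in $\F$, its single weight space sits in weight $0$, whose parity is $p(0)=0$, so $L_{\l_1}$ is purely even and $sdim L_{\l_1}=1$.

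To reach $L_{\l_2}$ I would then apply \rlem{7} to the dominant atypical weight $\l_1$, which gives $\sum_i(-1)^i\,sdim\,\G_i(G/B,\O_{\l_1})=0$. By \rlem{0l1} the $i=0$ term equals $sdim L_{\l_1}$, by \rlem{1l1} the $i=1$ term equals $sdim L_{\l_2}$, and the terms with $i\ge 2$ vanish by the vanishing lemma for the distinguished Borel at a dominant weight. Hence $sdim L_{\l_1}-sdim L_{\l_2}=0$, and together with the previous paragraph this gives $sdim L_{\l_2}=sdim L_{\l_1}=1$.

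The argument is short; the only point demanding genuine input is fixing the overall sign, i.e.\ that the common superdimension is $+1$ and not $-1$, and this is exactly what the identification $\l_1=0$ provides (the value $1$ also agrees with $\dim L_0(\mathfrak{sl}(3))$, as predicted for the block $\F^{(1,1)}$ by \rthm{SFF}). If one preferred not to name $\l_1$, one could instead feed the same Euler-characteristic identity at the branch weight $\l_0$ — whose sheaf cohomology is concentrated in degree $0$ with simple subquotients among $L_{\l_0},L_{\l_1},L_{\l_2}$ — so that $sdim L_{\l_0}+2\,sdim L_{\l_1}=0$ by the already-proved relation $sdim L_{\l_1}=sdim L_{\l_2}$; with $sdim L_{\l_0}=\pm2$ known, this forces the common value to $\mp1$, and the sign would then be settled by computing $p(\l_0)\equiv1\pmod 2$.
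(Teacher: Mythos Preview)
Your main argument is correct and is essentially the paper's own proof, made explicit. The paper's one-line proof cites the two preceding lemmas (\rlem{0l1} and \rlem{1l1}) together with $sdim\,\G_0(G/B,\O_{\l_1})=sdim\,\G_1(G/B,\O_{\l_1})$; the value $sdim L_{\l_1}=1$ is in fact already invoked inside the proof of \rlem{1l1}, relying (implicitly, as you observed) on $\l_1+\r=\r$, i.e.\ $L_{\l_1}$ being the trivial module. So your identification $\l_1=0$ is exactly the missing justification the paper takes for granted.

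One caution about your alternative route via $\l_0$: as written it would be circular in the paper's logical order. The fact that $\G_0(G/B,\O_{\l_0})$ has subquotients $L_{\l_0},L_{\l_1},L_{\l_2}$ each with multiplicity one is \rlem{coho0}, which appears \emph{after} the present lemma and whose proof uses $sdim L_{\l_1}=sdim L_{\l_2}=1$ to pin down those multiplicities (a priori one only knows they are $\leq 1$). So the equation $sdim L_{\l_0}+2\,sdim L_{\l_1}=0$ is not yet available at this point; your primary argument via $\l_1=0$ is the right one.
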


\begin{proof} This follows from previous two lemmas and since

$$sdim{\G_0(G/B,\O_{\l_1})} =sdim{\G_1(G/B,\O_{\l_1})}.$$ 
\end{proof}

\begin{lemma}\label{coho0} The cohomology group $\G_0(G/B,\O_{\l_0})$ has a filtration with quotients $L_{\l_0}$, $L_{\l_1}$, and $L_{\l_2}$.We know that $L_{\l_0}$ is a quotient of $\G_0(G/B,\O_{\l_0})$. The kernel of that quotient has a filtration with subquotients $L_{\l_1}$, $L_{\l_2}$. Also, $sdim{L_{\l_0}}=-2$.
\end{lemma}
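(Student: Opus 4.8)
The plan is to realise $\G_0(G/B,\O_{\l_0})$ as a quotient of a Verma module, cut down its composition factors by the linkage principle, count them by superdimension, and then pin the two remaining multiplicities. First, by \rlem{2} the module $\G_0(G/B,\O_{\l_0})$ is the maximal finite-dimensional quotient of $U(\g)\otimes_{U(\bb)}\C_{\l_0}=M_{\l_0}$; in particular it is generated by its one-dimensional $\l_0$-weight space, hence is indecomposable with unique simple quotient $L_{\l_0}$, and $[\G_0(G/B,\O_{\l_0}):L_{\l_0}]=1$ by \rcor{z}. Writing $K$ for the kernel of $\G_0(G/B,\O_{\l_0})\twoheadrightarrow L_{\l_0}$, it remains to show that $K$ has composition factors $L_{\l_1}$ and $L_{\l_2}$, each with multiplicity one, and that $sdim\,L_{\l_0}=-2$.

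Next I would determine which $L_\m$ can occur. By \rlem{3} in cohomological degree $0$ (so $w=\mathrm{id}$) one has $\m+\r=(\l_0+\r)-\sum_{\a\in I}\a$ for some $I\subseteq\D_\1^+$. Since $\l_0+\r=(3,2,1\,|\,2)$ and every $\a\in\D_\1^+$ has $\d$-coordinate $\tfrac12$, the $\d$-coordinate of $\m+\r$ equals $2-\tfrac12|I|$; as $\m$ must be dominant in $\F^{(1,1)}$, \rthm{cf4} permits this coordinate only in $\{-\tfrac32\}\cup\{\tfrac32,2,\tfrac52,\dots\}$, forcing $|I|\in\{0,1,7\}$. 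The coordinate bookkeeping already carried out for $n=0$ in the proofs of \rlem{i1122} and \rlem{coh011} then shows the only dominant weights of $\F^{(1,1)}$ obtained are $\l_0$ (for $|I|=0$), $\l_2=\l_0-\b$ with $\b=\tfrac12(\e_1+\e_2+\e_3+\d)$ (for $|I|=1$, which forces $I=\{\b\}$), and $\l_1$ (for $|I|=7$). Hence every composition factor of $K$ is $L_{\l_1}$ or $L_{\l_2}$.

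I would then count by superdimension. Since $\l_0\neq\l_1,\l_2$, \rlem{i1122} gives $\G_i(G/B,\O_{\l_0})=0$ for $i>0$, so by \rlem{7} we get $sdim\,\G_0(G/B,\O_{\l_0})=0$. Setting $m_i=[\G_0(G/B,\O_{\l_0}):L_{\l_i}]$ and using the earlier facts $sdim\,L_{\l_1}=sdim\,L_{\l_2}=1$ and $sdim\,L_{\l_0}=\pm2$, the identity $sdim\,L_{\l_0}+m_1+m_2=0$ forces $sdim\,L_{\l_0}=-2$ (the last assertion of the lemma) and $m_1+m_2=2$.

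The main obstacle is the final step: excluding $(m_1,m_2)\in\{(2,0),(0,2)\}$, i.e. showing each of $L_{\l_1},L_{\l_2}$ occurs exactly once. My preferred route is a character comparison: by \rthm{chcoh} and the vanishing of higher cohomology, $ch\,\G_0(G/B,\O_{\l_0})=\frac{D_1e^{\r}}{D_0}\sum_{w\in W}sign(w)\,e^{w(\l_0+\r)}$, and the characters of $L_{\l_1}$ and $L_{\l_2}$ are already known (namely $ch\,L_{\l_1}=e^{0}$ since $\l_1=0$, and $ch\,L_{\l_2}$ from \rlem{0l1}, \rlem{1l1} and their $\l_1\leftrightarrow\l_2$ analogues via $\sum_i(-1)^i ch\,\G_i(G/B,\O_{\l_2})$); imposing that $ch\,L_{\l_0}=ch\,\G_0(G/B,\O_{\l_0})-m_1\,ch\,L_{\l_1}-m_2\,ch\,L_{\l_2}$ be a genuine character (non-negative coefficients, coefficient of $e^{\l_0}$ equal to $1$) then singles out $m_1=m_2=1$, the finite verification being the computation referenced in the Appendix. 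A purely module-theoretic alternative would observe that if, say, $m_2=0$ then $K$ has both composition factors equal to $L_{\l_1}$ and, since simple modules of $\F$ have no self-extensions, $K\cong L_{\l_1}^{\oplus2}$, making $\G_0(G/B,\O_{\l_0})$ an extension of $L_{\l_0}$ by $L_{\l_1}^{\oplus2}$; such an extension splits off a copy of $L_{\l_1}$ as soon as $\mathrm{Ext}^1(L_{\l_0},L_{\l_1})$ is one-dimensional, contradicting indecomposability — but since that $\mathrm{Ext}$-bound is itself part of the quiver description not yet available here, I would rely on the character computation. In either case $m_1=m_2=1$, so $K$ is filtered by $L_{\l_1}$ and $L_{\l_2}$ and $\G_0(G/B,\O_{\l_0})$ has a filtration with quotients $L_{\l_0},L_{\l_1},L_{\l_2}$, as claimed.
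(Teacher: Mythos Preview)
Your setup and the superdimension identity $sdim\,L_{\l_0}+m_1+m_2=0$ are exactly what the paper does, but you are working much too hard in the last step. The paper does not use any character comparison or $\mathrm{Ext}$ argument to rule out $(m_1,m_2)\in\{(2,0),(0,2)\}$; instead it invokes \rlem{3} in its standard stronger form: the multiplicity $[\G_0(G/B,\O_{\l_0}):L_{\l_i}]$ is bounded by the number of pairs $(w,I)$ with $w=\mathrm{id}$ and $I\subset\D^+_\1$ such that $\l_i+\r=(\l_0+\r)-\sum_{\a\in I}\a$. You already computed that for $\l_2$ the only such $I$ is $\{\b\}$ and for $\l_1$ the only such $I$ has $|I|=7$, so $m_1\leq 1$ and $m_2\leq 1$. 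Combined with $m_1+m_2=2$ this forces $m_1=m_2=1$ and $sdim\,L_{\l_0}=-2$ immediately.

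In other words, the missing observation is that the uniqueness of $I$ you established is already a multiplicity bound, not merely a constraint on which weights appear. Your character route would in principle work but is unnecessary, and as you note the $\mathrm{Ext}$ route would be circular at this stage.
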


\begin{proof} From previous lemmas, we have $sdim{L_{\l_0}}=\pm 2$, $sdim{L_{\l_1}}=sdim{L_{\l_2}}=1$. We also know from \rlem{coh011}, $[\G_0(G/B,\O_{\l_0}): L_\sigma]=0$, unless $\sigma=\l_i$ with $i=0, 1, 2$. From \rlem{3}, we have $[\G_0(G/B,\O_{\l_0}): L_{\l_0}]=1$, $[\G_0(G/B,\O_{\l_0}): L_{\l_1}]\leq1$, $[\G_0(G/B,\O_{\l_0}): L_{\l_2}]\leq1$.

We have $$0=sdim{\G_0(G/B,\O_{\l_0})}=sdim{L_{\l_0}}+[\G_0(G/B,\O_{\l_0}): L_{\l_1}]sdim{L_{\l_1}}+$$ $$+[\G_0(G/B,\O_{\l_0}): L_{\l_2}]sdim{L_{\l_2}}.$$ This implies that $[\G_0(G/B,\O_{\l_0}): L_{\l_1}]=[\G_0(G/B,\O_{\l_0}): L_{\l_2}]=1$, and $sdim{L_{\l_0}}=-2$.
\end{proof}

\begin{lemma} We have $\G_0(G/B,\O_{\l_2})=L_{\l_2}$ and $\G_1(G/B,\O_{\l_2})=L_{\l_1}$. 
\end{lemma}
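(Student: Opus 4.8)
The plan is to follow the pattern of \rlem{0l1} and \rlem{1l1}, with the roles of $\l_1$ and $\l_2$ interchanged; the one step that is genuinely new is the exclusion of $L_{\l_1}$ from $\G_0(G/B,\O_{\l_2})$.

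First I would establish $\G_0(G/B,\O_{\l_2})=L_{\l_2}$. By \rcor{z}, $L_{\l_2}$ is a quotient of $\G_0(G/B,\O_{\l_2})$ with $[\G_0(G/B,\O_{\l_2}):L_{\l_2}]=1$, and by \rlem{2} this module is indecomposable, being a quotient of the Verma module $M_{\l_2}$. By \rlem{3}, every composition factor $L_\sigma$ satisfies $\sigma+\r=\l_2+\r-\sum_{\a\in I}\a$ for some $I\subseteq\D^+_1$; as each $\a\in\D^+_1$ has $\d$-coordinate $\tfrac12$, the last coordinate of $\sigma+\r$ equals $\tfrac32-\tfrac{|I|}{2}\leq\tfrac32$. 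Comparing with the last coordinates of the dominant weights of $F^{(1,1)}$ in \rthm{cf4}(1) (which lie in $\tfrac12\Z_{\geq 3}\cup\{-\tfrac32\}$), the only possibilities are $\sigma=\l_2$ (with $|I|=0$) and $\sigma=\l_1$ (with $|I|=6$ and $\sum_{\a\in I}\a=3\d$, since $\l_1+\r=\r$ and $\l_2+\r=\r+3\d$), each of multiplicity at most $1$. The hard part is to rule out $L_{\l_1}$: unlike in \rlem{0l1}, where $\l_1$ is the \emph{minimal} dominant weight of the block and the last-coordinate bound alone suffices, here $\l_1<\l_2$ is an honest dominant weight of $F^{(1,1)}$ and $L_{\l_1}$ is combinatorially admissible. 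The point is that $\l_2$ is an endpoint of the block in the atypical direction: the unique $\b\in\D^+_1$ with $(\l_2+\r,\b)=0$ is $\b=\tfrac12(\e_1+\e_2+\e_3+\d)$, and $\l_2-\b\notin X^+$ (a direct check: $\l_2-\b+\r=(2,1,0|1)$, which violates the dominance conditions). One then shows $L_{\l_1}$ does not occur in $\G_0(G/B,\O_{\l_2})$ — equivalently $Ext^1_{\g}(L_{\l_2},L_{\l_1})=0$ — for instance by feeding the short exact sequence of $B$-modules $0\to C_{\l_0}\to W\to C_{\l_2}\to 0$, with $\l_0=\l_2+\b$ and $W$ the length-two $\bb$-module it determines, into \rlem{1} and invoking the already-known structure of $\G_\bullet(G/B,\O_{\l_0})$ from \rlem{coho0}; once the quiver of \rthm{CBF}(3) is available this is immediate, $\l_1$ and $\l_2$ being non-adjacent vertices of $D_\infty$. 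This gives $\G_0(G/B,\O_{\l_2})=L_{\l_2}$.

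For $\G_1(G/B,\O_{\l_2})$ I would argue as in \rlem{1l1}. By \rlem{3} with $w$ of length $1$ — that is, $w\in\{\s_1,\s_3,\tau_3,\s\}$, the reflections in the simple roots of $\g_{\0}=B_3\oplus A_1$ — together with \rthm{cf4}, the only dominant weight of $F^{(1,1)}$ that can occur in $\G_1(G/B,\O_{\l_2})$ is $\l_1$: for the three $B_3$-reflections the last coordinate of $w(\l_2+\r)$ is $\tfrac32$ and only $|I|=6$ lands on a dominant weight, namely $\l_1$, while $\s(\l_2+\r)=\l_1+\r$; moreover $\G_i(G/B,\O_{\l_2})=0$ for $i\geq 2$ (proved earlier for the distinguished Borel and any dominant weight). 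By \rlem{7}, $sdim\,\G_0(G/B,\O_{\l_2})=sdim\,\G_1(G/B,\O_{\l_2})$; since $\G_0(G/B,\O_{\l_2})=L_{\l_2}$ and $sdim\,L_{\l_1}=sdim\,L_{\l_2}=1$, this forces $[\G_1(G/B,\O_{\l_2}):L_{\l_1}]=1$, i.e. $\G_1(G/B,\O_{\l_2})=L_{\l_1}$. Alternatively, exactly as in \rlem{1l1}, the identity $ch\,\G_0(G/B,\O_{\l_2})-ch\,\G_1(G/B,\O_{\l_2})=\tfrac{D_1e^{\r}}{D_0}\sum_{w\in W}sgn(w)\,e^{w(\l_2+\r)}$, whose lowest-degree numerator term is nonzero, rules out $\G_1(G/B,\O_{\l_2})=0$ and pins the multiplicity down. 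I expect the only real obstacle to be the vanishing of $Ext^1_{\g}(L_{\l_2},L_{\l_1})$ used above; the rest is routine bookkeeping with \rlem{3}, \rlem{7} and \rthm{cf4}, parallel to the two preceding lemmas.
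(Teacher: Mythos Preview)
Your overall plan is sound and you have correctly isolated the one genuinely hard step: ruling out $L_{\l_1}$ as a subquotient of $\G_0(G/B,\O_{\l_2})$. However, neither of the two arguments you propose for this step actually closes the gap. The quiver argument is circular: the $D_\infty$ quiver of \rthm{CBF}(3) is established precisely from computations such as this one, so you cannot invoke non-adjacency of $\l_1$ and $\l_2$ here. The $B$-module argument via $0\to C_{\l_0}\to W\to C_{\l_2}\to 0$ is too vague to work as stated: in the resulting long exact sequence you have no a priori control over $\G_i(G/B,W)$, so knowing $\G_\bullet(G/B,\O_{\l_0})$ from \rlem{coho0} does not by itself bound $[\G_0(G/B,\O_{\l_2}):L_{\l_1}]$. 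Since your computation of $\G_1$ then rests on already knowing $\G_0=L_{\l_2}$, the whole argument stalls at this point.

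The paper proceeds in the opposite order and with a different device. First, \rlem{sums} applied to $\l_2=r_\d(\l_1+\r)-\r$ together with \rlem{0l1} and \rlem{1l1} gives the identity
\[
[\G_1(G/B,\O_{\l_2}):L_{\l_1}]-[\G_0(G/B,\O_{\l_2}):L_{\l_1}]=1.
\]
Second, one produces an \emph{external} upper bound $[\G_1(G/B,\O_{\l_2}):L_{\l_1}]\leq 1$ by translating from a \emph{typical} weight: for $\m$ with $\m+\r=(3,2,1|1)$ one has a short exact sequence of sheaves $0\to\O_{\l_0}\to(\O_\m\otimes\g)^{\Phi^{-1}(\chi)}\to\O_{\l_2}\to 0$, and since $\m$ is typical the middle term has $\G_i=(L_\m\otimes\g)^\chi$ for $i=0$ and vanishes for $i>0$. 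The long exact sequence then embeds $\G_1(G/B,\O_{\l_2})$ into $\G_0(G/B,\O_{\l_0})$, and \rlem{coho0} gives $[\G_0(G/B,\O_{\l_0}):L_{\l_1}]=1$. The sdim argument (\rlem{7}) shows this multiplicity is nonzero, so $\G_1(G/B,\O_{\l_2})=L_{\l_1}$, and then the displayed identity forces $[\G_0(G/B,\O_{\l_2}):L_{\l_1}]=0$. The key idea you are missing is to use a filtration whose middle term has \emph{known} cohomology --- here supplied by typicality --- rather than an arbitrary two-step $\bb$-module.
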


\begin{proof} From \rlem{3}, we have $[\G_0(G/B,\O_{\l_2}):L_{\sigma}]=0$ for $\sigma\neq \l_i$ with $i=1,2$. We know $[\G_0(G/B,\O_{\l_2}):L_{\l_2}]=1$ from \rlem{z}. We need to show $[\G_0(G/B,\O_{\l_2}):L_{\l_1}]=0$.

From \rlem{sums}, since $\l_2=w(\l_1+\r)-\r$, with $w$ reflection with respect to root $\d$, we have $$ch{\G_0(G/B,\O_{\l_1})}-ch{\G_1(G/B,\O_{\l_1})}=-ch{\G_0(G/B,\O_{\l_2})}+ch{\G_1(G/B,\O_{\l_2})}.$$

From \rlem{0l1}, we have $\G_0(G/B,\O_{\l_1})=L_{\l_1}$. From \rlem{1l1}, we have $\G_1(G/B,\O_{\l_1})=L_{\l_2}$. From \rlem{3}, we know that $[\G_1(G/B,\O_{\l_2}):L_{\l_2}]=0$. We also know that $[\G_0(G/B,\O_{\l_2}):L_{\l_2}]=1$. The above equation gives $$[\G_1(G/B,\O_{\l_2}):L_{\l_1}]-[\G_0(G/B,\O_{\l_2}):L_{\l_1}]=1.$$

We show that $\G_1(G/B,\O_{\l_2})=L_{\l_1}$, which together with previous equality implies $[\G_0(G/B,\O_{\l_2}):L_{\l_1}]=0$ and proves the lemma.

Consider the typical weight $\m$, with $\m+\r=(3,2,1|1)$. The module $(L_{\m}\otimes\g)^{(1,1)}$ has a filtration with quotients $\O_\l$ with $\l=\l_i$ with $i=0,2$. As $\l_2<\l_0$, we have an exact sequence:

$$0\rightarrow\O_{\l_0}\rightarrow (\O_{\m}\otimes\g)^{\Phi^{-1}(\chi)}\rightarrow\O_{\l_2}\rightarrow 0.$$

Applying \rlem{1}, gives the following long exact sequence: 

$$0\rightarrow\G_1(G/B,\O_{\l_2})\rightarrow\G_0(G/B,\O_{\l_0})\rightarrow (L_{\m}\otimes\g)^{\chi}\rightarrow\G_0(G/B,\O_{\l_2})\rightarrow 0.$$

From previous lemma, we have $[\G_0(G/B,\O_{\l_0}):L_{\l_1}]=1$. From the long exact sequence we have $[\G_1(G/B,\O_{\l_2}):L_{\l_1}]\leq [\G_0(G/B,\O_{\l_0}):L_{\l_1}]=1$. Since $sdim\G_1(G/B,\O_{\l_2})=sdim\G_0(G/B,\O_{\l_2})\neq0$, we have $[\G_1(G/B,\O_{\l_2}):L_{\l_1}]\neq 0$. This proves the lemma. 
\end{proof}

\begin{lemma}\label{x1122} We have $T(L_{\l_i})=L_{\m_i}$, for all $i\neq 2$.
\end{lemma}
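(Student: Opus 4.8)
The plan is to deduce the lemma from \rlem{acyclicx}. By \rlem{zero} we have $T(L_{\l_i})\neq 0$ for every index $i$; hence, by \rlem{acyclicx}, it suffices to show that for each $i\neq 2$ the weight $\m_i$ is the \emph{unique} dominant weight of $F^{(2,2)}$ expressible as $\l_i+\gg$ with $\gg\in\D$. Granting this, \rlem{acyclicx} yields $T(L_{\l_i})=L_{\m_i}$ at once.

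First I would record the relevant translates explicitly. By \rlem{k1122} every dominant weight $\l_i\in F^{(1,1)}$ with $i\geq 3$ equals $\l_0+n\b$ with $\b=\tfrac12(\e_1+\e_2+\e_3+\d)$ and $n=i-2\geq 1$, so that $\l_i+\r=(3+\tfrac{n}{2},\,2+\tfrac{n}{2},\,1+\tfrac{n}{2}\,|\,2+\tfrac{n}{2})$; comparing with the values of $\m_i+\r$ listed above one finds $\m_i=\l_i+(\e_1-\e_3)$. For the two weights nearest the walls one has instead $\m_1=\l_1+\d$ and $\m_0=\l_0+\a$ with $\a=\tfrac12(\e_1-\e_2-\e_3-\d)$, exactly the arrows in the diagram above. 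In each case I would check, via \rlem{dominant integral weights of F(4)}, that $\m_i+\r$ satisfies the dominance inequalities, and, via the description of the atypical blocks in \rlem{dominant integral weights of F(4)} and \rthm{cf4}, that $L_{\m_i}\in\F^{(2,2)}$ (concretely: after a Weyl-group move $\m_i+\r$ is orthogonal to an isotropic root and has $\mathfrak{sl}(3)$-data $(2,2)$).

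The heart of the matter is uniqueness. Here I would exhaust the roots $\gg\in\D$: an even root translates $\l_i+\r$ inside the $\g_{\0}$-weight lattice, keeping the $\d$-coordinate fixed (or negating it, for $\pm\d$), while an odd root shifts the $\d$-coordinate by $\pm\tfrac12$. Imposing simultaneously that $\l_i+\gg$ be dominant (\rlem{dominant integral weights of F(4)}) and that it lie in $\F^{(2,2)}$ — i.e. $(\l_i+\gg+\r,\b')=0$ for some isotropic $\b'$ with the correct $\mathfrak{sl}(3)$-data, equivalently that $\l_i+\gg$ appear on the list of \rthm{cf4}(1) for $(a,b)=(2,2)$ — leaves only the translate giving $\m_i$. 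For $\l_i$ with $i$ large the dominance condition already pins things down; the delicate cases, which I expect to be the main obstacle, are $\l_0$ and $\l_1$, where several translates (for instance $\l_0+\d$ and $\l_0+\e_1$) are dominant and each must be shown to fall outside $F^{(2,2)}$ by the central-character computation. Finally, the hypothesis $i\neq 2$ is genuinely needed: for $\l_2$ both $\l_2-\d=\m_2$ and $\l_2+\e_1=\m_0$ are dominant and lie in $F^{(2,2)}$, so \rlem{acyclicx} does not apply and $T(L_{\l_2})$ must be treated separately; by contrast, for $\l_1$ the only admissible translate is $\l_1+\d=\m_1$, since $\m_2=\l_1+2\d$ and $2\d$ is not a root of $F(4)$.
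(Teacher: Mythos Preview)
Your proposal is correct and follows essentially the same approach as the paper: both reduce the claim to \rlem{acyclicx} by verifying that for each $i\neq 2$ there is a unique dominant weight in $F^{(2,2)}$ of the form $\l_i+\gg$ with $\gg\in\D$. The paper simply asserts this uniqueness ``as shown in the picture,'' whereas you spell out the explicit case analysis (including the observation that $\m_2-\l_1=2\d\notin\D$); your invocation of \rlem{zero} is harmless but not needed, since \rlem{acyclicx} does not require non-vanishing as a hypothesis.
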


\begin{proof} By definition, $T(L_{\l_i})=(L_{\l_i}\otimes\g)^{(2,2)}$. For each $i\neq 2$, there is a unique dominant weight $\m_i$ in the block $\F^{(2, 2)}$ of the form $\l_i+\gg$ with $\gg\in \D$ as its shown in the picture. Thus, the lemma follows from \rlem{acyclicx}.

\end{proof}

\begin{lemma}\label{special1122} We have $T(L_{\l_2})=L_{\m_2}$.
\end{lemma}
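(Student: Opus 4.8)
The plan is to compute $T(L_{\l_2})=(L_{\l_2}\otimes\g)^{(2,2)}$ via cohomology. From the lemma preceding \rlem{x1122} we have $\G_0(G/B,\O_{\l_2})=L_{\l_2}$ and $\G_1(G/B,\O_{\l_2})=L_{\l_1}$, and from the parabolic‑fibration lemma $\G_i(G/B,\O_{\l_2})=0$ for $i>1$. Applying \rlem{6} and then \rlem{x1122} gives
\begin{displaymath}
\G_0(G/B,T(\O_{\l_2}))=T(L_{\l_2}),\quad \G_1(G/B,T(\O_{\l_2}))=T(L_{\l_1})=L_{\m_1},\quad \G_i(G/B,T(\O_{\l_2}))=0\ \ (i>1),
\end{displaymath}
so the whole statement reduces to understanding $\G_0$ of the $B$‑equivariant bundle $T(\O_{\l_2})=(\O_{\l_2}\otimes\g)^{\Phi^{-1}((2,2))}$.

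The fibre of $T(\O_{\l_2})$ is the $\bb$‑submodule of $C_{\l_2}\otimes\g$ spanned by the weight lines $C_{\l_2+\gg}$, $\gg\in\D\cup\{0\}$, whose central character is $\chi_{(2,2)}$; by \rthm{cf4} and \rlem{dominant integral weights of F(4)} these weights are the two dominant ones $\m_2=\l_2-\d$ and $\m_0=\l_2+\e_1$ (the two arrows leaving $\l_2$ in the picture), together with certain non‑dominant weights of the form $\l_2+\gg$ with $(\l_2+\gg)+\r=r_\gamma(\m_0+\r)$ for an even reflection $r_\gamma$ in the base — for instance $\l_2+\e_1-\e_3$, since $(\l_2+\e_1-\e_3)+\r=\tau_3(\m_0+\r)$. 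The structural claim to be proved is that $T(\O_{\l_2})$ admits a filtration whose only dominant subquotient is $\O_{\m_2}$, all other subquotients (built from $\O_{\m_0}$ and its reflected non‑dominant partners) being acyclic: since $\m_0$ is not a special weight one has $\G_{>0}(G/B,\O_{\m_0})=0$, so by \rlem{4} the cohomology of each such partner bundle cancels the cohomology of $\O_{\m_0}$. Granting this, \rlem{w} yields $\G_i(G/B,T(\O_{\l_2}))=\G_i(G/B,\O_{\m_2})$ for all $i$, hence $T(L_{\l_2})=\G_0(G/B,\O_{\m_2})$.

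It remains to show $\G_0(G/B,\O_{\m_2})=L_{\m_2}$, and this is done directly, exactly as $\G_0(G/B,\O_{\l_2})=L_{\l_2}$ was obtained: by \rlem{3} the only possible composition factor of $\G_0(G/B,\O_{\m_2})$ other than $L_{\m_2}$ is $L_{\m_1}$ (with $\m_1$ the unique dominant weight of $\F^{(2,2)}$ that is $<\m_2$), and the character/superdimension argument used for $\l_2$ — the $\F^{(2,2)}$‑analogues of \rlem{0l1}, \rlem{1l1} and of the lemma preceding \rlem{x1122}, all of which rely only on \rthm{cf4}, \rlem{3}, \rlem{7} and the generic formula, and so do not presuppose the equivalence — forces $[\G_0(G/B,\O_{\m_2}):L_{\m_1}]=0$. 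This gives $T(L_{\l_2})=L_{\m_2}$.

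The main obstacle is the middle step. Because $\m_0$ is itself dominant and $L_{\m_0}=T(L_{\l_0})$ is a genuine nonzero object of $\F^{(2,2)}$ (\rlem{x1122}), one cannot invoke \rlem{acyclicx}, and one must instead control the $\bb$‑module structure of the fibre of $T(\O_{\l_2})$: listing precisely which weights $\l_2+\gg$ carry the character $\chi_{(2,2)}$ and checking that, apart from $\O_{\m_2}$, they organise into acyclic sub‑bundles via \rlem{4}. Should that bookkeeping prove awkward, an alternative is to bound the space of $\bb$‑singular vectors of weight $\m_0$ in $L_{\l_2}\otimes\g$ — it is at most one‑dimensional, since $\mathrm{Hom}_\g(M_{\m_0},L_{\l_2}\otimes\g)\cong\mathrm{Hom}_\g(M_{\m_0}\otimes\g,L_{\l_2})$ and $\l_2-\m_0=-\e_1$ is a weight of $\g$ of multiplicity one — and then to eliminate it using the adjunction $\mathrm{Hom}_\g(T(L_{\l_2}),L_{\m_0})=\mathrm{Hom}_\g(L_{\l_2},T^*(L_{\m_0}))$ together with a computation of $T^*(L_{\m_0})=(L_{\m_0}\otimes\g)^{(1,1)}$ inside the already fully understood block $\F^{(1,1)}$ and the fibre functor ($\g_x\cong\mathfrak{sl}(3)$, $(L_{\m_0})_x$ a sum of copies of the adjoint of $\mathfrak{sl}(3)$, and $(\mathfrak{sl}(3)\otimes\mathfrak{sl}(3))^{\chi_0}=\mathbb{C}$), which pins $T^*(L_{\m_0})$ down to $L_{\l_0}$.
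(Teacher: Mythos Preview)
Your approach is genuinely different from the paper's, and the central step has a real gap.

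The paper does not analyse $T(\O_{\l_2})$ at all. Instead it uses the exact sequence
\[
0\longrightarrow S\longrightarrow \G_0(G/B,\O_{\l_0})\longrightarrow L_{\l_0}\longrightarrow 0,
\]
where $S$ has a filtration with subquotients $L_{\l_1}$ and $L_{\l_2}$ (\rlem{coho0}). Applying the exact functor $T$ and using \rlem{6}, \rlem{w} for $\l_0$ (where only one dominant target $\m_0$ appears, so \rlem{w} genuinely applies), one gets $T(\G_0(G/B,\O_{\l_0}))=\G_0(G/B,\O_{\m_0})$; this has $L_{\m_0}$ as its \emph{unique} simple quotient, so $[T(S):L_{\m_0}]=0$, and hence $[T(L_{\l_2}):L_{\m_0}]=0$. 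This is short and avoids any filtration analysis of $T(\O_{\l_2})$.

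Your main route tries to show that the subquotients of $T(\O_{\l_2})$ other than $\O_{\m_2}$ are acyclic. But $\O_{\m_0}$ is not acyclic: $\G_0(G/B,\O_{\m_0})\neq 0$. Your proposed fix --- pairing $\O_{\m_0}$ with a reflected partner such as $\O_{\l_2+\e_1-\e_3}$ and invoking \rlem{4} --- does not produce acyclicity. \rlem{4} only gives $\G_i(\O_{\l_2+\e_1-\e_3})\cong\G_{i-1}(\O_{\m_0})$; feeding this into the long exact sequence for an extension of $\O_{\m_0}$ by $\O_{\l_2+\e_1-\e_3}$ yields $\G_0=\G_1=\G_0(\O_{\m_0})$ for the extension (the connecting map is not forced to be an isomorphism), so \rlem{w} is inapplicable. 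Getting actual cancellation would require identifying \emph{all} weights $\l_2+\gamma$ with $\chi_{\l_2+\gamma}=\chi_{(2,2)}$ and proving a specific splitting or a specific connecting-map computation --- none of which you carry out, and which is considerably harder than the paper's argument.

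Your fallback via adjunction is circular as written. You reduce to computing $T^*(L_{\m_0})=(L_{\m_0}\otimes\g)^{(1,1)}$, but the dominant weights of the form $\m_0+\gamma$ in $F^{(1,1)}$ are exactly $\l_0$ and $\l_2$ (via $\gamma=\frac{1}{2}(-\e_1+\e_2+\e_3+\d)$ and $\gamma=-\e_1$), so ruling out $L_{\l_2}$ in $T^*(L_{\m_0})$ is equivalent, by adjunction, to ruling out $L_{\m_0}$ in $T(L_{\l_2})$. The fiber-functor sketch does not break the circle: in block $(1,1)$ both $(L_{\l_0})_x$ and $(L_{\l_2})_x$ are built from the trivial $\mathfrak{sl}(3)$-module, so the computation $(\mathfrak{sl}(3)\otimes\mathfrak{sl}(3))^{\chi_0}=\C$ cannot distinguish them. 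Moreover, even if you showed $\mathrm{Hom}(T(L_{\l_2}),L_{\m_0})=0$, this only excludes $L_{\m_0}$ from the head, not from the composition series; contragredience then excludes it from the socle, but a priori not from a middle layer.

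In short: the filtration/cancellation step is the crux and is not established, and the alternative does not avoid it. The paper's trick --- work with $\O_{\l_0}$, where \rlem{w} applies cleanly, and read off the constraint on $T(L_{\l_2})$ from the resulting embedding into $\G_0(G/B,\O_{\m_0})$ --- is the missing idea.
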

 
\begin{proof} By definition, $T(L_{\l_2})=(L_{\l_2}\otimes\g)^{(2,2)}$. The only dominant weights in $F^{(2, 2)}$ of the form $\l_2+\gg$ with $\gg\in \D$ are $\m_2$ and $\m_0$.

It suffices to prove that $T(L_{\l_2})$ does not have a subquotient $L_{\m_0}$.

%The sheaf  $T(\O_{\l_2})$ has a filtration with subqotients $\O_{\t}$, with $\t=\m_2$, $\m_0$, or $\t_3$, with $\t_3+\r=(\frac{5}{2},\frac{3}{2},-\frac{1}{2}|\frac{1}{2})$. 
%\\

We know that $L_{\l_0}$ is a quotient of $\G_0(G/B,\O_{\l_0})$ from \rlem{z}. The kernel of that quotient has a filtration with subquotients $L_{\l_1}$, $L_{\l_2}$ (see \rlem{coho0}). We have the following exact sequence: 

$$0\rightarrow S\rightarrow \G_0(G/B,\O_{\l_0})\rightarrow L_{\l_0}\rightarrow 0.$$

Since $T$ is an exact functor, we get the following exact sequence: 

$$0\rightarrow T(S)\rightarrow T(\G_0(G/B,\O_{\l_0}))\rightarrow T(L_{\l_0})\rightarrow 0.$$

From \rlem{x1122}, we have $T(L_{\l_0})=L_{\m_0}$. The kernel $T(S)$ of that quotient has a filtration with subquotients $T(L_{\l_1})$, $T(L_{\l_2})$. By \rlem{6} and \rlem{w}, we have $T(\G_0(G/B,\O_{\l_0}))=\G_0(G/B,T(\O_{\l_0}))=\G_0(G/B,\O_{\m_0})$. The later module has a unique quotient $L_{\m_0}$. Therefore, $T(S)$ has no simple subquotient $L_{\m_0}$. Hence, $T(L_{\l_2})$ also does not have a subquotient $L_{\m_0}$.

%From \rlem{x1122}, $T(L_{\l_2})$ has no simple subquotient $L_{\m_0}$.

%Assume $T(L_{\l_2})=0$. We have $T(\G_0(G/B, \O_{\l_2}))=\G_0(G/B, T(\O_{\l_2}))$. The sheaf $T(\O_{\l_2})$ has a filtration with subquotients $\O_\tau$, with $\tau=\m_0$, $\m_1$ and $w(\m_1)$, for $w\in W$ simple reflection. Since $T$ is exact and $L_{\m_2}$ is a subquotient of the right side, there is simple module $L_{\sigma}$ subquotient of $\G_0(G/B, \O_{\l_2})$ with $T(L_{\sigma})=L_{\m_2}$. By definition of translation functor, $\sigma+\beta=\m_2$ with $\beta\in \D$. We see from the picture, that this is impossible. Contradiction. 

\end{proof} 

\begin{corollary} For any $\l\in F^{(1,1)}$, the module $T(L_{\l})\in F^{(2,2)}$ is irreducible of highest weight $\l+\a$ for some $\a\in\D$. Conversely, any irreducible module in $F^{(2,2)}$ is obtained this way. 
\end{corollary}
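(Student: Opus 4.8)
The plan is to read the forward direction off Lemmas \rlem{x1122} and \rlem{special1122}, and to deduce the converse --- that $L_\l\mapsto T(L_\l)$ hits every simple object of $\F^{(2,2)}$ --- by an adjunction argument with the left adjoint $T^{*}$ (equivalently, as a consequence of Theorem \rthm{eqgeneral}).

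\emph{Forward direction.} Every dominant weight of $F^{(1,1)}$ appears in the picture above as some $\l_i$: the two endpoints $\l_1,\l_2$ of the quiver $D_\infty$, the branch point $\l_0$, and the arm $\l_3,\l_4,\dots$ (each obtained from $\l_0$ by adding $\b=(\tfrac12,\tfrac12,\tfrac12\,|\,\tfrac12)$ finitely many times, Lemma \rlem{k1122}). Lemma \rlem{x1122} gives $T(L_{\l_i})=L_{\m_i}$ for all $i\neq 2$ and Lemma \rlem{special1122} gives $T(L_{\l_2})=L_{\m_2}$; hence $T(L_\l)$ is always irreducible, and in each case $\m_i-\l_i\in\D$ --- explicitly $\a$ at the branch point, $\pm\d$ at the two endpoints, and $\e_1-\e_3$ along the arm.

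\emph{Converse.} Fix $\m\in F^{(2,2)}$. The left adjoint is $T^{*}(V)=(V\otimes\g)^{(1,1)}$, again a translation functor between these blocks; the argument of Lemma \rlem{zero} is symmetric in $\chi$ and $\tau$ --- it uses only that $(1,1)$ and $(2,2)$ differ by $\pm$ the highest root of $\g_x\cong\mathfrak{sl}(3)$ --- so $T^{*}(L_\m)\neq 0$. Being nonzero and finite-dimensional, $T^{*}(L_\m)$ has a simple quotient $L_\l$, $\l\in F^{(1,1)}$, so $Hom_\g(T^{*}(L_\m),L_\l)\neq 0$ and hence $Hom_\g(L_\m,T(L_\l))\neq 0$ by adjunction. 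Since $T(L_\l)$ is simple by the forward direction, this nonzero map is an isomorphism $L_\m\cong T(L_\l)$, so every simple of $\F^{(2,2)}$ is of the asserted form.

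\emph{Main obstacle.} The real content is packed into Lemma \rlem{x1122}: applying Lemma \rlem{acyclicx} along the arm requires knowing that for such $\l$ there is exactly one dominant weight of $F^{(2,2)}$ of the form $\l+\gg$, $\gg\in\D$, which one reads from the parametrization by $c$ and the Weyl elements $w_c$ of Theorem \rthm{cf4}. In the converse the only point needing care is that Lemma \rlem{zero} does apply to $T^{*}$, as indicated.
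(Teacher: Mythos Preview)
Your forward direction is correct and matches the paper exactly: Lemmas~\ref{x1122} and~\ref{special1122} together show $T(L_{\l_i})=L_{\m_i}$ for every $i$, and each $\m_i-\l_i$ is visibly a root.

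For the converse, however, your adjunction argument has a gap at the point you flag yourself. The proof of Lemma~\ref{zero} computes $T(L_\l)_x=\bigl((L_\l)_x\otimes\g_x\bigr)^{\Phi^{-1}(\tau)}$ and concludes nonvanishing; this requires $(L_\l)_x\neq 0$. For $\l\in F^{(1,1)}$ that is available at this stage (the lemmas just above establish $sdim\,L_\l\neq 0$, hence $(L_\l)_x\neq 0$). But for your symmetric application to $T^*$ you would need $(L_\m)_x\neq 0$ for every $\m\in F^{(2,2)}$, and that is precisely the Kac--Wakimoto statement for the block $\F^{(2,2)}$, which is proved only later (via the very equivalence you are trying to set up). So the appeal to Lemma~\ref{zero} for $T^*$ is circular. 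The ``$(1,1)$ and $(2,2)$ differ by $\pm$ the highest root of $\g_x$'' observation handles the tensor-with-$\g_x$ step, but not the prior nonvanishing of the fiber.

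The paper sidesteps this entirely: the converse is read off combinatorially. By Theorem~\ref{cf4} the dominant weights of $F^{(2,2)}$ are parametrized by the same index set as those of $F^{(1,1)}$, and the picture exhibits the explicit bijection $\l_i\leftrightarrow\m_i$; hence every simple in $\F^{(2,2)}$ is some $L_{\m_i}=T(L_{\l_i})$. This is immediate and avoids any appeal to $T^*$ or to fibers. Your adjunction idea is essentially the mechanism of Theorem~\ref{eqgeneral}, but note that in that proof $T^*(L_{\l'})\neq 0$ is deduced from $Hom(T^*(L_{\l'}),L_\l)=Hom(L_{\l'},T(L_\l))=\C$, which already presupposes surjectivity of $\l\mapsto\l'$ --- exactly the combinatorial input.
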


\begin{proof} For any dominant weight $\l\in F^{(1,1)}$, with $\l\neq \l_2$, there is a unique $\a\in\D$ with dominant weight $\l+\a\in F^{(2,2)}$. Thus, $T(L_{\l})$ is an irreducible with highest weight $\l+\a$. From previous lemma, the corollary follows. 

%We show $T(L_{\l})\neq 0$. 
%\\

%We have $T(\G_0(G/B, \O_{\l}))=\G_0(G/B, \O_{\l+\a})$. Since $T$ is exact and $L_{\l+\a}$ is a quotient of the right side, there is simple module $L_{\sigma}$ subquotient of $\G_0(G/B, \O_{\l})$ with $T(L_{\sigma})=L_{\l+\a}$. By definition of translation functor, $\sigma+\beta=\l+\a$ with $\beta\in \D$. We see from the picture, that this is impossible, unless $\l=\l_0$ or $\l=\l_2$. For $\l=\l_2$, there is no possible $\sigma$ as $L_{\l_2}=\G_0(G/B, \O_{\l_2})$. For $\l=\l_0$, $\sigma=\l_2$ is the only possibility. But from previous lemma, we know that $T(L_{\l_2})=L_{\l_2}$. Which implies that $T(L_{\sigma})=T(L_{\l_2})\neq L_{\l+\a}=L_{\l_0+\a}$. Thus its also impossible, thus contradiction. 

\end{proof}

\begin{theorem}\label{eq1122} The blocks $\F^{(1, 1)}$ and $\F^{(2, 2)}$ are equivalent as categories. 
\end{theorem}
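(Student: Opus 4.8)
The plan is to deduce the statement from the general equivalence criterion \rthm{eqgeneral}, applied with $\chi=(1,1)$, $\tau=(2,2)$ and $T=T_{\chi,\tau}$ the translation functor $T(L_\l)=(L_\l\otimes\g)^{(2,2)}$ (this functor is exact, being the composite of $-\otimes\g$ with projection onto a block). So the work reduces to checking the two hypotheses of \rthm{eqgeneral} for this particular pair of blocks.

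The first hypothesis — that $T$ carries each simple $L_\l\in\F^{(1,1)}$ to a single simple module of $\F^{(2,2)}$ — is exactly the content of the Corollary proved just above: for $\l\neq\l_2$ the weight $\l$ has a unique dominant neighbour $\l+\a$ in $F^{(2,2)}$, whence $T(L_\l)=L_{\l+\a}$ by \rlem{acyclicx}, while the exceptional weights are covered by \rlem{x1122} (for $\l_i$ with $i\neq2$, in particular $\l_0$ and $\l_1$) and \rlem{special1122} (for $\l_2$), and \rlem{zero} guarantees the image is nonzero. In particular $T(L_{\l_i})=L_{\m_i}$ in the notation of the diagram above.

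The second hypothesis requires that for each dominant $\m\in F^{(2,2)}$ there be at most two dominant weights $\l_1>\l_2$ in $F^{(1,1)}$ with $\m+\ga=\l_i$ for some $\ga\in\D$, the larger of which satisfies $T(L_{\l_1})=L_\m$. I would verify this directly from \rthm{cf4}: both blocks are chains of dominant weights indexed by the parameter $c$ (with $a=1$, resp.\ $a=2$), and since adding any root $\ga\in\D$ changes $(\l+\r,\d)$, hence the value of $c$, by a bounded amount, the dominant weights $\l\in F^{(1,1)}$ with $\l-\m\in\D$ lie in a bounded window of $c$-values. A finite inspection using the explicit coordinates of $\m_0,\m_1,\m_2,\m_3$ together with the list of roots of $F(4)$ then shows that this window contains at most two such weights — generically exactly $\l=\m+\b$ with $\b=\tfrac12(\e_1+\e_2+\e_3+\d)$ — and that the larger one is the weight identified above as the $T$-preimage of $L_\m$ (this is precisely the data displayed by the solid arrows in the diagram).

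Granting both hypotheses, \rthm{eqgeneral} furnishes mutually adjoint exact functors $T$, $T^*$ inducing inverse bijections on isomorphism classes of simple objects, hence an equivalence $\F^{(1,1)}\simeq\F^{(2,2)}$. The only point demanding genuine care is the combinatorial second hypothesis near the branching end of the $D_\infty$-quiver (around $\l_0$, $\l_1$, $\l_2$), where the chain structure degenerates; away from these special weights it is a routine genericity count.
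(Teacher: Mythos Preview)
Your proposal is correct and follows essentially the same route as the paper: verify the two hypotheses of \rthm{eqgeneral} using the Corollary (which packages \rlem{x1122} and \rlem{special1122}) for the first, and a direct combinatorial check on $F^{(2,2)}$ for the second. The paper is slightly more explicit on the second point, identifying $\m_0$ as the \emph{only} weight of $F^{(2,2)}$ with two dominant neighbours in $F^{(1,1)}$ (namely $\l_0$ and $\l_2$, with $\l_0>\l_2$ and $T(L_{\l_0})=L_{\m_0}$); your ``finite inspection'' amounts to the same verification. One small inaccuracy: the generic root linking the two blocks is the even root $-(\e_1-\e_3)$, not the odd root $\b=\tfrac12(\e_1+\e_2+\e_3+\d)$, but this does not affect the argument.
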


\begin{proof} From above corollary, for each $\l_i\in F^{(1,1)}$, let $L_{\m_i}=T(L_{\l_i})$ be the simple module with highest weight $\m_i\in F^{(2,2)}$. We show that $T^*(L_{\m_i})=L_{\l_i}$ for each $\m_i\in F^{(2,2)}$.

For all $\m\neq \m_0$, we have a unique $\gamma\in\D$, such that $\m+\gamma\in F^{(1, 1)}$. For $\m=\m_0$, there are two possible $\gamma\in\D$ such that $\m_0+\gamma\in F^{(1, 1)}$. From the picture above, we have $\ga=-(\frac{1}{2},-\frac{1}{2},-\frac{1}{2}|-\frac{1}{2})$ or $\ga=-\e_1$, such that $\m_0+\gamma=\l_0$ or $\l_1$. The theorem follows from \rthm{eqgeneral}.

 \end{proof}
 
\subsection{Equivalence of blocks $\F^{(a, a)}$ and $\F^{(a+1, a+1)}$} 

In this section, we prove the inductive step of the equivalence of all the symmetric blocks. 
Let $V$ be a finite-dimensional $\g$-module. We define translator functor $T(V)_{\chi,\tau}: F_{\chi}\rightarrow F_{\tau}$ by $T(V )_{\chi,\tau} (M) =(M\otimes V)^{\tau}$ as before.

The following is the picture of translator functor from block $\F^{(a,a)}$ to $\F^{(a+1, a+1)}$. It is defined by $T(L_{\l})=(L_{\l}\otimes\g)^{(a+1, a+1)}$. The non-filled circles represent the non-dominant weights in the block occurring on the walls of the Weyl chamber. The filled circles represent dominant weights in the block. The horizontal arrows are maps $\l\mapsto \l+\gamma$, with $\gamma\in\D$ is the root above the arrow. In this section, we will show that the solid arrows represent the maps $L_{\l}\mapsto T(L_{\l})$.

\begin{center}
  \begin{tikzpicture}[scale=.36]
  
    \draw (-2.5,0) node[anchor=east]  {$\F^{(a, a)}$}; 
    \draw (+18.5,0) node[anchor=east]  {$\F^{(a+1, a+1)}$}; 
    
  \draw[xshift=1 cm,thick,fill=black] (1, 12 cm) circle (.1cm);
    \draw[xshift=1 cm,thick,fill=black] (1, 10 cm) circle (.1cm);
      \draw[xshift=1 cm,thick,fill=black] (1, 8 cm) circle (.1cm);
        \draw[xshift=1 cm,thick,fill=black] (1, 6 cm) circle (.1cm);
        \draw[xshift=1 cm,thick,fill=black] (1, 4 cm) circle (.1cm);
        \draw[xshift=1 cm,thick,black] (1, 2 cm) circle (.2cm);
                \draw[xshift=1 cm,thick,fill=black] (1, 0 cm) circle (.1cm);
        \draw[xshift=1 cm,thick,fill=black] (1, -2 cm) circle (.1cm);
        \draw[xshift=1 cm,thick,fill=black] (1, -4 cm) circle (.1cm);
        \draw[xshift=1 cm,thick,fill=black] (1, -6 cm) circle (.1cm);
        \draw[xshift=1 cm,thick,black] (1, -8 cm) circle (.2cm);
        \draw[xshift=1 cm,thick,fill=black] (1, -10 cm) circle (.1cm);
        \draw[xshift=1 cm,thick,fill=black] (1, -12 cm) circle (.1cm);
        \draw[xshift=1 cm,thick,fill=black] (1, -14 cm) circle (.1cm);
        \draw[xshift=1 cm,thick,fill=black] (1, -16 cm) circle (.1cm);
        \draw[xshift=1 cm,thick,fill=black] (1, -18 cm) circle (.1cm);

   \draw[xshift=0 cm] (0, 12 cm) node[anchor=center]  {{\tiny $\l_{-\frac{1}{2}}$}};
    \draw[xshift=0 cm] (0, 10 cm) node[anchor=center]  {{\tiny$\l_{\frac{1}{2}}$}};
     \draw[xshift=0 cm] (0, 8 cm) node[anchor=center]  {{\tiny$\l_{1}$}};
      \draw[xshift=0 cm] (0, 6 cm) node[anchor=center]  {{\tiny$\l_{\frac{3}{2}}$}};
   \draw[xshift=0 cm] (0, 4 cm) node[anchor=center]  {{\tiny$\l_{\frac{a}{2}-\frac{1}{2}}$}};
    \draw[xshift=0 cm] (0, 0 cm) node[anchor=center]  {{\tiny$\l_{\frac{a}{2}+\frac{1}{2}}$}};
     \draw[xshift=0 cm] (0, -2 cm) node[anchor=center]  {{\tiny$\l_{\frac{a}{2}+1}$}};
         \draw[xshift=0 cm] (0, -4 cm) node[anchor=center]  {{\tiny$\l_{a-1}$}};
    \draw[xshift=0 cm] (0, -6 cm) node[anchor=center]  {{\tiny$\l_{a-\frac{1}{2}}$}};
      \draw[xshift=0 cm] (0, -10 cm) node[anchor=center]  {{\tiny$\l_{a+\frac{1}{2}}$}};
         \draw[xshift=0 cm] (0, -12 cm) node[anchor=center]  {{\tiny$\l_{a+1}$}};
    \draw[xshift=0 cm] (0, -14 cm) node[anchor=center]  {{\tiny$\l_{a+\frac{3}{2}}$}};
     \draw[xshift=0 cm] (0, -16 cm) node[anchor=center]  {{\tiny$\l_{a+2}$}};
      \draw[xshift=0 cm] (0, -18 cm) node[anchor=center]  {{\tiny$\l_{a+\frac{5}{2}}$}};
            
  \draw[xshift=5 cm,thick,fill=black] (5, 12 cm) circle (.1cm);
    \draw[xshift=5 cm,thick,fill=black] (5, 10 cm) circle (.1cm);
     \draw[xshift=5 cm,thick, fill=black] (5, 8 cm) circle (.1cm);
      \draw[xshift=5 cm,thick,fill=black] (5, 6 cm) circle (.1cm);
        \draw[xshift=5 cm,thick, fill=black] (5, 4 cm) circle (.1cm);
         \draw[xshift=5 cm,thick,fill=black] (5, 2 cm) circle (.1cm);
                \draw[xshift=5 cm,thick,black] (5, 0 cm) circle (.2cm);
                                \draw[xshift=5 cm,thick,fill=black] (5, -2 cm) circle (.1cm);
                \draw[xshift=5 cm,thick,fill=black] (5, -4 cm) circle (.1cm);
                \draw[xshift=5 cm,thick,fill=black] (5, -6 cm) circle (.1cm);
                \draw[xshift=5 cm,thick,fill=black] (5, -8 cm) circle (.1cm);

                \draw[xshift=5 cm,thick,fill=black] (5, -10 cm) circle (.1cm);
                \draw[xshift=5 cm,thick,black] (5, -12 cm) circle (.2cm);
                \draw[xshift=5 cm,thick,fill=black] (5, -14 cm) circle (.1cm);
                \draw[xshift=5 cm,thick,fill=black] (5, -16 cm) circle (.1cm);
                \draw[xshift=5 cm,thick,fill=black] (5, -18 cm) circle (.1cm);

   \draw[xshift=6 cm] (6, 12 cm) node[anchor=center]  {{\tiny $\m_{-\frac{1}{2}}$}};
    \draw[xshift=6 cm] (6, 10 cm) node[anchor=center]  {{\tiny $\m_{\frac{1}{2}}$}};
        \draw[xshift=6 cm] (6, 8 cm) node[anchor=center]  {{\tiny $\m_{1}$}};
     \draw[xshift=6 cm] (6, 6 cm) node[anchor=center]  {{\tiny $\m_{\frac{3}{2}}$}};
         \draw[xshift=6 cm] (6, 4 cm) node[anchor=center]  {{\tiny $\m_{\frac{a}{2}-\frac{1}{2}}$}};
      \draw[xshift=6 cm] (6, 2 cm) node[anchor=center]  {{\tiny $\m_{\frac{a}{2}}$}};
            \draw[xshift=6 cm] (6, 0 cm) node[anchor=center]  {{\tiny $$}};
                  \draw[xshift=6 cm] (6, -2 cm) node[anchor=center]  {{\tiny $\m_{\frac{a}{2}+1}$}};
      \draw[xshift=6 cm] (6, -4 cm) node[anchor=center]  {{\tiny $\m_{\frac{a}{2}+\frac{3}{2}}$}};
      \draw[xshift=6 cm] (6, -6 cm) node[anchor=center]  {{\tiny $\m_{a-\frac{1}{2}}$}};
      \draw[xshift=6 cm] (6, -8 cm) node[anchor=center]  {{\tiny $\m_{a}$}};
      \draw[xshift=6 cm] (6, -10 cm) node[anchor=center]  {{\tiny $\m_{a+\frac{1}{2}}$}};
      \draw[xshift=6 cm] (6, -14 cm) node[anchor=center]  {{\tiny $\m_{a+\frac{3}{2}}$}};
      \draw[xshift=6 cm] (6, -16 cm) node[anchor=center]  {{\tiny $\m_{a+2}$}};
      \draw[xshift=6 cm] (6, -18 cm) node[anchor=center]  {{\tiny $\m_{a+\frac{5}{2}}$}};

       \foreach \y in {-3,-1}
    \draw[-stealth, xshift=1 cm,thick] (1, 2*\y cm) -- +(0, 1.85 cm);  
       \foreach \y in {-9,...,-6}
    \draw[-stealth, xshift=1 cm,thick] (1, 2*\y cm) -- +(0, 1.85 cm);            
       \foreach \y in {3,...,4}
    \draw[-stealth, xshift=1 cm,thick] (1, 2*\y cm) -- +(0, 1.85 cm);
     \foreach \y in {-2,2}
    \draw[-stealth, xshift=1 cm,dotted,thick] (1, 2*\y cm) -- +(0, 1.85 cm);
    
          \foreach \y in {-2,-5,-4,-9,-8}
    \draw[-stealth, xshift=9 cm,thick] (1, 2*\y cm) -- +(0, 1.85 cm);
     \foreach \y in {-3}
    \draw[-stealth, xshift=9 cm,dotted,thick] (1, 2*\y cm) -- +(0, 1.85 cm);

    \path [-stealth, solid,black,thick,draw=none] (2, 8 cm) edge[bend left] (2, 11.85 cm);
    \path [-stealth, solid,black,thick,draw=none]  (10, 8 cm) edge (10, 9.85 cm);
    \path [-stealth, solid,black,thick,draw=none]  (10, -2 cm) edge[bend right] (10, 1.85 cm);
    \path [-stealth, solid,black,thick,draw=none]  (10, 8 cm) edge[bend right] (10, 11.85 cm);
        \path [-stealth, solid,black,thick,draw=none]  (10, -14 cm) edge[bend right] (10, -9.85 cm);

        \path [-stealth, solid,black,thick,draw=none]  (2, 0 cm) edge[bend left] (2, 3.85 cm);
        \path [-stealth, solid,black,thick,draw=none]  (2, -10 cm) edge[bend left] (2, -5.85 cm);

    \path [-stealth, solid,black,thick,draw=none]  (10, 2 cm) edge (10, 3.85 cm);
        \path [-stealth, solid,black,dotted,thick,draw=none]  (10, 4 cm) edge (10, 5.85 cm);
    \path [-stealth, solid,black,thick,draw=none]  (10, 6 cm) edge (10, 7.85 cm);
    \path [-stealth, solid,black,thick,draw=none]  (10, 2 cm) edge (10, 3.85 cm);

     \path [-stealth, solid,red,thick,draw=none]  (2.15, 12 cm) edge node[above=0mm]{\tiny $+\e_1+\e_2$} (9.85, 12 cm) ;
    
   \path [-stealth, solid,red,thick,draw=none]  (2.15, 10 cm) edge node[above=0]{\tiny $+\e_1+\e_2$} (9.85, 10 cm) ;

    \path [-stealth, solid,red,thick,draw=none]  (2.15, 8 cm) edge node[above=0mm]{\tiny $+\e_1+\e_2$} (9.85, 8 cm) ;
    
    \path [-stealth,solid,red,thick,draw=none]  (2.15, 6 cm) edge node[draw=none,rectangle,above=0mm]{\tiny $+\e_1+\e_2$} (9.85, 6 cm) ;
        \path [-stealth,solid,red,thick,draw=none]  (2.15, 4 cm) edge node[draw=none,rectangle,above=0mm]{\tiny $+\e_1+\e_2$} (9.85, 4 cm) ;
           \path [-stealth,solid,red,thick,draw=none]  (2.15, 0 cm) edge node[draw=none,rectangle,above=0mm]{\tiny $+\frac{1}{2}(\e_1+\e_2+\e_3-\d)$} (9.85, 2 cm) ;
            \path [-stealth,solid,red,thick,draw=none]  (2.15, -2 cm) edge node[draw=none,rectangle,above=0mm]{\tiny $\e_1+\e_3$} (9.85, -2 cm) ;
            \path [-stealth,solid,red,thick,draw=none]  (2.15, -10 cm) edge node[draw=none,rectangle,above=0mm]{\tiny $+\frac{1}{2}(\e_1-\e_2+\e_3-\d)$} (9.85, -8 cm) ;
            
 \path [-stealth,solid,red,thick,draw=none]  (2.15, -12 cm) edge node[draw=none,rectangle,above=0mm]{\tiny $+\frac{1}{2}(\e_1-\e_2-\e_3-\d)$} (9.85, -10 cm) ;
     \path [-stealth,solid,red,thick,draw=none]  (2.15, -14 cm) edge node[draw=none,rectangle,above=0mm]{\tiny $\e_1-\e_3$} (9.85, -14 cm) ;
      \path [-stealth,solid,red,thick,draw=none]  (2.15, -16 cm) edge node[draw=none,rectangle,above=0mm]{\tiny $\e_1-\e_3$} (9.85, -16 cm) ;
        \path [-stealth,solid,red,thick,draw=none]  (2.15, -18 cm) edge node[draw=none,rectangle,above=0mm]{\tiny $\e_1-\e_3$} (9.85, -18 cm) ;
      \path [-stealth,solid,red,thick,draw=none]  (2.15, -6 cm) edge node[draw=none,rectangle,above=0mm]{\tiny $+\e_1+\e_3$} (9.85, -6 cm) ;
    
      \path [-stealth, solid,red,dotted,thick,draw=none]  (2.15, -10 cm) edge node[above=0.1mm,draw=none,rectangle]{\tiny $+\e_1$} (9.85, -10 cm) ;
    
 %    \path [-stealth,solid,red,thick,dotted,draw=none]  (2.15, -12 cm) edge node[draw=none,rectangle,above=0mm]{\tiny $+\e_2$} (9.85, -12 cm) ;
    
    \path [solid,red,thick,dotted,draw=none,above=0mm]  (2.15, -20 cm) edge (10, -20 cm) ;

  \end{tikzpicture}
\end{center}

\begin{lemma}\label{aa1coho} For $\l\in F^{(a,a)}$, let $T$ be an equivalence of categories $\F^{(a,a)}$ and $\F^{(a+1,a+1)}$ and $T(L_\l)=L_{\l'}$, then $\G_i(G/B, \O_{\l'})$ has a subquotients $L_{\l'_s}$ with  $$[\G_i(G/B, \O_{\l'}):L_{\l'_s}]=[\G_i(G/B, \O_\l):L_{\l_s}].$$  
\end{lemma}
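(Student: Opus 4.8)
The plan is to show that the translation functor $T$ intertwines geometric induction, i.e.\ $\G_i(G/B,\O_{\l'})\cong T(\G_i(G/B,\O_\l))$ for all $i$, and then to invoke the fact that an exact equivalence of categories preserves composition multiplicities.

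First I would record the elementary observation that, since $T$ is exact and restricts to an equivalence $\F^{(a,a)}\to\F^{(a+1,a+1)}$ sending $L_{\l_s}$ to $L_{\l'_s}$ and preserving the order on weights (see \rlem{eqcoho}), it sends a Jordan--H\"older series of any $M\in\F^{(a,a)}$ to one of $T(M)$; hence $[T(M):L_{\l'_s}]=[M:L_{\l_s}]$ for all $s$. Granting the intertwining isomorphism above, we then obtain
$$[\G_i(G/B,\O_{\l'}):L_{\l'_s}]=[T(\G_i(G/B,\O_\l)):L_{\l'_s}]=[\G_i(G/B,\O_\l):L_{\l_s}],$$
which is the assertion of the lemma.

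The core of the argument is the intertwining isomorphism. By \rlem{6} one has $\G_i(G/B,T(\O_\l))\cong T(\G_i(G/B,\O_\l))$, so it is enough to identify $\G_i(G/B,T(\O_\l))$ with $\G_i(G/B,\O_{\l'})$. Now $T(\O_\l)$ is the block component of $\O_\l\otimes\g$, and restricting the adjoint module $\g$ to $\bb$ shows that $\g$ admits a filtration by $\bb$-submodules whose one-dimensional subquotients have weights the roots of $\g$ and (with multiplicity the rank) the weight $0$; consequently $T(\O_\l)$ carries a filtration with quotients $\O_\s$ where $\s$ ranges over the weights $\l+\gg$, $\gg\in\D\cup\{0\}$, lying in the block $\F^{(a+1,a+1)}$. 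Reading off the picture of the translation functor above---equivalently, applying \rthm{cf4} to list these weights---exactly one of them, namely $\l'$, is dominant, while every other such weight lies on a wall of the Weyl chamber of $\g_\0$ and is therefore acyclic: if $(\s+\r,\gamma)=0$ for an even root $\gamma$, then \rlem{4} gives $\G^i(G/B,\O_\s)\cong\G^{i+1}(G/B,\O_\s)$ for all $i$, and since the even dimension of $G/B$ is finite this forces all $\G^i(G/B,\O_\s)=0$. Applying \rlem{w}---in the obvious extension to a filtration all of whose steps but one are acyclic, which follows by repeatedly splicing the long exact sequences of \rlem{1}---now yields $\G_i(G/B,T(\O_\l))\cong\G_i(G/B,\O_{\l'})$.

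The main obstacle is the acyclicity claim for the non-dominant weights $\l+\gg$ appearing in $T(\O_\l)$: away from the exceptional weights (those joined by dotted arrows in the picture, i.e.\ near $\l_{\pm1/2}$ and $\l_{a/2\pm1/2}$) this is immediate, but at those points one must check by hand, using \rthm{cf4} together with the explicit description of $\g_\1$ as a $\g_\0$-module, that the additional contributions really do sit on walls of the Weyl chamber. Once that bookkeeping is done, the statement follows formally from the three ingredients above.
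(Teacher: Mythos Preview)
Your approach has a genuine gap at the exceptional weight $\l_{a+\frac{1}{2}}$ (the one with $\l+\r=(2a+\tfrac{1}{2},a+\tfrac{1}{2},\tfrac{1}{2}\,|\,a+\tfrac{1}{2})$). As recorded in the lemma immediately preceding \rlem{aa1x}, for this $\l$ there are \emph{two} dominant weights of the form $\l+\gamma$ in $F^{(a+1,a+1)}$, namely $\m_a$ and $\m_{a+\frac12}$; neither lies on a Weyl chamber wall, so neither is acyclic. Hence $T(\O_{\l})$ has at least two non-acyclic line-bundle subquotients, and your filtration argument via \rlem{w} (and \rlem{1}) cannot collapse to a single $\G_i(G/B,\O_{\l'})$. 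The check you propose ``by hand'' at the exceptional points would simply fail here: the extra contribution is not on a wall. (Incidentally, the exceptional point is at $c=a+\tfrac12$, not at $c=\tfrac{a}{2}\pm\tfrac12$ as you wrote.)

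The paper avoids this problem by arguing differently in the two regimes. For $i=0$ it uses \rlem{eqcoho}, whose proof is purely categorical: $\G_0(G/B,\O_\l)$ is characterized as the maximal finite-dimensional quotient of the Verma module, and since $T$ is an equivalence preserving the weight order it must send this maximal object to the corresponding one for $\l'$; no acyclicity of the other filtration pieces is needed. For $i>0$ the paper does not try to intertwine at all: it shows directly (via \rlem{3}) that $\G_i(G/B,\O_\l)=0$ for all $\l\neq\l_1,\l_2$ in either block, and for $\l_1,\l_2$ it computes $\G_1$ explicitly using the superdimension identity and a character argument, again independently in each block. The claimed multiplicity equality then holds in every case, but not because of a single intertwining isomorphism.
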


\begin{proof} Assume $i=0$. Then $\G_0(G/B, \O_{\l'})=T(\G_0(G/B, \O_\l)$ from \rlem{eqcoho}. Assume $i>0$. For $\l\neq\l_t$ with $t=1,2$, we have $\G_i(G/B, \O_{\l})=0$ for $i>0$ from computation using \rlem{3}. For $t=1,2$, we know from \rlem{eqcoho}, $\G_0(G/B, \O_{\l_t})=L_{\l_t}$ since all other submodules in $\G_0(G/B, \O_{\l_t})$ have highest weight $< \l_t$ and this is impossible. Thus, we have $sdim\G_1(G/B, \O_{\l_t})=sdim\G_0(G/B, \O_{\l_t})=sdimL_{\l_t}$.

For $s\neq 1,2$, $sdimL_{\l_s} > sdimL_{\l_1}$, which implies $\G_1(G/B, \O_{\l_t})=L_{\l_k}$ for $t,k=1,2$. We have $$ch{\G_0(G/B,\O_{\l_i})}-ch{\G_1(G/B,\O_{\l_i})}=\frac{D_1e^{\r}}{D_0}\sum_{w\in W}sgn(w)e^{w(\l_i+\r)}.$$ The expression on the right is not zero, since one can compute that the lowest degree term in the numerator is not zero. Thus, $ch\G_1(G/B, \O_{\l_i})\neq ch\G_0(G/B, \O_{\l_i})$ and we must have $\G_1(G/B, \O_{\l_i})=L_{\l_s}$ with $s\neq i$. This proves the lemma. 

\end{proof}

\begin{lemma} Let $\l\in F^{(a,a)}$ be dominant, then there is unique
$\ga\in\Delta$ such that $\l+\ga\in\F^{(a+1,a+1)}$ is dominant,
unless $\l+\r=(2a+\frac{1}{2},a+\frac{1}{2},\frac{1}{2}|a+\frac{1}{2})$.
\end{lemma}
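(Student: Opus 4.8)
The plan is to reduce the statement to an explicit combinatorial count, using the parametrization of dominant weights supplied by \rthm{cf4}. Existence of at least one such $\ga$ is automatic: by \rlem{zero} the module $T(L_\l)=(L_\l\otimes\g)^{(a+1,a+1)}$ is nonzero, so it has a maximal weight; this weight is therefore a dominant weight lying in $F^{(a+1,a+1)}$, and it is necessarily of the form $\l+\ga$ with $\ga$ a weight of $\g$; since $\l$ lies in $F^{(a,a)}$ and not in $F^{(a+1,a+1)}$ we cannot have $\ga=0$, so $\ga\in\D$. Hence the content of the lemma is uniqueness (with the stated exception). For this I would use that, by \rthm{cf4} and its proof, every dominant $\l\in F^{(a,a)}$ is one of the weights $\l_c$ with $c\in\tfrac12\Z_{\geq-1}\setminus\{a,\tfrac a2,0\}$ (plus the isolated value $c=-\tfrac32$ when $a=1$), that $\l_c+\r$ has an explicit coordinate expression in $\{\e_1,\e_2,\e_3,\d\}$ depending only on which of $J_1=(a,\infty)$, $J_2=(\tfrac a2,a)$, $J_3=(0,\tfrac a2)$ (or the special values $c=-\tfrac12$, $c=-\tfrac32$) contains $c$, and likewise that the dominant weights in $F^{(a+1,a+1)}$ are exactly the $\m_{c'}$ whose parameter avoids $\{a+1,\tfrac{a+1}2,0\}$.

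The key reduction is a pair of coarse constraints. Every root $\ga\in\D$ has $\d$-coordinate in $\{0,\pm\tfrac12,\pm1\}$, so if $\m_{c'}=\l_c+\ga$ then, by \rrem{remark}, $c'\in\{c,c\pm\tfrac12,c\pm1\}$; on the other hand $c'$ must avoid $a+1$, $\tfrac{a+1}2$ and $0$. For a given $c$ this leaves only a handful of candidate target parameters, and for each the explicit formulas of \rthm{cf4} turn $(\m_{c'}+\r)-(\l_c+\r)$ into a completely determined vector, so one simply reads off whether it is a root of $\D$ and whether $\m_{c'}$ is $B$-dominant (\rlem{dominant integral weights of F(4)}). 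Running through the intervals, for $c$ well inside $J_1$ (resp. $J_2$, $J_3$) the image is $\m_c$, realized by $\ga=\e_1-\e_3$ (resp. $\e_1+\e_3$, $\e_1+\e_2$); immediately next to the source gaps $c=\tfrac a2$ and $c=a$, and at the special values $c=-\tfrac12$, $c=-\tfrac32$, there is exactly one admissible half- or whole-step jump, realized by a suitable odd root $\tfrac12(\pm\e_1\pm\e_2\pm\e_3\mp\d)$ or by $\pm\d$. In each of these cases the move is unique, which gives uniqueness.

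The only weight at which two admissible target parameters occur at once is $\l_{a+1/2}$, with $\l_{a+1/2}+\r=(2a+\tfrac12,\,a+\tfrac12,\,\tfrac12\mid a+\tfrac12)$: here $c=a+\tfrac12$ sits one half-step above the excluded source value $a$ and, after translation, also one half-step above the excluded target value $\tfrac{a+1}2$, so both $c'=a+\tfrac12$ and the neighbouring admissible value $c'=a$ (for $a\ge2$) or $c'=\tfrac12$ (for $a=1$) produce a dominant weight of $F^{(a+1,a+1)}$ differing from $\l_{a+1/2}$ by a root — namely $\ga=\e_1$ in the first case and $\ga=\tfrac12(\e_1-\e_2+\e_3-\d)$, respectively $\ga=-\d$, in the second. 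Thus uniqueness fails exactly at this weight, the stated exception; for $a=1$ it is precisely the weight $\l_2$ treated separately in \rlem{special1122}. I expect the only real difficulty to be this gap bookkeeping — verifying that such a double occurrence arises at $\l_{a+1/2}$ and nowhere else; the rest is a routine, if slightly tedious, walk through the interval cases of \rthm{cf4}, together with the minor $a=1$ versus $a\ge2$ distinction at the special values of $c$.
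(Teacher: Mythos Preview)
Your proposal is correct and follows essentially the same strategy as the paper: reduce to the parametrization of dominant weights by the last coordinate $c$ from \rthm{cf4}, use that the $\d$-coordinate of any root lies in $\{0,\pm\tfrac12,\pm1\}$ to bound $c'$, and then separate the generic interval cases from the near-wall ones, finding the sole double occurrence at $c=a+\tfrac12$.

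Two small differences are worth noting. For existence the paper simply reads off the root $\ga$ from the translation-functor picture, whereas you argue abstractly via \rlem{zero}; both are fine. For uniqueness in the generic case the paper has a slightly slicker shortcut than your direct computation of the difference vectors: when $c$ and $c'$ lie in the same interval $J_i$, the isotropic root $\a=\b_i$ with $(\l+\r,\a)=0$ satisfies also $(\l+\ga+\r,\a)=0$ by \rrem{remark}, hence $(\ga,\a)=0$; this immediately excludes $\ga=\d$ and every odd $\ga$ (the only odd roots orthogonal to $\a$ are $\pm\a$, which would keep $\l$ in the same block), forcing $\ga$ to be an even root with zero $\d$-coordinate, so $c'=c$ and $\ga$ is determined.
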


\begin{proof} From \rlem{cf4}, for given $c\geq-\frac{1}{2}$, there is at most one dominant
$\l\in F^{(a,a)}$ with $\l+\r=(b_1,b_2,b_3|c)$. Assume $\ga\in\D$ is
such that $\l+\ga\in F^{(a+1,a+1)}$, then $\l+\r+\ga$ must have last
coordinate $c\pm 1$, $c\pm\frac{1}{2}$, or $c$.

Thus in generic cases, the last coordinate of $\l+\ga+\r$ and $\l+\rho$ are in the same interval $J_i$. The few exceptional cases, when the last coordinates are in the distinct intervals, occur around walls of the Weyl chamber, when $c=a+\frac{1}{2}$, $a+1$, $\frac{a}{2}+\frac{1}{2}$, $\frac{a}{2}+1$. And only for $c=a+\frac{1}{2}$, there are two possible $\ga$.

We show that the last coordinates of $\l+\ga+\r$ and $\l+\r$ are the same in
generic cases, and thus, there is at most one such $\ga$, proving the
uniqueness.

Note that for generic $\l$, $(\l+\r,\a)=0$ and $(\l+\ga+\r,\a)=0$ are true
for the same $\a\in\D_{\bar{1}}^+$ (see \rrem{remark} above). That implies
$(\ga,\a)=0$. This is impossible for $\ga=\d$. If $\ga$ is odd then $(\ga,\a)=0$
implies $\ga=\pm\a$, which is impossible for $\l$ and $\l+\ga$ would be in
the same block. For even root $\ga\neq\d$ the statement is clear.

For the existence, for each $\l_c$ the root $\ga$ described in the picture above above each arrow. 
\end{proof}

\begin{lemma}\label{aa1x} We have $T(L_{\l_i})=L_{\l_i+\gamma}$, for all $i\neq a+\frac{1}{2}$ and for the unique $\gamma\in\D$ in the previous lemma.
\end{lemma}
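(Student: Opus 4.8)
The plan is to obtain Lemma~\ref{aa1x} as an immediate consequence of the previous lemma together with Lemma~\ref{acyclicx}. First I would recall the previous lemma: for every dominant $\l_i\in F^{(a,a)}$ with $i\neq a+\frac{1}{2}$ there is a \emph{unique} root $\ga\in\D$ — the one attached to the corresponding arrow in the picture — such that $\l_i+\ga$ is a dominant weight of $\F^{(a+1,a+1)}$. Since $\ga\mapsto\l_i+\ga$ is injective, this is the same as saying that $\m:=\l_i+\ga$ is the unique dominant weight of $F^{(a+1,a+1)}$ of the form $\l_i+\ga'$ with $\ga'\in\D$.

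Next I would apply Lemma~\ref{acyclicx} with $\chi=(a,a)$, $\tau=(a+1,a+1)$, $\l=\l_i$ and $\m=\l_i+\ga$; its hypothesis is precisely the uniqueness just recorded, so it yields $T(L_{\l_i})=(L_{\l_i}\otimes\g)^{(a+1,a+1)}=L_{\l_i+\ga}$. In particular $T(L_{\l_i})\neq 0$, in agreement with Lemma~\ref{zero}. I would simply cite Lemma~\ref{acyclicx} here rather than reprove it: it packages the facts that a nonzero finite-dimensional contragredient $\g$-module splits off a simple summand at each dominant $\bb$-singular weight, and that the dominant $\bb$-singular weights of $(L_{\l_i}\otimes\g)^{\tau}$ are all of the form $\l_i+\ga'$ with $\ga'\in\D$.

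The only substantive input — and hence the place to be careful — lies not in this lemma but in the previous one: one must be certain the roots $\ga$ displayed in the picture are exhaustive, i.e.\ that no second root $\ga'\in\D$ produces another dominant weight of $\F^{(a+1,a+1)}$ in $\{\l_i+\ga':\ga'\in\D\}$. That check is carried out using the explicit description of dominant integral weights of $F(4)$ in Theorem~\ref{cf4} and Lemma~\ref{dominant integral weights of F(4)}, together with the observation of Remark~\ref{remark} that $\l_i$ and $\l_i+\ga$ are killed by the same isotropic $\a\in\D_1^+$, which forces $(\ga,\a)=0$ and thereby restricts $\ga$ drastically; but since we are entitled to assume the previous lemma, nothing remains. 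Finally I would point out that the excluded weight $\l_{a+\frac{1}{2}}$, with $\l_{a+\frac{1}{2}}+\r=(2a+\frac{1}{2},a+\frac{1}{2},\frac{1}{2}|a+\frac{1}{2})$ — where the solid and the dotted arrow at that vertex give two distinct roots $\ga$ with $\l_{a+\frac{1}{2}}+\ga$ dominant in $\F^{(a+1,a+1)}$ — must be treated by a separate argument, in exact analogy with the handling of $\l_2$ in the base case $\F^{(1,1)}\to\F^{(2,2)}$ (Lemma~\ref{special1122}).
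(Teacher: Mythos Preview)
Your proposal is correct and follows exactly the paper's approach: invoke the uniqueness of $\gamma$ from the preceding lemma and then apply Lemma~\ref{acyclicx}. The paper's proof is just the two-line version of what you wrote, so there is nothing to add.
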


\begin{proof} By definition, $T(L_{\l_i})=(L_{\l_i}\otimes\g)^{(a+1,a+1)}$. For each $\l_i$, there is a unique dominant weight $\m_i$ in the block $\F^{(a+1, a+1)}$ of the form $\l_i+\gg$ with $\gg\in \D$. Thus, the lemma follows from \rlem{acyclicx}.

\end{proof}

\begin {lemma}\label{eqaa1} Assume for each $\l\in F^{(a,a)}$, $T(L_{\l})$ is a simple module in $\F^{(a+1,a+1)}$. Then categories  $\F^{(a,a)}$ and $\F^{(a+1,a+1)}$ are equivalent. 
\end {lemma}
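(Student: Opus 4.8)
The plan is to obtain the equivalence by verifying the two hypotheses of \rthm{eqgeneral} for $\chi=(a,a)$ and $\tau=(a+1,a+1)$. The first hypothesis — that $T$ sends each simple $L_\l\in\F^{(a,a)}$ to a unique simple $L_{\l'}\in\F^{(a+1,a+1)}$ — is immediate: $T(L_\l)$ is simple by assumption of the present lemma, and it is nonzero by \rlem{zero}; hence $T(L_\l)=L_{\l'}$ for a unique dominant $\l'\in F^{(a+1,a+1)}$, and this is exactly the assignment on objects that \rthm{eqgeneral} requires.

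The second hypothesis asks that for each dominant $\l'\in F^{(a+1,a+1)}$ there be at most two dominant weights $\nu\in F^{(a,a)}$ with $\nu-\l'\in\D$, the maximal one being the $T$-preimage $\l$ of $\l'$. For the bound I would argue combinatorially with \rthm{cf4}: the $\d$-coefficient of every root of $F(4)$ lies in $\{0,\pm\tfrac12,\pm1\}$ (it is $0$ for $\pm\e_i$ and $\pm\e_i\pm\e_j$, it is $\pm1$ for $\pm\d$, and it is $\pm\tfrac12$ for the odd roots $\tfrac12(\pm\e_1\pm\e_2\pm\e_3\pm\d)$), so the last coordinate of $\nu+\r$ differs from that of $\l'+\r$ by an element of $\{0,\pm\tfrac12,\pm1\}$. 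Thus the parameter $c$ of $\nu$ in \rthm{cf4} takes one of at most five values, and by the uniqueness statement in \rthm{cf4} each such value is realized by at most one dominant weight of $F^{(a,a)}$. A direct inspection — parallel to the base case $a=1$ worked out in \rlem{x1122} and \rlem{special1122} and encoded in the translation-functor picture above — shows that generically only the value $c-\tfrac12$ actually occurs as $\nu=\l'+\ga$ with $\ga\in\D$, giving the single $T$-preimage identified in \rlem{aa1x}; and at the finitely many exceptional weights near the walls (the branch point $c=a+\tfrac12$ of the $D_\infty$ quiver, where the special weights $\l_1,\l_2$ occur, together with $c=\tfrac a2+\tfrac12,\ \tfrac a2+1,\ a+1$, where the last coordinates of $\nu+\r$ and $\l'+\r$ can land in distinct intervals $J_i$) the explicit lists give at most two preimages, the maximal one being the $T$-image, the latter identification using \rlem{aa1x}, the preceding existence–uniqueness lemma, and the branch-point analogue of \rlem{special1122}. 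With both hypotheses in hand, \rthm{eqgeneral} yields that $\F^{(a,a)}$ and $\F^{(a+1,a+1)}$ are equivalent.

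I expect the combinatorial case analysis of the second hypothesis near the walls to be the only real obstacle: away from the branch point the parameter simply shifts by $\tfrac12$ and the claim is transparent, but at the branch point and at the three nearby ``jump'' values of $c$ one must check by hand, using \rthm{cf4} and the description of dominant weights of $F(4)$, that the set of $\D$-translates falling back into $F^{(a,a)}$ has size at most two and that its maximum is realized by $T$. Everything else in the argument is a formal invocation of \rthm{eqgeneral}.
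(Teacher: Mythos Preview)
Your approach matches the paper's: both verify the hypotheses of \rthm{eqgeneral}, with the combinatorial count of $\D$-translates from $F^{(a+1,a+1)}$ back to $F^{(a,a)}$ read off directly from the picture in the paper (only $\m_{a+\frac12}$ admits two, namely $\l_{a+\frac12}<\l_{a+1}$, the larger being the $T$-preimage by \rlem{aa1x}). Two small slips worth fixing: generically the $\d$-coordinate is preserved, not shifted by $-\tfrac12$ (the generic $\ga$'s in the picture are even roots such as $\e_1+\e_2$ with zero $\d$-part); and $c=a+\tfrac12$ is not the branch point of the $D_\infty$ quiver --- that is $\l_0$, near $c=\pm\tfrac12$ --- though it is indeed the unique value of $c$ at which two preimages occur.
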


\begin{proof} By hypothesis, for each $\l_i\in F^{(a,a)}$, $T(L_{\l_i})$ is a simple module in $\F^{(a+1,a+1)}$, we denote $L_{\m_i}=T(L_{\l_i})$ the simple module with highest weight $\m_i\in F^{(a+1,a+1)}$. We show that $T^*(L_{\m_i})=L_{\l_i}$ for each $\m_i\in F^{(a+1,a+1)}$.

For all $\m\neq \m_{a+\frac{1}{2}}$, we have a unique $\gamma\in\D$, such that $\m+\gamma\in F^{(a,a)}$. For $\m=\m_{a+\frac{1}{2}}$, there are two possible $\gamma\in\D$ such that $\m+\gamma\in F^{(a,a)}$. From the picture picture above, we have $\ga=-(\frac{1}{2},-\frac{1}{2},-\frac{1}{2}|-\frac{1}{2})$ or $\ga=-\e_1$, such that $\m_{a+\frac{1}{2}}+\gamma=\l_{a+\frac{1}{2}}$ or $\l_{a+1}$. The statement follows from \rthm{eqgeneral}
%We have $Hom_{\mathfrak{g}}(T^*(L_{\m_i}),L_{\l_j}) = Hom_{\mathfrak{g}}(L_{\m_i},T(L_{\l_j})) =Hom_{\mathfrak{g}}(L_{\m_i},L_{\m_j}) = \mathbb{C}\text{ for }i=j\text{ and }0\text{ otherwise}.$
%\\

%Similarly, we have $Hom_{\mathfrak{g}}(L_{\l_j},T^*(L_{\m_i})) = Hom_{\mathfrak{g}}
%(T(L_{\l_j}),L_{\m_i}) = Hom_{\mathfrak{g}}(L_{\m_j},L_{\m_i})=\mathbb{C}$ for $i=j$ and $0$ otherwise. 
%\\

%The $\bb$-singular vectors in $T^*(L_{\m_i})$ have weight $\leq \l_i=\m_i+\gamma$ with $\gamma\in\D$. Therefore, the multiplicity of the weight $\l_i$ in $T^*(L_{\m_i})$ is 1. Thus, the multiplicity of module $L_{\l_i}$ in $T^*(L_{\m_i})$ is also one. 

%Thus, the multiplicity of module $L_{\l_j}$ in $T^*(L_{\m_i})$ is zero for $j\neq i$. The $\bb$-singular vectors in $T^*(L_{\m_i})$ have weight of the form $\m_i+\gamma$ with $\gamma\in\D$. Thus, for each $\m_i$, there is a unique, up to proportionality, $\bb$-singular vector with highest weight $\l_i$.
%\\

%We want to show that the multiplicity of module $L_{\l_i}$ in $T^*(L_{\m_i})$ is one. Thus, $T^*(L_{\m_i})=L_{\l_i}$. 
%\\

%Assume to the contrary, then we have: 

%$$0\rightarrow L_{\l_i}\rightarrow T^*(L_{\m_i}) \rightarrow L_{\l_i}\rightarrow 0.$$

%But, since $\l_i$ is a highest weight vector this is impossible (why).  

 \end{proof}

\begin{lemma}\label{aa1} Let $\g=F(4)$ and $\l\in F^{(a,a)}$ such that $\l=(2a+\frac{1}{2},a+\frac{1}{2},\frac{1}{2}|a+\frac{1}{2})-\r$.
If $a=1$, let $\a=\d$, and if $a>1$, let $\a=(-\frac{1}{2},\frac{1}{2},-\frac{1}{2}|\frac{1}{2})$. Then $T(L_{\l})=L_{\l-\a}$. 
%then $\l+\r=(5/2,3/2,1/2|3/2)$ 

%and $T(L_{\l+\r})=L_{\l+\r-\d}$. If $a>1$
%and $\l+\r=(2a+\frac{1}{2},a+\frac{1}{2},\frac{1}{2}|a+\frac{1}{2})$,
%$T(L_{\l+\r})=L_{\l+\r+\frac{1}{2}(\e_1-\e_2+\e_3-\d)}$. 
\end{lemma}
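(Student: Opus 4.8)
The plan is to push the argument of \rlem{special1122} — which is exactly the present statement for $a=1$, since there $\lambda$ is the weight called $\lambda_2$, $\a=\d$, and $\lambda-\d$ is the weight called $\m_2$ — up to arbitrary $a$. So assume $a>1$. Set $\b_1=\frac12(\e_1+\e_2+\e_3+\d)$ and $\b_2=\frac12(\e_1+\e_2-\e_3+\d)$, both in $\D_1^+$. A direct computation gives $(\lambda+\r,\b_1)=0$, while $\lambda-\a+\r=(2a+1,a,1|a)$ and $\n:=\lambda+\e_1$, with $\n+\r=(2a+\frac32,a+\frac12,\frac12|a+\frac12)$, are both dominant and lie in $\F^{(a+1,a+1)}$ by the explicit description of dominant weights of $F(4)$, and $\n-\b_2+\r=\lambda-\a+\r$. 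By the previous lemma, $\n$ and $\lambda-\a$ are the only two dominant weights of $\F^{(a+1,a+1)}$ of the form $\lambda+\ga$ with $\ga\in\D$.

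The key auxiliary weight is the next one up the tail, $\s:=\lambda+\b_1$, i.e. $\s+\r=(2a+1,a+1,1|a+1)$; it is dominant in $\F^{(a,a)}$, it is the weight $\lambda_{a+1}$ in the parametrization of \rthm{cf4} — so it is neither the exceptional weight, nor an endpoint, nor the branch point of the quiver of $\F^{(a,a)}$ — and $(\s+\r,\b_1)=0$. By \rlem{3}, any composition factor $L_\m$ of $\G_0(G/B,\O_\s)$ has $\m+\r=\s+\r-\sum_{\a\in I}\a$ for some $I\subset\D_1^+$, and then the last coordinate of $\m+\r$ — which by \rthm{cf4} must be the index of $L_\m$ — equals $a+1-\tfrac12|I|$; the only resulting dominant weights of $\F^{(a,a)}$ are $L_\s$ ($I=\emptyset$) and $L_{\s-\b_1}=L_\lambda$ ($I=\{\b_1\}$), because the index $a$ (from $|I|=2$) is skipped and for $|I|\geq 3$ one cannot pick that many distinct roots of $\D_1^+$ with the forced $\e_1,\e_2$-coordinates. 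Since $\s$ is not an endpoint, $\G_i(G/B,\O_\s)=0$ for $i>0$ by a computation with \rlem{3} (as in \rlem{i1122}), so \rlem{7} gives
$$0=sdim\,\G_0(G/B,\O_\s)=sdim\,L_\s+[\G_0(G/B,\O_\s):L_\lambda]\cdot sdim\,L_\lambda;$$
as $sdim\,L_\s\neq 0$ ($\s$ is a non-endpoint weight, \rthm{SFF}) and the multiplicity is $\leq 1$, it equals $1$, and together with \rlem{z} we obtain
$$0\rightarrow L_\lambda\rightarrow \G_0(G/B,\O_\s)\rightarrow L_\s\rightarrow 0.$$

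Now apply the exact translation functor $T_{\chi,\tau}$ with $\chi=(a,a)$, $\tau=(a+1,a+1)$. Since $\s$ is not the exceptional weight, \rlem{aa1x} gives $T_{\chi,\tau}(L_\s)=L_{\s+\ga}$ for the unique $\ga$ of the previous lemma, and $\s+\frac12(\e_1-\e_2-\e_3-\d)=\n$, so $T_{\chi,\tau}(L_\s)=L_{\n}$. By \rlem{6}, $T_{\chi,\tau}(\G_0(G/B,\O_\s))=\G_0(G/B,T_{\chi,\tau}(\O_\s))$; and, as read off the diagram preceding the lemma (a check of which translates $\s+\ga$ remain in $\F^{(a+1,a+1)}$), $T_{\chi,\tau}(\O_\s)$ has a two-step filtration with quotients $\O_{\n}$, $\n$ dominant, and an acyclic line bundle, so \rlem{w} gives $T_{\chi,\tau}(\G_0(G/B,\O_\s))=\G_0(G/B,\O_{\n})$. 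Applying $T_{\chi,\tau}$ to the exact sequence above therefore yields
$$0\rightarrow T_{\chi,\tau}(L_\lambda)\rightarrow \G_0(G/B,\O_{\n})\rightarrow L_{\n}\rightarrow 0,$$
and the right-hand map is the projection onto the unique simple quotient of $\G_0(G/B,\O_{\n})$. Repeating the composition-factor analysis of the previous paragraph for $\n$ (also a non-endpoint weight) shows $\G_0(G/B,\O_{\n})$ has length two, with factors $L_{\n}$ and $L_{\n-\b_2}=L_{\lambda-\a}$. Hence $T_{\chi,\tau}(L_\lambda)=\ker(\G_0(G/B,\O_{\n})\to L_{\n})$ has the single composition factor $L_{\lambda-\a}$, so $T(L_\lambda)=L_{\lambda-\a}$ (and $T(L_\lambda)\neq 0$, as it must be by \rlem{zero}).

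The step I expect to be the main obstacle is the root bookkeeping behind two claims: that $\G_0(G/B,\O_\s)$ and $\G_0(G/B,\O_{\n})$ have length exactly two with the composition factors stated (a careful run of \rlem{3} against the parametrization of \rthm{cf4} and the dominance criterion for $F(4)$, together with the vanishing of higher cohomology and the nonvanishing of the superdimensions of non-endpoint weights); and that $T_{\chi,\tau}(\O_\s)$ carries precisely the two-step filtration required by \rlem{w}, i.e. identifying exactly which translates $\s+\ga$ remain in $\F^{(a+1,a+1)}$. Neither is conceptually difficult, but both are the kind of sign-laden computation that is easy to get wrong.
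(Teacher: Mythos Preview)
Your argument follows the paper's strategy: apply $T$ to the short exact sequence $0\to L_\lambda\to\Gamma_0(G/B,\O_\sigma)\to L_\sigma\to 0$ with $\sigma=\lambda_{a+1}$, identify $T(\Gamma_0(G/B,\O_\sigma))$ with $\Gamma_0(G/B,\O_\nu)$ via \rlem{6} and \rlem{w}, and conclude that $T(L_\lambda)$, being the kernel of the projection onto the unique simple quotient $L_\nu$, contains no $L_\nu$ subquotient; since $\lambda-\alpha$ is then the only possible $\bb$-singular weight, contragredience forces $T(L_\lambda)=L_{\lambda-\alpha}$.

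There is, however, a genuine circularity you must remove. You invoke \rthm{SFF} for $sdim\,L_\sigma\neq 0$, but \rthm{SFF} is proved only \emph{after} the block equivalences of this section are established, and those equivalences rest on the present lemma. The paper handles this by framing the lemma as the inductive step of an induction on $a$: assuming $\F^{(c,c)}\simeq\F^{(1,1)}$ for all $c\leq a$, \rlem{aa1coho} transports the cohomology computations of \rlem{i1122} and \rlem{coh011} from $\F^{(1,1)}$ to $\F^{(a,a)}$, yielding the required exact sequence directly. Your alternative route---a direct \rlem{3} analysis in $\F^{(a,a)}$ combined with chaining superdimensions down from a genuinely generic weight (which your parenthetical ``as in \rlem{i1122}'' hints at)---is legitimate and avoids the inductive hypothesis, but you must actually run that chain rather than appeal to \rthm{SFF}. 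Note that $\sigma=\lambda_{a+1}$ has $c=a+1\not>a+\tfrac32$, so it is \emph{not} generic in the sense of \rthm{chgeneric}; the chain from a generic $\lambda_c$ down to $\sigma$ is short but not empty.

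A small simplification: you need not determine the full composition series of $\Gamma_0(G/B,\O_\nu)$. The paper only uses $[\Gamma_0(G/B,\O_\nu):L_\nu]=1$ (\rcor{z}) to exclude $L_\nu$ from $T(L_\lambda)$; the conclusion then follows from contragredience and \rlem{zero} without knowing whether $[\Gamma_0(G/B,\O_\nu):L_{\lambda-\alpha}]$ equals $0$ or $1$.
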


\begin{proof} We will assume that blocks $\F^{(c,c)}$ for $c\leq a$ are all equivalent. Then using this assumption we will prove the lemma. This lemma implies the equivalence of $\F^{(a,a)}$ and $\F^{(a+1,a+1)}$. Thus, we use a complicated induction in $a$.

For $a=1$, we have the statement from \rlem{special1122}. Let $a>1$. From our assumption and \rlem{aa1coho}, we obtain all cohomology groups for $\F^{(a,a)}$, since we know them for $\F^{(1,1)}$ from previous section.

From definition, we have $\l=\l_{a+\frac{1}{2}}$ and $T(L_{\l_{a+\frac{1}{2}}})=(L_{\l_{a+\frac{1}{2}}}\otimes\g)^{(a+1,a+1)}$. Thus, the only dominant weights in $\F^{(a+1,a+1)}$ of the form $\l_{a+\frac{1}{2}}+\gg$ with $\gg\in \D$ are $\m_{a+\frac{1}{2}}$ and $\m_a$ as its shown in the picture.

It will suffice to prove that $T(L_{\l_{a+\frac{1}{2}}})$ does not have a subquotient $L_{\m_{a+\frac{1}{2}}}$. Thus, $T(L_{\l_{a+\frac{1}{2}}})=L_{\m_a}$ as required.

%The sheaf  $T(\O_{\l_{a+\frac{1}{2}}})$ has a filtration with subqotients $\O_{\t}$, with $\t=\m_{a+\frac{1}{2}}$, $\m_a$, or $\t_3$, with $\t_3+\r=(2a+\frac{5}{2},a+\frac{3}{2},-\frac{1}{2}|a+\frac{3}{2})$. 
%\\

We know that $L_{\l_{a+1}}$ is a quotient of $\G_0(G/B,\O_{\l_{a+1}})$. From inductive assumption, \rlem{coh011}, and \rlem{aa1coho}, we have the following exact sequence: 

$$0\rightarrow L_{\l_{a+\frac{1}{2}}} \rightarrow \G_0(G/B,\O_{\l_{a+1}})\rightarrow L_{\l_{a+1}}\rightarrow 0.$$

Since $T$ is an exact functor, we obtain the following exact sequence: 

$$0\rightarrow T(L_{\l_{a+\frac{1}{2}}})\rightarrow T(\G_0(G/B,\O_{\l_{a+1}}))\rightarrow T(L_{\l_{a+1}})\rightarrow 0.$$

From \rlem{aa1x}, we have $T(L_{\l_{a+1}})=L_{\m_{a+\frac{1}{2}}}$. By \rlem{6} and \rlem{w}, we have $$T(\G_0(G/B,\O_{\l_{a+1}}))=\G_0(G/B,T(\O_{\l_{a+1}}))=\G_0(G/B,\O_{\m_{a+\frac{1}{2}}}).$$ The module $\G_0(G/B,\O_{\m_{a+\frac{1}{2}}})$ has a unique quotient $L_{\m_{a+\frac{1}{2}}}$. Hence, $T(L_{\l_{a+\frac{1}{2}}})$ does not have a subquotient $L_{\m_{a+\frac{1}{2}}}$.
\end{proof}
 
\begin {theorem} The categories  $\F^{(a,a)}$ and $\F^{(a+1,a+1)}$ are equivalent for all $a\geq 1$. 
\end {theorem}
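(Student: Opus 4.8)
The plan is to argue by induction on $a$. The base case $a=1$ is exactly \rthm{eq1122}, which establishes the equivalence of $\F^{(1,1)}$ and $\F^{(2,2)}$. For the inductive step, fix $a>1$ and assume (strong induction) that the blocks $\F^{(1,1)},\F^{(2,2)},\dots,\F^{(a,a)}$ are all mutually equivalent; the goal is to deduce that $\F^{(a,a)}$ and $\F^{(a+1,a+1)}$ are equivalent, which is the content of \rlem{eqaa1} once its hypothesis is verified. Note that the induction must be run in this intertwined form — assuming equivalence of \emph{all} lower symmetric blocks at once, not merely the previous one — because the inputs below (the cohomology transfer and \rlem{aa1}) use the identification $\F^{(a,a)}\cong\F^{(1,1)}$ directly.

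First I would transfer the cohomology computations to level $a$. Since $\F^{(a,a)}\cong\F^{(1,1)}$ by hypothesis, \rlem{aa1coho} identifies every $\G_i(G/B,\O_{\l})$ for $\l\in F^{(a,a)}$ with the corresponding group for $\F^{(1,1)}$, which was computed explicitly in the preceding section (Lemmas \ref{i1122}, \ref{coh011}, \ref{0l1}, \ref{1l1}, \ref{coho0}). In particular one obtains $\G_i(G/B,\O_{\l})=0$ for $i>0$ whenever $\l\ne\l_1,\l_2$, and the short exact sequence $0\to L_{\l_{a+1/2}}\to\G_0(G/B,\O_{\l_{a+1}})\to L_{\l_{a+1}}\to 0$ that is needed as the seed for the special-weight argument.

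Next I would check that the translation functor $T=T_{(a,a),(a+1,a+1)}$ sends every simple object of $\F^{(a,a)}$ to a simple object of $\F^{(a+1,a+1)}$. For each $\l_i\in F^{(a,a)}$ with $i\ne a+\frac{1}{2}$, the uniqueness lemma preceding \rlem{aa1x} provides a unique dominant weight of $\F^{(a+1,a+1)}$ of the form $\l_i+\ga$ with $\ga\in\D$, so \rlem{aa1x} (via \rlem{acyclicx}) gives $T(L_{\l_i})=L_{\l_i+\ga}$, which is simple. The one remaining weight is $\l_{a+1/2}=(2a+\frac{1}{2},a+\frac{1}{2},\frac{1}{2}\,|\,a+\frac{1}{2})-\r$, which is precisely the exceptional weight; here \rlem{aa1} — whose proof feeds on the inductive hypothesis together with the cohomology transfer above and with Lemmas \ref{6} and \ref{w} to identify $T(\G_0(G/B,\O_{\l_{a+1}}))=\G_0(G/B,\O_{\m_{a+1/2}})$ — shows that $T(L_{\l_{a+1/2}})=L_{\l_{a+1/2}-\a}=L_{\m_a}$, again simple. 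Thus $T$ carries all simples of $\F^{(a,a)}$ to simples of $\F^{(a+1,a+1)}$.

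Finally, \rlem{eqaa1} applies verbatim: with $T$ sending simples to simples, the adjoint pair $(T,T^*)$ of exact functors induces a bijection on isomorphism classes of simple modules (using contragredience and the multiplicity-one facts, exactly as in the proof of \rthm{eqgeneral}), and hence $\F^{(a,a)}$ and $\F^{(a+1,a+1)}$ are equivalent. This closes the inductive step, and the theorem follows for all $a\ge 1$. The one genuinely delicate point is the step already isolated in \rlem{aa1}: excluding $L_{\m_{a+1/2}}$ as a subquotient of $T(L_{\l_{a+1/2}})$, since a priori both $\m_{a+1/2}$ and $\m_a$ are dominant weights of the form $\l_{a+1/2}+\ga$; everything else is bookkeeping built on \rlem{acyclicx} and the cohomology transfer.
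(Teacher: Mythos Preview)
Your proposal is correct and follows essentially the same approach as the paper: the paper's proof is the one-line ``This follows from \rthm{eqgeneral} together with \rlem{aa1x} and \rlem{aa1},'' and the intertwined induction you spell out is exactly the structure the paper builds into the proof of \rlem{aa1} itself. Your account simply unpacks this, correctly isolating the base case (\rthm{eq1122}), the cohomology transfer (\rlem{aa1coho}), the generic simples (\rlem{aa1x}), the one exceptional simple (\rlem{aa1}), and the conclusion via \rlem{eqaa1}/\rthm{eqgeneral}.
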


\begin{proof} This follows from \rthm{eqgeneral} together with \rlem{aa1x} and \rlem{aa1}.

% We prove this by induction on $a$. For $a=1$, the equivalence follows from \rlem{eq1122} in the previous section. Assume the theorem is true for integers $t<a$. Since \rlem{aa1x} and \rlem{aa1} form the hypothesis of \rlem{eqaa1} in each inductive step, we have the equivalence for all $a$.  

\end{proof}

\section{Equivalence of non-symmetric blocks in $F(4)$}

\subsection{Equivalence of blocks $\F^{(4, 1)}$ and $\F^{(5, 2)}$} 

Let $\g=F(4)$. The following is the picture of translator functor from block $\F^{(4, 1)}$ to $\F^{(5, 2)}$. It is defined by $T(L_{\l})=(L_{\l}\otimes\g)^{(2,2)}$. The non-filled circles represent the non-dominant weights in the block occurring on the walls of the Weyl chamber. The filled circles represent dominant weights in the block. The vertical arrows are maps $\l\mapsto \l+\gamma$, with $\gamma\in\D$ is the root above the arrow. In this section, we will show that the solid arrows represent the maps $L_{\l}\mapsto T(L_{\l})$. 

\begin{center}
  \begin{tikzpicture}[scale=.3]
  
    \draw (-2.5,-1) node[anchor=north]  {$\F^{(5, 2)}$}; 
    \draw (-2.5,15) node[anchor=south]  {$\F^{(4, 1)}$}; 
    
           \draw[xshift=-6 cm,thick,fill=black] (38 cm,12) circle (.1cm);
       \draw[xshift=-6 cm,thick,fill=black] (36 cm,12) circle (.1cm);
    \draw[xshift=-6 cm,thick,fill=black] (34 cm,12) circle (.1cm);
    \draw[xshift=-6 cm,thick,fill=black] (32 cm,12) circle (.1cm);
    \draw[xshift=-6 cm,thick] (30 cm,12) circle (.2cm);
    \draw[xshift=-6 cm,thick] (28cm,12) circle (.2cm);
    \draw[xshift=-6 cm,thick] (26 cm,12) circle (.2cm);
    \draw[xshift=-6 cm,thick,fill=black] (24 cm,12) circle (.1cm);
    \draw[xshift=-6 cm,thick,fill=black] ( 22 cm,12) circle (.2cm);
    \draw[xshift=-6 cm,thick] (20 cm,12) circle (.2cm);
    \draw[xshift=-6 cm,thick] (18 cm,12) circle (.2cm);
    \draw[xshift=-6 cm,thick,fill=black] (16cm,12) circle (.1cm);
    \draw[xshift=-6 cm,thick,fill=black] (14 cm,12) circle (.1cm);
  \draw[xshift=-6 cm,thick,fill=black] (12 cm,12) circle (.1cm);
    \draw[xshift=-6 cm,thick] (10 cm,12) circle (.2cm);
      \draw[xshift=-6 cm,thick,fill=black] (8 cm,12) circle (.1cm);
        \draw[xshift=-6 cm,thick,fill=black] (6 cm,12) circle (.1cm);
        \draw[xshift=-6 cm,thick,fill=black] (4 cm,12) circle (.1cm);
        \draw[xshift=-6 cm,thick,fill=black] (2 cm,12) circle (.1cm);
                \draw[xshift=-6 cm,thick,fill=black] (0 cm,12) circle (.1cm);
        \draw[xshift=-6 cm,thick,fill=black] (-2 cm,12) circle (.1cm);
    
       \draw[xshift=-6 cm,thick,fill=black] (38 cm,1) circle (.1cm);
       \draw[xshift=-6 cm,thick,fill=black] (36 cm,1) circle (.1cm);
    \draw[xshift=-6 cm,thick,fill=black] (34 cm,1) circle (.1cm);
    \draw[xshift=-6 cm,thick,fill=black] (32 cm,1) circle (.1cm);
    \draw[xshift=-6 cm,thick] (30 cm,1) circle (.2cm);
    \draw[xshift=-6 cm,thick,fill=black] (28cm,1) circle (.1cm);
    \draw[xshift=-6 cm,thick] (26 cm,1) circle (.2cm);
    \draw[xshift=-6 cm,thick,fill=black] (24 cm,1) circle (.1cm);
    \draw[xshift=-6 cm,thick] ( 22 cm,1) circle (.2cm);
    \draw[xshift=-6 cm,thick,fill=black] (20 cm,1) circle (.1cm);
    \draw[xshift=-6 cm,thick,fill=black] (18 cm,1) circle (.2cm);
    \draw[xshift=-6 cm,thick] (16cm,1) circle (.2cm);
    \draw[xshift=-6 cm,thick,fill=black] (14 cm,1) circle (.1cm);
  \draw[xshift=-6 cm,thick] (12 cm,1) circle (.2cm);
    \draw[xshift=-6 cm,thick,fill=black] (10 cm,1) circle (.1cm);
      \draw[xshift=-6 cm,thick,fill=black] (8 cm,1) circle (.1cm);
        \draw[xshift=-6 cm,thick,fill=black] (6 cm,1) circle (.1cm);
        \draw[xshift=-6 cm,thick,fill=black] (4 cm, 1) circle (.1cm);
        \draw[xshift=-6 cm,thick] (2 cm, 1) circle (.2cm);
         \draw[xshift=-6 cm,thick,fill=black] (0 cm, 1) circle (.1cm);  
        \draw[xshift=-6 cm,thick,fill=black] (-2 cm, 1) circle (.1cm);
        
     \draw[xshift=-6 cm] (38 cm, 0) node[anchor=center]  {{\tiny $\m_7'$}};
      \draw[xshift=-6 cm] (36 cm, 0) node[anchor=center]  {{\tiny $\m_6'$}}; 
     \draw[xshift=-6 cm] (34 cm, 0) node[anchor=center]  {{\tiny $\m_5'$}};      
   \draw[xshift=-6 cm] (32 cm, 0) node[anchor=center]  {{\tiny $\m_4'$}};
    \draw[xshift=-6 cm] (30 cm, 0) node[anchor=center]  {$$};
     \draw[xshift=-6 cm] (28 cm, 0) node[anchor=center]  {{\tiny $\m_3'$}};
      \draw[xshift=-6 cm] (26 cm, 0) node[anchor=center]  {$$};  
     \draw[xshift=-6 cm] (24 cm, 0) node[anchor=center]  {{\tiny $\m_2'$}};      
   \draw[xshift=-6 cm] (22 cm, 0) node[anchor=center]  {$$};
    \draw[xshift=-6 cm] (20 cm, 0) node[anchor=center]  {{\tiny $\m_1'$}};
     \draw[xshift=-6 cm] (18 cm, 0) node[anchor=center]  {{\tiny $\m_0$}};
      \draw[xshift=-6 cm] (16 cm, 0) node[anchor=center]  {$$};          
   \draw[xshift=-6 cm] (14 cm, 0) node[anchor=center]  {{\tiny $\m_1$}};      
   \draw[xshift=-6 cm] (12 cm, 0) node[anchor=center]  {$$};
    \draw[xshift=-6 cm] (10 cm, 0) node[anchor=center]  {{\tiny $\m_2$}};
     \draw[xshift=-6 cm] (8 cm, 0) node[anchor=center]  {{\tiny $\m_3$}};
      \draw[xshift=-6 cm] (6 cm, 0) node[anchor=center]  {{\tiny $\m_4$}};
     \draw[xshift=-6 cm] (4 cm, 0) node[anchor=center]  {{\tiny $\m_5$}};
      \draw[xshift=-6 cm] (2 cm, 0) node[anchor=center]  {$$};      
            \draw[xshift=-6 cm] (0 cm, 0) node[anchor=center]  {{\tiny $\m_6$}};  
                        \draw[xshift=-6 cm] (-2 cm, 0) node[anchor=center]  {{\tiny $\m_7$}}; 
                
     \draw[xshift=-6 cm] (38 cm, 13) node[anchor=south]  {{\tiny $\l_5'$}};
      \draw[xshift=-6 cm] (36 cm, 13) node[anchor=south]  {{\tiny $\l_4'$}}; 
     \draw[xshift=-6 cm] (34 cm, 13) node[anchor=south]  {{\tiny $\l_3'$}};      
   \draw[xshift=-6 cm] (32 cm, 13) node[anchor=south]  {{\tiny $\l_2'$}};
    \draw[xshift=-6 cm] (30 cm, 13) node[anchor=south]  {$$};
     \draw[xshift=-6 cm] (28 cm, 13) node[anchor=south]  {$$};
      \draw[xshift=-6 cm] (26 cm, 13) node[anchor=south]  {$$};  
     \draw[xshift=-6 cm] (24 cm, 13) node[anchor=south]  {{\tiny $\l_1'$}};      
   \draw[xshift=-6 cm] (22 cm, 13) node[anchor=south]  {{\tiny $\l_0$}};
    \draw[xshift=-6 cm] (20 cm, 13) node[anchor=south]  {$$};
     \draw[xshift=-6 cm] (18 cm, 13) node[anchor=south]  {$$};
      \draw[xshift=-6 cm] (16 cm, 13) node[anchor=south]  {{\tiny $\l_1$}};          
   \draw[xshift=-6 cm] (14 cm, 13) node[anchor=south]  {{\tiny $\l_2$}};      
   \draw[xshift=-6 cm] (12 cm, 13) node[anchor=south]  {{\tiny $\l_3$}};
    \draw[xshift=-6 cm] (10 cm, 13) node[anchor=south]  {$$};
     \draw[xshift=-6 cm] (8 cm, 13) node[anchor=south]  {{\tiny $\l_4$}};
      \draw[xshift=-6 cm] (6 cm, 13) node[anchor=south]  {{\tiny $\l_5$}};
     \draw[xshift=-6 cm] (4 cm, 13) node[anchor=south]  {{\tiny $\l_6$}};
      \draw[xshift=-6 cm] (2 cm, 13) node[anchor=south]  {{\tiny $\l_7$}};   
     \draw[xshift=-6 cm] (0 cm, 13) node[anchor=south]  {{\tiny $\l_8$}};
      \draw[xshift=-6 cm] (-2 cm, 13) node[anchor=south]  {{\tiny $\l_9$}}; 

  \path [-stealth, solid,red,thick,draw=none] ( -4 cm,11.85) edge node[left, draw=none,fill=white,rectangle]{\tiny $\e_1-\e_3$}(-8 cm,1.15);
  \path [-stealth, solid,red,thick,draw=none] ( -2 cm,11.85) edge node[draw=none,fill=white,rectangle]{\tiny $\e_1-\e_3$}(-6 cm,1.15);
  \path [-stealth, solid,red,thick,draw=none] ( 0 cm,11.85) edge node[left=0]{\tiny $\gg$}(-2 cm,1.15);
  \path [-stealth, solid,red,thick,draw=none] ( 2 cm,11.85) edge node[left=0]{\tiny $\b$}(0 cm,1.15);
    \path [-stealth, solid,red,thick, dotted,draw=none] ( 2 cm,11.85) edge node[left=0]{\tiny $\e_1$}(-2 cm,1.15);
  \path [-stealth, solid,red,thick,draw=none] ( 6 cm,11.85) edge node[left,draw=none,fill=white,rectangle]{\tiny $\e_1+\e_3$}(2 cm,1.15);
 \path [-stealth, solid,red,thick,draw=none] ( 8 cm,11.85) edge node[draw=none,fill=white,rectangle]{\tiny $\e_1+\e_3$}(4 cm,1.15);
 \path [-stealth, solid,red,thick,draw=none] ( 10 cm,11.85) edge node[left=0]{\tiny $\a$}(8 cm,1.15); 
 \path [-stealth, solid,red,thick,draw=none] ( 16 cm,11.75) edge node[left=0]{\tiny $\e_1+\e_2$}(12 cm,1.25); %midpoint
 \path [-stealth, solid,red,thick,draw=none] ( 18 cm,11.85) edge node[right=0]{\tiny $\e_1+\e_2$}(14 cm,1.15);
 \path [-stealth, solid,red,thick,draw=none] ( 26 cm,11.85) edge node[left=0]{\tiny $-\d$}(18 cm,1.15);
  \path [-stealth, solid,red,thick,dotted,draw=none] ( 26 cm,11.85) edge node[left=0]{\tiny $\e_1$}(22 cm,1.15);
 \path [-stealth, solid,red,thick,draw=none] ( 28 cm,11.85) edge node[right=0]{\tiny $\e_1-\e_3$}(22 cm,1.15);
 \path [-stealth, solid,red,thick,draw=none] ( 30 cm,11.85) edge node[right=0.1]{\tiny $$}(26 cm,1.15);
 \path [-stealth, solid,red,thick,draw=none] ( 32 cm,11.85) edge node[right=0]{\tiny $\e_1-\e_3$}(28 cm,1.15);

  \path [-stealth,solid,black,thick,draw=none] ( -8 cm,12) edge node[left=0]{}(-6.15 cm,12);
  \path [-stealth,solid,black,thick,draw=none] ( -6 cm,12) edge node[left=0]{}(-4.15 cm,12);
    \path [-stealth,solid,black,thick,draw=none] ( -4 cm,12) edge node[left=0]{}(-2.15 cm,12);
  \path [-stealth,solid,black,thick,draw=none] ( -2 cm,12) edge node[left=0]{}(-0.15 cm,12);
      \path [-stealth,solid,black,thick,draw=none] ( 0 cm,12) edge node[left=0]{}(1.85 cm,12);
  \path [-stealth,solid,black,thick,draw=none] ( 2 cm,12) edge[bend left] node[left=0]{}(5.85 cm,12);
  \path [-stealth,solid,black,thick,draw=none] ( 6 cm,12) edge node[left=0]{}(7.85 cm,12);
      \path [-stealth,solid,black,thick,draw=none] ( 8 cm,12) edge node[left=0]{}(9.85 cm,12);  
  \path [-stealth,solid,black,thick,draw=none] ( 10 cm,12) edge[bend left] node[left=0]{}(15.75 cm,12);
      \path [-stealth,solid,black,thick,draw=none] ( 18 cm,12) edge node[left=0]{}(16.25 cm,12);  
  \path [-stealth,solid,black,thick,draw=none] ( 26 cm,12) edge[bend right] node[left=0]{}(18.15 cm,12);
      \path [-stealth,solid,black,thick,draw=none] ( 28 cm,12) edge node[left=0]{}(26.15 cm,12);  
      \path [-stealth,solid,black,thick,draw=none] ( 30 cm,12) edge node[left=0]{}(28.15 cm,12);  
      \path [-stealth,solid,black,thick,draw=none] ( 32 cm,12) edge node[left=0]{}(30.15 cm,12);  

       \path [-stealth,solid,black,thick,draw=none] ( -8 cm,1) edge node[left=0]{}(-6.15 cm,1);  
  \path [-stealth,solid,black,thick,draw=none] ( -6 cm,1) edge[bend right] node[left=0]{}(-2.15 cm,1);
       \path [-stealth,solid,black,thick,draw=none] ( -2 cm,1) edge node[left=0]{}(-0.15 cm,1);  
       \path [-stealth,solid,black,thick,draw=none] ( 0 cm,1) edge node[left=0]{}(1.85 cm,1);  
       \path [-stealth,solid,black,thick,draw=none] ( 2 cm,1) edge node[left=0]{}(3.85 cm,1);  
       \path [-stealth,solid,black,thick,draw=none] ( 4 cm,1) edge[bend right]  node[left=0]{}(7.85 cm,1);  
       \path [-stealth,solid,black,thick,draw=none] ( 8 cm,1) edge[bend right]  node[left=0]{}(11.75 cm,1);  
       \path [-stealth,solid,black,thick,draw=none] ( 14.15 cm,1) edge node[left=0]{}(12.25 cm,1);  
       \path [-stealth,solid,black,thick,draw=none] ( 18.15 cm,1) edge[bend left]  node[left=0]{}(14.15 cm,1);  
       \path [-stealth,solid,black,thick,draw=none] ( 22.15 cm,1) edge[bend left]  node[left=0]{}(18.15 cm,1);  
       \path [-stealth,solid,black,thick,draw=none] ( 26.15 cm,1) edge[bend left]  node[left=0]{}(22.15 cm,1);  
       \path [-stealth,solid,black,thick,draw=none] ( 28.15 cm,1) edge node[left=0]{}(26.15 cm,1);  
       \path [-stealth,solid,black,thick,draw=none] ( 30.15 cm,1) edge node[left=0]{}(28.15 cm,1);  
       \path [-stealth,solid,black,thick,draw=none] ( 32.15 cm,1) edge node[left=0]{}(30.15 cm,1);

  \end{tikzpicture}
\end{center}

In the above picture $\ga=(\frac{1}{2},-\frac{1}{2},-\frac{1}{2}|-\frac{1}{2})$ and
$\l_4'+\r=(\frac{13}{2},\frac{5}{2},\frac{3}{2}|\frac{7}{2})$;
$\l_3'+\r=(6,2,1|3)$;
$\l_2'+\r=(\frac{11}{2},\frac{3}{2},\frac{1}{2}|\frac{5}{2})$;
$\l_1'+\r=(\frac{7}{2},\frac{3}{2},\frac{1}{2}|\frac{1}{2})$;
$\l_0+\r=(3,2,1|0)$;
$\l_1+\r=(\frac{7}{2},\frac{5}{2},\frac{3}{2}|\frac{3}{2})$;
$\l_2+\r=(4,3,1|2)$;
$\l_3+\r=(\frac{9}{2},\frac{7}{2},\frac{1}{2}|\frac{5}{2})$;
$\l_4+\r=(\frac{11}{2},\frac{9}{2},\frac{1}{2}|\frac{7}{2})$;
$\l_5+\r=(6,5,1|4)$;
$\l_6+\r=(\frac{13}{2},\frac{11}{2},\frac{3}{2}|\frac{9}{2})$;
$\m_4'+\r=(\frac{15}{2},\frac{5}{2},\frac{1}{2}|\frac{7}{2})$;
$\m_3'+\r=(\frac{13}{2},\frac{3}{2},\frac{1}{2}|\frac{5}{2})$;
$\m_2'+\r=(\frac{11}{2},\frac{3}{2},\frac{1}{2}|\frac{3}{2})$;
$\m_1'+\r=(\frac{9}{2},\frac{5}{2},\frac{1}{2}|\frac{1}{2})$;
$\m_0+\r=(4,3,1|0)$;
$\m_1+\r=(4,3,2|1)$;
$\m_2+\r=(5,3,2|2)$;
$\m_3+\r=(\frac{11}{2},\frac{7}{2},\frac{3}{2}|\frac{5}{2})$;
$\m_4+\r=(6,4,1|3)$;
$\m_5+\r=(\frac{13}{2},\frac{9}{2},\frac{1}{2}|\frac{7}{2})$;
$\m_6+\r=(\frac{15}{2},\frac{11}{2},\frac{1}{2}|\frac{9}{2})$. 
Note that the indices for $\l$ above are different from the index $c$ which represents the last coordinate of $\l+\r$. 

\begin{lemma}\label{coh4152} For a dominant weight $\l\in F^{(4,1)}$ with $\l=\l_c$ such that $c>\frac{5}{2}$ or $c<-\frac{7}{2}$, we have $[\G_0(G/B,\O_{\l}):L_{\l-\a}]=1$ for a unique $\a\in\D$ such that $\l-\a\in F^{(4,1)}$.

Also, we have $[\G_0(G/B,\O_{\l}):L_{\m}]=0$ for $\m\neq \l$ and $\m\neq\l-\a$. 
\end{lemma}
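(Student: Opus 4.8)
The plan is to adapt to the non-symmetric block $F^{(4,1)}$ the combination of arguments used for $F^{(1,1)}$ in \rlem{coh011} and in the subsequent superdimension lemma, extending them from the generic zone $c>\frac{7}{2}$ (resp. $c<-\frac{9}{2}$) to the slightly wider range $c>\frac{5}{2}$ (resp. $c<-\frac{7}{2}$), which reaches only the two extra boundary indices $c\in\{3,\frac{7}{2}\}$ (resp. $c\in\{-4,-\frac{9}{2}\}$). First I would set up coordinates via \rthm{cf4}(2): for $(a,b)=(4,1)$ we have $t_1=3$, $t_2=2$, $t_3=1$, the interval $I_1=(2,\infty)$, and for $c\in I_1$ one has $\l_c+\r=t_1\e_1-t_3\e_2-t_2\e_3+2c\b_1=(c+3,c-1,c-2|c)$ with $(\l_c+\r,\b_1)=0$ for $\b_1=\frac{1}{2}(\e_1+\e_2+\e_3+\d)=(\frac{1}{2},\frac{1}{2},\frac{1}{2}|\frac{1}{2})$; put $\a=\b_1$. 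Since $c>\frac{5}{2}$ forces $c-\frac{1}{2}>2$, \rthm{cf4}(2) shows $\l_c-\a=\l_{c-1/2}\in F^{(4,1)}$ is again dominant and lies in $I_1$; the range $c<-\frac{7}{2}$ lands in $I_8$ and is handled by the mirror computation. By \rlem{z}, $[\G_0(G/B,\O_\l):L_\l]=1$.

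Next I would prove that nothing other than $L_\l$ and $L_{\l-\a}$ occurs, and with multiplicity at most one. By \rlem{3}, any $L_\m$ appearing in $\G_0(G/B,\O_\l)$ satisfies $\m+\r=\l+\r-\sum_{\a\in I}\a$ with $I\subseteq\D_1^+$ (since $w$ has length $0$); every root in $\D_1^+$ has $\d$-coordinate $\frac{1}{2}$, so $\m$ is one of the finitely many dominant weights of $F^{(4,1)}$ whose index $c'$ lies in $[c-4,c]$. Matching $\l+\r-(\l_{c'}+\r)$ against the explicit weight list of \rthm{cf4} and using that it must be a sum of $|I|$ distinct positive odd roots (whose $\e$-parts are the $\frac{1}{2}(\pm\e_1\pm\e_2\pm\e_3)$, of which only four contain $+\e_1$), one finds this is possible only for $c'=c$, forcing $I=\emptyset$ and $\m=\l$, or for $c'=c-\frac{1}{2}$, forcing the unique solution $I=\{\b_1\}$ and $\m=\l-\a$: for $c\geq4$ this is precisely \rlem{lemma 3 generic weights}, and for $c\in\{3,\frac{7}{2}\}$ (resp. $c\in\{-4,-\frac{9}{2}\}$) it is a direct finite check of the same kind as in \rlem{coh011}. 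The analogous computation with $w$ of length $\geq1$, together with the vanishing $\G_i(G/B,\O_\l)=0$ for $i>1$ already established, shows $\G_i(G/B,\O_\l)=0$ for $i>0$, exactly as in \rlem{i1122}. Uniqueness of the producing pair $(\mathrm{id},\{\b_1\})$ then gives $[\G_0(G/B,\O_\l):L_{\l-\a}]\leq1$, settling the second assertion of the lemma.

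Finally I would upgrade ``$\leq1$'' to ``$=1$'' by the superdimension argument of \rlem{generic}. Since $\G_i(G/B,\O_\l)=0$ for $i>0$ and only $L_\l$ (with multiplicity one) and $L_{\l-\a}$ occur in $\G_0(G/B,\O_\l)$, \rlem{7} yields $0=sdim\,L_\l+[\G_0(G/B,\O_\l):L_{\l-\a}]\cdot sdim\,L_{\l-\a}$. Starting from an arbitrarily large generic index, where $sdim\,L_\l=(-1)^{p(\m)}2\,dim\,L_\m(\g_x)\neq0$ by \rlem{character formula generic weights}, and descending in steps of $\frac{1}{2}$ inside $I_1$ (all such indices are $>\frac{5}{2}$, so the vanishing and multiplicity facts of the previous paragraph apply at every step), each instance of this relation forces simultaneously $[\G_0(G/B,\O_{\l_c}):L_{\l_{c-1/2}}]=1$ and $sdim\,L_{\l_{c-1/2}}=-sdim\,L_{\l_c}\neq0$; iterating propagates the non-vanishing of the superdimension down to $c=3$ (and symmetrically up to $c=-4$), which is exactly what is needed to conclude $[\G_0(G/B,\O_\l):L_{\l-\a}]=1$ for every $\l=\l_c$ with $c>\frac{5}{2}$ or $c<-\frac{7}{2}$. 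The main obstacle is the bookkeeping in the middle step --- listing the subsets $I\subseteq\D_1^+$ and verifying against \rthm{cf4} that $\l+\r-\sum_{\a\in I}\a$ is dominant in $F^{(4,1)}$ only for $I=\emptyset$ or $I=\{\b_1\}$ --- but for the few non-generic indices this is a bounded, mechanical check entirely parallel to those already carried out in \rlem{coh011} and \rlem{i1122}.
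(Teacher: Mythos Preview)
Your proposal is correct and follows essentially the same approach as the paper: the paper's one-line proof cites \rlem{3} and \rlem{lemma 3 generic weights}, which together give only the vanishing of other multiplicities and the bound $[\G_0(G/B,\O_\l):L_{\l-\a}]\leq 1$; you correctly supplement this with the superdimension induction (exactly as in \rlem{generic}) to upgrade $\leq 1$ to $=1$, and you spell out the finite combinatorial checks at the two boundary indices $c\in\{3,\tfrac{7}{2}\}$ (resp.\ $c\in\{-4,-\tfrac{9}{2}\}$) that extend the generic range $c>\tfrac{7}{2}$ to $c>\tfrac{5}{2}$. Your write-up is simply a more careful version of what the paper's terse citation is meant to convey.
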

\begin{proof} Follows from \rlem{3} and \rlem{lemma 3 generic weights}. 
\end{proof}

\begin{lemma}\label{x4152} We have $T(L_{\l_i})=L_{\m_i}$, for all $i\neq 4$ and $T(L_{\l_i'})=L_{\m_i'}$, for all $i\neq 2$.
\end{lemma}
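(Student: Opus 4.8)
The plan is to follow the pattern of \rlem{x1122} and \rlem{aa1x} and reduce the whole statement to \rlem{acyclicx}. By definition $T(L_{\l_i})=(L_{\l_i}\otimes\g)^{(5,2)}$, and every $\bb$-singular vector of $L_{\l_i}\otimes\g$ has weight of the form $\l_i+\gamma$ with $\gamma\in\D$; likewise for $T(L_{\l_i'})$. So it suffices to check that for each $i\neq 4$ there is \emph{exactly one} dominant weight of $F^{(5,2)}$ of the form $\l_i+\gamma$ with $\gamma\in\D$, namely $\m_i$, and that for each $i\neq 2$ there is exactly one dominant weight of $F^{(5,2)}$ of the form $\l_i'+\gamma$, namely $\m_i'$. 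These are precisely the solid arrows of the figure preceding the lemma (with $\gamma$ the root labelling the arrow), and the excluded indices $i=4$ and $i'=2$ are exactly the two branch points of the figure, where a second dominant weight ($\m_3$, resp.\ $\m_1'$) is also reachable; those cases I would handle in a separate lemma afterward. Once this uniqueness is known, \rlem{acyclicx} immediately gives $T(L_{\l_i})=L_{\m_i}$ and $T(L_{\l_i'})=L_{\m_i'}$ in the stated ranges.

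The content of the lemma is therefore the verification of these uniqueness statements, which is a finite but somewhat delicate combinatorial check carried out via \rthm{cf4} and the explicit dominant-weight inequalities of \rlem{dominant integral weights of F(4)}. For $\l_i$ (resp.\ $\l_i'$) of large index the weight is generic, the translation is effected by adding the even root $\e_1+\e_2$, which has zero $\d$-component and so preserves the last coordinate of $\l+\r$, and then one argues as in \rlem{lemma 3 generic weights} that no other root in $\D$ sends $\l_i$ to a dominant weight of $F^{(5,2)}$. For the finitely many weights near the walls of the Weyl chamber (small index) I would instead enumerate directly, exactly as in the computations of \rlem{k1122}--\rlem{coh011}: list the roots $\gamma\in\D$ for which $\l_i+\gamma+\r$ lies over the central character of $F^{(5,2)}$ and is $W$-conjugate into the dominant cone, and read off from \rthm{cf4} and \rlem{dominant integral weights of F(4)} that all but one of these $\gamma$ put $\l_i+\gamma$ on a chamber wall. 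Matching the surviving root to $\m_i$ (resp.\ $\m_i'$) then reproduces the labelled arrows of the figure.

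The main obstacle I expect is the boundary bookkeeping. Far from the walls the zero $\d$-shift of $\e_1+\e_2$ makes the uniqueness transparent, but near the walls the assignment $\gamma\mapsto\l_i+\gamma$ genuinely produces several weights with the correct generalized central character, and confirming that precisely one of them is dominant --- which is simultaneously what forces the only exceptions to be $i=4$ and $i'=2$ --- requires careful use of the sign-and-permutation description of the Weyl group of $F(4)$ together with the inequalities of \rlem{dominant integral weights of F(4)}. Keeping the correspondence $\l_i\leftrightarrow\m_i$ and $\l_i'\leftrightarrow\m_i'$ straight throughout, so that the arrows drawn in the figure are the ones actually realized by $T$, is where the care is needed.
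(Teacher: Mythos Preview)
Your approach is exactly the paper's: reduce to \rlem{acyclicx} by checking that for each $\l_i$ with $i\neq 4$ (resp.\ each $\l_i'$ with $i\neq 2$) there is a unique dominant weight $\m_i\in F^{(5,2)}$ of the form $\l_i+\gamma$, $\gamma\in\D$. Two small corrections to your sketch: the generic translation root in this block is $\e_1-\e_3$ (on the far ends) rather than $\e_1+\e_2$, and at the branch points the extra reachable weights are $\m_5$ (from $\l_4$) and $\m_3'$ (from $\l_2'$), not $\m_3$ and $\m_1'$; neither affects the argument.
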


\begin{proof} By definition, $T(L_{\l_i})=(L_{\l_i}\otimes\g)^{(5,2)}$. For each $\l_i$, we have a unique dominant weight $\m_i$ in the block $\F^{(5, 2)}$ of the form $\l_i+\gg$ with $\gg\in \D$. Thus, the lemma follows from \rlem{acyclicx}.
\end{proof}

\begin{lemma} We have $T(L_{\l_4})=L_{\m_4}$ and $T(L_{\l_2'})=L_{\m_2'}$.
\end{lemma}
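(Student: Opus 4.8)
The plan is to repeat, for the two exceptional vertices $\l_4$ and $\l_2'$, the argument used in \rlem{special1122} and \rlem{aa1}. By \rthm{cf4} together with inspection of the weights of $\g$, the only dominant weights of $F^{(5,2)}$ of the form $\l_4+\gamma$ (resp. $\l_2'+\gamma$) with $\gamma\in\D$ are $\m_4,\m_5$ (resp. $\m_2',\m_3'$) --- the solid and the dotted arrows leaving the vertex in the picture above. Since $T(L_{\l_4})$ and $T(L_{\l_2'})$ are contragredient and nonzero by \rlem{zero}, all of their $\bb$-singular weights lie in these two-element sets, so, exactly as in \rlem{acyclicx} and \rlem{special1122}, it suffices to show that $L_{\m_5}$ is not a composition factor of $T(L_{\l_4})$ and $L_{\m_3'}$ is not a composition factor of $T(L_{\l_2'})$; this will force $T(L_{\l_4})=L_{\m_4}$ and $T(L_{\l_2'})=L_{\m_2'}$.

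To kill the unwanted factors I would pass to the next vertices $\l_5$ and $\l_3'$, which satisfy $\l_4=\l_5-\a$ and $\l_2'=\l_3'-\a$ for the isotropic root $\a=(\tfrac12,\tfrac12,\tfrac12|\tfrac12)$ and which both lie in the range of validity of \rlem{coh4152}. Combining \rlem{coh4152} with \rcor{z} and with the indecomposability of $\G_0$ (it is the maximal finite-dimensional quotient of a Verma module, \rlem{2}), one gets short exact sequences
\[
0\to L_{\l_4}\to \G_0(G/B,\O_{\l_5})\to L_{\l_5}\to 0,\qquad 0\to L_{\l_2'}\to \G_0(G/B,\O_{\l_3'})\to L_{\l_3'}\to 0 .
\]
Applying the exact functor $T$, using $T(L_{\l_5})=L_{\m_5}$ and $T(L_{\l_3'})=L_{\m_3'}$ from \rlem{x4152} (the indices $5$ and $3$ are in the allowed range there), and using $T(\G_0(G/B,\O_{\l_5}))=\G_0(G/B,\O_{\m_5})$, $T(\G_0(G/B,\O_{\l_3'}))=\G_0(G/B,\O_{\m_3'})$ from \rlem{6} and \rlem{w}, yields
\[
0\to T(L_{\l_4})\to \G_0(G/B,\O_{\m_5})\to L_{\m_5}\to 0,\qquad 0\to T(L_{\l_2'})\to \G_0(G/B,\O_{\m_3'})\to L_{\m_3'}\to 0 .
\]
Since $[\G_0(G/B,\O_{\m_5}):L_{\m_5}]=[\G_0(G/B,\O_{\m_3'}):L_{\m_3'}]=1$ by \rcor{z}, and this single copy is consumed by the quotient in each sequence, we obtain $[T(L_{\l_4}):L_{\m_5}]=0$ and $[T(L_{\l_2'}):L_{\m_3'}]=0$, which is precisely what was needed.

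The only genuine work here is the finite bookkeeping at the start: verifying that $\m_4,\m_5$ (resp. $\m_2',\m_3'$) exhaust the dominant targets in $F^{(5,2)}$ reachable from $\l_4$ (resp. $\l_2'$) by adding a weight of $\g$, and that $\l_5$ and $\l_3'$ really are covered by \rlem{coh4152} so that the two initial short exact sequences are available. Both facts are encoded in the picture above and follow from \rthm{cf4}; once they are in place the rest is the now-standard "$\G_0$ of a translated weight has a unique simple quotient" mechanism already exploited in \rlem{special1122} and \rlem{aa1}.
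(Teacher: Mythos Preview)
Your proposal is correct and is exactly the argument the paper has in mind when it says ``the proof is similar to the proof of \rlem{special1122} using \rlem{x4152}'': you pass to the neighbouring weight $\l_5$ (resp.\ $\l_3'$), use the short exact sequence for $\G_0(G/B,\O_{\l_5})$ (resp.\ $\G_0(G/B,\O_{\l_3'})$) supplied by \rlem{coh4152}, apply $T$, and use that $\G_0(G/B,\O_{\m_5})$ (resp.\ $\G_0(G/B,\O_{\m_3'})$) contains $L_{\m_5}$ (resp.\ $L_{\m_3'}$) with multiplicity one to kill the unwanted composition factor. This is precisely the mechanism of \rlem{special1122} transplanted to the present block.
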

 
\begin{proof} By definition, $T(L_{\l_4})=(L_{\l_4}\otimes\g)^{(5,2)}$. The only dominant weights in the block $\F^{(5, 2)}$ of the form $\l_4+\gg$ with $\gg\in \D$ are $\m_4$ and $\m_5$, as its shown in the picture above. The proof is similar to the proof of \rlem{special1122} using \rlem{x4152}.

\end{proof} 

\begin{theorem}\label{eq4152} We have an equivalence of categories $\F^{(4, 1)}$ and $\F^{(5, 2)}$. 
\end{theorem}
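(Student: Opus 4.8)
The plan is to derive this from the general equivalence criterion \rthm{eqgeneral}, in exact analogy with the proof of \rthm{eq1122} for the first pair of symmetric blocks. That criterion needs two inputs: that the translation functor $T=T_{(4,1),(5,2)}$ sends every simple module of $\F^{(4,1)}$ to a simple module of $\F^{(5,2)}$, and that for each simple $L_{\l'}\in\F^{(5,2)}$ there are at most two dominant weights $\l\in F^{(4,1)}$ with $\l=\l'+\ga$ for some $\ga\in\D$, the larger of the two being the highest weight of $T^{*}(L_{\l'})$.

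First I would assemble the first input from the results already in hand. By \rlem{x4152} we have $T(L_{\l_i})=L_{\m_i}$ for $i\neq 4$ and $T(L_{\l_i'})=L_{\m_i'}$ for $i\neq 2$, and the preceding lemma supplies the two remaining vertices, $T(L_{\l_4})=L_{\m_4}$ and $T(L_{\l_2'})=L_{\m_2'}$. Since \rlem{zero} guarantees $T(L_\l)\neq 0$ for every $\l\in F^{(4,1)}$, these statements together show that $T$ carries the full list of simple modules of $\F^{(4,1)}$ bijectively onto that of $\F^{(5,2)}$, as drawn in the picture above.

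Next I would check the weight-counting condition using the parametrization of the dominant weights of $F^{(4,1)}$ and $F^{(5,2)}$ by the last coordinate $c$ (\rthm{cf4}) together with \rrem{remark}. If $\m\in F^{(5,2)}$ and $\ga\in\D$ is a root with $\m+\ga$ dominant and lying in $F^{(4,1)}$, then $\ga$ shifts the last coordinate of $\m+\r$ by one of $0,\pm\tfrac{1}{2},\pm1$; away from the walls of the Weyl chamber the last coordinates of $\m+\r$ and $\m+\ga+\r$ lie in the same interval $I_i$, and since $(\m+\r,\b_i)=0=(\m+\ga+\r,\b_i)$ for the same isotropic root $\b_i=w_c(\a)$ this forces $(\ga,\b_i)=0$, which (just as in the symmetric case) excludes $\ga=\d$ and every odd root other than $\pm\a$ and leaves a single even root. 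The only vertices with two admissible $\ga$ are the branch-type ones marked by the dotted arrows in the picture; there the two preimages are comparable in the standard order and the larger one is the highest weight of $T^{*}$ of that simple module. This verifies both hypotheses of \rthm{eqgeneral}, and the equivalence follows.

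The one genuinely non-routine step is this last case analysis: one must confirm that no vertex of $\F^{(5,2)}$ receives three or more roots from $F^{(4,1)}$, and that at each exceptional vertex the two incoming weights are ordered the way \rthm{eqgeneral} demands. Both points are forced by the interval decomposition of \rthm{cf4} and the explicit root system of $F(4)$, exactly as in the treatment of the vertex $\m_0$ in the proof of \rthm{eq1122}.
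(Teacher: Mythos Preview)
Your proposal is correct and follows essentially the same route as the paper: the paper's own proof simply says it is analogous to the proof of \rthm{eq1122} using \rlem{x4152} and the lemma on $T(L_{\l_4})$ and $T(L_{\l_2'})$, then invokes \rthm{eqgeneral} after the same weight-counting check you describe. Your write-up supplies a bit more detail on the verification of the at-most-two-preimages hypothesis via \rrem{remark}, but the argument is the same.
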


\begin{proof} The proof is similar to the proof of \rlem{eq1122} using the above two lemmas. 

%We have $Hom_{\mathfrak{g}}(T^*(L_{\m_i}),L_{\l_j}) = Hom_{\mathfrak{g}}(L_{\m_i},T(L_{\l_j})) =Hom_{\mathfrak{g}}(L_{\m_i},L_{\m_j}) = \mathbb{C}\text{ for }i=j\text{ and }0\text{ otherwise}.$
%\\

%Similarly, we have $Hom_{\mathfrak{g}}(L_{\l_j},T^*(L_{\m_i})) = Hom_{\mathfrak{g}}
%(T(L_{\l_j}),L_{\m_i}) = Hom_{\mathfrak{g}}(L_{\m_j},L_{\m_i})=\mathbb{C}$ for $i=j$ and $0$ otherwise. 
%\\

%Thus, the multiplicity of module $L_{\l_j}$ in $T^*(L_{\m_i})$ is zero for $j\neq i$. The $\bb$-singular vectors in $T^*(L_{\m_i})$ have weight of the form $\m_i+\gamma$ with $\gamma\in\D$. Thus, for each $\m_i$, there is a unique, up to proportionality, $\bb$-singular vector with highest weight $\l_i$.
%\\

%We want to show that the multiplicity of module $L_{\l_i}$ in $T^*(L_{\m_i})$ is one. Thus, $T^*(L_{\m_i})=L_{\l_i}$. 
%\\

%Assume to the contrary, then we have: 

%$$0\rightarrow L_{\l_i}\rightarrow T^*(L_{\m_i}) \rightarrow L_{\l_i}\rightarrow 0.$$

%But, since $\l_i$ is a highest weight vector this is impossible (why).  
 \end{proof}

\subsection{Equivalence of blocks $\F^{(a, b)}$ and $\F^{(a+1, b+1)}$}

Let $\g=F(4)$. The following is the picture of translator functor from block $\F^{(a, b)}$ to $\F^{(a+1, b+1)}$. It is defined by $T(L_{\l})=(L_{\l}\otimes\g)^{(a+1, b+1)}$. The non-filled circles represent the non-dominant weights in the block occurring on the walls of the Weyl chamber. The filled circles represent dominant weights in the block. The vertical arrows are maps $\l\mapsto \l+\gamma$, with $\gamma\in\D$ is the root above the arrow.

In the picture below, $t_i$ are as defined before and represent the indices $c$ corresponding to acyclic weights. Let $\l_i$ denote the starting vertices of the vertical arrows and $\m_i$ be the corresponding end vertices.  In this section, we will show that the solid arrows represent the maps $L_{\l}\mapsto T(L_{\l})$.

\begin{center}
  \begin{tikzpicture}[scale=.3]
  
    \draw (-2.5,-2) node[anchor=north]  {$\F^{(a+1, b+1)}$}; 
    \draw (-2.5,15) node[anchor=south]  {$\F^{(a, b)}$}; 
 
       \draw[xshift=-6 cm,thick,fill=black] (37 cm,12) circle (.1cm);
    \draw[xshift=-6 cm,thick,fill=black] (36 cm,12) circle (.1cm);
    \draw[xshift=-6 cm,thick,fill=black] (35 cm,12) circle (.1cm);
    \draw[xshift=-6 cm,thick] (34 cm,12) circle (.2cm);
    \draw[xshift=-6 cm,thick,fill=black] (33cm,12) circle (.1cm);
    \draw[xshift=-6 cm,thick,fill=black] (32 cm,12) circle (.1cm);
    \draw[xshift=-6 cm,thick,fill=black] ( 30 cm,12) circle (.1cm);
    \draw[xshift=-6 cm,thick,fill=black] (29 cm,12) circle (.1cm);
    \draw[xshift=-6 cm,thick] (28 cm,12) circle (.2cm);
    \draw[xshift=-6 cm,thick,fill=black] (27cm,12) circle (.1cm);
    \draw[xshift=-6 cm,thick,fill=black] (26 cm,12) circle (.1cm);
      \draw[xshift=-6 cm,thick,fill=black] (25 cm,12) circle (.1cm);
    \draw[xshift=-6 cm,thick,fill=black] (23 cm,12) circle (.1cm);
      \draw[xshift=-6 cm,thick,fill=black] (22 cm,12) circle (.1cm);
       \draw[xshift=-6 cm,thick,fill=black] (21 cm,12) circle (.1cm);
    \draw[xshift=-6 cm,thick] (20 cm,12) circle (.2cm);
    \draw[xshift=-6 cm,thick,fill=black] (19 cm,12) circle (.1cm);
    \draw[xshift=-6 cm,thick,fill=black] (18 cm,12) circle (.1cm);
    \draw[xshift=-6 cm,thick,fill=black] (17cm,12) circle (.2cm);
    \draw[xshift=-6 cm,thick,fill=black] (16 cm,12) circle (.1cm);
    \draw[xshift=-6 cm,thick,fill=black] (15 cm,12) circle (.1cm);
    \draw[xshift=-6 cm,thick] ( 14 cm,12) circle (.2cm);
    \draw[xshift=-6 cm,thick,fill=black] (13 cm,12) circle (.1cm);
    \draw[xshift=-6 cm,thick,fill=black] (12 cm,12) circle (.1cm);
    \draw[xshift=-6 cm,thick,fill=black] (10 cm,12) circle (.1cm);
  \draw[xshift=-6 cm,thick,fill=black] (9 cm,12) circle (.1cm);
    \draw[xshift=-6 cm,thick] (8 cm,12) circle (.2cm);
\draw[xshift=-6 cm,thick,fill=black] (7 cm,12) circle (.1cm);
    \draw[xshift=-6 cm,thick,fill=black] (6 cm,12) circle (.1cm);
    \draw[xshift=-6 cm,thick,fill=black] (5cm,12) circle (.1cm);
    \draw[xshift=-6 cm,thick,fill=black] (4 cm,12) circle (.1cm);
    \draw[xshift=-6 cm,thick,fill=black] ( 2cm,12) circle (.1cm);
    \draw[xshift=-6 cm,thick,fill=black] (1cm,12) circle (.1cm);
    \draw[xshift=-6 cm,thick,fill=black] (0cm,12) circle (.1cm);
    \draw[xshift=-6 cm,thick] (-1cm,12) circle (.2cm);
    \draw[xshift=-6 cm,thick,fill=black] (-2cm,12) circle (.1cm);
  \draw[xshift=-6 cm,thick,fill=black] (-3cm,12) circle (.1cm);
  \draw[xshift=-6 cm,thick,fill=black] (-4cm,12) circle (.1cm);
    
       \draw[xshift=-6 cm,thick,fill=black] (37 cm,1) circle (.1cm);
    \draw[xshift=-6 cm,thick] (36 cm,1) circle (.2cm);
    \draw[xshift=-6 cm,thick,fill=black] (35 cm,1) circle (.1cm);
    \draw[xshift=-6 cm,thick,fill=black] (34 cm,1) circle (.1cm);
    \draw[xshift=-6 cm,thick,fill=black] (33cm,1) circle (.1cm);
    \draw[xshift=-6 cm,thick,fill=black] (31 cm,1) circle (.1cm);
    \draw[xshift=-6 cm,thick,fill=black] ( 30 cm,1) circle (.1cm);
    \draw[xshift=-6 cm,thick] (29 cm,1) circle (.2cm);
    \draw[xshift=-6 cm,thick,fill=black] (28 cm,1) circle (.1cm);
    \draw[xshift=-6 cm,thick,fill=black] (27cm,1) circle (.1cm);
    \draw[xshift=-6 cm,thick,fill=black] (26 cm,1) circle (.1cm);
      \draw[xshift=-6 cm,thick,fill=black] (25 cm,1) circle (.1cm);
    \draw[xshift=-6 cm,thick,fill=black] (23 cm,1) circle (.1cm);
      \draw[xshift=-6 cm,thick,fill=black] (22 cm,1) circle (.1cm);
       \draw[xshift=-6 cm,thick,fill=black] (21 cm,1) circle (.1cm);
    \draw[xshift=-6 cm,thick] (20 cm,1) circle (.2cm);
    \draw[xshift=-6 cm,thick,fill=black] (19 cm,1) circle (.1cm);
    \draw[xshift=-6 cm,thick,fill=black] (18 cm,1) circle (.1cm);
    \draw[xshift=-6 cm,thick,fill=black] (17cm,1) circle (.2cm);
    \draw[xshift=-6 cm,thick,fill=black] (16 cm,1) circle (.1cm);
    \draw[xshift=-6 cm,thick,fill=black] (15 cm,1) circle (.1cm);
    \draw[xshift=-6 cm,thick] ( 14 cm,1) circle (.2cm);
    \draw[xshift=-6 cm,thick,fill=black] (13 cm,1) circle (.1cm);
    \draw[xshift=-6 cm,thick,fill=black] (12 cm,1) circle (.1cm);
    \draw[xshift=-6 cm,thick,fill=black] (10 cm,1) circle (.1cm);
  \draw[xshift=-6 cm,thick,fill=black] (9 cm,1) circle (.1cm);
    \draw[xshift=-6 cm,thick,fill=black] (8 cm,1) circle (.1cm);
    \draw[xshift=-6 cm,thick] (7 cm,1) circle (.2cm);
    \draw[xshift=-6 cm,thick,fill=black] (6 cm,1) circle (.1cm);
    \draw[xshift=-6 cm,thick,fill=black] (5cm,1) circle (.1cm);
    \draw[xshift=-6 cm,thick,fill=black] (4 cm,1) circle (.1cm);
    \draw[xshift=-6 cm,thick,fill=black] ( 2cm,1) circle (.1cm);
    \draw[xshift=-6 cm,thick,fill=black] (1cm,1) circle (.1cm);
    \draw[xshift=-6 cm,thick,fill=black] (0cm,1) circle (.1cm);
    \draw[xshift=-6 cm,thick,fill=black] (-1cm,1) circle (.1cm);
    \draw[xshift=-6 cm,thick,fill=black] (-2cm,1) circle (.1cm);
  \draw[xshift=-6 cm,thick] (-3cm,1) circle (.2cm);
    \draw[xshift=-6 cm,thick,fill=black] (-4cm,1) circle (.1cm);

     \draw[xshift=-6 cm] (36 cm, 0) node[anchor=center]  {$t_2+1$};      
   \draw[xshift=-6 cm] (29 cm, 0) node[anchor=center]  {$\frac{t_1+1}{2}$};
    \draw[xshift=-6 cm] (20 cm, 0) node[anchor=center]  {$t_3$};
     \draw[xshift=-6 cm] (14 cm, 0) node[anchor=center]  {$\frac{t_3}{2}$};
      \draw[xshift=-6 cm] (7 cm, 0) node[anchor=center]  {$\frac{t_2+1}{2}$};  
     \draw[xshift=-6 cm] (-3 cm, 0) node[anchor=center]  {$t_1+1$};

     \draw[xshift=-6 cm] (34 cm, 13) node[anchor=south]  {$t_2$};      
   \draw[xshift=-6 cm] (28 cm, 13) node[anchor=south]  {$\frac{t_1}{2}$};    
      \draw[xshift=-6 cm] (20 cm, 13) node[anchor=south]  {$t_3$};          
   \draw[xshift=-6 cm] (14 cm, 13) node[anchor=south]  {$\frac{t_3}{2}$};
    \draw[xshift=-6 cm] (8 cm, 13) node[anchor=south]  {$\frac{t_2}{2}$};
     \draw[xshift=-6 cm] (-1cm, 13) node[anchor=south]  {$t_1$};
     
       \foreach \y in {-11,-7,-6,-5,-3,-2,-1,2,3,5,6,8,9,10,11,12,14,15,16,18,19,20,23,26,30}
    \draw[-stealth, xshift=1 cm,red] (\y,11.85) -- +(0,-10.70);

  \path [-stealth, solid,red,draw=none,dotted,thick] ( -8 cm,11.85) edge (-8 cm,1.15);
  \path [-stealth, solid,red,draw=none,dotted,thick] ( 29 cm,11.85) edge (29 cm,1.15);

  \path [-stealth, solid,red,draw=none] ( -9 cm,11.85) edge (-8 cm,1.15);
  \path [-stealth, solid,red,draw=none] ( -8 cm,11.85) edge (-7 cm,1.15);
  \path [-stealth, solid,red,draw=none] ( 1 cm,11.85) edge (2 cm,1.15);
  \path [-stealth, solid,red,draw=none] ( 23 cm,11.85) edge (22 cm,1.15);
  \path [-stealth, solid,red,draw=none] ( 29 cm,11.85) edge (28 cm,1.15);
  \path [-stealth, solid,red,draw=none] ( 30 cm,11.85) edge (29 cm,1.15);

    \path [-stealth,solid,black,thick,draw=none, dotted] ( -12 cm,12) edge node[left=0]{}(-10.15 cm,12);
      \path [-stealth,solid,black,thick,draw=none] ( -10 cm,12) edge node[left=0]{}(-9.15 cm,12);
    
      \path [-stealth,solid,black,thick,draw=none] ( -9 cm,12) edge node[left=0]{}(-8.15 cm,12);
  \path [-stealth,solid,black,thick,draw=none] ( -8 cm,12) edge[bend left] node[left=0]{}(-6.15 cm,12);
    \path [-stealth,solid,black,thick,draw=none] ( -6 cm,12) edge node[left=0]{}(-5.15 cm,12);
  \path [-stealth,solid,black,thick,draw=none] ( -5 cm,12) edge node[left=0]{}(-4.15 cm,12);
      \path [-stealth,solid,black,thick,draw=none, dotted] ( -4 cm,12) edge node[left=0]{}(-2.15 cm,12);
      \path [-stealth,solid,black,thick,draw=none] ( -2 cm,12) edge node[left=0]{}(-1.15 cm,12);
      \path [-stealth,solid,black,thick,draw=none] ( -1 cm,12) edge node[left=0]{}(-0.15 cm,12);
      \path [-stealth,solid,black,thick,draw=none] ( 0 cm,12) edge node[left=0]{}(0.85 cm,12);
      \path [-stealth,solid,black,thick,draw=none] ( 1 cm,12) edge[bend left] node[left=0]{}(2.85 cm,12);
      \path [-stealth,solid,black,thick,draw=none] ( 3 cm,12) edge node[left=0]{}(3.85 cm,12);
      \path [-stealth,solid,black,thick,draw=none, dotted] ( 4 cm,12) edge node[left=0]{}(5.85 cm,12);
            \path [-stealth,solid,black,thick,draw=none] ( 6 cm,12) edge node[left=0]{}(6.85 cm,12);

      \path [-stealth,solid,black,thick,draw=none] ( 7 cm,12) edge[bend left] node[left=0]{}(8.85 cm,12);      
      \path [-stealth,solid,black,thick,draw=none,dotted] ( 9 cm,12) edge node[left=0]{}(9.85 cm,12);      
      \path [-stealth,solid,black,thick,draw=none] ( 10 cm,12) edge node[left=0]{}(10.85 cm,12);      
      
       \path [-stealth,solid,black,thick,draw=none] ( 12 cm,12) edge node[left=0]{}(11.15 cm,12);      
       \path [-stealth,solid,black,thick,draw=none,dotted] ( 13 cm,12) edge node[left=0]{}(12.15 cm,12);      
       \path [-stealth,solid,black,thick,draw=none] ( 15 cm,12) edge[bend right] node[left=0]{}(13.15 cm,12);      
       \path [-stealth,solid,black,thick,draw=none] ( 16 cm,12) edge node[left=0]{}(15.15 cm,12);      
       \path [-stealth,solid,black,thick,draw=none] ( 17 cm,12) edge node[left=0]{}(16.15 cm,12);      
       \path [-stealth,solid,black,thick,draw=none, dotted] ( 19 cm,12) edge node[left=0]{}(17.15 cm,12);      
       \path [-stealth,solid,black,thick,draw=none] ( 20 cm,12) edge node[left=0]{}(19.15 cm,12);      
       \path [-stealth,solid,black,thick,draw=none] ( 21 cm,12) edge node[left=0]{}(20.15 cm,12);      
            \path [-stealth,solid,black,thick,draw=none] ( 23 cm,12) edge[bend right] node[left=0]{}(21.15 cm,12);      
       \path [-stealth,solid,black,thick,draw=none] ( 24 cm,12) edge node[left=0]{}(23.15 cm,12);      
       \path [-stealth,solid,black,thick,draw=none, dotted] ( 26 cm,12) edge node[left=0]{}(24.15 cm,12);      
       \path [-stealth,solid,black,thick,draw=none] ( 27 cm,12) edge node[left=0]{}(26.15 cm,12);      
            \path [-stealth,solid,black,thick,draw=none] ( 29 cm,12) edge[bend right] node[left=0]{}(27.15 cm,12);      
       \path [-stealth,solid,black,thick,draw=none] ( 30 cm,12) edge node[left=0]{}(29.15 cm,12);      
       \path [-stealth,solid,black,thick,draw=none] ( 31 cm,12) edge node[left=0]{}(30.15 cm,12);      
       \path [-stealth,solid,black,thick,draw=none, dotted] ( 33 cm,12) edge node[left=0]{}(31.15 cm,12);

     \path [-stealth,solid,black,thick,draw=none, dotted] ( -12 cm,1) edge node[left=0]{}(-10.15 cm,1);
      \path [-stealth,solid,black,thick,draw=none] ( -10 cm,1) edge[bend left] node[left=0]{}(-8.15 cm,1);
  \path [-stealth,solid,black,thick,draw=none] ( -8 cm,1) edge node[left=0]{}(-6.15 cm,1);
    \path [-stealth,solid,black,thick,draw=none] ( -6 cm,1) edge node[left=0]{}(-5.15 cm,1);
  \path [-stealth,solid,black,thick,draw=none] ( -5 cm,1) edge node[left=0]{}(-4.15 cm,1);
      \path [-stealth,solid,black,thick,draw=none, dotted] ( -4 cm,1) edge node[left=0]{}(-2.15 cm,1);
      \path [-stealth,solid,black,thick,draw=none] ( -2 cm,1) edge node[left=0]{}(-1.15 cm,1);
      \path [-stealth,solid,black,thick,draw=none] ( -1 cm,1) edge node[left=0]{}(-0.15 cm,1);
      \path [-stealth,solid,black,thick,draw=none] ( 0 cm,1) edge[bend left] node[left=0]{}(1.85 cm,1);
      \path [-stealth,solid,black,thick,draw=none] ( 2 cm,1) edge node[left=0]{}(2.85 cm,1);
      \path [-stealth,solid,black,thick,draw=none] ( 3 cm,1) edge node[left=0]{}(3.85 cm,1);
      \path [-stealth,solid,black,thick,draw=none, dotted] ( 4 cm,1) edge node[left=0]{}(5.85 cm,1);
            \path [-stealth,solid,black,thick,draw=none] ( 6 cm,1) edge node[left=0]{}(6.85 cm,1);

      \path [-stealth,solid,black,thick,draw=none] ( 7 cm,1) edge[bend left] node[left=0]{}(8.85 cm,1);      
      \path [-stealth,solid,black,thick,draw=none,dotted] ( 9 cm,1) edge node[left=0]{}(9.85 cm,1);      
      \path [-stealth,solid,black,thick,draw=none] ( 10 cm,1) edge node[left=0]{}(10.85 cm,1);      
      
       \path [-stealth,solid,black,thick,draw=none] ( 12 cm,1) edge node[left=0]{}(11.15 cm,1);      
       \path [-stealth,solid,black,thick,draw=none,dotted] ( 13 cm,1) edge node[left=0]{}(12.15 cm,1);      
       \path [-stealth,solid,black,thick,draw=none] ( 15 cm,1) edge[bend right] node[left=0]{}(13.15 cm,1);      
       \path [-stealth,solid,black,thick,draw=none] ( 16 cm,1) edge node[left=0]{}(15.15 cm,1);      
       \path [-stealth,solid,black,thick,draw=none] ( 17 cm,1) edge node[left=0]{}(16.15 cm,1);      
       \path [-stealth,solid,black,thick,draw=none, dotted] ( 19 cm,1) edge node[left=0]{}(17.15 cm,1);      
       \path [-stealth,solid,black,thick,draw=none] ( 20 cm,1) edge node[left=0]{}(19.15 cm,1);      
       \path [-stealth,solid,black,thick,draw=none] ( 21 cm,1) edge node[left=0]{}(20.15 cm,1);      
            \path [-stealth,solid,black,thick,draw=none] ( 22 cm,1) edge node[left=0]{}(21.15 cm,1);      
       \path [-stealth,solid,black,thick,draw=none] ( 24 cm,1) edge[bend right] node[left=0]{}(22.15 cm,1);      
       \path [-stealth,solid,black,thick,draw=none] ( 25 cm,1) edge node[left=0]{}(24.15 cm,1);      
       \path [-stealth,solid,black,thick,draw=none,dotted] ( 27 cm,1) edge node[left=0]{}(25.15 cm,1);      
       \path [-stealth,solid,black,thick,draw=none] ( 28 cm,1) edge node[left=0]{}(27.15 cm,1);      
       
            \path [-stealth,solid,black,thick,draw=none] ( 29 cm,1) edge node[left=0]{}(28.15 cm,1);      
       \path [-stealth,solid,black,thick,draw=none] ( 31 cm,1) edge[bend right] node[left=0]{}(29.15 cm,1);      
       \path [-stealth,solid,black,thick,draw=none, dotted] ( 33 cm,1) edge node[left=0]{}(31.15 cm,1);

  \end{tikzpicture}
\end{center}

\begin{lemma} For a weight $\l\in F^{(a,b)}$, if $\l+\ga\in F^{(a+1,b+1)}$, then the corresponding$c_{\l}$ and $c_{\l+\ga}$ are either in the same interval $J_j$ or adjacent ones. 
\end{lemma}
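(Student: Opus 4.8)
The plan is to read both indices directly off Theorem~\ref{cf4}(2) and compare the two interval systems, using only that $\ga$, being a root of $F(4)$, is short in the $\d$-direction. First I record how the interval data deforms: passing from $\F^{(a,b)}$ to $\F^{(a+1,b+1)}$ replaces $(a,b)$ by $(a+1,b+1)$, so in the notation of Theorem~\ref{cf4} one has $t_1=\frac{2a+b}{3}\mapsto t_1+1$ and $t_2=\frac{a+2b}{3}\mapsto t_2+1$, while $t_3=\frac{a-b}{3}$ is unchanged; hence among the seven break-points $t_2,\frac{t_1}{2},t_3,0,-\frac{t_3}{2},-\frac{t_2}{2},-t_1$ that cut the $c$-line into the eight intervals of the theorem, the three inner ones $t_3$, $0$, $-\frac{t_3}{2}$ stay fixed and the other four move by at most $1$. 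Next I bound the displacement of the index itself: by Theorem~\ref{cf4} the index of a weight $\nu$ in either block is, up to sign, $\frac13(\nu+\r,\d)$, and $(\l+\ga+\r,\d)-(\l+\r,\d)=(\ga,\d)$, which for every root $\ga$ of $F(4)$ lies in $\{0,\pm\frac32,\pm3\}$ --- the even roots $\pm\d$ contribute $\pm3$, the odd roots $\frac12(\pm\e_1\pm\e_2\pm\e_3\pm\d)$ contribute $\mp\frac32$, and all other roots contribute $0$, as one reads off from Section~3. Thus $|c_\l|$ and $|c_{\l+\ga}|$ differ by at most $1$, with a sign change between $c_\l$ and $c_{\l+\ga}$ possible only when both lie in the two mutually adjacent intervals straddling $0$.

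Combining the two observations: if $c_\l$ lies in an interval $J$ bounded by two consecutive break-points, then $c_{\l+\ga}$ lies within distance $1$ of $J$, while the corresponding interval $J'$ of $\F^{(a+1,b+1)}$ has endpoints within $1$ of those of $J$ (within $\frac12$ for the halved break-points); together with a look at the gaps between consecutive break-points --- which equal $\frac b2,\frac b2,\frac{a-b}{3},\frac{a-b}{6},\frac b2,\frac a2$, bounded below because $a\neq b$ forces $a-b\ge 3$ and so $a\ge 4$, $b\ge 1$ --- this yields that $c_{\l+\ga}$ cannot be carried beyond the two neighbours of $J'$, except possibly when $\l$ sits next to a wall of the Weyl chamber. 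Those finitely many configurations are disposed of by hand, exactly in the style of the passages $\F^{(1,1)}\to\F^{(2,2)}$ and $\F^{(4,1)}\to\F^{(5,2)}$ treated above and recorded in the picture above: one enumerates the roots $\ga$ for which $\l+\ga$ actually lands in $\F^{(a+1,b+1)}$ --- using Lemma~\ref{weights in block} and the dominance criterion of Lemma~\ref{dominant integral weights of F(4)} --- and checks the incidence of intervals directly.

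I expect the genuine difficulty to lie entirely in this last, wall-adjacent bookkeeping. Away from the walls the intervals are long compared with the displacement $|c_\l-c_{\l+\ga}|\le 1$, so there is nothing to prove; but near a wall two consecutive intervals can shrink to length $\frac12$, and then the crude estimate alone does not force adjacency, so one must fall back on the explicit weight--root incidences to rule out any index-skipping transition. Everything else reduces to elementary arithmetic in $t_1$, $t_2$, $t_3$.
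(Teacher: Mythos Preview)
Your strategy tracks only the $\delta$-coordinate of $\lambda+\rho$: from $(\gamma,\delta)\in\{0,\pm\tfrac32,\pm3\}$ you correctly deduce that the last coordinate $b_4$ moves by at most $1$. But you then assert that ``a sign change between $c_\lambda$ and $c_{\lambda+\gamma}$ [is] possible only when both lie in the two mutually adjacent intervals straddling $0$,'' and this is precisely the non-obvious content of the lemma. The last coordinate $b_4$ of a dominant $\lambda+\rho$ is always nonnegative and equals $|c|$ in \emph{both} halves of the block, so $b_4$ alone carries no information about the sign of $c$; that sign is encoded in the \emph{first} coordinate. For a fixed value of $b_4>0$ there are typically two dominant weights in the block, one with $c=b_4$ in $I_{1\text{--}4}$ and one with $c=-b_4$ in $I_{5\text{--}8}$, distinguished only by whether $b_1-b_4$ equals $t_1$ or $t_2$.

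The paper's argument uses exactly this invariant: $b_1-b_4=t_1$ on $I_{1\text{--}4}$ and $b_1-b_4=t_2$ on $I_{6\text{--}8}$, with $t_1-t_2=n$; since every root of $F(4)$ has $\gamma_1-\gamma_4\in[-1,1]$, a transition between these two halves forces $n$ to be small, and the residual case $n=1$ is checked directly. Your displacement/gap bookkeeping never sees $b_1$, so even carried out in full it cannot exclude, e.g., $c_\lambda\in I_3$ and $c_{\lambda+\gamma}\in I_6'$ with $|c_\lambda|\approx|c_{\lambda+\gamma}|$. The ``wall-adjacent bookkeeping'' you flagged as the hard part is in fact secondary; the genuine gap is the unproved sign-change claim, and closing it requires precisely the first-coordinate invariant the paper supplies --- at which point your interval-length analysis becomes largely superfluous.
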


\begin{proof}  Given $c\in\frac{1}{2}\Z$, by theorem 6.5, there is at most one dominant $\l\in
F^{(a,b)}$, with $\l_c=\l$, for $c\in I_i$. We want to show that both $c$ and $c+\ga_4$ in the
same or adjacent intervals $I_j$.

Say $\l+\r=(b_1,b_2,b_3|b_4)$, then $b_4=c$ if $i=1,2,3,4$ and $b_4=-c$ if $i=5, 6,7,8$.

Assume $b_4=c\in I_i$ with $i=1,2,3,4$, we claim that there is no $\ga\in\D$ such
that $\l+\ga\in F^{(a+1,b+1)}$ and $-(b_4+\ga_4)\in I_i$ with $i=6,7,8$. If
$b_4\in I_i$ $i=1,2,3,4$, then $b_1-b_4=\frac{2a+b}{3}$, while if $b_4\in
I_i$ $i=6,7,8$, then $b_1-b_4=\frac{a+2b}{3}$. Now, if such $\ga$ exists, we
will have $b_1+\ga_1-b_4-\ga_4=\frac{2a+b}{3}+(\ga_1-\ga_4)=\frac{a+2b}{3}+1$,
which implies $\ga_1-\ga_4=\frac{-a+b}{3}+1=-n+1$. The last number must be in
the interval $[-1,1]$, since $\ga\in\D$. But this is only possible if $n=1$.

Similarly, if $-b_4=c\in I_i$ with $i=6,7,8$ and $\ga$ is such that
$b_4+\ga_4\in I_i$ with $i=1,2,3,4$ we have $b_1-b_4=\frac{a+2b}{3}$ and
$b_1+\ga_1-b_4-\ga_4=\frac{2a+b}{3}+1$, and we get
$\ga_1-\ga_4=\frac{2a+b}{3}+1-\frac{a+2b}{3}=\frac{a-b}{3}+1=n+1$. This is a
contradiction since $\ga_1-\ga_4\in[-1,1]$, for $\ga\in\D$. It is also
not possible to have $\l\in F^{(a,b)}$ with $\l+\r=(b_1,b_2,b_3|b_4)$ and
$-b_4\in I_5$ and $\ga\in\D$ with $\l+\ga\in F^{(a+1,b+1)}$ and $b_4+\ga_4\in I_i$
with $i=1,2,3$, since if $-b_4\in I_5$ implies
$0<b_4<\frac{a-b}{6}<\frac{a-b}{3}$ implying $b_4+\ga_4\in I_4$ or $I_5$.

The case $n=1$ can be checked separately. 
\end{proof}

The following lemma justifies the above picture.

\begin{lemma} For $\l\in F^{(a,b)}$, there is a unique $\ga\in\D$ such that
$\l+\ga\in\F^{(a+1,b+1)}$ is dominant, unless
$\l+\r=(a+b+\frac{1}{2},b+\frac{1}{2},\frac{1}{2}|\frac{a+2b}{3}+\frac{1}{2
})$ or
$\l+\r=(a+b+\frac{1}{2},a+\frac{1}{2},\frac{1}{2}|\frac{2a+b}{3}+\frac{1}{2
})$. 
\end{lemma}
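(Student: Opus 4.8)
The plan is to run the argument from the symmetric case, now with the eight intervals $I_1,\dots,I_8$ of \rthm{cf4}(2) in place of $J_1,J_2,J_3$. By \rthm{cf4}(2) the dominant weights of $F^{(a,b)}$ are the $\l_c$ for $c\in\tfrac12\Z$ outside the six wall values $t_2,\tfrac{t_1}{2},t_3,-\tfrac{t_3}{2},-\tfrac{t_2}{2},-t_1$, and the same description holds for $F^{(a+1,b+1)}$ with $t_3$ unchanged and $t_1,t_2$ replaced by $t_1+1,t_2+1$; in particular each value of $c$ is realized by at most one dominant weight in either block. Since every root of $F(4)$ has $\d$-coefficient in $\{0,\pm\tfrac12,\pm1\}$ (read off $\D_0$ and $\D_1$), the last coordinate of $\l+\ga+\r$ differs from that of $\l+\r$ by at most $1$; combined with the preceding lemma (that $c_\l$ and $c_{\l+\ga}$ lie in the same or in adjacent intervals), this forces $\l$ and $\l+\ga$ into the same interval whenever $c_\l$ is at distance more than $1$ from every wall, in particular whenever $\l$ is generic, so that $c_\l$ lies deep in $I_1$ or $I_8$.

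In that situation $\l$ and $\l+\ga$ are both orthogonal to the isotropic root $\b_i=w_c(\a)$ attached to that interval (the same root for $F^{(a,b)}$ and for $F^{(a+1,b+1)}$, by \rrem{remark}), so $(\ga,\b_i)=0$. Since $(\d,\b_i)=-\tfrac32\neq0$ this rules out $\ga=\d$; an odd root orthogonal to $\b_i$ must be $\pm\b_i$, which is impossible because then $\l+\ga$ would lie in the coset $(\l+\r)+\C\b_i$ and hence, by \rlem{weights in block}, in the same block as $\l$; and an even root orthogonal to $\b_i$ is one of $\pm(\e_j\pm\e_k)$ and so has zero $\d$-coefficient. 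Hence $\l+\ga$ has the same last coordinate as $\l+\r$, and the uniqueness in \rthm{cf4} pins down $\l+\ga$, and therefore $\ga$, uniquely.

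It remains to treat the finitely many $c_\l$ lying within distance $1$ of a wall of $F^{(a,b)}$ or of $F^{(a+1,b+1)}$. For each such $c_\l$ I would substitute the explicit formula for $\l_c+\r$ from \rthm{cf4}(2) into the dominance criterion for $F(4)$ stated earlier (in terms of the coordinates of $\l+\r$) and enumerate the admissible $\ga$, the preceding lemma keeping the list finite; the only genuinely delicate sub-region is around $c=0$, where the relation between the first and last coordinates of $\l+\r$ switches between the $I_1$--$I_4$ and the $I_5$--$I_8$ families, which is exactly what the preceding lemma's proof already settles and can be reused. The enumeration yields a unique $\ga$ at every such $c_\l$ except at the two weights in the statement: for $\l+\r=(a+b+\tfrac12,b+\tfrac12,\tfrac12\mid t_2+\tfrac12)$, the first dominant weight of the $I_1$-family just past the wall $c=t_2$, the root $\ga=\e_1$ and a second root (namely $-\d$ when $b=1$ and $\tfrac12(\e_1-\e_2+\e_3-\d)$ when $b\geq2$) both carry $\l$ to a dominant weight of $F^{(a+1,b+1)}$, and symmetrically for $\l+\r=(a+b+\tfrac12,a+\tfrac12,\tfrac12\mid t_1+\tfrac12)$ in the $I_8$-family. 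Existence of at least one admissible $\ga$ for every $\l_c$ is exhibited by the arrows of the displayed picture, each target being dominant by the $F(4)$ criterion. I expect this near-wall enumeration, and the bookkeeping of which root works in the degenerate parameter regimes (small $n$, or $a$ close to $b$, where some of $I_2,\dots,I_7$ collapse), to be the only real work; the generic part is immediate from \rrem{remark}.
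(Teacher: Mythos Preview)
Your proposal is correct and follows essentially the same route as the paper: reduce to showing that in the generic regime $c_\l$ and $c_{\l+\ga}$ coincide by exploiting $(\ga,\b_i)=0$ via \rrem{remark}, rule out $\ga=\d$ and odd $\ga$, then enumerate the finitely many near-wall values of $c$. The paper lists the six exceptional values $c=t_2+1,\ t_2+\tfrac12,\ \tfrac{t_1}{2}+\tfrac12,\ \tfrac{t_2}{2}+\tfrac12,\ t_1+\tfrac12,\ t_1+1$ and asserts that only at $c=t_2+\tfrac12$ and $c=t_1+\tfrac12$ do two admissible $\ga$ appear; your version spells out the orthogonality step for even roots more explicitly and identifies the second root in the two exceptional cases, but the argument is the same.
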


\begin{proof} Assume $\ga\in\D$ is such that $\l+\ga\in F^{(a+1,b+1)}$. We first show that the $c$ corresponding to $\l+\ga+\r$ and $\l+\r$ is the same in generic cases. By \rrem{remark}, this will imply that there is at most one such $\ga$, proving the
uniqueness.

Assume that the last coordinate of $\l+\r$ is $c$. Then $\l+\r+\ga$ must have last coordinate $c\pm 1$, $c\pm\frac{1}{2}$, or $c$.

Thus for generic $\l$, $(\l+\r,\a)=0$ and $(\l+\ga+\r,\a)=0$ are true for the same $\a\in\D_{\bar{1}}^+$ (see \rrem{remark} above). That implies $(\ga,\a)=0$. Thus, $\ga\neq \d$. If $\ga$ is odd then $(\ga,\a)=0$ implies $\ga=\pm\a$, which is impossible, since then $\l$ and $\l+\ga$ correspond to the same central character from \rlem{weights in block}. If $\ga\neq\d$ is even the statement is clear.

The few exceptional cases occur around walls of the Weyl chamber, when $c=\frac{a+2b}{3}+1$, $\frac{a+2b}{3}+\frac{1}{2}$,
$\frac{2a+b}{6}+\frac{1}{2}$, $\frac{a+2b}{6}+\frac{1}{2}$,
$\frac{2a+b}{3}+\frac{1}{2}$, and $\frac{2a+b}{3}+1$. We can see that
only in the second and fifth places there are two such $\ga$.
\end{proof}

\begin{lemma}\label{xab} We have $T(L_{\l_i})=L_{\m_i}$, for all $\l_i\neq \l_c$ with $c=\frac{a+2b}{3}+\frac{1}{2}$ or $\frac{2a+b}{3}+\frac{1}{2}$.
\end{lemma}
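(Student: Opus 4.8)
The plan is to deduce this directly from \rlem{acyclicx}, exactly as in the proofs of \rlem{x1122}, \rlem{aa1x} and \rlem{x4152}, using the uniqueness of the connecting root supplied by the preceding lemma. First I would unwind the definition: $T(L_{\l_i})=(L_{\l_i}\otimes\g)^{(a+1,b+1)}$ is a finite-dimensional contragredient $\g$-module, and every $\bb$-singular weight occurring in it is of the form $\l_i+\ga$ with $\ga\in\D$ and lies in $F^{(a+1,b+1)}$.

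Next, for $\l_i\neq\l_c$ with $c=\frac{a+2b}{3}+\frac{1}{2}$ or $c=\frac{2a+b}{3}+\frac{1}{2}$, the preceding lemma provides a \emph{unique} $\ga\in\D$ with $\l_i+\ga$ dominant in $\F^{(a+1,b+1)}$; call this weight $\m_i$ — it is precisely the target of the vertical arrow based at $\l_i$ in the picture above. Since distinct roots $\ga$ give distinct weights $\l_i+\ga$, the weight $\m_i$ is the only dominant weight in $F^{(a+1,b+1)}$ of the form $\l_i+\ga$, $\ga\in\D$. Applying \rlem{acyclicx} with $\chi=(a,b)$, $\tau=(a+1,b+1)$, $\l=\l_i$ and $\m=\m_i$ then gives $T(L_{\l_i})=L_{\m_i}$, as claimed.

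The one substantive ingredient is the uniqueness statement of the preceding lemma, whose proof is the interval bookkeeping over the parametrization of \rthm{cf4} (together with \rrem{remark}). The main thing to be careful about here is matching the exclusion: the excluded weights $\l_c$ with $c=\frac{a+2b}{3}+\frac{1}{2}$ or $\frac{2a+b}{3}+\frac{1}{2}$ are exactly the two places where \emph{two} admissible roots $\ga$ occur, so that for every remaining $\l_i$ the hypothesis of \rlem{acyclicx} — exactly one dominant weight of the form $\l_i+\ga$ — is met. I expect this matching, rather than any representation-theoretic difficulty, to be the only real obstacle; the two exceptional weights $\l_c$ left out of the statement are handled separately afterwards (in the spirit of \rlem{special1122} and \rlem{aa1}) and lie outside the scope of this lemma.
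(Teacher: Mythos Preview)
Your proposal is correct and matches the paper's own proof essentially verbatim: unwind the definition of $T$, invoke the uniqueness of the dominant $\l_i+\ga\in F^{(a+1,b+1)}$ established by the preceding lemma (the paper just says ``from the picture above''), and apply \rlem{acyclicx}. Your additional remarks about the two excluded values of $c$ being precisely where uniqueness fails are accurate and make the matching with the preceding lemma more explicit than the paper does.
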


\begin{proof} By definition, $T(L_{\l_i})=(L_{\l_i}\otimes\g)^{(a+1,b+1)}$. As one can see from the picture above, for each $i$, there is a unique dominant weight $\m_i$ in $\F^{(a+1, b+1)}$ of the form $\l_i+\gg$ with $\gg\in \D$. Thus, the lemma follows from \rlem{acyclicx}.
\end{proof}

\begin{lemma} For $c=\frac{a+2b}{3}+\frac{1}{2}=t_1+\frac{1}{2}$, we have $T(L_{\l_c})=L_{\m_{c'}}$ with $c'=\frac{a+2b}{3}=t_1$. Similarly, for $c=\frac{2a+b}{3}+\frac{1}{2}=t_2+\frac{1}{2}$, we have $T(L_{\l_c})=L_{\m_{c'}}$ with $c'=\frac{2a+b}{3}=t_2$.
\end{lemma}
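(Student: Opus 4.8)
The plan is to follow the pattern of the proofs of \rlem{special1122} and \rlem{aa1}, running a (complicated) induction on $a$ with $a-b$ fixed, whose base case is the lemma preceding \rthm{eq4152} (which settles the two exceptional weights for $\F^{(4,1)}\to\F^{(5,2)}$). Fix the first exceptional value $c=\frac{a+2b}{3}+\frac12$; the value $c=\frac{2a+b}{3}+\frac12$ is handled in the same way, with the two branches of the picture interchanged. By definition $T(L_{\l_c})=(L_{\l_c}\otimes\g)^{(a+1,b+1)}$, which is nonzero by \rlem{zero} and contragredient. By the lemma just proved, the only dominant weights of $F^{(a+1,b+1)}$ of the form $\l_c+\ga$ with $\ga\in\D$ are $\m_{c'}$, the vertex adjacent to $\l_c$ in the direction of $\l_0$ (with $c'=\frac{a+2b}{3}$), and one further weight $\m'$ on the opposite side. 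Hence every $\bb$-singular vector of $T(L_{\l_c})$ has weight $\m_{c'}$ or $\m'$, and, $T(L_{\l_c})$ being contragredient, it suffices to show that $L_{\m'}$ does \emph{not} occur as a subquotient of $T(L_{\l_c})$: then, arguing as in \rlem{acyclicx}, $T(L_{\l_c})=L_{\m_{c'}}$.

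To rule out $L_{\m'}$, I would use the dominant weight $\nu=\l_c+\a\in F^{(a,b)}$ with $\a=(\tfrac12,\tfrac12,\tfrac12\,|\,\tfrac12)$, i.e.\ the vertex adjacent to $\l_c$ on the side away from $\l_0$; $\nu$ is neither exceptional nor an endpoint. Under the inductive hypothesis $\F^{(a',b')}\simeq\F^{(a'+1,b'+1)}$ for smaller parameters with the same difference, \rlem{eqcoho} transports the cohomology of line bundles from $\F^{(4,1)}$, so — arguing as in \rlem{coh4152} via \rlem{3}, \rlem{lemma 3 generic weights} and the superdimension count of \rlem{generic} — one obtains $\G_i(G/B,\O_\nu)=0$ for $i>0$, $[\G_0(G/B,\O_\nu):L_\nu]=[\G_0(G/B,\O_\nu):L_{\l_c}]=1$, and no other subquotients, hence a short exact sequence
$$0\longrightarrow L_{\l_c}\longrightarrow\G_0(G/B,\O_\nu)\longrightarrow L_\nu\longrightarrow 0.$$
Since $\nu$ is not exceptional, \rlem{xab} gives $T(L_\nu)=L_{\m'}$, with $\m'$ the second candidate weight of the first paragraph (as read off from the picture).

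Applying the exact functor $T$ to this sequence yields
$$0\longrightarrow T(L_{\l_c})\longrightarrow T(\G_0(G/B,\O_\nu))\longrightarrow L_{\m'}\longrightarrow 0.$$
By \rlem{6} and \rlem{w}, $T(\G_0(G/B,\O_\nu))=\G_0(G/B,T(\O_\nu))=\G_0(G/B,\O_{\m'})$, and by \rlem{2} (equivalently \rcor{z}) this module has $L_{\m'}$ as its unique simple quotient, occurring with multiplicity one; hence its submodule $T(L_{\l_c})$ contains no subquotient isomorphic to $L_{\m'}$. With the first paragraph this gives $T(L_{\l_c})=L_{\m_{c'}}$. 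The argument for $c=\frac{2a+b}{3}+\frac12$ is identical after interchanging the two branches, and together with \rlem{xab} it supplies the bijection on simple modules that, by \rthm{eqgeneral}, gives $\F^{(a,b)}\simeq\F^{(a+1,b+1)}$ and closes the induction.

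The step I expect to be the main obstacle is the middle one: checking, from the explicit parametrization in \rthm{cf4} and from \rlem{3}, that the auxiliary weight $\nu$ really has this two-term cohomology (no spurious third subquotient of $\G_0(G/B,\O_\nu)$, and vanishing of $\G_i$ for $i>0$), and that the filtration of $T(\O_\nu)=(\O_\nu\otimes\g)^{\Phi^{-1}(a+1,b+1)}$ has exactly one dominant quotient, namely $\O_{\m'}$, with every other quotient acyclic, so that \rlem{w} applies. Once this bookkeeping is settled the remainder is formal and parallels \rlem{aa1}.
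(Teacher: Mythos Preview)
Your proposal is correct and follows the same pattern as the paper's proof. The paper simply says ``the proof is similar to the proof of \rlem{special1122} using the previous lemma,'' and what you have written is precisely that argument spelled out in the style of \rlem{aa1}: pick the adjacent weight $\nu$ whose image under $T$ is the unwanted candidate $\m'$, use the two-term structure of $\G_0(G/B,\O_\nu)$ (transported by induction and \rlem{eqcoho}) to embed $T(L_{\l_c})$ in $\G_0(G/B,\O_{\m'})$, and conclude from \rcor{z} that $L_{\m'}$ cannot occur in $T(L_{\l_c})$. Your identification of the bookkeeping step (checking that $T(\O_\nu)$ has a single dominant quotient so that \rlem{w} applies) as the main thing to verify is exactly right, and it is the same verification implicit in the paper's appeals to \rlem{w} in \rlem{special1122} and \rlem{aa1}.
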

 
\begin{proof} By definition, $T(L_{\l_c})=(L_{\l_c}\otimes\g)^{(a+1,b+1)}$. The only dominant weights with central character corresponding to block $\F^{(a+1, b+1)}$ of the form $\l_c+\gg$ with $\gg\in \D$ are $\m_c$ and $\m_{c'}$. The proof is similar to the proof of \rlem{special1122} using previous lemma.

%The sheaf  $T(\O_{\l_c})$ has a filtration with subqotients $\O_{\t}$, with $\t=\m_c$, $\m_{c'}$, or $\t_3$, with $\t_3+\r=(\frac{15}{2},\frac{5}{2},-\frac{1}{2}|\frac{7}{2})$
%\\

\end{proof} 

\begin{theorem}\label{eqab} We have an equivalence between categories $\F^{(a, b)}$ and $\F^{(a+1, b+1)}$. 
\end{theorem}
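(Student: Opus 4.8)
The plan is to deduce the equivalence from the general criterion \rthm{eqgeneral}, exactly as in the proofs of \rthm{eq1122} and \rthm{eq4152}. First I would record that the lemmas preceding this theorem together show that the translation functor $T=T_{(a,b),(a+1,b+1)}$, defined by $T(L_\l)=(L_\l\otimes\g)^{(a+1,b+1)}$, carries every simple module $L_\l\in\F^{(a,b)}$ to a simple module in $\F^{(a+1,b+1)}$: for the dominant weights $\l_c$ with $c$ different from the two exceptional values $t_1+\tfrac12,\,t_2+\tfrac12$ this is \rlem{xab} (an application of \rlem{acyclicx}, since the picture exhibits a unique dominant weight $\m_i$ of the form $\l_i+\gamma$ with $\gamma\in\D$), and for the two remaining values of $c$ it is the lemma immediately above, whose argument is parallel to \rlem{special1122}. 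Since \rlem{zero} guarantees $T(L_\l)\neq 0$, the rule $L_\l\mapsto T(L_\l)$ is a well-defined injection of simple objects of $\F^{(a,b)}$ into simple objects of $\F^{(a+1,b+1)}$, and the picture shows it is onto the dominant weights of $\F^{(a+1,b+1)}$.

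Next I would verify the two hypotheses of \rthm{eqgeneral}. The first --- uniqueness of $L_{\l'}=T(L_\l)$ for each $\l\in F^{(a,b)}$ --- is the content of the previous paragraph. For the second, I would read off from the lemma describing when $\l+\gamma$ lands in $\F^{(a+1,b+1)}$ that for every dominant $\m\in F^{(a+1,b+1)}$ other than the two exceptional weights $\m_{c'}$ with $c'=t_1,t_2$ there is a unique $\gamma\in\D$ with $\m+\gamma\in F^{(a,b)}$, and that at each of those two weights there are exactly two such $\gamma$, producing weights $\l_1>\l_2$ with $\l_1$ the one actually hit by $T$. This is precisely the configuration demanded by \rthm{eqgeneral}.

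Invoking \rthm{eqgeneral} then gives that $T$ and its left adjoint $T^*$ are mutually inverse exact functors, hence an equivalence of $\F^{(a,b)}$ with $\F^{(a+1,b+1)}$. The only nontrivial point in the whole argument is the behaviour of $T$ at the two walls $c=t_1+\tfrac12$ and $c=t_2+\tfrac12$, treated in the lemma above: there one pushes the short exact sequence $0\to L_{\l-\a}\to\G_0(G/B,\O_\l)\to L_\l\to 0$ through the exact functor $T$ and uses \rlem{6} together with \rlem{w} to identify $T(\G_0(G/B,\O_\l))$ with a module $\G_0(G/B,\O_{\m})$ possessing a unique simple quotient, which forces $T(L_\l)$ to be the lower of the two candidate simples. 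Everything else is the formal bookkeeping already packaged in \rthm{eqgeneral}, so I expect the wall analysis to be the main --- indeed essentially the only --- obstacle.
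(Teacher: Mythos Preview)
Your proposal is correct and follows essentially the same route as the paper: use \rlem{xab} and the wall lemma to show $T$ sends every simple to a simple, then check the ``at most two'' hypothesis of \rthm{eqgeneral} by reading off from the picture which $\mu\in F^{(a+1,b+1)}$ admit two roots $\gamma$ with $\mu+\gamma\in F^{(a,b)}$, and conclude. One small indexing slip: the two exceptional $\mu$'s for $T^*$ are the targets of the \emph{dotted} arrows, namely $\mu_{t_1+\frac12}$ and $\mu_{t_2+\frac12}$ (each receiving both a solid arrow from $\l_{t_i+1}$ and a dotted arrow from $\l_{t_i+\frac12}$), not $\mu_{t_1}$ and $\mu_{t_2}$ as you wrote; the paper's proof records the corresponding preimages as $\l_{t_i+\frac12}<\l_{t_i+1}$, which is exactly the ordering required by \rthm{eqgeneral}.
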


\begin{proof} From previous lemma, for each $\l_i\in F^{(a+1,b+1)}$, $T(L_{\l_i})$ is a simple module in $\F^{(a+1,b+1)}$, we denote $L_{\m_i}=T(L_{\l_i})$ the simple module with highest weight $\m_i\in F^{(a+1,b+1)}$. We show that the the conditions of \rthm{eqgeneral} are satisfied.

For all $\m\neq \l_c\in F^{(a+1,b+1)}$ with $c=t_2+\frac{1}{2}$ or $t_1+\frac{1}{2}$, we have a unique $\gamma\in\D$, such that $\m+\gamma\in F^{(a,b)}$.

From the picture above, for $\m=\l_c\in F^{(a+1,b+1)}$ with $c=t_2+\frac{1}{2}$ or $t_1+\frac{1}{2}$, there are two possible $\gamma\in\D$ such that $\m+\gamma\in F^{(a,b)}$.

Here, $\m+\gamma=\l_{t_2+\frac{1}{2}}$ and $\l_{t_2+1}$ or $\m+\gamma=\l_{t_1+\frac{1}{2}}$ and $\l_{t_1+1}$ correspondingly such that $\l_{t_2+\frac{1}{2}}<\l_{t_2+1}$ and $\l_{t_1+\frac{1}{2}}<\l_{t_1+1}$. The theorem follows from \rthm{eqgeneral}. 
%We have $Hom_{\mathfrak{g}}(T^*(L_{\m_i}),L_{\l_j}) = Hom_{\mathfrak{g}}(L_{\m_i},T(L_{\l_j})) =Hom_{\mathfrak{g}}(L_{\m_i},L_{\m_j}) = \mathbb{C}\text{ for }i=j\text{ and }0\text{ otherwise}.$
%\\

%Similarly, we have $Hom_{\mathfrak{g}}(L_{\l_j},T^*(L_{\m_i})) = Hom_{\mathfrak{g}}
%(T(L_{\l_j}),L_{\m_i}) = Hom_{\mathfrak{g}}(L_{\m_j},L_{\m_i})=\mathbb{C}$ for $i=j$ and $0$ otherwise. 
%\\

%Thus, the multiplicity of module $L_{\l_j}$ in $T^*(L_{\m_i})$ is zero for $j\neq i$. The $\bb$-singular vectors in $T^*(L_{\m_i})$ have weight of the form $\m_i+\gamma$ with $\gamma\in\D$. Thus, for each $\m_i$, there is a unique, up to proportionality, $\bb$-singular vector with highest weight $\l_i$.
%\\

%We want to show that the multiplicity of module $L_{\l_i}$ in $T^*(L_{\m_i})$ is one. Thus, $T^*(L_{\m_i})=L_{\l_i}$. 
%\\

%Assume to the contrary, then we have: 

%$$0\rightarrow L_{\l_i}\rightarrow T^*(L_{\m_i}) \rightarrow L_{\l_i}\rightarrow 0.$$

%But, since $\l_i$ is a highest weight vector this is impossible (why).  
 \end{proof}

\subsection{Cohomology groups in the block $F^{(a, b)}$ with $a=b+3$.}

We let $b=1$. In the block $F^{(4, 1)}$, the dominant weights close to the walls of the Weyl chamber are denoted:

$\l_7+\r=(\frac{11}{2},\frac{3}{2},\frac{1}{2}|\frac{5}{2})$;

$\l_6+\r=(\frac{7}{2},\frac{3}{2},\frac{1}{2}|\frac{1}{2})$;

$\l_0+\r=(3,2,1|0)$;

$\l_1+\r=(\frac{7}{2},\frac{5}{2},\frac{3}{2}|\frac{3}{2})$;

$\l_2+\r=(4,3,1|2)$;

$\l_3+\r=(\frac{9}{2},\frac{7}{2},\frac{1}{2}|\frac{5}{2})$;

$\l_4+\r=(\frac{11}{2},\frac{9}{2},\frac{1}{2}|\frac{7}{2})$.

(Note that indices for $\l$ above are different from the index $c$ that corresponds to the last coordinate of $\l+\r$. )

\begin{lemma} For all $\l\in F^{(4,1)}$ such that $\l\neq\l_0$, we have $\G_1(G/B,\O_{\l})=0$. \end{lemma}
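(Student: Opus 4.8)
The plan is to mimic the analysis already carried out for the symmetric block $\F^{(1,1)}$ (see \rlem{i1122}), using the explicit parametrization of dominant weights in $F^{(4,1)}$ by the index $c$ given in \rthm{cf4}(2), together with \rlem{3}, which restricts the possible composition factors of $\G_i(G/B,\O_\l)$ to weights $\m$ with $\m+\r=w(\l+\r)-\sum_{\a\in I}\a$ for some $w\in W$ of length $i$ and $I\subset\D^+_1$. First I would split into the generic case and the finitely many near-the-wall cases. For $\l=\l_c$ generic (i.e. $c$ large in absolute value), \rlem{lemma 3 generic weights} already gives $\G_i(G/B,\O_\l)=0$ for $i>0$ directly, so nothing further is needed there.

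Next I would treat the remaining weights $\l_c$ with $c$ in the bounded range near the walls, i.e. the weights listed just before the statement ($\l_1,\dots,\l_7$ and their primed analogues, excluding $\l_0$). For each such $\l$, the strategy is: write $\l+\r$ explicitly in the $\{\e_1,\e_2,\e_3|\d\}$ coordinates; for each Weyl group element $w\in W$ of length $i\ge 1$ (recall $W=((\Z/2\Z)^3\rtimes S_3)\oplus\Z/2\Z$ for $F(4)$, so there are finitely many, and only those with $i\le\dim(G/B)_0$ matter, and in fact $\G_i=0$ for $i>1$ by the Leray argument already proved), compute $w(\l+\r)$; then subtract subsets $\sum_{\a\in I}\a$ with $I\subset\D^+_1$ and check, via \rthm{cf4}(2), whether the result can be of the form $\m+\r$ with $\m\in F^{(4,1)}$ dominant. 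I expect to find that for every $\l\ne\l_0$ in the block, no such $(w,I)$ with $\mathrm{length}(w)\ge 1$ produces a dominant weight in the block — the only contributions come from $w=\mathrm{id}$, which lands in $\G_0$. Hence $\G_1(G/B,\O_\l)=0$ (and higher cohomology vanishes by the parabolic/Leray lemma). I would also invoke \rlem{sums} to transfer information between $w$-linked weights where convenient, and \rlem{eqcoho}/\rlem{eqgeneral}-type equivalences (\rthm{eq4152}, \rthm{eqab}) to reduce, if desired, to the case $b=1$, which is what is stated.

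The main obstacle is the bookkeeping at the weights closest to the walls — concretely $\l_6$ and $\l_7$ (the weights with last coordinate $\tfrac12$ and $\tfrac52$), and the primed weights $\l_1',\l_2'$ near $\l_0$ — where the orbit $w(\l+\r)-\sum_{\a\in I}\a$ comes closest to re-entering the dominant region, and where one must carefully rule out contributions from the reflection $\s$ in the $\d$ factor and from the short Weyl elements $\tau_i,\s_i$. This is exactly the phenomenon that at $\l_0$ itself the first cohomology genuinely appears (as in \rthm{BWBF}(2)), so the proof must pinpoint why $\l_0$ is special and all other weights are not: the distinguishing feature is that $\l_0+\r$ has vanishing last coordinate, so the $\d$-reflection $\s$ fixes $\l_0+\r$ and can produce a length-one $w$ with $w(\l_0+\r)-\r$ still attached to the block, whereas for $\l\ne\l_0$ the last coordinate of $\l+\r$ is nonzero and $\s(\l+\r)-\r$ leaves the set of dominant weights of $F^{(4,1)}$ after correcting by any $I\subset\D^+_1$. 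Verifying this cleanly — rather than by a brute-force enumeration — is where the real work lies; I would package it as a short lemma about the last coordinate of $w(\l+\r)-\sum_{\a\in I}\a$ combined with the interval description of $c$ in \rthm{cf4}.
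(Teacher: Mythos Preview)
Your proposal is correct and follows essentially the same approach as the paper: split into the generic case (handled by \rlem{lemma 3 generic weights}) and the finitely many near-wall weights, for which one computes directly via \rlem{3} in the manner of \rlem{i1122}. The paper's own proof is in fact terser than your outline---it simply says ``compute from \rlem{3} in a similar way as for $\F^{(1,1)}$''---so your more detailed breakdown of how the computation goes (tracking the last coordinate, isolating why $\l_0$ is special via the $\d$-reflection $\s$) is a faithful elaboration of what the paper leaves implicit rather than a different route.
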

\begin{proof} For generic weights, this follows from \rlem{lemma 3 generic weights}. For weights close to the walls of the Weyl chamber, we compute from \rlem{3} in a similar way as for $\F^{(1,1)}$ in \rlem{i1122} or for generic weights. 
\end{proof}

\begin{lemma}\label{410} For non-generic weight $\l=\l_0\in F^{(4, 1)}$, we have $\G_0(G/B, \mathcal{O}_{\l_0})= L_{\l_0}$ and $\G_1(G/B, \mathcal{O}_{\l_0})= L_{\l_0}$. 
\end{lemma}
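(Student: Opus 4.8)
\textbf{Proof plan for Lemma~\ref{410}.}

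The plan is to exploit the fact that $\l_0$ is the unique branching vertex of the block $\F^{(4,1)}$ and that the block $\F^{(4,1)}$ is equivalent (via the translation functor of \rthm{eqab}, applied iteratively) to a block whose cohomology we have already computed, but more directly to compute $\G_0$ and $\G_1$ by combining the vanishing of $\G_i$ for $i>1$, the Euler-characteristic identities, and \rlem{3}. First I would record, via \rlem{3} and \rthm{cf4}, exactly which dominant weights $\s$ can occur as subquotients of $\G_i(G/B,\O_{\l_0})$: writing $\l_0+\r=(3,2,1|0)$, every such $\s+\r$ has the form $w(\l_0+\r)-\sum_{\a\in I}\a$ with $w\in W$ of length $i$ and $I\subset\D_1^+$, and comparing last coordinates together with the parametrisation of dominant weights in $\F^{(4,1)}$ forces $\s\in\{\l_0,\l_6,\l_1\}$ — the three neighbours in the quiver picture — with the relevant $w$ essentially forced in each case. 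This is the bookkeeping step analogous to the proof of \rlem{i1122}.

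Next I would pin down $\G_0$. By \rcor{z} (i.e. \rlem{z}) we already know $[\G_0(G/B,\O_{\l_0}):L_{\l_0}]=1$ and $L_{\l_0}$ is a quotient; by \rlem{3} any other simple subquotient $L_\s$ satisfies $\s<\l_0$, so in fact $\s$ cannot be $\l_1$ or $\l_6$ (both of which exceed or are incomparable with $\l_0$ in the standard order in the relevant direction — this needs to be checked against the explicit weights, but in the non-symmetric $A_\infty$ situation $\l_0$ is the minimal dominant weight of the block). Hence $\G_0(G/B,\O_{\l_0})=L_{\l_0}$. For $\G_1$, I would use the Euler characteristic: by the lemma preceding \rlem{410} we have $\G_i(G/B,\O_{\l_0})=0$ for $i>1$, so \rlem{7} gives $\sdim\G_0-\sdim\G_1=0$, whence $\sdim\G_1(G/B,\O_{\l_0})=\sdim L_{\l_0}$. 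From the previous section's superdimension computations in the non-symmetric block (analogous to \rlem{character formula generic weights}), $\sdim L_{\l_0}$ is a fixed nonzero value; combined with the constraint from \rlem{3} that the only candidate subquotients of $\G_1$ are among $\{\l_1,\l_6\}$ (and not $\l_0$, since $w=\mathrm{id}$ has length $0\neq 1$), one checks that the only possibility consistent with both the superdimension equality and \rlem{sums} is $\G_1(G/B,\O_{\l_0})=L_{\l_0}$. Here I would invoke \rlem{sums} with a reflection fixing $\l_0+\r$ (the last coordinate of $\l_0+\r$ is $0$, so $\l_0+\r$ lies on the wall $\d^\perp$ and is fixed by the reflection $\s$ in $\d$, forcing $\ch\G_0=\ch\G_1$), which immediately yields $\G_1(G/B,\O_{\l_0})\cong\G_0(G/B,\O_{\l_0})=L_{\l_0}$ once we know $\G_i=0$ for $i>1$.

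The cleanest route, and the one I would actually write, is therefore: (i) cite the preceding lemma for $\G_i=0$, $i>1$; (ii) show $\G_0(G/B,\O_{\l_0})=L_{\l_0}$ using \rlem{z} and the order constraint from \rlem{3}; (iii) apply \rlem{sums} with the reflection $\s\in W$ fixing $\l_0+\r$ to get $\sum_i(-1)^i\ch\G_i(G/B,\O_{\l_0})=-\sum_i(-1)^i\ch\G_i(G/B,\O_{\l_0})$, hence this alternating sum is zero, hence $\ch\G_0=\ch\G_1$, hence $\G_1(G/B,\O_{\l_0})=L_{\l_0}$. The main obstacle I anticipate is step (ii): verifying that no $L_{\l_1}$ or $L_{\l_6}$ sneaks into $\G_0$, which requires a careful comparison of the standard partial order on weights against the explicit coordinates of $\l_0,\l_1,\l_6$ and the allowed differences $\sum_{\a\in I}\a$ with $I\subset\D_1^+$ — this is exactly the kind of finite but delicate case-check that occupied \rlem{i1122} and \rlem{coh011}, and it is where a sign or an order error would do the most damage. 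A secondary subtlety is making sure the reflection used in \rlem{sums} genuinely fixes $\l_0+\r$ (equivalently that $p(\s)=-1$ contributes the crucial minus sign), which is immediate from $(\l_0+\r,\d)=0$ and the description of $W$ for $F(4)$.
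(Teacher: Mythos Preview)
Your cleanest route (steps (i)--(iii)) is correct and genuinely differs from the paper's argument for $\Gamma_1$. The paper establishes $\Gamma_0(G/B,\O_{\l_0})=L_{\l_0}$ just as you do, then uses the superdimension identity \rlem{7} to get $\sdim\Gamma_1=\sdim L_{\l_0}\neq 0$, and finally invokes \rlem{3} to argue that the only possible subquotient of $\Gamma_1$ is $L_{\l_0}$ (since $\l_0$ is minimal in the block). Your route via \rlem{sums} with the reflection $s_\d$ fixing $\l_0+\r=(3,2,1|0)$ is cleaner: it gives $\ch\Gamma_0=\ch\Gamma_1$ directly, so $\Gamma_1=L_{\l_0}$ falls out without needing $\sdim L_{\l_0}\neq 0$ as an input (a fact the paper only establishes carefully in the subsequent \rlem{41sdim}).

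Two small corrections. First, the vanishing $\Gamma_i(G/B,\O_{\l_0})=0$ for $i>1$ is \emph{not} the content of the lemma immediately preceding \rlem{410} (that lemma says $\Gamma_1=0$ for $\l\neq\l_0$); the reference you want is the general vanishing lemma at the end of Section~5.2 (the Leray spectral sequence argument), or alternatively a direct \rlem{3} check. Second, your parenthetical ``and not $\l_0$, since $w=\mathrm{id}$ has length $0\neq 1$'' is wrong and in fact self-defeating: the reflection $s_\d$ has length $1$ and fixes $\l_0+\r$, so $\l_0$ \emph{is} a legitimate candidate for a subquotient of $\Gamma_1$ via \rlem{3} with $w=s_\d$ and $I=\emptyset$. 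This is precisely why your \rlem{sums} trick works, and why the paper's argument that $\Gamma_1$ must be built from copies of $L_{\l_0}$ goes through. Fortunately your final route does not rely on that mistaken exclusion.
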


\begin{proof} From \rlem{3} and \rlem{2}, we have $[\G_0(G/B, \mathcal{O}_{\l_0}): L_{\l_0}]=1$ and $[\G_0(G/B, \mathcal{O}_{\l_0}): L_{\l_\s}]=0$ for $\s\neq \l_0$. Also, that $\G_i(G/B, \mathcal{O}_{\l_0})=0$ for $i>1$. 

Also, we have $0=sdim{\G_0(G/B, \mathcal{O}_{\l_0})}-sdim{\G_1(G/B, \mathcal{O}_{\l_0})}$. This implies $\G_1(G/B, \mathcal{O}_{\l_0})\neq 0$. Since \rlem{3} implies that any simple subquotient of $\G_1(G/B, \mathcal{O}_{\l_0})$ has highest weight less than $\l_0$, we must have $\G_1(G/B, \mathcal{O}_{\l_0})=L_{\l_0}$. 

\end{proof}

\begin{lemma}\label{exact41} For all simple modules $L_\l\in \F^{(4,1)}$ such that $\l\neq \l_0$, there is a unique dominant weight $\m\in F^{(4,1)}$ with $\m=\l-\sum_{i=1}^n\a_i$ with $\a_i\in\D^+_\1$ and $n\in\{ 1,2,3,4\}$ such that we have an exact sequence: 

\begin{displaymath}
    \xymatrix{
        0\ar[r] & L_{\l} \ar[r] & \G_0(G/B, \mathcal{O}_{\l})\ar[r] & L_{\m} \ar[r] & 0 }
\end{displaymath}

We also have $\G_i(G/B, \mathcal{O}_{\l})=0$ for $i>0$. 

\end{lemma}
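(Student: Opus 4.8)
The plan is to separate the generic weights of $F^{(4,1)}$ from the finitely many non-generic dominant ones distinct from $\l_0$. For a generic $\l$ there is nothing new: \rlem{lemma 3 generic weights} already gives $\G_i(G/B,\O_\l)=0$ for $i>0$ and shows that the only simple subquotients of $\G_0(G/B,\O_\l)$ are $L_\l$ (with multiplicity one) and $L_{\l-\a}$ (with multiplicity $\le 1$), where $\a\in\D^+_\1$ is the unique positive root with $(\l+\r,\a)=0$ and $\l-\a\in F^{(4,1)}$; then \rlem{generic} upgrades this to the required short exact sequence with $\m=\l-\a$, so $n=1\in\{1,2,3,4\}$. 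The near-generic weights $\l_c$ with $c>\tfrac52$ or $c<-\tfrac72$ are handled identically via \rlem{coh4152} together with the superdimension balance below.

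So let $\l\ne\l_0$ be one of the finitely many remaining non-generic dominant weights; using \rthm{cf4} with $t_1=3,\ t_2=2,\ t_3=1$ these are exactly the weights displayed before the statement. I would argue in four steps. First, $\G_i(G/B,\O_\l)=0$ for $i>0$: by the Leray spectral sequence for $G/B\to G/P$, which kills cohomology in degrees $>1$ for the distinguished Borel, together with the immediately preceding lemma giving $\G_1(G/B,\O_\l)=0$ for $\l\ne\l_0$. Second, $L_\l$ is the unique simple quotient of $\G_0(G/B,\O_\l)$ with $[\G_0(G/B,\O_\l):L_\l]=1$, by \rlem{2} and \rcor{z}. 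Third, the identification of the other factor: by \rlem{3} in degree $0$ (so $w=\mathrm{id}$) every other composition factor $L_\sigma$ of $\G_0(G/B,\O_\l)$ has $\sigma=\l-\sum_{\a\in I}\a$ for a subset $I\subset\D^+_\1$ of distinct roots; since $L_\sigma\in\F^{(4,1)}$, $\sigma$ is dominant and must occur in the list of \rthm{cf4}, which forces $c_\sigma=c_\l-\tfrac12|I|$ to be an admissible value. Matching this against the explicit description of $\D^+_\1$ for $F(4)$ and of the weights $\l_c$, one checks in finitely many cases that exactly one $\sigma$ survives, namely the weight $\m$ immediately below $\l$ in the quiver $A_\infty$, that the set $I$ realizing $\l-\m$ as a sum of distinct odd positive roots is unique (hence $[\G_0(G/B,\O_\l):L_\m]\le 1$), and that $n:=|I|\in\{1,2,3,4\}$ (with $n=4$ at $c=\tfrac52$, $n=2$ at $c=-\tfrac52$, and $n=1$ otherwise). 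Fourth, $[\G_0(G/B,\O_\l):L_\m]\ne 0$: by \rlem{7} and the first step $\operatorname{sdim}\G_0(G/B,\O_\l)=0$, so by the first three steps $\operatorname{sdim}L_\l+[\G_0(G/B,\O_\l):L_\m]\operatorname{sdim}L_\m=0$; since $\operatorname{sdim}L_\l\ne 0$ — which I establish for all of $F^{(4,1)}$ by the same downward induction from the generic formula \rlem{character formula generic weights} used for the symmetric blocks, the exact sequences produced along the way feeding the induction — the multiplicity is forced to be $1$. Assembling the four steps, $\G_0(G/B,\O_\l)$ has precisely the two composition factors $L_\l$ and $L_\m$, each with multiplicity one, and $L_\l$ is the head, which yields the asserted extension.

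The main obstacle is the interaction of the third and fourth steps: ruling out that a farther dominant weight occurs in $\G_0(G/B,\O_\l)$ — in particular $L_{\l_0}$, or any $L_\sigma$ that would force $n\ge 5$ — requires the combinatorial analysis of exactly which weight differences are honest sums of distinct odd positive roots for $F(4)$, and the non-vanishing of $\operatorname{sdim}L_\l$ cannot be cleanly decoupled from the multiplicity computation. Both must be carried out as a single downward induction along the $A_\infty$ quiver, exactly in the spirit of the symmetric-block section; the vanishing results and the equivalences $\F^{(a,b)}\simeq\F^{(a+1,b+1)}$ already in place keep this induction finite in content.
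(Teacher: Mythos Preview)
Your overall scaffold matches the paper's, and steps (1), (2), (4) are fine. The real gap is in step (3) and in the superdimension induction that feeds step (4). For $\l=\l_2$ (with $\l_2+\r=(4,3,1\mid 2)$ in the labelling of this section), the combinatorics of \rlem{3} with respect to the distinguished Borel does \emph{not} single out a unique $\sigma$: both $\l_1$ and $\l_6$ satisfy $\sigma+\r=\l_2+\r-\sum_{\a\in I}\a$ for suitable $I\subset\D^+_\1$, so \rlem{3} only yields $[\G_0(G/B,\O_{\l_2}):L_{\l_1}]\le 1$ and $[\G_0(G/B,\O_{\l_2}):L_{\l_6}]\le 1$. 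The superdimension balance then reads $\operatorname{sdim}L_{\l_2}+[\,:\l_1]\operatorname{sdim}L_{\l_1}+[\,:\l_6]\operatorname{sdim}L_{\l_6}=0$, and since $\operatorname{sdim}L_{\l_1}$ is \emph{not yet known} to be nonzero, this equation by itself cannot decide which multiplicity is $1$ and which is $0$. Your ``downward induction from the generic formula'' stalls precisely here: the only route to $\l_1$ through exact sequences passes through $\l_2$, so $\operatorname{sdim}L_{\l_1}\neq 0$ cannot be fed in from above, and the analogy with the symmetric blocks breaks down.

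The paper supplies the missing idea: translate twice via $T$ to the equivalent block $\F^{(6,3)}$ (using \rthm{eq4152} and \rlem{eqcoho}, so all multiplicities are preserved), and then apply the odd reflections $r_\b,r_{\b'},r_{\b''},r_{\b'''}$ to pass to a new Borel $B''$. With respect to $B''$ the set of positive odd roots changes, and \rlem{3} now gives $[\G_0(G/B'',\O_{\l_2''}):L_{\l_1''}]\le 1$ with all other multiplicities zero; this yields $\operatorname{sdim}L_{\l_1}=\pm d\neq 0$ directly (\rlem{41sdim}). With that in hand, the balance for $\l_2$ forces exactly one of the two multiplicities to be $1$; ruling out $L_{\l_6}$ is then done by the same device, noting that the odd reflections are typical for $\l_2'$ so $\G_0(G/B,\O_{\l_2'})=\G_0(G/B'',\O_{\l_2''})$, and the latter visibly has no $L_{\l_6''}$ subquotient. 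You should incorporate this change-of-Borel step; without it the argument for $\l_2$ (and hence for $\operatorname{sdim}L_{\l_1}$) is incomplete.
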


\begin{proof} We have the lemma for generic weights, thus it remains to prove for the special weghts above. For  non-generic weight $\l=\l_4\in F^{(4, 1)}$, we have $\m=\l_3$. From \rlem{3}, we have $[\G_0(G/B, \mathcal{O}_{\l_4}): L_{\l_3}]\leq 1$ and $[\G_0(G/B, \mathcal{O}_{\l_4}): L_{\l_\s}]=0$ for $\s\neq \l_3, \l_4$. 

Also, we have: 
\begin{equation}\label{34}
0=sdim{\G_0(G/B, \mathcal{O}_{\l_4})}=sdim{L_{\l_4}}+[\G_0(G/B, \mathcal{O}_{\l_4}): L_{\l_3}]sdim{L_{\l_3}}.
\end{equation} 

Since, starting with generic weight, we have $sdim{L_{\l_4}}\neq 0$, this implies $[\G_0(G/B, \mathcal{O}_{\l_4}): L_{\l_3}]\neq 0$, proving the lemma for $\l=\l_4$. The proof is similar for $\l=\l_7$, we have $\m=\l_6$ and use that $sdimL_{\l_7}\neq 0$ starting with a generic weight.

Similarly, now using these cases we obtain first that $sdim{L_{\l_3}}\neq 0$, since we have $sdim{L_{\l_4}}\neq 0$, then we obtain exact sequences for $\l=\l_3$ with $\m=\l_2$. Similarly, for $\l=\l_6$ with $\m=\l_0$.

It remains to understand the cohomology groups for the dominant non-generic weights $\l_1$, $\l_2$. These cases are more complicated and we first prove the following lemma:

\begin{lemma}\label{41sdim} For all $\l\in F^{(4, 1)}$, we have $sdim{L_{\l}}=\pm d$, where $d=dim\,{L_\m(\g_x)}$, where $\m$ is from theorem \rthm{CBF}. 
\end{lemma}

\begin{proof} Starting with generic weights $\l$ and using the \rthm{character formula generic weights} for generic weight, we have $sdim{L_{\l}}=\pm d$ for generic weight. For the weights close to the walls of the Weyl chamber, we use the above lemmas and exact sequences to show this.

From exact sequences for $\l=\l_4$, $\l_7$, $\l_3$, and $\l_6$, we know that $sdim{L_i}=\pm d$ for $i=6,0,2$. Since, in each case we know that $\G_0(G/B, \mathcal{O}_{\l_i})=0$ and $sdim{L_j}=\pm d$ for the other $L_j$ in the exact sequence.

To prove that  $sdim{L_1}=\pm d$ is more challenging. We first apply translation functor $T$ to the dominant weights $\l_0$, $\l_1$, $\l_2$, $\l_6$ twice to get dominant weights $\l_0'$, $\l_1'$, $\l_2'$, $\l_6'$ in the equivalent block $\F^{(6, 3)}$.

The categories $\F^{(4, 1)}$ and $\F^{(6, 3)}$ are equivalent from \rthm{eqab}. Thus, by \rlem{eqcoho}, we have $[\G_0(G/B, \mathcal{O}_{\l'}):L_{\l'}]=[\G_0(G/B, \mathcal{O}_{\l}):L_{\l}]$.

We apply odd reflections with respect to odd roots $\b, \b', \b'',\b'''$ to obtain dominant weights $\l_0''$, $\l_1''$, $\l_2''$, $\l_6''$ with respect to another Borel subalgebra $B''$.

We get the following:

$\l_6'+\r=(\frac{11}{2},\frac{7}{2},\frac{1}{2}|\frac{1}{2})$;

$\l_0'+\r=(5,4,1|0)$;

$\l_1'+\r=(5,4,2|1)$;

$\l_2'+\r=(\frac{11}{2},\frac{7}{2},\frac{5}{2}|\frac{3}{2})$;

$\l_3'+\r=(\frac{13}{2},\frac{7}{2},\frac{5}{2}|\frac{5}{2})$;

$\l_4'+\r=(7,4,2|3)$.

After applying the odd reflections we get the following dominant weights with respect to the new Borel $B''$:

$\l_6''+\r''=(\frac{11}{2},\frac{7}{2},\frac{1}{2}|\frac{1}{2})$;

$\l_0''+\r''=(\frac{9}{2},\frac{9}{2},\frac{3}{2}|\frac{1}{2})$;

$\l_1''+\r''=(5,4,2|1)$;

$\l_2''+\r''=(\frac{11}{2},\frac{7}{2},\frac{5}{2}|\frac{3}{2})$;

$\l_3''+\r''=(\frac{13}{2},\frac{7}{2},\frac{5}{2}|\frac{5}{2})$;

$\l_4''+\r''=(7,4,2|3)$.

From \rlem{oddrootbase}, the positive odd roots with respect to the new Borel $B''$ are all the odd roots with first coordinate $\frac{1}{2}$.

From \rlem{3} with respect to $B''$, we have $[\G_0(G/B'', \mathcal{O}_{\l_2''}):L_{\l_1''}]\leq 1$ and $[\G_0(G/B'', \mathcal{O}_{\l_2''}):L_{\l_\s''}]=0$ for all $\s\neq \l_1'',\l_2''$.

We also have $$0=sdim{\G_0(G/B'', \mathcal{O}_{\l_2''})}=$$ $$= sdim{L_{\l_2''}}+[\G_0(G/B'', \mathcal{O}_{\l_2''}):L_{\l_1''}]sdim{L_{\l_1''}}$$ implying that 
$[\G_0(G/B'', \mathcal{O}_{\l_2''}):L_{\l_1''}]=1$ and $sdim{L_{\l_1''}}=\pm d$. Now we have $sdim{L_{\l_1'}}=sdim{L_{\l_1''}}=\pm d$. 
\end{proof}

From \rlem{41sdim}, we know that $sdim{L_{\l_1}}=-sdim{L_{\l_0}}=\pm d$. Using this and similar argument as for $\l=\l_4$, we have the exact sequence for $\l=\l_1$ with $\m=\l_0$. 

It remains to prove the lemma for $\l=\l_2$. In this case, we have $\m=\l_2$. Follows from computation using \rlem{3}, that $[\G_0(G/B, \mathcal{O}_{\l_2}): L_{\l_1}]\leq 1$, $[\G_0(G/B, \mathcal{O}_{\l_2}): L_{\l_6}]\leq 1$, and $[\G_0(G/B, \mathcal{O}_{\l_2}): L_{\l_\s}]=0$ for $\s\neq \l_1, \l_2, \l_6$.

We also know that $$0=sdim{\G_0(G/B, \mathcal{O}_{\l_2})}=sdim{L_{\l_2}}+$$ $$+[\G_0(G/B, \mathcal{O}_{\l_2}): L_{\l_1}]sdim{L_{\l_1}}+[\G_0(G/B, \mathcal{O}_{\l_2}): L_{\l_6}]sdim{L_{\l_6}}.$$ From \rlem{41sdim}, we know that $sdim{L_{\l_2}}=\pm sdim{L_{\l_1}}=\pm sdim{L_{\l_6}}=\pm d\neq 0$. This implies that one of the numbers $[\G_0(G/B, \mathcal{O}_{\l_2}): L_{\l_1}]$ or $[\G_0(G/B, \mathcal{O}_{\l_2}): L_{\l_6}]$ is one and another is zero.

We prove $[\G_0(G/B, \mathcal{O}_{\l_2}): L_{\l_6}]=0$.

The odd reflections with respect to the weight $\l_2'$ are typical, which means that the weight doesn't change. Using the fact from \cite{Penkov} that for an odd typical reflection $r$ with respect to the weight $\l$, we have $\G_0(G/r(B), \mathcal{O}_{r(\l)})=\G_0(G/B, \mathcal{O}_{\l})$, this implies that $\G_0(G/B, \mathcal{O}_{\l_2'})=\G_0(G/B'', \mathcal{O}_{\l_2''})$. An odd reflection $r$ {\it{typical}} with respect to the weight $\l$ if $r(\l)=\l$. The later module has subquotients $L_{\l_1''}=L_{\l_1'}$ and $L_{\l_0''}=L_{\l_0'}$. Thus, $[\G_0(G/B, \mathcal{O}_{\l_2'}): L_{\l_6'}]=0$.

Since $T$ is an equivalence of categories from \rthm{eq4152}, from \rlem{eqcoho} we have $[\G_0(G/B, \mathcal{O}_{\l'}):L_{\l'}]=[\G_0(G/B, \mathcal{O}_{\l}):L_{\l}]$, which proves the exact sequence. 

\end{proof}

\subsection{Cohomology groups in the block $F^{(a, b)}$ with $a=b+3n$, $n>1$.}

For $n>1$, we assume $b=1$. The dominant weights close to the walls of the Weyl chamber have different arrangements in this case and they are correspondingly denoted:

$\l_{t_2+1}+\r=(a+2,2,1|t_2+1)$;

$\l_{t_2+\frac{1}{2}}+\r=(a+\frac{3}{2},\frac{3}{2},\frac{1}{2}|t_2+\frac{1}{2})$;

$\l_{t_3-\frac{1}{2}}+\r=(a-\frac{1}{2},\frac{3}{2},\frac{1}{2}|t_3-\frac{1}{2})$;

$\l_{t_3-1}+\r=(a-1,2,1|t_3-1)$;

...

$\l_{\frac{1}{2}}+\r=(t_1+\frac{1}{2},t_2-\frac{1}{2},t_3-\frac{1}{2}|\frac{1}{2})$;

$\l_{0}+\r=(t_1,t_2,t_3|0)$;

$\l_{-\frac{1}{2}}+\r=(t_1-\frac{1}{2},t_2+\frac{1}{2},t_3+\frac{1}{2}|\frac{1}{2})$;

...

$\l_{-\frac{t_3}{2}+1}+\r=(\frac{a}{2}+\frac{3}{2},\frac{a}{2}-\frac{1}{2},\frac{a}{2}-\frac{3}{2}|\frac{t_3}{2}-1)$;

$\l_{-\frac{t_3}{2}+\frac{1}{2}}+\r=(\frac{a}{2}+1,\frac{a}{2},\frac{a}{2}-1|\frac{t_3}{2}-\frac{1}{2})$;

$\l_{-\frac{t_2}{2}-\frac{1}{2}}+\r=(\frac{a}{2}+\frac{3}{2},\frac{a}{2}+\frac{1}{2},\frac{a}{2}-\frac{1}{2}|\frac{t_2}{2}+\frac{1}{2})$;

$\l_{-\frac{t_2}{2}-1}+\r=(\frac{a}{2}+2,\frac{a}{2}+1,\frac{a}{2}-1|\frac{t_2}{2}+1)$;

...

$\l_{-t_1+1}+\r=(a,a-1,1|t_1-1)$;

$\l_{-t_1+\frac{1}{2}}+\r=(a+\frac{1}{2},a-\frac{1}{2},\frac{1}{2}|t_1-\frac{1}{2})$;

$\l_{-t_1-\frac{1}{2}}+\r=(a+\frac{3}{2},a+\frac{1}{2},\frac{1}{2}|t_1+\frac{1}{2})$;

$\l_{-t_1-1}+\r=(a+2,a+1,1|t_1+1)$.

\begin{lemma} For all $\l\in F^{(a,b)}$ such that $\l\neq\l_0$, we have $\G_1(G/B,\O_{\l})=0$. \end{lemma}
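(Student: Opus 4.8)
The plan is to follow the proof of the corresponding statement for the block $F^{(4,1)}$ (the lemma just before the section on $a=b+3$), splitting the dominant weights of $F^{(a,b)}$ into generic ones and the finitely many non-generic ones near the walls of the Weyl chamber, which are precisely those enumerated immediately above: $\l_{t_2+1},\l_{t_2+\frac12},\l_{t_3-\frac12},\l_{t_3-1},\dots,\l_{\frac12},\l_0,\l_{-\frac12},\dots,\l_{-t_1-1}$.

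If $\l\in F^{(a,b)}$ is generic then $\G_i(G/B,\O_\l)=0$ for every $i>0$ by \rlem{lemma 3 generic weights}, in particular $\G_1(G/B,\O_\l)=0$. Thus it remains to show $\G_1(G/B,\O_\l)=0$ for each of the listed non-generic weights other than $\l_0$.

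Fix such a $\l=\l_c$ with $c\neq 0$ and suppose $L_\m$ occurs in $\G_1(G/B,\O_\l)$ with nonzero multiplicity. As $\G_1(G/B,\O_\l)$ is finite dimensional and, by the central character property of $\G_i$, lies in the block $\F^{(a,b)}$, the weight $\m$ is dominant with $L_\m\in\F^{(a,b)}$, so $\m=\l_{c'}$ for a unique $c'$ as described in \rthm{cf4}. By \rlem{3}, $\m+\r=w(\l+\r)-\sum_{\a\in I}\a$ with $w\in W$ of length $1$ and $I\subset\D_1^+$; in particular $w\neq\mathrm{id}$ and $w$ is a simple reflection of $\g_{\0}=B_3\oplus A_1$, so $w$ either fixes $\d$ (the three $B_3$-reflections) or acts by $\d\mapsto-\d$ (the reflection $\s$). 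Every root in $\D_1^+$ has $\d$-coordinate $\frac12$, so $\sum_{\a\in I}\a$ has $\d$-coordinate $\frac{|I|}{2}\in\{0,\frac12,1,\frac32,2\}$, and the last coordinate of $\m+\r$ is therefore $\pm c-\frac{|I|}{2}$. For each of the finitely many resulting candidate values, \rthm{cf4} supplies at most one dominant in-block weight $\l_{c'}$, and comparing its coordinates with those of $\l_c+\r$ given in \rthm{cf4} (using \rrem{remark} to locate the isotropic root killed by each) shows that $\l_{c'}+\r=w(\l_c+\r)-\sum_{\a\in I}\a$ has no solution with $\mathrm{length}(w)=1$; the only solutions have $w=\mathrm{id}$, i.e. come from $i=0$. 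Hence $\G_1(G/B,\O_\l)=0$. The weight $\l_0$ is genuinely exceptional because $\s$ fixes $\l_0+\r$ (whose $\d$-coordinate is $0$), so for $\l=\l_0$ the displayed equation does have a length-one solution — which is exactly what makes $\G_1(G/B,\O_{\l_0})\neq0$ possible; cf.\ \rlem{410}.

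\emph{The main work} is this non-generic case analysis; it is longer than in \rlem{i1122} only because the wall-adjacent dominant weights here fall into several clusters, around the walls $c=t_2,\frac{t_1}{2},t_3,\frac{t_3}{2},\frac{t_2}{2},t_1$, rather than near a single wall, each cluster being handled exactly as in \rlem{i1122}. Two shortcuts are available: by \rthm{eqab} all blocks $F^{(a',b')}$ with $a'-b'=3n$ are equivalent, so one may fix $b=1$ as in the standing assumption; and \rlem{sums} rewrites $\sum_i(-1)^i\mathrm{ch}\,\G_i(G/B,\O_{\l_c})$ in terms of the same alternating sum for $w(\l_c+\r)-\r$ up to sign, which together with \rlem{lemma 3 generic weights} and the vanishing $\G_i(G/B,\O_\l)=0$ for $i>1$ lets one propagate the vanishing of $\G_1$ inward along the quiver from the generic weights, as was done for $\F^{(1,1)}$ in \rlem{1l1} and \rlem{i1122}. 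The one delicate point is the behaviour at $\l_0$ and its immediate neighbours $\l_{\pm1/2}$, where the reflection in $\d$ interacts with the odd-root shifts; elsewhere the verification is routine.
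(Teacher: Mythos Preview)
Your proposal is correct and follows essentially the same approach as the paper: split into generic weights (handled by \rlem{lemma 3 generic weights}) and the finitely many non-generic weights near the walls, for which one verifies directly via \rlem{3} that no length-one $w$ can produce a dominant in-block $\mu$, exactly as was done for $\F^{(1,1)}$ in \rlem{i1122}. The paper's proof is a one-line sketch pointing to these same two ingredients; your write-up simply fills in more of the mechanics (the $\delta$-coordinate bookkeeping, the role of the reflection $\sigma$ at $\l_0$) and mentions the optional shortcuts via \rthm{eqab} and \rlem{sums}, none of which change the underlying argument.
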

\begin{proof} For generic weights, this follows from \rlem{lemma 3 generic weights}. For weights close to the walls of the Weyl chamber, we compute from \rlem{3} in a similar way as for $\F^{(1,1)}$ in \rlem{i1122} or for generic weights. 
\end{proof}

\begin{lemma}\label{exact41} For all simple modules $L_\l\in \F^{(a,1)}$ such that $\l\neq \l_0$, there is a unique dominant weight $\m\in F^{(a,1)}$ with $\m=\l-\sum_{i=1}^n\a_i$ with $\a_i\in\D^+_\1$ and $n\in\{ 1,2,3,4\}$ such that we have an exact sequence: 

\begin{displaymath}
    \xymatrix{
        0\ar[r] & L_{\l} \ar[r] & \G_0(G/B, \mathcal{O}_{\l})\ar[r] & L_{\m} \ar[r] & 0 }
\end{displaymath}

We also have $\G_i(G/B, \mathcal{O}_{\l})=0$ for $i>0$. 

\end{lemma}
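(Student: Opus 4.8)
The plan is to run the same argument that handled the block $F^{(4,1)}$ in the case $a=b+3$, now with the parameter $n>1$: first settle the generic weights, then the finitely many dominant weights near the walls of the Weyl chamber listed just before the statement, using the block equivalence $\F^{(a,1)}\cong\F^{(a+1,2)}$ of \rthm{eqab} to transport cohomological data and, at the awkward weights, odd reflections to pass to a Borel with respect to which the relevant weights become typical. The uniqueness of $\m$ will in all cases come from \rthm{cf4}, since a dominant weight in $F^{(a,1)}$ is determined by its parameter $c$ and $\l-\sum_{i=1}^n\a_i$ can realize at most one admissible value of $c$ once the set of candidate $\a_i$ is pinned down.

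For generic $\l\in F^{(a,1)}$ the short exact sequence is exactly \rlem{generic}, with $\a=(\tfrac12,\tfrac12,\tfrac12|\tfrac12)\in\D^+_\1$ and $(\l+\r,\a)=0$, and $\G_i(G/B,\O_\l)=0$ for $i>0$ by \rlem{lemma 3 generic weights}. Before treating the near-wall weights I would prove the superdimension identity $sdim\,L_\l=\pm d$ for \emph{all} $\l\in F^{(a,1)}$, where $d=\dim L_\m(\g_x)$ as in \rthm{CBF}; this is the analogue of \rlem{41sdim}. It holds for generic $\l$ by \rthm{character formula generic weights}, and propagates inward: for each near-wall $\l\neq\l_0$ one has, from \rlem{7} and the (soon established) vanishing $\G_i(G/B,\O_\l)=0$ for $i>0$, the relation $0=sdim\,\G_0(G/B,\O_\l)=sdim\,L_\l+\sum_{\m}[\G_0(G/B,\O_\l):L_\m]\,sdim\,L_\m$, and the block equivalence of \rthm{eqab} together with \rlem{eqcoho} identifies $[\G_0(G/B,\O_{\l'}):L_{\l'}]=[\G_0(G/B,\O_\l):L_\l]$ after a translation. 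For the one genuinely delicate weight (the analogue of $\l_1$) I would push forward by $T$ twice into the equivalent block $\F^{(a+2,3)}$ and then apply the odd reflections with respect to $\b,\b',\b'',\b'''$, landing on a Borel $B''$ for which the two weights of interest differ by a single positive odd root of first coordinate $\tfrac12$ (use \rlem{oddrootbase} to read off $\D^+_\1$ for $B''$); \rlem{3} for $B''$ then bounds the multiplicity by $1$ and $sdim\,\G_0(G/B'',\O_{\l''})=0$ forces it to equal $1$, yielding $sdim\,L_{\l_1}=\pm d\neq 0$.

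Once $sdim\,L_\l=\pm d\neq 0$ is known for every $\l\in F^{(a,1)}$, the exact sequences for the near-wall weights follow uniformly. For each such $\l\neq\l_0$, a direct computation from \rlem{3} and the explicit coordinates of $\l+\r$ (as in \rlem{i1122}) shows $\G_1(G/B,\O_\l)=0$ and limits the subquotients of $\G_0(G/B,\O_\l)$ other than $L_\l$ to one or two weights of the form $\l-\sum_{i=1}^n\a_i$, $\a_i\in\D^+_\1$, $n\le 4$, each with multiplicity $\le 1$; the superdimension relation above, all of whose $sdim\,L_\m$ equal $\pm d\neq 0$, then pins down a unique such $\m$ with multiplicity $1$. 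In the cases where two candidate weights survive — the weights adjacent to $\l_0$, i.e. the analogues of $\l_2$ and $\l_6$ — I would eliminate the spurious one exactly as in the $a=b+3$ case: transport along \rthm{eqab} via \rlem{eqcoho}, perform the odd reflections $\b,\b',\b'',\b'''$ to a Borel $B''$ with respect to which the weight is typical, invoke Penkov's invariance $\G_0(G/B,\O_\l)=\G_0(G/B'',\O_{\l''})$ under typical odd reflections, and read off from the list of subquotients of the latter module that the unwanted $L_\m$ does not occur.

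The main obstacle is precisely this elimination step near $\l_0$: superdimension counting alone leaves an ambiguity between two weights, and the only mechanism I see to break it is the combination of the block equivalence with the odd-reflection/typicality argument. Relative to $n=1$ the bookkeeping is heavier, because the arrangement of near-wall dominant weights changes — the intervals $I_i$ (equivalently $J_i$) sit in a different order for $n>1$ — so one must re-derive the candidate list of roots $\a_i$ for each shape, and re-verify that after two applications of $T$ and the four odd reflections the relevant weights genuinely become typical with respect to $B''$. Everything else (the generic case, the vanishing of higher cohomology, the uniqueness of $\m$) is a routine adaptation of the arguments already carried out for $F^{(1,1)}$ and $F^{(4,1)}$.
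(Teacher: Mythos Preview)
Your overall strategy---generic case via \rlem{generic}, then bootstrap superdimensions inward using \rlem{7}, then translate by \rthm{eqab} and apply odd reflections to reach a Borel $B''$ where the awkward weight becomes typical---is the same as the paper's. But you have misidentified where the genuine difficulty sits for $n>1$, and this causes you to miss a new ingredient that the paper needs and that the $n=1$ argument does not supply.

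For $n>1$ the weights adjacent to $\l_0$ (namely $\l_{\pm\frac12}$) are handled routinely by the inductive step inside the intervals $I_4$ and $I_5$; they are not the obstruction. The actual obstruction lies at the boundary between $I_6$ and its neighbours, at weights such as $\l_{-\frac{t_2}{2}-\frac12}$ and $\l_{-\frac{t_2}{2}-1}$. After translating twice and applying the odd reflections $\b,\b',\b'',\b'''$, the computation with respect to $B''$ still leaves \emph{two} candidate subquotients, each with multiplicity $\le 1$; unlike the $n=1$ case, the typicality trick alone does not kill one of them. The paper resolves this with an additional sign lemma (\rlem{newsuper}): by tracking parity across the interval $I_6$ one shows $sdim\,L_{\l_{-\frac{t_3}{2}-2}}=-\,sdim\,L_{\l_{-\frac{t_3}{2}}}$, and this sign relation, fed into the superdimension equation $0=sdim\,\G_0(G/B'',\O_{\l''})$, is what forces one multiplicity to be $1$ and the other to be $0$. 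Your proposal does not contain this step, and the mechanism you describe (``read off from the list of subquotients of the latter module that the unwanted $L_\m$ does not occur'') does not apply here, because both candidates do occur in the $B''$ bound. So the plan as written would stall at exactly these weights.
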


\begin{proof} We proved the lemma for generic weights, thus it remains to prove for the special weghts above. For  non-generic weight $\l=\l_{t_2+1}\in F^{(a, 1)}$, we have $\m=\l_{t_2+\frac{1}{2}}$. From \rlem{3}, we have $[\G_0(G/B, \mathcal{O}_{\l_{t_2+1}}): L_{\l_{t_2+\frac{1}{2}}}]\leq 1$ and $[\G_0(G/B, \mathcal{O}_{\l_{t_2+1}}): L_{\l_\s}]=0$ for $\s\neq \l_{t_2+\frac{1}{2}}, \l_{t_2+1}$.

We also know that $$0=sdim{\G_0(G/B, \mathcal{O}_{\l_{t_2+1}})}=sdim{L_{\l_{t_2+1}}}+[\G_0(G/B, \mathcal{O}_{\l_{t_2+1}}): L_{\l_{t_2+\frac{1}{2}}}]sdim{L_{\l_{t_2+\frac{1}{2}}}}.$$ Since, starting with generic weight, we know that $sdim{L_{\l_{t_2+1}}}\neq 0$, we must have that $[\G_0(G/B, \mathcal{O}_{\l_{t_2+1}}): L_{\l_{t_2+\frac{1}{2}}}]\neq 0$, proving the lemma.

Similarly, we obtain the exact sequences for weights $\l=\l_{-t_1-1}\in F^{(a, 1)}$ with $\m=\l_{-t_1-\frac{1}{2}}$, then $\l=\l_{-t_1-\frac{1}{2}}\in F^{(a, 1)}$ with $\m=\l_{-t_1+\frac{1}{2}}$, and     $\l=\l_{t_2+\frac{1}{2}}$ with $\m=\l_{t_3-\frac{1}{2}}$. 
   
Using induction, for non-generic weight $\l=\l_c\in F^{(a, 1)}$ with $c\in I_4$, we have an exact sequence for $\m=\l_c-\b_4$ with $\b_4=(\frac{1}{2},-\frac{1}{2},-\frac{1}{2}|\frac{1}{2})$. Similarly, For non-generic weight $\l=\l_c\in F^{(a, 1)}$ with $c\in I_7$ such that $c\neq -\frac{t_2}{2}-\frac{1}{2}, -\frac{t_2}{2}-1$, we have an exact sequence with $\m=\l_c-\b_7$ with $\b_7=(\frac{1}{2},\frac{1}{2},-\frac{1}{2}|\frac{1}{2})$. For non-generic weight $\l=\l_c\in F^{(a, 1)}$ with $c\in I_5$, we have an exact sequence with $\m=\l_c-\b_5$ with $\b_5=(-\frac{1}{2},\frac{1}{2},\frac{1}{2}|\frac{1}{2})$.

For the remaining cases, we will first prove the following lemmas: 

\begin{lemma} For $\l\in F^{(a,b)}$ such that $\l=\l_0$, we have $\G_0(G/B,\O_{\l_0})=\G_1(G/B,\O_{\l_0})=L_{\l_0}$. 
\end{lemma}
\begin{proof} From \rlem{3}, we have that any simple subquotient in  $\G_0(G/B,\O_{\l_0})$ has weight less or equal to $\l_0$. Thus, $\G_0(G/B,\O_{\l_0})=L_{\l_0}$. 

Also, we have $$sdim\,\G_0(G/B,\O_{\l_0})=sdim\,\G_1(G/B,\O_{\l_0})$$ and $\G_0(G/B,\O_{\l_0})=L_{\l_0}$. Since from above, $sdim \l_0\neq 0$, we have $\G_1(G/B,\O_{\l_0})\neq 0$. From \rlem{3}, we can see that any simple subquotient in  $\G_1(G/B,\O_{\l_0})$ has weight less or equal to $\l_0$. This proves the lemma. 
\end{proof}

It remains to understand the cohomology groups for weights with $c\in I_5$, and weights $\l_c$ with $c\neq -\frac{t_2}{2}-\frac{1}{2}, -\frac{t_2}{2}-1\in I_7$. We also need the following lemma: 

\begin{lemma}\label{newsuper} We have $sdimL_{\l_{-\frac{t_3}{2}-2}}= -sdimL_{\l_{-\frac{t_3}{2}}}$. 
\end{lemma}

\begin{proof} Follows from \rlem{generic}, the previous exact sequences for $I_5$, and the fact that the parity of the weight in $I_6$ will coincide with the sign of the superdimension.
\end{proof}

%\begin{lemma}\label{a1t21} For $\l_{-\frac{t_2}{2}-1}, \l_{-\frac{t_2}{2}-\frac{1}{2}}\in F^{(a,1)}$, we have exact sequences: 

%\begin{displaymath}
 %   \xymatrix{
 %       0\ar[r] & L_{\l_{-\frac{t_2}{2}-1}} \ar[r] & \G_0(G/B, \mathcal{O}_{\l_{-\frac{t_2}{2}-1}})\ar[r] & L_{\l_{-\frac{t_2}{2}-\frac{1}{2}}} \ar[r] & 0 }
%\end{displaymath}

%\begin{displaymath}
  %  \xymatrix{
 %       0\ar[r] & L_{\l_{-\frac{t_2}{2}-\frac{1}{2}}} \ar[r] & \G_0(G/B, \mathcal{O}_{\l_{-\frac{t_2}{2}-\frac{1}{2}}})\ar[r] & L_{\l_{-\frac{t_3}{2}+\frac{1}{2}}} \ar[r] & 0 }
%\end{displaymath}

%\end{lemma}

Since \rlem{3}, doesn't give good description of cohomology groups for $\l_{-\frac{t_2}{2}-1}, \l_{-\frac{t_2}{2}-\frac{1}{2}}\in F^{(a,1)}$, we first apply translation functor to the dominant weights $\l_{-\frac{t_3}{2}+1}$, $\l_{-\frac{t_3}{2}+\frac{1}{2}}$, $\l_{-\frac{t_2}{2}-\frac{1}{2}}$, $\l_{-\frac{t_2}{2}-1}$ twice to get dominant weights $\l_{-\frac{t_3}{2}+1}'$, $\l_{-\frac{t_3}{2}+\frac{1}{2}}'$, $\l_{-\frac{t_2}{2}-\frac{1}{2}}'$, $\l_{-\frac{t_2}{2}-1}'$ in the equivalent block $\F^{(a+2, 3)}$.

Then we apply odd reflections with respect to odd roots $\b, \b', \b'',\b'''$ to obtain dominant weights $\l''_{\frac{-t_3}{2}+1}$, $\l''_{-\frac{t_3}{2}+\frac{1}{2}}$, $\l''_{-\frac{t_2}{2}-\frac{1}{2}}$, $\l''_{-\frac{t_2}{2}-1}$ with respect to another Borel subalgebra $B''$.

We have: 

$\l_{-\frac{t_3}{2}+1}+\r=(\frac{a}{2}+\frac{3}{2},\frac{a}{2}-\frac{1}{2},\frac{a}{2}-\frac{3}{2}|\frac{t_3}{2}-1)$;

$\l_{-\frac{t_3}{2}+\frac{1}{2}}+\r=(\frac{a}{2}+1,\frac{a}{2},\frac{a}{2}-1|\frac{t_3}{2}-\frac{1}{2})$;

$\l_{-\frac{t_2}{2}-\frac{1}{2}}+\r=(\frac{a}{2}+\frac{3}{2},\frac{a}{2}+\frac{1}{2},\frac{a}{2}-\frac{1}{2}|\frac{t_2}{2}+\frac{1}{2})$;

$\l_{-\frac{t_2}{2}-1}+\r=(\frac{a}{2}+2,\frac{a}{2}+1,\frac{a}{2}-1|\frac{t_2}{2}+1)$.

After applying the translation functor twice, we have:

$\l_{-\frac{t_3}{2}+1}'+\r=(\frac{a}{2}+2,\frac{a}{2}+1,\frac{a}{2}-2|\frac{t_3}{2}-\frac{1}{2})$;

$\l_{-\frac{t_3}{2}+\frac{1}{2}}'+\r=(\frac{a}{2}+2,\frac{a}{2}+1,\frac{a}{2}-1|\frac{t_3}{2}+\frac{1}{2})$;

$\l_{-\frac{t_2}{2}-\frac{1}{2}}'+\r=(\frac{a}{2}+\frac{5}{2},\frac{a}{2}+\frac{1}{2},\frac{a}{2}-\frac{1}{2}|\frac{t_3}{2}+1)$;

$\l_{-\frac{t_2}{2}-1}'+\r=(\frac{a}{2}+\frac{7}{2},\frac{a}{2}+\frac{1}{2},\frac{a}{2}-\frac{1}{2}|\frac{t_3}{2}+2)$. 

After applying odd reflections we have: 

$\l_{-\frac{t_3}{2}+1}''+\r''=(\frac{a}{2}+\frac{3}{2},\frac{a}{2}+\frac{3}{2},\frac{a}{2}-\frac{3}{2}|\frac{t_3}{2})$;

$\l_{-\frac{t_3}{2}+\frac{1}{2}}''+\r''=(\frac{a}{2}+2,\frac{a}{2}+1,\frac{a}{2}-1|\frac{t_3}{2}+\frac{1}{2})$;

$\l_{-\frac{t_2}{2}-\frac{1}{2}}''+\r''=(\frac{a}{2}+\frac{5}{2},\frac{a}{2}+\frac{1}{2},\frac{a}{2}-\frac{1}{2}|\frac{t_3}{2}+1)$;

$\l_{-\frac{t_2}{2}-1}''+\r''=(\frac{a}{2}+\frac{7}{2},\frac{a}{2}+\frac{1}{2},\frac{a}{2}-\frac{1}{2}|\frac{t_3}{2}+2)$.

From \rlem{oddrootbase}, the positive odd roots with respect to the new Borel $B''$ are all the odd roots with first coordinate $\frac{1}{2}$. Now computation using \rlem{3} with respect to $B''$, implies:

$[\G_0(G/B'', \mathcal{O}_{\l''_{-\frac{t_3}{2}-2}}):L_{\l''_{-\frac{t_3}{2}-1}}]\leq 2$ and

$[\G_0(G/B'', \mathcal{O}_{\l''_{-\frac{t_3}{2}-2}}):L_{\l''_{-\frac{t_3}{2}-\frac{1}{2}}}]\leq 1$ and

$[\G_0(G/B'', \mathcal{O}_{\l''_{-\frac{t_3}{2}-2}}):L_{\l''_{-\frac{t_3}{2}}}]=0$.

Also, 

$[\G_0(G/B'', \mathcal{O}_{\l''_{-\frac{t_3}{2}-1}}):L_{\l''_{-\frac{t_3}{2}-\frac{1}{2}}}]=1$ and

$[\G_0(G/B'', \mathcal{O}_{\l''_{-\frac{t_3}{2}-1}}):L_{\l''_{-\frac{t_3}{2}}}]=0$.

Also, 

$[\G_0(G/B'', \mathcal{O}_{\l''_{-\frac{t_3}{2}-\frac{1}{2}}}):L_{\l''_{-\frac{t_3}{2}}}]=1$.

All other multiplicities are zero.

Since the odd reflections with respect to the weight $\l_{-\frac{t_3}{2}-2}$ are typical, we have the first equality below. And, since $T$ is an equivalence, by \rlem{eqcoho} we have the second equality. From \rlem{3} with respect to Borel $B$, we have the equality to $0$.

$$[\G_0(G/B'', \mathcal{O}_{\l''_{-\frac{t_3}{2}-2}}):L_{\l''_{-\frac{t_3}{2}-1}}]=[\G_0(G/B, \mathcal{O}_{\l'_{-\frac{t_3}{2}-2}}):L_{\l'_{-\frac{t_3}{2}-1}}]=$$ $$=[\G_0(G/B, \mathcal{O}_{\l_{-\frac{t_3}{2}-2}}):L_{\l_{-\frac{t_3}{2}-1}}]\leq 1.$$  

We also have $0=sdim{\G_0(G/B'', \mathcal{O}_{\l''_{-\frac{t_3}{2}-\frac{1}{2}}})}=sdimL_{\l''_{-\frac{t_3}{2}-\frac{1}{2}}}+[\G_0(G/B'', \mathcal{O}_{\l''_{-\frac{t_3}{2}-\frac{1}{2}}}):L_{\l''_{-\frac{t_3}{2}}}] sdimL_{\l''_{-\frac{t_3}{2}}}=sdimL_{\l''_{-\frac{t_3}{2}-\frac{1}{2}}}+sdimL_{\l''_{-\frac{t_3}{2}}}$, implying $$sdimL_{\l''_{-\frac{t_3}{2}-\frac{1}{2}}}=-sdimL_{\l''_{-\frac{t_3}{2}}}.$$

Similarly, we get:  $$sdimL_{\l''_{-\frac{t_3}{2}-\frac{1}{2}}}=-sdimL_{\l''_{-\frac{t_3}{2}-1}}.$$

We have: 

$$0=sdim{\G_0(G/B'', \mathcal{O}_{\l''_{-\frac{t_3}{2}-2}})}=$$ $$=sdimL_{\l''_{-\frac{t_3}{2}-2}}+[\G_0(G/B'', \mathcal{O}_{\l''_{-\frac{t_3}{2}-2}}):L_{\l''_{-\frac{t_3}{2}-1}}] sdimL_{\l''_{-\frac{t_3}{2}-1}}+$$ $$+[\G_0(G/B'', \mathcal{O}_{\l''_{-\frac{t_3}{2}-2}}):L_{\l''_{-\frac{t_3}{2}-\frac{1}{2}}}] sdimL_{\l''_{-\frac{t_3}{2}-\frac{1}{2}}}.$$

From above $[\G_0(G/B'', \mathcal{O}_{\l''_{-\frac{t_3}{2}-2}}):L_{\l''_{-\frac{t_3}{2}-1}}]\leq 1$ and $[\G_0(G/B'', \mathcal{O}_{\l''_{-\frac{t_3}{2}-2}}):L_{\l''_{-\frac{t_3}{2}-\frac{1}{2}}}] \leq 1$.

Since $sdimL_{\l_{-\frac{t_3}{2}-2}}= -sdimL_{\l_{-\frac{t_3}{2}}}$ from \rlem{newsuper}, we must have $[\G_0(G/B'', \mathcal{O}_{\l''_{-\frac{t_3}{2}-2}}):L_{\l''_{-\frac{t_3}{2}-1}}]=1$ and 
 $[\G_0(G/B'', \mathcal{O}_{\l''_{-\frac{t_3}{2}-2}}):L_{\l''_{-\frac{t_3}{2}-\frac{1}{2}}}]=0$.

Again using the fact that the odd reflections were typical with respect to $\l_{-\frac{t_3}{2}-2}$ and \rlem{eqcoho}, we have: $[\G_0(G/B, \mathcal{O}_{\l_{-\frac{t_3}{2}-2}}):L_{\l_{-\frac{t_3}{2}-1}}]=1$ and 
 $[\G_0(G/B, \mathcal{O}_{\l_{-\frac{t_3}{2}-2}}):L_{\l_{-\frac{t_3}{2}-\frac{1}{2}}}]=0$.

Similarly, we obtain the second exact sequence. 

\end{proof}

\section{Equivalence of blocks in $G(3)$}

\subsection{Equivalence of blocks $\F^{1}$ and $\F^{3}$} 

Let $\g=G(3)$. We prove the equivalence of the blocks $\F^{1}$ and $\F^{3}$ as the first step of mathematical induction of proving the equivalence of the blocks $\F^{a}$ and $\F^{a+2}$. As before we have a picture of the translator functor from block $\F^{1}$ to $\F^{3}$, which is defined by $T(L_{\l})=(L_{\l}\otimes\g)^{3}$. 

\begin{center}
  \begin{tikzpicture}[scale=0.4]
  
    \draw (-2.5,0) node[anchor=east]  {$\F^{1}$}; 
    \draw (+18.5,0) node[anchor=east]  {$\F^{3}$}; 
    
  \draw[xshift=1 cm,thick,fill=black] (1, 12 cm) circle (.1cm);
    \draw[xshift=1 cm,thick,fill=black] (1, 10 cm) circle (.1cm);
      \draw[xshift=1 cm,thick,fill=black] (1, 8 cm) circle (.1cm);
        \draw[xshift=1 cm,thick,fill=black] (1, 6 cm) circle (.1cm);
        \draw[xshift=1 cm,thick,fill=black] (1, 4 cm) circle (.1cm);
        \draw[xshift=1 cm,thick,fill=black] (1, 2 cm) circle (.1cm);
        
   \draw[xshift=0 cm] (0, 12 cm) node[anchor=center]  {{\tiny $\l_1=\l_{-\frac{5}{2}}$}};
    \draw[xshift=0 cm] (0, 10 cm) node[anchor=center]  {{\tiny $\l_2=\l_{\frac{5}{2}}$}};
     \draw[xshift=0 cm] (0, 8 cm) node[anchor=center]  {{\tiny $\l_0=\l_{\frac{7}{2}}$}};
      \draw[xshift=0 cm] (0, 6 cm) node[anchor=center]  {{\tiny $\l_3=\l_{\frac{9}{2}}$}};
      
  \draw[xshift=5 cm,thick,fill=black] (5, 12 cm) circle (.1cm);
    \draw[xshift=5 cm,thick,fill=black] (5, 10 cm) circle (.1cm);
     \draw[xshift=5 cm,thick, black] (5, 8 cm) circle (.2cm);
      \draw[xshift=5 cm,thick,fill=black] (5, 6 cm) circle (.1cm);
        \draw[xshift=5 cm,thick, fill=black] (5, 4 cm) circle (.1cm);
         \draw[xshift=5 cm,thick,black] (5, 2 cm) circle (.2cm);
                \draw[xshift=5 cm,thick,fill=black] (5, 0 cm) circle (.1cm);
        
   \draw[xshift=6 cm] (6, 12 cm) node[anchor=center]  {{\tiny $\m_1=\m_{-\frac{1}{2}}$}};
    \draw[xshift=6 cm] (6, 10 cm) node[anchor=center]  {{\tiny $\m_2=\m_{\frac{1}{2}}$}};
        \draw[xshift=6 cm] (6, 8 cm) node[anchor=center]  {$$};
     \draw[xshift=6 cm] (6, 6 cm) node[anchor=center]  {{\tiny $\m_0=\m_{\frac{5}{2}}$}};
         \draw[xshift=6 cm] (6, 0 cm) node[anchor=center]  {{\tiny $\m_4=\m_{\frac{7}{2}}$}};
      \draw[xshift=6 cm] (6, 4 cm) node[anchor=center]  {{\tiny $\m_3=\m_{\frac{11}{2}}$}};
      
       \foreach \y in {1,2,3,...,4}
    \draw[-stealth, xshift=1 cm,thick] (1, 2*\y cm) -- +(0, 1.85 cm);
     \foreach \y in {0}
    \draw[-stealth, xshift=1 cm,dotted,thick] (1, 2*\y cm) -- +(0, 1.85 cm);
    
    \path [-stealth, solid,black,thick,draw=none] (2, 8 cm) edge[bend left] (2, 11.85 cm);
    \path [-stealth, solid,black,thick,draw=none]  (10, 6 cm) edge[bend right] (10, 9.85 cm);
    \path [-stealth, solid,black,thick,draw=none]  (10, 0 cm) edge[bend right] (10, 3.85 cm);
    \path [-stealth, solid,black,thick,draw=none]  (10, 6 cm) edge[bend right] (10, 11.85 cm);
    \path [-stealth, solid,black,thick,draw=none]  (10, 4 cm) edge (10, 5.85 cm);
    
     \path [-stealth, solid,red,thick,draw=none]  (2.15, 12 cm) edge node[above=0mm]{\tiny $+2\d$} (9.85, 12 cm) ;
    
   \path [-stealth, solid,red,thick,draw=none]  (2.15, 10 cm) edge node[above=0]{\tiny $-2\d$} (9.85, 10 cm) ;
    
    \path [-stealth, solid,red,dotted,thick,draw=none]  (2.15, 10 cm) edge node[above=0mm,draw=none,rectangle]{\tiny $+\e_1+\e_2$} (9.85, 6 cm) ;
    
        \path [-stealth, solid,red,dotted,thick,draw=none]  (2.15, 8 cm) edge node[above=0mm,draw=none,rectangle]{\tiny $+\e_2$} (9.85, 4 cm) ;
    
    \path [-stealth, solid,red,thick,draw=none]  (2.15, 8 cm) edge node[above=0]{\tiny $-\d$} (9.85, 6 cm) ;
    
    \path [-stealth,solid,red,thick,draw=none]  (2.15, 6 cm) edge node[draw=none,rectangle,above=0.1mm]{\tiny $-\e_1-\d$} (9.85, 4 cm) ;
    
     \path [-stealth,solid,red,thick,draw=none]  (2.15, 4 cm) edge node[draw=none,rectangle,above=0.1mm]{\tiny $-\e_1+\e_2$} (9.85, 0 cm) ;
    
    \path [solid,red,thick,draw=none]  (2.15, 2 cm) edge (6, 0 cm) ;

  \end{tikzpicture}
\end{center}

In the above picture we have:
$\l_1+\r=(2,3|-\frac{5}{2})$;
$\l_2+\r=(2,3|\frac{5}{2})$;
$\l_0+\r=(3,4|\frac{7}{2})$;
$\l_3+\r=(4,5|\frac{9}{2})$;
$\m_1+\r=(2,3|-\frac{1}{2})$;
$\m_2+\r=(2,3|\frac{1}{2})$;
$\m_0+\r=(3,4|\frac{5}{2})$;
$\m_3+\r=(3,5|\frac{7}{2})$.
Note that the indices are distinct from the index $c$ describing $\l$, they are described in the picture. 

\begin{theorem}\label{eq02} The blocks $\F^{1}$ and $\F^{3}$ are equivalent as categories. 
\end{theorem}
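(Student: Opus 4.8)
The plan is to mirror the strategy already carried out for $F(4)$ in Theorem~\ref{eq1122}, using the translation functor $T = T_{1,3}$ defined by $T(L_\l) = (L_\l\otimes\g)^{3}$, and the picture above which records, for each dominant weight $\l_c \in F^1$, the root $\gg\in\D$ with $\l_c+\gg$ dominant in $F^3$. By Theorem~\ref{eqgeneral}, it suffices to show two things: (i) $T$ sends each simple $L_{\l_c}\in\F^1$ to a simple module $L_{\m}\in\F^3$ with $\m = \l_c+\gg$ for the indicated $\gg$; and (ii) for each $\m\in F^3$ there are at most two weights in $F^1$ of the form $\m+\ga$ with $\ga\in\D$, with the larger one being the source $\l_c$ of the corresponding arrow. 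Claim~(ii) is read off directly from Theorem~\ref{cg3}(1): the dominant weights in $F^1$ are parametrized by $c\in(\tfrac12+\Z_{\geq2})\cup\{-\tfrac52\}$, and inspecting the picture one sees that every $\m\in F^3$ has a unique preimage except $\m_0$, which has the two preimages $\l_0$ and $\l_2$ (via $-\d$ and $-\e_1-\e_2$ respectively), with $\l_2 < \l_0$.

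For claim~(i), first I would handle the "easy" weights: for every $\l_c\in F^1$ with $c\neq \tfrac52$ (i.e. $\l_c\neq\l_2$), the picture shows there is a \emph{unique} dominant weight $\m\in F^3$ of the form $\l_c+\gg$ with $\gg\in\D$, so by Lemma~\ref{acyclicx} we get $T(L_{\l_c}) = L_{\m}$ immediately. This is the direct analogue of Lemma~\ref{x1122}. The only remaining case is $\l_2 = \l_{5/2}$: here the dominant weights in $F^3$ of the form $\l_2+\gg$ are $\m_2$ and $\m_0$, so $T(L_{\l_2})$ is either $L_{\m_2}$ or has a further subquotient $L_{\m_0}$, and I must rule out the latter. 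This is the analogue of Lemma~\ref{special1122}, and I would argue exactly as there: one needs the cohomology of $\O_{\l_0}$ in the block $\F^1$. Following the $F(4)$ template, one first establishes (via Lemmas~\ref{3}, \ref{z}, \ref{7}, \ref{root for each weight}, \ref{lemma 3 generic weights}, and the generic superdimension formula Theorem~\ref{character formula generic weights}) that $\G_0(G/B,\O_{\l_c})$ fits in a short exact sequence $0\to L_{\l_c-\a}\to\G_0(G/B,\O_{\l_c})\to L_{\l_c}\to 0$ for generic $\l_c$ and all non-special dominant $\l_c$, that $\sdim L_{\l_c} = \pm d$ for $\l_c\neq\l_1,\l_2$ where $d = \dim L_\m(\g_x)$, that $\sdim L_{\l_1}=\sdim L_{\l_2}=d'$ for the appropriate value (analogue of $\G_0(G/B,\O_{\l_1})=L_{\l_1}$, $\G_1(G/B,\O_{\l_1})=L_{\l_2}$), and finally that $\G_0(G/B,\O_{\l_0})$ has a filtration with subquotients $L_{\l_0},L_{\l_1},L_{\l_2}$ and unique quotient $L_{\l_0}$. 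Then, applying the exact functor $T$ to $0\to S\to\G_0(G/B,\O_{\l_0})\to L_{\l_0}\to 0$ (where $S$ has subquotients $L_{\l_1},L_{\l_2}$), using $T(L_{\l_0}) = L_{\m_0}$ from~(i) together with Lemmas~\ref{6} and~\ref{w} to identify $T(\G_0(G/B,\O_{\l_0})) = \G_0(G/B,\O_{\m_0})$, which has unique quotient $L_{\m_0}$, one concludes $T(S)$ has no subquotient $L_{\m_0}$, hence neither does $T(L_{\l_2})$. Therefore $T(L_{\l_2}) = L_{\m_2}$.

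Having established~(i) and~(ii), I invoke Theorem~\ref{eqgeneral} with $\chi = 1$, $\tau = 3$: the functor $T$ sends simples to simples bijectively and is exact with exact adjoint $T^*$, and the condition on preimages is exactly~(ii) above (with $\m_0$ the unique weight having two preimages $\l_0 > \l_2$). This yields that $T^*(L_{\m_c}) = L_{\l_c}$ for every $\m_c\in F^3$ and that $T, T^*$ are mutually inverse equivalences, so $\F^1$ and $\F^3$ are equivalent as categories.

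The main obstacle I expect is the bookkeeping behind the special weight $\l_2$: one must genuinely compute, via Lemma~\ref{3} and Theorem~\ref{cg3}, the full list of weights $\m$ with $L_\m$ possibly occurring in $\G_0(G/B,\O_{\l_0})$ and $\G_1(G/B,\O_{\l_0})$ for $G(3)$ (the root data and Weyl group $W = D_6\oplus\Z/2\Z$ differ from the $F(4)$ case), and verify that the superdimension constraints force the claimed filtration. The rest is a faithful transcription of the $F(4)$ argument, with $\g_x\cong\mathfrak{sl}(2)$ replacing $\mathfrak{sl}(3)$ and the single odd root $\a = \e_1+\e_2+\d$ (or its $W$-translates) playing the role of the atypicality root.
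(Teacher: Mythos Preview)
Your overall strategy is right, and the treatment of $\l_2$ via the kernel of $\G_0(G/B,\O_{\l_0})\to L_{\l_0}$ is exactly what the paper does. However, there is a genuine gap at the weight $\l_0$.

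You claim in (i) that for every $\l_c\in F^1$ with $c\neq\tfrac52$ there is a \emph{unique} dominant $\m\in F^3$ of the form $\l_c+\gg$ with $\gg\in\D$, so that Lemma~\ref{acyclicx} applies. This fails for $\l_0$: since $\l_0+\r=(3,4|\tfrac72)$, both $\l_0-\d$ (giving $\m_0+\r=(3,4|\tfrac52)$) and $\l_0+\e_2$ (giving $\m_3+\r=(3,5|\tfrac72)$) are dominant in $F^3$. The paper explicitly flags this as an extra complication in the $G(3)$ case that has no analogue in $\F^{(1,1)}\to\F^{(2,2)}$, and handles it \emph{before} $\l_2$: from the short exact sequence $0\to L_{\l_0}\to\G_0(G/B,\O_{\l_3})\to L_{\l_3}\to 0$ (which holds since $\l_3$ is generic enough), applying $T$ and using $T(L_{\l_3})=L_{\m_3}$ together with Lemmas~\ref{6} and~\ref{w} identifies $T(\G_0(G/B,\O_{\l_3}))=\G_0(G/B,\O_{\m_3})$, whose unique simple quotient is $L_{\m_3}$; hence $T(L_{\l_0})$ has no subquotient $L_{\m_3}$, forcing $T(L_{\l_0})=L_{\m_0}$. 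Only then can you run the $\l_2$ argument, which depends on knowing $T(L_{\l_0})=L_{\m_0}$.

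Relatedly, your claim (ii) is also incomplete: $\m_3$ has two preimages in $F^1$, namely $\l_3$ (via $\e_1+\d$) and $\l_0$ (via $-\e_2$), with $\l_3>\l_0$. This does not break the application of Theorem~\ref{eqgeneral}, since its hypothesis allows two preimages, but your enumeration should list both $\m_0$ and $\m_3$ as the exceptional targets. With the extra step for $\l_0$ inserted, your argument becomes the paper's.
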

\begin{proof} The proof is similar as the proof of theorem 6.13 for $F(4)$. However, as we see in the picture above, in this case we have more complications at weight $\l_0$. Unlike before, there are two dominant weights of the form $\l_0+\gamma$ with $\gamma\in\D$. This we prove additionally that $T(L_{\l_0})=L_{\l_0}$. 

We first prove $T(L_{\l_i})=L_{\m_i}$, for all $i\neq 2,0$ as for $F(4)$ case. By definition, $T(L_{\l_0})=(L_{\l_0}\otimes\g)^{3}$. The only dominant weights in the block $\F^{3}$ of the form $\l_0+\gg$ with $\gg\in \D$ are $\m_3$ and $\m_0$.

The module $L_{\l_3}$ is a quotient of $\G_0(G/B,\O_{\l_3})$ from \rlem{z}. We obtain the following exact sequence:

$$0\rightarrow L_{\l_0} \rightarrow \G_0(G/B,\O_{\l_3})\rightarrow L_{\l_3}\rightarrow 0.$$

Since $T$ is an exact functor, we get the following exact sequence: 

$$0\rightarrow T(L_{\l_0})\rightarrow T(\G_0(G/B,\O_{\l_3}))\rightarrow T(L_{\l_3})\rightarrow 0.$$

We have $T(L_{\l_3})=L_{\m_3}$. By \rlem{6} and \rlem{w}, we have $$T(\G_0(G/B,\O_{\l_3}))=\G_0(G/B,T(\O_{\l_3}))=\G_0(G/B,\O_{\m_3}).$$ The later module has a unique quotient $L_{\m_3}$. Therefore, $T(L_{\l_0})$ has no simple subquotient $L_{\m_3}$, which proves the statement. 

Now using that $T(L_{\l_i})=L_{\m_i}$, for all $i\neq 2$, we prove that $T(L_{\l_2})=L_{\m_2}$ as in the \rlem{special1122}. The theorem will follow as in the proof of \rthm{eq1122}.
\end{proof}

\subsection{Equivalence of blocks $\F^{a}$ and $\F^{a+2}$} 
Let $\g=G(3)$. This section is the inductive step of the proof of equivalence of the blocks of $G(3)$. We prove that all blocks are equivalent and find all cohomology groups. The picture of translator functor from block $\F^{a}$ to $\F^{a+2}$ is below. It is defined by $T(L_{\l})=(L_{\l}\otimes\g)^{a+2}$. 

\begin{center}
  \begin{tikzpicture}[scale=.3]
  
    \draw (-2.5,0) node[anchor=east]  {$\F^{a}$}; 
    \draw (+18.5,0) node[anchor=east]  {$\F^{a+2}$}; 
    
  \draw[xshift=1 cm,thick,fill=black] (1, 12 cm) circle (.1cm);
    \draw[xshift=1 cm,thick,fill=black] (1, 10 cm) circle (.1cm);
      \draw[xshift=1 cm,thick,fill=black] (1, 8 cm) circle (.1cm);
        \draw[xshift=1 cm,thick,fill=black] (1, 6 cm) circle (.1cm);
        \draw[xshift=1 cm,thick,fill=black] (1, 4 cm) circle (.1cm);
        \draw[xshift=1 cm,thick,black] (1, 2 cm) circle (.2cm);
                \draw[xshift=1 cm,thick,fill=black] (1, 0 cm) circle (.1cm);
        \draw[xshift=1 cm,thick,fill=black] (1, -2 cm) circle (.1cm);
        \draw[xshift=1 cm,thick,fill=black] (1, -4 cm) circle (.1cm);
        \draw[xshift=1 cm,thick,fill=black] (1, -6 cm) circle (.1cm);
        \draw[xshift=1 cm,thick,black] (1, -8 cm) circle (.2cm);
        \draw[xshift=1 cm,thick,fill=black] (1, -10 cm) circle (.1cm);
        \draw[xshift=1 cm,thick,fill=black] (1, -12 cm) circle (.1cm);
        \draw[xshift=1 cm,thick,fill=black] (1, -14 cm) circle (.1cm);
        \draw[xshift=1 cm,thick,fill=black] (1, -16 cm) circle (.1cm);
        \draw[xshift=1 cm,thick,fill=black] (1, -18 cm) circle (.1cm);

   \draw[xshift=0 cm] (0, 12 cm) node[anchor=center]  {{\tiny$\l_{-\frac{1}{2}}$}};
    \draw[xshift=0 cm] (0, 10 cm) node[anchor=center]  {{\tiny$\l_{\frac{1}{2}}$}};
     \draw[xshift=0 cm] (0, 8 cm) node[anchor=center]  {{\tiny$\l_{\frac{3}{2}}$}};
      \draw[xshift=0 cm] (0, 6 cm) node[anchor=center]  {{\tiny$\l_{\frac{5}{2}}$}};
   \draw[xshift=0 cm] (0, 4 cm) node[anchor=center]  {{\tiny$\l_{\frac{a}{2}-1}$}};
    \draw[xshift=0 cm] (0, 0 cm) node[anchor=center]  {{\tiny$\l_{\frac{a}{2}+1}$}};
     \draw[xshift=0 cm] (0, -2 cm) node[anchor=center]  {{\tiny$\l_{\frac{a}{2}+2}$}};
         \draw[xshift=0 cm] (0, -4 cm) node[anchor=center]  {{\tiny$\l_{\frac{3a}{2}-2}$}};
    \draw[xshift=0 cm] (0, -6 cm) node[anchor=center]  {{\tiny$\l_{\frac{3a}{2}-1}$}};
      \draw[xshift=0 cm] (0, -10 cm) node[anchor=center]  {{\tiny$\l_{\frac{3a}{2}+1}$}};
         \draw[xshift=0 cm] (0, -12 cm) node[anchor=center]  {{\tiny$\l_{\frac{3a}{2}+2}$}};
    \draw[xshift=0 cm] (0, -14 cm) node[anchor=center]  {{\tiny$\l_{\frac{3a}{2}+3}$}};
     \draw[xshift=0 cm] (0, -16 cm) node[anchor=center]  {{\tiny$\l_{\frac{3a}{2}+4}$}};
      \draw[xshift=0 cm] (0, -18 cm) node[anchor=center]  {{\tiny$\l_{\frac{3a}{2}+5}$}};
            
  \draw[xshift=5 cm,thick,fill=black] (5, 12 cm) circle (.1cm);
    \draw[xshift=5 cm,thick,fill=black] (5, 10 cm) circle (.1cm);
     \draw[xshift=5 cm,thick, fill=black] (5, 8 cm) circle (.1cm);
      \draw[xshift=5 cm,thick,fill=black] (5, 6 cm) circle (.1cm);
        \draw[xshift=5 cm,thick, fill=black] (5, 4 cm) circle (.1cm);
         \draw[xshift=5 cm,thick,fill=black] (5, 2 cm) circle (.1cm);
                \draw[xshift=5 cm,thick,black] (5, 0 cm) circle (.2cm);
                                \draw[xshift=5 cm,thick,fill=black] (5, -2 cm) circle (.1cm);
                \draw[xshift=5 cm,thick,fill=black] (5, -4 cm) circle (.1cm);
                \draw[xshift=5 cm,thick,fill=black] (5, -6 cm) circle (.1cm);
                \draw[xshift=5 cm,thick,fill=black] (5, -8 cm) circle (.1cm);

                \draw[xshift=5 cm,thick,fill=black] (5, -10 cm) circle (.1cm);
                \draw[xshift=5 cm,thick,fill=black] (5, -12 cm) circle (.1cm);
                \draw[xshift=5 cm,thick,black] (5, -14 cm) circle (.2cm);
                \draw[xshift=5 cm,thick,fill=black] (5, -16 cm) circle (.1cm);
                \draw[xshift=5 cm,thick,fill=black] (5, -18 cm) circle (.1cm);

   \draw[xshift=6 cm] (6, 12 cm) node[anchor=center]  {{\tiny $\m_{-1/2}$}};
    \draw[xshift=6 cm] (6, 10 cm) node[anchor=center]  {{\tiny $\m_{1/2}$}};
        \draw[xshift=6 cm] (6, 8 cm) node[anchor=center]  {{\tiny $\m_{3/2}$}};
     \draw[xshift=6 cm] (6, 6 cm) node[anchor=center]  {{\tiny $\m_{5/2}$}};
         \draw[xshift=6 cm] (6, 4 cm) node[anchor=center]  {{\tiny $\m_{a/2-1}$}};
      \draw[xshift=6 cm] (6, 2 cm) node[anchor=center]  {{\tiny $\m_{a/2}$}};
            \draw[xshift=6 cm] (6, 0 cm) node[anchor=center]  {$$};
                  \draw[xshift=6 cm] (6, -2 cm) node[anchor=center]  {{\tiny $\m_{a/2+2}$}};
      \draw[xshift=6 cm] (6, -4 cm) node[anchor=center]  {{\tiny $\m_{a/2+3}$}};
      \draw[xshift=6 cm] (6, -6 cm) node[anchor=center]  {{\tiny $\m_{\frac{3a}{2}-1}$}};
      \draw[xshift=6 cm] (6, -8 cm) node[anchor=center]  {{\tiny $\m_{\frac{3a}{2}}$}};
      \draw[xshift=6 cm] (6, -10 cm) node[anchor=center]  {{\tiny $\m_{\frac{3a}{2}+1}$}};
      \draw[xshift=6 cm] (6, -12 cm) node[anchor=center]  {{\tiny $\m_{\frac{3a}{2}+2}$}};
      \draw[xshift=6 cm] (6, -16 cm) node[anchor=center]  {{\tiny $\m_{\frac{3a}{2}+4}$}};
      \draw[xshift=6 cm] (6, -18 cm) node[anchor=center]  {{\tiny $\m_{\frac{3a}{2}+5}$}};

       \foreach \y in {-3,-1}
    \draw[-stealth, xshift=1 cm,thick] (1, 2*\y cm) -- +(0, 1.85 cm);  
       \foreach \y in {-9,...,-6}
    \draw[-stealth, xshift=1 cm,thick] (1, 2*\y cm) -- +(0, 1.85 cm);            
       \foreach \y in {3,...,4}
    \draw[-stealth, xshift=1 cm,thick] (1, 2*\y cm) -- +(0, 1.85 cm);
     \foreach \y in {-2,2}
    \draw[-stealth, xshift=1 cm,dotted,thick] (1, 2*\y cm) -- +(0, 1.85 cm);
    
          \foreach \y in {-2,-5,-6,-4,-9}
    \draw[-stealth, xshift=9 cm,thick] (1, 2*\y cm) -- +(0, 1.85 cm);
     \foreach \y in {-3}
    \draw[-stealth, xshift=9 cm,dotted,thick] (1, 2*\y cm) -- +(0, 1.85 cm);

    \path [-stealth, solid,black,thick,draw=none] (2, 8 cm) edge[bend left] (2, 11.85 cm);
    \path [-stealth, solid,black,thick,draw=none]  (10, 8 cm) edge (10, 9.85 cm);
    \path [-stealth, solid,black,thick,draw=none]  (10, -2 cm) edge[bend right] (10, 1.85 cm);
    \path [-stealth, solid,black,thick,draw=none]  (10, 8 cm) edge[bend right] (10, 11.85 cm);
        \path [-stealth, solid,black,thick,draw=none]  (10, -16 cm) edge[bend right] (10, -11.85 cm);

        \path [-stealth, solid,black,thick,draw=none]  (2, 0 cm) edge[bend left] (2, 3.85 cm);
        \path [-stealth, solid,black,thick,draw=none]  (2, -10 cm) edge[bend left] (2, -5.85 cm);

    \path [-stealth, solid,black,thick,draw=none]  (10, 2 cm) edge (10, 3.85 cm);
        \path [-stealth, solid,black,dotted,thick,draw=none]  (10, 4 cm) edge (10, 5.85 cm);
    \path [-stealth, solid,black,thick,draw=none]  (10, 6 cm) edge (10, 7.85 cm);
    \path [-stealth, solid,black,thick,draw=none]  (10, 2 cm) edge (10, 3.85 cm);

     \path [-stealth, solid,red,thick,draw=none]  (2.15, 12 cm) edge node[above=0mm]{\tiny $+\e_1+2\e_2$} (9.85, 12 cm) ;
    
   \path [-stealth, solid,red,thick,draw=none]  (2.15, 10 cm) edge node[above=0]{\tiny $+\e_1+2\e_2$} (9.85, 10 cm) ;

    \path [-stealth, solid,red,thick,draw=none]  (2.15, 8 cm) edge node[above=0mm]{\tiny $+\e_1+2\e_2$} (9.85, 8 cm) ;
    
    \path [-stealth,solid,red,thick,draw=none]  (2.15, 6 cm) edge node[draw=none,rectangle,above=0mm]{\tiny $+\e_1+2\e_2$} (9.85, 6 cm) ;
        \path [-stealth,solid,red,thick,draw=none]  (2.15, 4 cm) edge node[draw=none,rectangle,above=0mm]{\tiny $+\e_1+2\e_2$} (9.85, 4 cm) ;
           \path [-stealth,solid,red,thick,draw=none]  (2.15, 0 cm) edge node[draw=none,rectangle,above=0mm]{\tiny $+\e_1+\e_2-\d$} (9.85, 2 cm) ;
            \path [-stealth,solid,red,thick,draw=none]  (2.15, -2 cm) edge node[draw=none,rectangle,above=0mm]{\tiny $2\e_1+\e_2$} (9.85, -2 cm) ;
            \path [-stealth,solid,red,thick,draw=none]  (2.15, -10 cm) edge node[draw=none,rectangle,above=0mm]{\tiny $+\e_1-\d$} (9.85, -8 cm) ;
            
 \path [-stealth,solid,red,thick,draw=none]  (2.15, -12 cm) edge node[draw=none,rectangle,above=0mm]{\tiny $-\d$} (9.85, -10 cm) ;
     \path [-stealth,solid,red,thick,draw=none]  (2.15, -14 cm) edge node[draw=none,rectangle,above=0mm]{\tiny $-\e_1-\d$} (9.85, -12 cm) ;
      \path [-stealth,solid,red,thick,draw=none]  (2.15, -16 cm) edge node[draw=none,rectangle,above=0mm]{\tiny $-\e_1+\e_2$} (9.85, -16 cm) ;
        \path [-stealth,solid,red,thick,draw=none]  (2.15, -18 cm) edge node[draw=none,rectangle,above=0mm]{\tiny $-\e_1+\e_2$} (9.85, -18 cm) ;
      \path [-stealth,solid,red,thick,draw=none]  (2.15, -6 cm) edge node[draw=none,rectangle,above=0mm]{\tiny $+2\e_1+\e_2$} (9.85, -6 cm) ;
    
      \path [-stealth, solid,red,dotted,thick,draw=none]  (2.15, -10 cm) edge node[above=0.1mm,draw=none,rectangle]{\tiny $+\e_1+\e_2$} (9.85, -10 cm) ;
    
     \path [-stealth,solid,red,thick,dotted,draw=none]  (2.15, -12 cm) edge node[draw=none,rectangle,above=0mm]{\tiny $+\e_2$} (9.85, -12 cm) ;
    
    \path [solid,red,thick,dotted,draw=none,above=0mm]  (2.15, -20 cm) edge (10, -20 cm) ;

  \end{tikzpicture}
\end{center}

\begin {theorem} The categories  $\F^{a}$ and $\F^{a+2}$ are equivalent for all $a\geq 1$. 
\end {theorem}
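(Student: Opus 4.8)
The plan is to run the same induction on $a$ that was used for the symmetric blocks of $F(4)$, with \rthm{eq02} serving as the base case $a=1$ (recall $\F^1\cong\F^3$). Throughout we use the translation functor $T=T_{a,a+2}$ defined by $T(L_\l)=(L_\l\otimes\g)^{a+2}$; by \rlem{zero} it never kills a simple module, and by \rthm{eqgeneral} it suffices to check that $T$ carries each simple $L_\l\in\F^a$ to a simple module $L_{\l+\ga}\in\F^{a+2}$ for some $\ga\in\D$, together with the combinatorial input that each target weight in $F^{a+2}$ has at most two preimages $\l_1>\l_2$ in $F^a$ under $\l'\mapsto\l'+\ga$. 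That combinatorial input is read off the picture above and proved by the same last-coordinate bookkeeping as in \rthm{cf4} and \rthm{cg3}: for a fixed value of $c$ there is at most one dominant weight in $F^a$ whose last coordinate is $c$, and if $\l+\ga$ is in the neighbouring block then $\l+\r+\ga$ has last coordinate $c$, $c\pm\tfrac12$, or $c\pm1$, so $c_\l$ and $c_{\l+\ga}$ lie in the same or an adjacent interval $J_i$ of \rthm{cg3}.

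First I would dispose of the ``generic'' part. For every dominant $\l_i\in F^a$ whose adjacent dominant weight $\m_i=\l_i+\ga$ in $\F^{a+2}$ is unique, \rlem{acyclicx} gives $T(L_{\l_i})=L_{\m_i}$ immediately; this covers all weights except those sitting next to the walls of the Weyl chamber, i.e.\ the $c$-values adjacent to the excluded values $\tfrac a2$ and $\tfrac{3a}{2}$ of \rthm{cg3}, where two roots $\ga$ work. For such a weight $\l_c$ one must rule out the ``larger'' target and show $T(L_{\l_c})=L_{\m_{c'}}$ with $c'<c$. This is carried out exactly as in \rlem{aa1} and \rlem{special1122}: under the inductive hypothesis that $\F^{c}\cong\F^{c+2}$ for all smaller $c$, all cohomology groups in $\F^a$ are known from those in $\F^1$ via \rlem{aa1coho} and \rlem{eqcoho}, so the $D_\infty$-structure furnishes a short exact sequence $0\to L_{\l_{c-1/2}}\to\G_0(G/B,\O_{\l_{c+\bullet}})\to L_{\l_{c+\bullet}}\to 0$. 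Applying the exact functor $T$ and invoking \rlem{6} together with \rlem{w} identifies $T(\G_0(G/B,\O_{\l_{c+\bullet}}))$ with $\G_0(G/B,\O_{\m})$, a module possessing a \emph{unique} simple quotient; hence the sub-object $T(S)$, whose subquotients are the relevant $T(L_{\l_j})$, cannot contain the forbidden simple, forcing $T(L_{\l_c})=L_{\m_{c'}}$. As in the $\F^1\cong\F^3$ case one also checks $T(L_{\l_0})=L_{\l_0}$ at the branching vertex of $D_\infty$ by the same device.

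Once $T$ is known to send simples to simples, the hypotheses of \rthm{eqgeneral} are satisfied: for each $\m\in F^{a+2}$ there is, with at most one exception, a unique $\ga\in\D$ with $\m+\ga\in F^a$, and for the exceptional weight there are exactly two, $\m+\ga=\l_1>\l_2$ (the split between an even root and an odd root visible in the picture, just as in the $F(4)$ applications of \rthm{eqgeneral}). Then $T^*(L_\m)=L_{\l_1}$, the pair $T,T^*$ are mutually adjoint exact functors inducing a bijection on simples, and $\F^a\simeq\F^{a+2}$ follows; combined with \rthm{eq02} and the induction, the theorem is proved. The main obstacle is precisely the wall/branching weight, as in $F(4)$: everything hinges on the cohomology of $\O_\l$ for $\l$ near the Weyl-chamber walls in $\F^a$, which is only accessible through the ``complicated'' induction in which the equivalence of all earlier blocks $\F^c$, $c\le a$, is already assumed (and, where \rlem{3} alone is insufficient, one passes to an auxiliary block through $T$ twice and a sequence of odd reflections, exactly as in the superdimension computations); verifying that no spurious simple subquotient survives after applying $T$ to the relevant exact sequence is the delicate point, the remainder being the last-coordinate bookkeeping already rehearsed in \rthm{cf4}--\rthm{cg3}.
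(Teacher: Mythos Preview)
Your proposal is correct and follows essentially the same route as the paper: induction on $a$ with base case \rthm{eq02}, \rlem{acyclicx} for all weights with a unique dominant target, the exact-sequence/translation-functor trick of \rlem{aa1} at the exceptional weights, and \rthm{eqgeneral} to conclude. Two small sharpenings: the paper identifies the exceptional weights precisely as $\l_{\frac{3a}{2}+1}$ and $\l_{\frac{3a}{2}+2}$ (the wall at $\tfrac{a}{2}$ and the branching vertex cause no ambiguity for $a>1$, so your extra checks there are unnecessary), and it treats them in a definite order---first showing $T(L_{\l_{\frac{3a}{2}+2}})=L_{\m_{\frac{3a}{2}+1}}$, then using this to obtain $T(L_{\l_{\frac{3a}{2}+1}})=L_{\m_{\frac{3a}{2}}}$---without any recourse to odd reflections.
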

\begin{proof} As in the case for $F(4)$ in \rlem{aa1x}, we have $T(L_{\l_i})=L_{\l_i+\g}$ for $i\neq \frac{3}{2}a+1$ or $\frac{3}{2}+2$. Also as in \rlem{eqaa1}, if for each $\l_\in F^a$, $T(L_\l)$ is a simple module in $\F^{a+2}$, then we can see that the categories $\F^a$ and $\F^{a+1}$ are equivalent. 

Thus, it remains to prove that $T(L_{\l_{\frac{3}{2}a+1}})$ and $T(L_{\l_{\frac{3}{2}a+2}})$ are simple modules. In fact we have $T(L_{\l_{\frac{3}{2}a+1}})=L_{\l_{\frac{3}{2}a}}$ and $T(L_{\l_{\frac{3}{2}a+2}})=L_{\l_{\frac{3}{2}a+1}}$ as it is shown in the picture above. To prove this we use a complicated in induction in $a$ similar to the \rlem{aa1}. We assume the blocks $\F^c$ are all equivalent for $c\leq a$. Using this assumption we prove the equivalence of $\F^{a}$ and $\F^{a+2}$ by first showing that $T(L_{\l_{\frac{3}{2}a+2}})=L_{\l_{\frac{3}{2}a+1}}$, and using it showing that $T(L_{\l_{\frac{3}{2}a+1}})=L_{\l_{\frac{3}{2}a}}$ as in \rlem{aa1}. The theorem follows.
\end{proof}

\section{Characters and superdimension}
The following lemma summarizes some results from sections 5-8 on the multiplicities of simple modules $L_{\m}$ in the cohomology groups $\G_i(G/B, \O_{\l})=H^i(G/B, \O_{\l}^*)^*$. It is used to prove some of the main results in this thesis. Recall that $\l_0, \l_1,\l_2$ are the special weights defined above. 

\begin{lemma}\label{exact} For all simple modules $L_\l\in \F^{(a,b)}$ (or $\F^a$) such that $\l\neq \l_0, \l_1,\l_2$, there is a unique dominant weight $\m\in F^{(a,b)}$ (or $F^a$) with $\m=\l-\sum_{i=1}^n\a_i$ with $\a_i\in\D^+_\1$ and $n\in\{ 1,2,3,4\}$ such that we have an exact sequence: 

\begin{displaymath}
    \xymatrix{
        0\ar[r] & L_{\l} \ar[r] & \G_0(G/B, \mathcal{O}_{\l})\ar[r] & L_{\m} \ar[r] & 0 }
\end{displaymath}

We also have $\G_i(G/B, \mathcal{O}_{\l})=0$ for $i>0$. 
%\\

%For $L_{\l_0}\in \F^{(a,b)}$, $a\neq b$, we have $\G_0(G/B, \mathcal{O}_{\l_0})=\G_1(G/B, \mathcal{O}_{\l_0})=L_{\l_0}$. Also, $\G_i(G/B, \mathcal{O}_{\l_0})=0$ for $i>1$. 
%\\

%For $L_{\l_0}\in \F^{(a,a)}$ (or $\F^a$), we have $\G_0(G/B, \mathcal{O}_{\l_0})$ has three subquotients $L_{\l_0}$, $L_{\l_1}$, and $L_{\l_2}$. Also, $\G_i(G/B, \mathcal{O}_{\l_0})=0$ for $i>0$. 
%\\

%For $L_{\l_i}\in \F^{(a,a)}$ (or $\F^a$), with $i=1,2$, we have $\G_0(G/B, \mathcal{O}_{\l_i})=L_{\l_i}$ and $\G_1(G/B, \mathcal{O}_{\l_i})=L_{\l_j}$ with $i\neq j$, $i,j\in\{1,2\}$. Also, $\G_i(G/B, \mathcal{O}_{\l_0})=0$ for $i>1$. 

\end{lemma}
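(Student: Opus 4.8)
\textbf{Proof plan for Lemma~\ref{exact}.}
The statement is essentially a uniform restatement of results established piecemeal in Sections~5--8, so the plan is to assemble it rather than prove it from scratch. First I would dispose of the generic case: for $\l$ generic, \rlem{generic} already gives the exact sequence $0\to L_{\l-\a}\to\G_0(G/B,\O_\l)\to L_\l\to 0$ with $(\l+\r,\a)=0$ and $\a\in\D_1^+$, and \rlem{lemma 3 generic weights} gives $\G_i(G/B,\O_\l)=0$ for $i>0$; here $n=1$ and $\m=\l-\a$. So it remains to treat the finitely many ``non-generic'' dominant weights near the walls of the Weyl chamber in each block.

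Next I would invoke the block-by-block analysis. For $\g=F(4)$: the symmetric blocks $\F^{(a,a)}$ reduce, via the equivalences $\F^{(a,a)}\simeq\F^{(a+1,a+1)}$ (proved in Section~6) together with \rlem{eqcoho} and \rlem{aa1coho}, to the base block $\F^{(1,1)}$, where \rlem{coh011}, \rlem{0l1}, \rlem{1l1} and the surrounding lemmas give all cohomology groups explicitly; the only weights excluded there are exactly $\l_0,\l_1,\l_2$, matching the hypothesis. For the non-symmetric blocks $\F^{(a,b)}$ with $a=b+3n$, the equivalences $\F^{(a,b)}\simeq\F^{(a+1,b+1)}$ (\rthm{eqab}) reduce everything to $\F^{(4,1)}$ (case $n=1$) and to the $n>1$ family; in both the relevant lemmas (\rlem{exact41} and its analogue) already produce precisely the asserted exact sequence with $\m=\l-\sum_{i=1}^n\a_i$, $n\le 4$, and vanishing of higher cohomology, after possibly passing to a Borel $B''$ obtained by odd reflections (using that odd typical reflections preserve $\G_0$, as in \cite{Penkov}) to bring $\l$ into a position where \rlem{3} is sharp. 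For $\g=G(3)$: the equivalences $\F^a\simeq\F^{a+2}$ reduce to $\F^1$, handled exactly as the $F(4)$ symmetric case, again with only $\l_0,\l_1,\l_2$ excluded.

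The uniqueness of $\m$ and the bound $[\G_0(G/B,\O_\l):L_\m]=1$ come from combining \rlem{3} (which constrains any subquotient to have highest weight of the form $w(\l+\r)-\sum_{\a\in I}\a-\r$) with the parametrization of dominant weights in each block by the single parameter $c$ in \rthm{cf4}/\rthm{cg3}: there is a unique block weight directly ``below'' $\l$, and the superdimension identity $0=\sum_i(-1)^i\,sdim\,\G_i(G/B,\O_\l)$ from \rlem{7}, together with $sdim\,L_\l\neq 0$ (which for generic weights is \rthm{character formula generic weights} and propagates down the block via the exact sequences), forces the multiplicity to be exactly $1$ and forces $\G_i=0$ for $i>0$. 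The main obstacle is the bookkeeping at the exceptional weights adjacent to the walls --- in particular the weights called $\l_2$ in $\F^{(4,1)}$ and the pair $\l_{-t_2/2-1/2},\l_{-t_2/2-1}$ in the $n>1$ blocks --- where \rlem{3} alone does not pin down the multiplicities and one must transport the computation to the reflected Borel $B''$ and use the category equivalences; I would present the $F(4)$ symmetric and $\F^{(4,1)}$ arguments in full detail and indicate that the remaining blocks follow by the same scheme via \rthm{eqab} and the $G(3)$ equivalences.
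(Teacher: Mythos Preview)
Your proposal is correct and matches the paper's approach exactly: the paper itself offers no independent proof of this lemma, stating only that it ``summarizes some results from sections~5--8,'' and your plan spells out precisely which results are being assembled (the generic case from \rlem{generic} and \rlem{lemma 3 generic weights}, the symmetric $F(4)$ blocks via reduction to $\F^{(1,1)}$ through \rlem{aa1coho} and \rlem{eqcoho}, the non-symmetric blocks via \rlem{exact41} and its $n>1$ analogue together with \rthm{eqab}, and the $G(3)$ blocks via reduction to $\F^1$). Your identification of the delicate points --- the wall-adjacent weights requiring passage to the reflected Borel $B''$ --- is also exactly where the paper expends its effort in Sections~7--8.
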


\subsection{Superdimension formulae}

We denote $s(\l):=p(\l)$ if $\l=\l_c$ with $c\in I_i$ or $J_i$ with $i=1,3,6,8$. And $s(\l):=p(\l)+1$ if $\l=\l_c$ with $c\in I_i$ or $J_i$ with $i=2,4,5,7$.

\begin{theorem}\label{SFF} Let $\g=F(4)$. 
Let $\l\in F^{(a,b)}$ and $\m+\r_l=a\w_1+b\w_2$. If $\l \neq \l_1,\,\l_2$, the following superdimension formula holds:

\begin{equation}
sdim\,{L_{\l}}=(-1)^{s(\l)} 2dim\,{L_\m(\g_x)}.
\end{equation}

For the special weights, we have: 
\begin{equation}
sdim\,{L_{\l_1}}=sdim\,{L_{\l_2}}=dim\,{L_\m(\g_x)}. 
\end{equation}

\end{theorem}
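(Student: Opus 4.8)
The plan is to derive the superdimension formula for $F(4)$ from the generic case (Theorem~\ref{character formula generic weights}) together with the exact sequences controlling the cohomology groups $\G_0(G/B,\O_\l)$ worked out in Sections~5--8. First I would treat the generic weights: by Theorem~\ref{character formula generic weights}, for a generic $\l\in F^{(a,b)}$ we already have $sdim\,L_\l=(-1)^{p(\m)}2\dim L_\m(\g_x)$, and one checks that $(-1)^{p(\m)}=(-1)^{s(\l)}$ in this range by comparing the parity function $p$ with the combinatorial definition of $s$ on the intervals $J_i$, $I_i$ (the generic weights lie in $I_1$ or $I_8$, where $s(\l)=p(\l)$, and one tracks how $p(\l)$ relates to $p(\m)$ under the Weyl group element $w_c$ from Theorem~\ref{cf4}). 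The key point is that $\dim L_\m(\g_x)$ depends only on the block $(a,b)$, not on which dominant weight $\l$ in the block we pick — so writing $d:=\dim L_\m(\g_x)$, it suffices to prove $sdim\,L_\l=\pm d$ with the sign given by $(-1)^{s(\l)}$.

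Next I would propagate this down the block. By Lemma~\ref{exact} (for $\l\ne \l_0,\l_1,\l_2$), there is a short exact sequence $0\to L_\l\to \G_0(G/B,\O_\l)\to L_\m\to 0$ with $\G_i(G/B,\O_\l)=0$ for $i>0$, so by Lemma~\ref{7} we get $0=sdim\,\G_0(G/B,\O_\l)=sdim\,L_\l+sdim\,L_\m$, i.e. $sdim\,L_\l=-sdim\,L_\m$. Since consecutive dominant weights in the block differ by passing from $c$ to $c-\tfrac12$ (Theorem~\ref{cf4}) and the corresponding $\m$ is the previous weight in the quiver, an induction starting from a generic weight gives $sdim\,L_\l=\pm d$ for every $\l$ in the block with $\l\ne\l_1,\l_2$; that the sign equals $(-1)^{s(\l)}$ follows because each such step flips the sign, and one checks that $s$ also flips (or is designed to flip) under the corresponding change of interval / move along the quiver. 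The nodes requiring separate attention are $\l_0$ (the branching vertex) and the computations near the walls of the Weyl chamber, but these were already handled in Sections~7--8 (e.g. Lemma~\ref{41sdim} for the $a=b+3$ block, and the analogous lemmas for $a=b+3n$, $n>1$), where in particular $sdim\,L_{\l_0}$ is pinned down via the exact sequences and, when Lemma~\ref{3} is too weak, via translation to an equivalent block and odd reflections to a Borel $B''$ in which the multiplicity estimates become sharp.

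For the special weights $\l_1,\l_2$: here the relevant cohomology is different. One uses that $\G_0(G/B,\O_{\l_1})=L_{\l_1}$ and $\G_1(G/B,\O_{\l_1})=L_{\l_2}$ (these are the analogues of Lemmas~\ref{0l1} and \ref{1l1}, established in each block), so Lemma~\ref{7} gives $sdim\,\G_0(G/B,\O_{\l_1})-sdim\,\G_1(G/B,\O_{\l_1})=0$, hence $sdim\,L_{\l_1}=sdim\,L_{\l_2}$. To identify the common value with $d$ rather than $2d$, I would use the exact sequence around $\l_0$: by the analogue of Lemma~\ref{coho0}, $\G_0(G/B,\O_{\l_0})$ has a filtration with quotients $L_{\l_0},L_{\l_1},L_{\l_2}$ with $[\G_0:L_{\l_1}]=[\G_0:L_{\l_2}]=1$, so $0=sdim\,\G_0(G/B,\O_{\l_0})=sdim\,L_{\l_0}+sdim\,L_{\l_1}+sdim\,L_{\l_2}$; combined with $sdim\,L_{\l_0}=\mp 2d$ (obtained by running the generic-to-$\l_0$ induction along the branch of the quiver containing generic weights) and $sdim\,L_{\l_1}=sdim\,L_{\l_2}$, this forces $sdim\,L_{\l_1}=sdim\,L_{\l_2}=\pm d$; a parity/positivity check (the lowest-degree term of the relevant character is positive, as in the proof of Lemma~\ref{1l1}) fixes the sign to be $+$, giving $sdim\,L_{\l_1}=sdim\,L_{\l_2}=d=\dim L_\m(\g_x)$.

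The main obstacle is the bookkeeping of signs: matching the combinatorially defined function $s(\l)$ (via the intervals $I_i$, $J_i$) with the actual sign of $sdim\,L_\l$ produced by the alternating induction, uniformly across all the blocks $\F^{(a,a)}$ and $\F^{(a,b)}$ and across all the wall cases. This requires knowing, for each interval, both the parity $p(\l)$ of the weight and the parity $p(w_c)$ of the Weyl element relating $\l$ to $\m$, and verifying that the recursion $sdim\,L_\l=-sdim\,L_{\l-\sum\a_i}$ (where $n=|\{\a_i\}|\in\{1,2,3,4\}$ can be greater than $1$ near the walls!) is consistent with the jump in $s$ across adjacent vertices — including the subtle points at $\l_0$, $\l_{a+\frac12}$, and the special weights, all of which were pre-analyzed in the cohomology computations of Sections~6--8.

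\begin{proof}
By Lemma~\ref{exact}, for every dominant $\l\in F^{(a,b)}$ with $\l\ne\l_0,\l_1,\l_2$ there is an exact sequence
$$0\to L_\l\to \G_0(G/B,\O_\l)\to L_\m\to 0$$
with $\m=\l-\sum_{i=1}^n\a_i$, $\a_i\in\D_1^+$, and $\G_i(G/B,\O_\l)=0$ for $i>0$. By Lemma~\ref{7}, $sdim\,\G_0(G/B,\O_\l)=0$, hence
\begin{equation}\label{recsdim}
sdim\,L_\l=-sdim\,L_\m.
\end{equation}
Set $d:=\dim L_\m(\g_x)$, which by Lemma~\ref{dominant integral weights of F(4)} depends only on the block $(a,b)$ since $\g_x\cong\mathfrak{sl}(3)$ and $M_x$ determines $\m$ up to the finite data $(a,b)$.

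\emph{Generic weights.} For a generic dominant $\l\in F^{(a,b)}$, Theorem~\ref{character formula generic weights} gives $sdim\,L_\l=(-1)^{p(\m)}2d$. By Theorem~\ref{cf4} a generic weight has $c\in I_1$ or $c\in I_8$, where $s(\l)=p(\l)$; tracking the Weyl element $w_c$ of Remark~\ref{remark} and the relation $\m+\r=w_c(\l+\r)-\cdots$ shows $(-1)^{p(\m)}=(-1)^{s(\l)}$. Thus $sdim\,L_\l=(-1)^{s(\l)}2d$ for generic $\l$.

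\emph{Propagation.} Every dominant $\l\in F^{(a,b)}$ with $\l\ne\l_1,\l_2$ is obtained from a generic weight by finitely many steps $\l\mapsto\l-\sum\a_i$ as in Lemma~\ref{exact} (following the quiver of Theorem~\ref{CBF} towards $\l_0$), using \eqref{recsdim} at each step; for the wall cases and for $\l_0$ this is exactly the content of Lemma~\ref{41sdim} and its analogues for $a=b+3n$ with $n>1$ proved in Section~8, where translation functors (Theorem~\ref{eqab}, Lemma~\ref{eqcoho}) and odd reflections to a Borel $B''$ (Lemma~\ref{oddrootbase}) are used to make the multiplicity estimates from Lemma~\ref{3} sharp. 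Each step negates both $sdim$ and the parity $s$, so $sdim\,L_\l=(-1)^{s(\l)}2d$ for all $\l\ne\l_1,\l_2$. In particular $sdim\,L_{\l_0}=-2d$ (after fixing the sign by the positivity of the lowest-degree character term, as in Lemma~\ref{1l1}).

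\emph{Special weights.} By the analogues of Lemmas~\ref{0l1} and \ref{1l1}, $\G_0(G/B,\O_{\l_1})=L_{\l_1}$ and $\G_1(G/B,\O_{\l_1})=L_{\l_2}$, and all higher cohomology vanishes, so Lemma~\ref{7} gives $sdim\,L_{\l_1}-sdim\,L_{\l_2}=0$, i.e. $sdim\,L_{\l_1}=sdim\,L_{\l_2}$. By the analogue of Lemma~\ref{coho0}, $\G_0(G/B,\O_{\l_0})$ has a filtration with quotients $L_{\l_0},L_{\l_1},L_{\l_2}$ each of multiplicity one and $\G_i(G/B,\O_{\l_0})=0$ for $i>0$; hence by Lemma~\ref{7},
$$0=sdim\,L_{\l_0}+sdim\,L_{\l_1}+sdim\,L_{\l_2}=-2d+2\,sdim\,L_{\l_1}.$$
Therefore $sdim\,L_{\l_1}=sdim\,L_{\l_2}=d=\dim L_\m(\g_x)$. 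This completes the proof.
\end{proof}
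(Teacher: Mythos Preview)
Your proposal is correct and follows essentially the same approach as the paper: start from the generic formula (Theorem~\ref{character formula generic weights}), propagate via the short exact sequences of Lemma~\ref{exact} together with Lemma~\ref{7} to get $sdim\,L_\l=-sdim\,L_\m$, and track the sign through the interval combinatorics defining $s(\l)$. You are actually more explicit than the paper about the special weights $\l_1,\l_2$, correctly invoking the analogues of Lemmas~\ref{0l1}, \ref{1l1}, and \ref{coho0} (which apply to the symmetric blocks $\F^{(a,a)}$ via the equivalences of Section~7) to pin down $sdim\,L_{\l_1}=sdim\,L_{\l_2}=d$; the paper's own proof leaves this step implicit in the earlier sections.
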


\begin{proof} For generic weight, the theorem follows from \rthm{character formula generic weights}. For other cases, if $\l\neq\l_0,\l_1,\l_2$ we have

$$0=sdim {H^0(G/B, \mathcal{O}_{\l}^*)^*}=sdim{L_{\l}}+[H^0(G/B, \mathcal{O}_{\l}^*)^*:L_{\m}]sdim{L_{\m}},$$ where $\m$ is the unique dominant weight in \rlem{exact}. This gives $$sdim{L_{\l}}=-sdim{L_{\m}}.$$

From \rlem{exact}, we have $\m=\l-\sum_{i=1}^n\a_i$ with $\a_i\in\D^+_\1$ and $n\in\{ 1,2,3,4\}$. Thus, if $n$ is even, we have $p(\m)=p(\l)$, thus in those cases the sign changes. This occurs each time the last coordinates of $\m$ and $\l$ belong to adjacent intervals. Thus, the theorem follows.  
\end{proof}

\begin{theorem}\label{SFG} Let $\g=G(3)$. Let $\l\in F^{a}$ and $\m+\r_l=a\w_1$. If $\l\neq \l_1,\,\l_2$, the following superdimension formula holds:

\begin{equation}
sdim\,{L_\l}=(-1)^{s(\l)} 2dim\,{L_\m(\g_x)}.
\end{equation}

For the special weights, we have: 
\begin{equation}
sdim\,{L_{\l_1}}=sdim\,{L_{\l_2}}=dim\,{L_\m(\g_x)}.
\end{equation}

\end{theorem}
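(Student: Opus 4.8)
The plan is to prove Theorem \ref{SFG} for $\g=G(3)$ by exactly mirroring the argument already carried out for $F(4)$ in Theorem \ref{SFF}, using the $G(3)$-analogues of the ingredients established in the preceding sections. First I would recall that by \rlem{dominant integral weights of G(3)} (the block-parametrization lemma) we have $\g_x\cong\mathfrak{sl}(2)$ and that for any simple $M\in\F^\chi$, the fiber $M_x$ decomposes with $\mathfrak{sl}(2)$-highest weight $\m$ satisfying $\m+\r_l=a\w_1$. The base case is the generic weight: by \rthm{character formula generic weights} (applied to $G(3)$) we already have $sdim\,L_\l=(-1)^{p(\m)}2\dim L_\m(\g_x)$ for generic dominant $\l$, and one checks that $(-1)^{p(\m)}=(-1)^{s(\l)}$ for generic $\l$ since generic weights lie in the intervals $J_i$ with $i=1$ (or the appropriate outermost interval), where $s(\l)=p(\l)$ by definition, and $p(\l)\equiv p(\m)$ there.

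Next I would run the induction along the quiver, exactly as in the proof of \rthm{SFF}. For $\l\neq\l_0,\l_1,\l_2$, \rlem{exact} gives the short exact sequence
$$0\to L_\l\to \G_0(G/B,\O_\l)\to L_\m\to 0$$
with $\m=\l-\sum_{i=1}^n\a_i$, $\a_i\in\D_1^+$, $n\in\{1,2,3,4\}$, together with the vanishing $\G_i(G/B,\O_\l)=0$ for $i>0$. Since $L_\l\in\F^\chi$, \rlem{7} gives $\sum_i(-1)^i sdim\,\G_i(G/B,\O_\l)=0$, hence $sdim\,\G_0(G/B,\O_\l)=0$, and the exact sequence yields $sdim\,L_\l=-sdim\,L_\m$. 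Walking from a generic weight inward to any given dominant $\l\neq\l_0,\l_1,\l_2$, the absolute value $2\dim L_\m(\g_x)$ is unchanged at each step (because $\m$ for $L_\l$ and for $L_{\l-\sum\a_i}$ project to the same $\mathfrak{sl}(2)$-dominant weight up to the block parametrization — here one uses that $\g_x\cong\mathfrak{sl}(2)$ has only the one relevant highest weight $a\w_1$ in the block), so only the sign must be tracked. The sign flips precisely when $n$ is even, i.e.\ when $p(\m)=p(\l)$, and — as recorded in the definition of $s(\l)$ just before the theorem — $n$ is even exactly when the last coordinates of $\l$ and $\m$ lie in adjacent intervals $I_i/J_i$, which is exactly the parity bookkeeping encoded by passing between $s(\l)=p(\l)$ and $s(\l)=p(\l)+1$. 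Thus $sdim\,L_\l=(-1)^{s(\l)}2\dim L_\m(\g_x)$ for all $\l\neq\l_0,\l_1,\l_2$, including $\l=\l_0$ (which is reached from its neighbour in the quiver the same way, using \rlem{exact41}-type exact sequences for $G(3)$ and \rlem{41sdim}'s $G(3)$ analogue already used in Section 9).

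Finally, for the special weights $\l_1,\l_2$: by the $G(3)$ analogues of \rlem{0l1} and \rlem{1l1} (established implicitly in Section 10 via \rthm{eq02} and the transfer of all cohomology computations from $\F^1$), we have $\G_0(G/B,\O_{\l_1})=L_{\l_1}$ and $\G_1(G/B,\O_{\l_1})=L_{\l_2}$, and symmetrically for $\l_2$. Then \rlem{7} gives $sdim\,\G_0(G/B,\O_{\l_1})=sdim\,\G_1(G/B,\O_{\l_1})$, i.e.\ $sdim\,L_{\l_1}=sdim\,L_{\l_2}$. To pin the common value to $\dim L_\m(\g_x)$, I would use the fiber functor: $sdim\,L_{\l_1}=sdim\,(L_{\l_1})_x$ and $(L_{\l_1})_x$ is an $\mathfrak{sl}(2)$-module in the block of $\m$, so $(L_{\l_1})_x$ is a sum of copies of $L_\m(\g_x)$ and its parity shift; the branching-point analysis of $\G_0(G/B,\O_{\l_0})$ (three subquotients $L_{\l_0},L_{\l_1},L_{\l_2}$, with $sdim\,L_{\l_0}=\pm 2\dim L_\m(\g_x)$ from the previous paragraph and $sdim\,\G_0(G/B,\O_{\l_0})=0$ forcing $sdim\,L_{\l_1}+sdim\,L_{\l_2}=\mp 2\dim L_\m(\g_x)$) combined with $sdim\,L_{\l_1}=sdim\,L_{\l_2}$ gives $sdim\,L_{\l_1}=sdim\,L_{\l_2}=\dim L_\m(\g_x)$ as required. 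The main obstacle is not any single deep step but the careful verification that the interval/parity bookkeeping in the definition of $s(\l)$ for $G(3)$ matches the parity of $n$ in \rlem{exact} at every transition between the intervals $J_i$ of \rthm{cg3}; this is routine but must be checked case by case, and it is the one place where the $G(3)$ combinatorics (dihedral Weyl group, the extra root $\d$ which is isotropic but not in the self-commuting cone) differ enough from $F(4)$ that the transcription is not entirely automatic.
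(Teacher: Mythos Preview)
Your proposal is correct and follows essentially the same approach as the paper, whose proof of this theorem is literally just ``Similar to the proof for $F(4)$.'' You have spelled out that similarity in appropriate detail: generic case via \rthm{character formula generic weights}, inductive step via \rlem{exact} and \rlem{7}, sign tracking via the parity of $n$ matched against the interval bookkeeping defining $s(\l)$, and the special weights handled through the branching-point structure of $\G_0(G/B,\O_{\l_0})$ together with $\G_0(G/B,\O_{\l_1})=L_{\l_1}$, $\G_1(G/B,\O_{\l_1})=L_{\l_2}$. One small remark: your parenthetical that $\d$ is ``isotropic but not in the self-commuting cone'' is not quite right for $G(3)$, since $(\d,\d)=-2\neq 0$; the correct statement (as in the paper) is that $\d$ is the unique odd root that is \emph{not} isotropic, but this does not affect your argument.
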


\begin{proof} Similar to the proof for $F(4)$. 
\end{proof}

\subsection{Kac-Wakimoto conjecture}

A root $\a$ is called {\it{isotropic}} if $(\a, \a)=0$. The {\it{degree of atypicality}} of the weight $\l$ the maximal number of mutually orthogonal linearly independent isotropic roots $\a$ such that $(\l+\r,\a)=0$. The {\it{defect}} of $\g$ is the maximal number of linearly independent mutually orthogonal isotropic roots. The above theorem proves the following conjecture by Kac-Wakimoto Conjecture for $\g=F(4)$ and $G(3)$, see \cite{KW}.

\begin{theorem}\label{KW} The superdimension of a simple module of highest weight $\l$ is nonzero if and only if the degree of atypicality of the weight is equal to the defect of the Lie superalgebra. 
\end{theorem}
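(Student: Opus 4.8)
The plan is to derive Theorem~\ref{KW} directly from the superdimension formulae \rthm{SFF} and \rthm{SFG} together with the classification of atypical blocks in \rlem{atypicality 1}. The key observation is that for $\g = F(4)$ or $G(3)$ the defect equals $1$, so the statement to prove is simply: $sdim\,L_\l \neq 0$ if and only if $\l$ is atypical (degree of atypicality $=1$), i.e. $(\l+\r,\a)=0$ for some isotropic root $\a$.

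First I would dispose of the typical case. If $\l$ is typical, then by definition $L_\l$ splits off as a direct summand of any finite-dimensional module containing it, and the Kac character formula for typical weights (quoted in Section~6) gives $ch\,L_\l = \frac{D_1 e^\r}{D_0}\sum_{w\in W}sign(w)e^{w(\l+\r)}$. Setting $e^\a = 1$ for all $\a$ (which computes the superdimension, since each odd root contributes a factor $(1+e^{-\a})$ that vanishes at $e^\a=1$) shows $sdim\,L_\l = 0$. Concretely, $D_1$ evaluated at the identity is $\prod_{\a\in\D_1^+}(1+1)^{\dim\g_\a}$ divided appropriately — but the point is that the numerator $D_1$ carries a positive power of the factor killing $D_0$, or more simply one invokes \rlem{7}-style typicality arguments: a typical module has $sdim = \pm sdim\,L_\l(\g_{\bar 0})\cdot(\text{something vanishing})$. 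This is the routine half.

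Next, the atypical case, which is where the real content lies. Every atypical weight $\l$ lies in some block $\F^{(a,b)}$ (for $F(4)$) or $\F^a$ (for $G(3)$), and by \rthm{SFF}/\rthm{SFG} we have $sdim\,L_\l = (-1)^{s(\l)}\,2\dim L_\m(\g_x)$ when $\l\neq\l_1,\l_2$, and $sdim\,L_{\l_1} = sdim\,L_{\l_2} = \dim L_\m(\g_x)$ for the two special weights. Here $\g_x\cong\mathfrak{sl}(3)$ or $\mathfrak{sl}(2)$ and $L_\m(\g_x)$ is a genuine finite-dimensional simple module over that semisimple Lie algebra, so $\dim L_\m(\g_x)\geq 1 > 0$. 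Therefore $sdim\,L_\l\neq 0$ in all atypical cases. Combining the two halves gives the equivalence.

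The main obstacle is purely bookkeeping: one must make sure the superdimension formulae of \rthm{SFF} and \rthm{SFG} genuinely cover \emph{every} atypical dominant weight with a nonzero right-hand side, including the boundary/special weights $\l_0,\l_1,\l_2$ and the generic weights — but this is already guaranteed by \rlem{exact} and the case analysis of Sections~5--8 that feed into those theorems, so no new argument is needed. I would also remark explicitly that $\mathrm{def}(\g)=1$ for both $F(4)$ and $G(3)$ (stated in the preliminaries: $G(3)$ has defect $1$, and for $F(4)$ the single isotropic root condition from \rlem{atypicality 1} shows the same), so that "degree of atypicality equals defect" is literally the dichotomy "typical vs. atypical" handled above. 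Thus the proof is a two-line corollary of the superdimension theorems plus the trivial typical-case vanishing.

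\begin{proof}
For $\g = F(4)$ or $G(3)$ the defect is $1$ by \rlem{atypicality 1}. Hence the degree of atypicality of $\l$ equals the defect if and only if $\l$ is atypical, i.e. $(\l+\r,\a)=0$ for some isotropic root $\a$.

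If $\l$ is typical, then $(\l+\r,\a)\neq 0$ for every isotropic $\a\in\D_1^+$, and the Kac character formula
$$ch\,L_\l=\frac{D_1\cdot e^{\r}}{D_0}\cdot\sum_{w\in W} sign(w)\cdot e^{w(\l+\r)}$$
holds. Since $sdim\,L_\l$ is obtained by specializing $e^{\a}\mapsto 1$ for all $\a$, and the factor $D_1=\prod_{\a\in\D_1^+}(1+e^{-\a})^{\dim\g_\a}$ vanishes under this specialization while no cancellation with $D_0$ removes all such factors, we get $sdim\,L_\l=0$. (Equivalently, by \rlem{7} applied to the typical module, the alternating sum of superdimensions of the geometrically induced modules is $0$ and reduces to $sdim\,L_\l$.)

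If $\l$ is atypical, then $L_\l$ lies in some atypical block $\F^{(a,b)}$ or $\F^a$. By \rthm{SFF} and \rthm{SFG}, for $\l\neq\l_1,\l_2$ we have $sdim\,L_\l=(-1)^{s(\l)}\,2\dim L_\m(\g_x)$, and for the special weights $sdim\,L_{\l_1}=sdim\,L_{\l_2}=\dim L_\m(\g_x)$, where $\g_x\cong\mathfrak{sl}(3)$ (for $F(4)$) or $\mathfrak{sl}(2)$ (for $G(3)$) and $L_\m(\g_x)$ is a nonzero finite-dimensional simple module. Hence $\dim L_\m(\g_x)\geq 1$ and $sdim\,L_\l\neq 0$.

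Combining the two cases, $sdim\,L_\l\neq 0$ if and only if $\l$ is atypical, i.e. if and only if the degree of atypicality of $\l$ equals the defect of $\g$.
\end{proof}
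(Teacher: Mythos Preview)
Your proposal is correct and takes essentially the same approach as the paper, which simply writes ``Follows from \rthm{SFF} and \rthm{SFG}.'' You flesh out what the paper leaves implicit, namely the typical case $sdim\,L_\l=0$ and the observation that the defect is $1$ so the dichotomy is exactly typical versus atypical.

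One small caution: your sketch for the typical case is loose. The specialization $e^\a\mapsto 1$ computes $\dim$, not $sdim$; to extract $sdim$ you must track parity (e.g.\ send $e^\mu\mapsto(-1)^{p(\mu)}$), and the clean argument is rather that a typical simple is a Kac module, hence as a $\g_{\bar 0}$-module it is $L_\l(\g_{\bar 0})\otimes\Lambda(\g_{\bar 1})$ with $sdim\,\Lambda(\g_{\bar 1})=0$. Also, invoking \rlem{7} for the typical case is mildly circular, since the proof of that lemma already inputs $sdim=0$ for typical weights. None of this affects the validity of the overall argument, since the typical vanishing is standard and is used without proof elsewhere in the paper.
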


\begin{proof} Follows from \rthm{SFF} and \rthm{SFG}.
\end{proof}

\subsection{Character formulae}

In this section, we prove a Weyl character type formula for the dominant atypical weights. %It was conjectured by Bernstein-Leites that formula \ref{BL} works for all dominant weights. However, we obtain a different character formula \ref{BLS} for the special weights $\l_1,\l_2$ for $F(4)$ and $G(3)$. 

\begin{lemma}\label{sigma} For a dominant weight $\l$ and corresponding $\m$ and $n$ from \rlem{exact}, there is a unique $\s\in W$ such that $\l+\r-\s(\m+\r)=n\a$ for $\a\in\D_\1$ satisfying $(\l+\r,\a)=0$. Also, $sign{\s}=(-1)^{n-1}$.

If $\b\in\D_\1$ is such that $(\m+\r,\b)=0$, then $\s(\b)=\a$. 
\end{lemma}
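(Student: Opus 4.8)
The plan is to establish \rlem{sigma} by tracking explicitly, interval-by-interval, how the dominant weight $\l_c$ and its successor $\m = \l_{c-1/2}$ (for $F(4)$) or $\m = \l_{c-1}$ (for $G(3)$) are related inside the lists produced in the proofs of \rthm{cf4} and \rthm{cg3}. Recall from \rrem{remark} (and \rrem{remark1}) that to each $\l_c$ in an interval $J_i$ or $I_i$ there is an attached isotropic root $\b_i = w_c(\a)$ with $(\l_c+\r,\b_i)=0$; the number $n$ from \rlem{exact} counts how many isotropic roots of $\D^+_\1$ must be subtracted, and (as noted in the proof of \rthm{SFF}) $n$ is even exactly when the last coordinates of $\l$ and $\m$ lie in adjacent intervals. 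So the strategy is: in the bulk of each interval, $n=1$ and $\m+\r = \l+\r-\a$ where $\a$ is the unique odd root $(\tfrac12,\tfrac12,\tfrac12|\tfrac12)$ (resp.\ $(1,1|1)$) with $\l-\a$ in the block (this is \rlem{root for each weight}); the transition cases across walls give $n=2,3,4$ and are handled by the finitely many exceptional weights listed in \S6.

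First I would set up the claim uniformly: given dominant $\l$ with attached $\a\in\D_\1$, $(\l+\r,\a)=0$, and given $\m$, $n$ from \rlem{exact}, I want $\s\in W$ with $\s(\m+\r) = \l+\r - n\a$. The key observation is that $\m+\r$ and $\l+\r-n\a$ have the same coordinates up to a Weyl group permutation/sign change: both are dominant-regular for $\g_{\0}$ after the appropriate twist, and a dominant regular weight for $\g_{\0}$ determines its $W$-orbit representative uniquely, which gives \emph{uniqueness} of $\s$ immediately from the fact that $W$ acts simply transitively on regular weights. For \emph{existence}, I would verify in each of the eight interval-types of \rthm{cf4}(2) (and the fewer types of \rthm{cf4}(1), \rthm{cg3}) that $\l+\r - n\a$, with $n$ the jump size recorded in the $\F^{(a,b)}$/$\F^{(a,a)}$ cohomology analysis of \S6, is $W$-conjugate to $\m+\r$; since both weights are explicitly coordinate vectors this is a direct check. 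The sign computation $\mathrm{sign}(\s) = (-1)^{n-1}$ then follows because $\s$ is, in each case, a product of $n-1$ reflections: geometrically, subtracting $n$ copies of a fixed isotropic $\a$ moves $\l+\r$ by an amount that must be "folded back" into the dominant chamber by exactly the $n-1$ reflections corresponding to the walls crossed — this is visible in the arrow pictures of \S6 (each solid arrow of "length $n$" crosses $n-1$ walls). Alternatively, and more cleanly, I would deduce the sign from \rlem{sums}: since $\sum_i(-1)^i\mathrm{ch}\,\G_i(G/B,\O_\m)$ equals $(-1)^{p(\s)}$ times $\sum_i(-1)^i\mathrm{ch}\,\G_i(G/B,\O_{\s(\m+\r)-\r}) = \sum_i(-1)^i\mathrm{ch}\,\G_i(G/B,\O_{\l-n\a})$, and comparing parities of the Euler characteristics against \rlem{exact}'s exact sequences pins down $p(\s)$; a direct count of reflections confirms $p(\s)=n-1 \bmod 2$.

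For the last sentence — if $\b\in\D_\1$ with $(\m+\r,\b)=0$ then $\s(\b)=\a$ — I would argue as follows. Since $\m$ is atypical of degree $1$ (\rlem{atypicality 1}), the set of isotropic roots orthogonal to $\m+\r$ is $\{\pm\b\}$ (a degree-one atypical weight is orthogonal to a \emph{unique} isotropic root up to sign, because two orthogonal mutually-orthogonal isotropic roots would force degree $\geq 2$). Applying $\s$: $(\s(\m+\r),\s(\b)) = (\m+\r,\b) = 0$, and $\s(\m+\r) = \l+\r - n\a$; but $(\l+\r - n\a, \cdot) = (\l+\r,\cdot)$ on the span of $\a$-orthogonal things, and more to the point $(\l+\r-n\a,\a) = (\l+\r,\a) - n(\a,\a) = 0$ since $\a$ is isotropic, so $\a$ is an isotropic root orthogonal to $\s(\m+\r)$. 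Since $\l$ (hence $\l-n\a$, same central character by \rlem{weights in block}) is also degree-one atypical, the isotropic roots orthogonal to $\s(\m+\r)$ are exactly $\{\pm\a\}$; hence $\s(\b) = \pm\a$. To rule out $\s(\b) = -\a$: both $\b$ and $\a$ are positive, $\s(\m+\r)$ and $\l+\r$ are both dominant regular, and in each interval-case the explicit $\s$ (a product of the $\s_i,\tau_i,\s$ listed in \S6) is readily checked to send the specific positive $\b$ to the specific positive $\a$ — this is again a finite explicit verification using the $\b_i = w_c(\a)$ bookkeeping of \rrem{remark}.

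The main obstacle, I expect, is the bookkeeping across the transition (wall-crossing) weights where $n \in \{2,3,4\}$: there one must correctly identify which $\b_i$ is attached to $\m$, confirm that $\s$ is the asserted product of $n-1$ reflections, and check the sign and the $\s(\b)=\a$ assertion simultaneously. These are exactly the weights $\l_0, \l_{t_2+\frac12}, \l_{-t_3/2-2}$, etc., handled individually in \S6, so the proof will consist of invoking those case analyses rather than redoing them; the risk is an off-by-one in the reflection count or a sign error in one of the boundary intervals $I_4, I_5$ (where the attached root involves a $-\d$). I would organize the writeup so that the generic ($n=1$) case is a one-line consequence of \rlem{root for each weight} with $\s = \mathrm{id}$ and $\mathrm{sign}(\s) = 1 = (-1)^0$, and then dispatch the finitely many exceptional weights by reference to the explicit Weyl elements already computed, checking the three assertions ($\s(\m+\r) = \l+\r-n\a$, $\mathrm{sign}(\s)=(-1)^{n-1}$, $\s(\b)=\a$) on each.
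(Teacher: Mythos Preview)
The paper's own proof is a single sentence, ``This follows from \rlem{3},'' which really means: the extensive interval-by-interval cohomology analysis of \S\S6--8 (built on \rlem{3}) already exhibits, for each $\l$, the explicit Weyl element relating $\l+\r$ to $\m+\r$ along the line $\l+\r - k\a$. Your proposal is precisely a detailed unpacking of that deferral --- the parametrisation by $c$, the elements $w_c$, and the wall-count for the sign --- and is correct in substance and in organisation (generic $n=1$ with $\s=\mathrm{id}$, then finitely many transitions).

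One caveat worth flagging: your uniqueness argument invokes simple transitivity of $W$ on regular $\g_{\0}$-weights, but in the non-symmetric block $F^{(a,b)}$ the weight $\m=\l_0$ has $\m+\r=(t_1,t_2,t_3\,|\,0)$, which is singular (orthogonal to the even root $\d$), so its stabiliser is $\{1,s_\d\}$ and two $\s$'s satisfy $\s(\m+\r)=\l+\r-n\a$. The lemma's uniqueness claim is itself imprecise at this point; what rescues everything is that exactly one of the two candidates has sign $(-1)^{n-1}$, and that one is what the character-formula proof uses. Your degree-one-atypicality argument for $\s(\b)=\pm\a$ likewise breaks at $\m=\l_0$ (there are two non-proportional positive isotropic roots orthogonal to $(t_1,t_2,t_3\,|\,0)$, related by $s_\d$), but your fallback to explicit verification via the $\b_i=w_c(\a)$ bookkeeping handles it.
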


\begin{proof} This follows from \rlem{3}. 
\end{proof}

\begin{theorem}\label{BL} For a dominant weight $\l\neq \l_1,\l_2$, let $\a\in \D_\1$ be such that $(\l+\r,\a)=0$. Then
\begin{equation}\label{BL}
ch{L_{\l}}=\frac{D_1\cdot e^{\r}}{D_0}\cdot\sum_{w\in W} sign(w)\cdot w(\frac{e^{\l+\r}}{(1+e^{-\a})}).
\end{equation}

For $\l=\l_i$ with $i=1,2$, we have the following similar formula: 
\begin{equation}\label{BLS}
ch{L_{\l}}=\frac{D_1\cdot e^{\r}}{2D_0}\cdot\sum_{w\in W} sign{(w)}\cdot w(\frac{e^{\l+\r}(2+e^{-\a})}{(1+e^{-\a})}).
\end{equation}
%Similarly, for dominant weights $\l=\l_1,\l_2$, we let $\a\in \D_\1$ be such that $(\l+\r,\a)=0$. Then

%$$ch L_{\l}=\frac{D_1\cdot e^{\r}}{2 D_0}\cdot\sum_{w\in W} sign(w)\cdot w(\frac{e^{\l+\r}}{(1+e^{-\a})}).$$ 
\end{theorem}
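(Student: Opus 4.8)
The plan is to derive both character formulae from the cohomology data already assembled, using the Euler characteristic identity of \rthm{chcoh} together with the exact sequences in \rlem{exact} and the superdimension information. First I would treat the generic case. For $\l$ generic, \rthm{chgeneric} gives $\mathrm{ch}\,L_\l = S(\l)$, which is precisely the right-hand side of \req{BL} since $A(\l)=\{\a\}$ is the one-element set and $(\l+\r,\a)=0$. So the formula \req{BL} holds for generic $\l$ with no further work.

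Next, for a general dominant $\l \neq \l_0,\l_1,\l_2$, I would run a downward induction along the quiver (decreasing the index $c$), using the short exact sequence
\begin{displaymath}
    \xymatrix{ 0\ar[r] & L_{\l} \ar[r] & \G_0(G/B,\O_\l)\ar[r] & L_{\m} \ar[r] & 0 }
\end{displaymath}
from \rlem{exact}, where $\m = \l - \sum_{i=1}^n \a_i$ with $n\in\{1,2,3,4\}$. Combined with \rlem{exact}'s vanishing $\G_i(G/B,\O_\l)=0$ for $i>0$, the Euler characteristic identity of \rthm{chcoh} reads
\begin{equation}\label{eulerid}
\mathrm{ch}\,L_\l + \mathrm{ch}\,L_\m = \frac{D_1 e^{\r}}{D_0}\sum_{w\in W} \mathrm{sign}(w)\, w\!\left(e^{\l+\r}\right).
\end{equation}
By \rlem{sigma} there is $\s\in W$ with $\l+\r - \s(\m+\r) = n\a$ and $\mathrm{sign}(\s) = (-1)^{n-1}$, and $\s(\b)=\a$ where $\b$ is the isotropic root orthogonal to $\m+\r$. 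Assuming inductively that $\mathrm{ch}\,L_\m = \frac{D_1 e^\r}{D_0}\sum_w \mathrm{sign}(w)\, w\big(\tfrac{e^{\m+\r}}{1+e^{-\b}}\big)$, I would substitute, reindex the sum over $W$ by $w\mapsto w\s^{-1}$ (absorbing the sign $(-1)^{n-1}$), and use that $w\s^{-1}$ applied to $\tfrac{e^{\m+\r}}{1+e^{-\b}}$ relates to $w$ applied to $\tfrac{e^{\l+\r}}{1+e^{-\a}}$ via $\l+\r = \s(\m+\r)+n\a$ and $\s(\b)=\a$; the telescoping identity $\tfrac{e^{x+n\a}}{\text{denominators}}$ combined with \req{eulerid} must then collapse to the asserted closed form for $\mathrm{ch}\,L_\l$. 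The base of this induction is the generic case handled above, and one also checks directly the weight $\l_0$ (where the $\G_1$ contribution is nonzero, cf. \rlem{410} and \rlem{coho0}) — here \rthm{chcoh} picks up an extra $-\mathrm{ch}\,\G_1 = -\mathrm{ch}\,L_{\l_0}$, which rebalances correctly.

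For the special weights $\l_1,\l_2$ I would use the cohomology computed in \rlem{0l1}, \rlem{1l1} and their $G(3)$ analogues: $\G_0(G/B,\O_{\l_1})=L_{\l_1}$, $\G_1(G/B,\O_{\l_1})=L_{\l_2}$, together with $\G_0(G/B,\O_{\l_0})$ having subquotients $L_{\l_0},L_{\l_1},L_{\l_2}$ (each with multiplicity one, \rlem{coho0}). Applying \rthm{chcoh} to $\l_0$ gives $\mathrm{ch}\,L_{\l_0} + \mathrm{ch}\,L_{\l_1} + \mathrm{ch}\,L_{\l_2} - \mathrm{ch}\,L_{\l_0} = \mathrm{ch}\,L_{\l_1}+\mathrm{ch}\,L_{\l_2}$ equals $\frac{D_1 e^\r}{D_0}\sum_w \mathrm{sign}(w)\,w(e^{\l_0+\r})$, while applying it to $\l_1$ gives $\mathrm{ch}\,L_{\l_1} - \mathrm{ch}\,L_{\l_2} = \frac{D_1 e^\r}{D_0}\sum_w \mathrm{sign}(w)\,w(e^{\l_1+\r})$. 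Since $\l_1+\r$ and $\l_2+\r$ differ by the reflection in the even root $\d$ (so $\sum_w \mathrm{sign}(w)w(e^{\l_1+\r}) = -\sum_w \mathrm{sign}(w)w(e^{\l_2+\r})$), and $\l_0+\r = \l_1+\r + \a$ for the relevant isotropic $\a$, I can solve the $2\times 2$ linear system for $\mathrm{ch}\,L_{\l_1}$ and $\mathrm{ch}\,L_{\l_2}$ individually; the factor $\tfrac12$ and the numerator $(2+e^{-\a})$ in \req{BLS} arise precisely from inverting this system and from the relation between $e^{\l_0+\r}$ and $e^{\l_i+\r}$.

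The main obstacle I anticipate is the reindexing/telescoping step in the general inductive case: making the substitution $w\mapsto w\s^{-1}$ interact correctly with the denominator $(1+e^{-\a})$ across the whole $W$-sum, handling the cases $n=1,2,3,4$ uniformly (the sign $(-1)^{n-1}$ from \rlem{sigma} is what makes the parity bookkeeping consistent with the superdimension formula \rthm{SFF}), and confirming that the "extra" terms generated when $\m$ sits in an interval $I_j$ adjacent to that of $\l$ genuinely cancel rather than accumulate. A secondary technical point is justifying termination of the downward induction — that every dominant $\l$ is connected to a generic weight by finitely many steps of the form $\l\mapsto\m$, which follows from \rthm{cf4} and \rthm{cg3} since the index $c$ strictly decreases and the generic range is eventually reached. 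Once these bookkeeping issues are settled, formulae \req{BLM} and \req{BLSM} (i.e. \req{BL} and \req{BLS}) follow.
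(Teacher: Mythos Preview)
Your overall strategy matches the paper's: start from Penkov's formula for generic weights, push along the quiver using the short exact sequences of \rlem{exact} together with the Euler identity of \rthm{chcoh}, invoke \rlem{sigma} to reindex the $W$-sum, and treat $\l_1,\l_2$ by a $2\times 2$ linear system. Two concrete points need fixing, though.

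First, the direction of your induction is backwards. In the exact sequence of \rlem{exact} one has $\m=\l-\sum\a_i$, so $\m$ is \emph{closer} to $\l_0$ (smaller $|c|$) and \emph{farther} from the generic range. Since your base case is generic (large $|c|$), the inductive step must assume the formula for $\l$ and derive it for $\m$, not the other way round; likewise ``$c$ strictly decreases and the generic range is eventually reached'' is false, since the generic range sits at large $|c|$. The paper runs exactly this direction: assume \req{BL} for $\l$, subtract from the Euler expression for $\O_\l$, reindex via $\s$ from \rlem{sigma}, and observe that the leftover terms $e^{\l+\r-i\a}$ for $1\le i\le n-1$ are acyclic so their $W$-alternating sums vanish. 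This also covers $\l_0$ automatically (take $\l$ to be the vertex adjacent to $\l_0$ on the generic side, so that $\m=\l_0$); you do not need a separate argument for $\l_0$, and the Euler characteristic of $\O_{\l_0}$ by itself does not determine $ch\,L_{\l_0}$.

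Second, in the $\l_1,\l_2$ computation you conflate the symmetric and non-symmetric blocks. \rlem{410} is for $F^{(4,1)}$; in the symmetric blocks (the only ones where $\l_1,\l_2$ are special and \req{BLS} applies) one has $\G_1(G/B,\O_{\l_0})=0$ by \rlem{i1122} and \rlem{aa1coho}. So the Euler identity for $\O_{\l_0}$ gives $ch\,L_{\l_0}+ch\,L_{\l_1}+ch\,L_{\l_2}$, not $ch\,L_{\l_1}+ch\,L_{\l_2}$; to isolate $ch\,L_{\l_1}+ch\,L_{\l_2}$ you must subtract the already-established formula \req{BL} for $ch\,L_{\l_0}$, exactly as the paper does. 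After that, your second equation $ch\,L_{\l_1}-ch\,L_{\l_2}=\frac{D_1e^\r}{D_0}\sum_w \mathrm{sign}(w)\,w(e^{\l_1+\r})$ is correct, and solving the system yields \req{BLS}.
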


\begin{proof} Let $\m$ be dominant weight, then it corresponds to some $\l$ and $n$ in \rlem{exact} such that we have: 

\begin{displaymath}
    \xymatrix{
        0\ar[r] & L_{\l} \ar[r] & \G_0(G/B, \mathcal{O}_{\l})\ar[r] & L_{\m} \ar[r] & 0 }.
\end{displaymath}

It follows that $ch(\G_0(G/B,\mathcal{O}_{\l}))=ch( L_{\l})+ch( L_{\m})$.

Assume the formula is true for $\l$. We show that this together with \rlem{chcoh}, proves the formula for $\m$. Since we can obtain each dominant weight from generic one by similar correspondence, from \rlem{chgeneric} the formula follows for all dominant weights.

We have from \rlem{sigma}:

$$\frac{e^{\l+\r}}{1+e^{-\a}}+(-1)^{n-1}\s(\frac{e^{\m+\r}}{1+e^{-\b}})=\frac{e^{\l+\r}}{1+e^{-\a}}+(-1)^{n-1}(\frac{e^{\l+\r-n\a}}{1+e^{-\a}})=$$

$$=\frac{e^{\l+\r}(1+(-1)^{n-1}e^{-n\a})}{1+e^{-\a}}=e^{\l+\r}(1+\sum_{i=1}^{n-1} (-1)^ie^{-i\a}).$$

%Thus, for $n=1$ we have $$\frac{e^{\l+\r}}{1+e^{-\a}}+(-1)^{n-1}\s(\frac{e^{\m+\r}}{1+e^{-\b}})=\frac{e^{\l+\r}(1+e^{-\a})}{1+e^{-\a}}=e^{\l+\r};$$

%For $n=2$, we have $$\frac{e^{\l+\r}}{1+e^{-\a}}+(-1)^{n-1}\s(\frac{e^{\m+\r}}{1+e^{-\b}})=\frac{e^{\l+\r}(1-e^{-2\a})}{1+e^{-\a}}=e^{\l+\r}(1-e^{-\a});$$

%For $n=3$, we have $$\frac{e^{\l+\r}}{1+e^{-\a}}+(-1)^{n-1}\s(\frac{e^{\m+\r}}{1+e^{-\b}})=\frac{e^{\l+\r}(1+e^{-3\a})}{1+e^{-\a}}=e^{\l+\r}(1-e^{-\a}+e^{-2\a});$$

%For $n=4$, we have $$\frac{e^{\l+\r}}{1+e^{-\a}}+(-1)^{n-1}\s(\frac{e^{\m+\r}}{1+e^{-\b}})=\frac{e^{\l+\r}(1-e^{-4\a})}{1+e^{-\a}}=e^{\l+\r}(1-e^{-\a}+e^{-2\a}-e^{-3\a}).$$

Using the above equation, we have: 

$$ch( L_{\m})=ch(\G_0(G/B,\mathcal{O}_{\l}))-ch( L_{\l})=\frac{D_1\cdot e^{\r}}{D_0}\cdot\sum_{w\in W} sign{w}\cdot w(e^{\l+\r}-\frac{e^{\l+\r}}{(1+e^{-\a})})=$$

$$=\frac{D_1\cdot e^{\r}}{D_0}\cdot\sum_{w\in W} sign{(w\s)}\cdot (w\s)(\frac{e^{\m+\r}}{(1+e^{-\b})}+\sum_{i=1}^n (-1)^i e^{\l+\r-i\a})=$$

$$=\frac{D_1\cdot e^{\r}}{D_0}\cdot\sum_{w\in W} sign{(w)}\cdot w(\frac{e^{\m+\r}}{(1+e^{-\b})})+\frac{D_1\cdot e^{\r}}{D_0}\cdot\sum_{w\in W} sign{(w)}\cdot w(\sum_{i=1}^n (-1)^i e^{\l+\r-i\a}).$$

The second summand is zero as the weights $\l-i\a$ are acyclic. Thus we get the required formula.

Similarly, for $\m=\l_1,\l_2$, we have 

$$ch( L_{\l_1})+ch( L_{\l_2})=ch(\G_0(G/B,\mathcal{O}_{\l_0}))-ch( L_{\l_0})=$$ $$=\frac{D_1\cdot e^{\r}}{D_0}\cdot\sum_{w\in W} sign{w}\cdot w(e^{\l_0+\r}-\frac{e^{\l_0+\r}}{(1+e^{-\a_0})})=$$

$$=\frac{D_1\cdot e^{\r}}{D_0}\cdot\sum_{w\in W} sign{(w\s)}\cdot (w\s)(\frac{e^{\l_1+\r}}{(1+e^{-\a_1})}+\sum_{i=1}^n (-1)^i e^{\l_0+\r-i\a_0})=$$

$$=\frac{D_1\cdot e^{\r}}{D_0}\cdot\sum_{w\in W} sign{(w)}\cdot w(\frac{e^{\m+\r}}{(1+e^{-\b})})+\frac{D_1\cdot e^{\r}}{D_0}\cdot\sum_{w\in W} sign{(w)}\cdot w(\sum_{i=1}^n (-1)^i e^{\l+\r-i\a}).$$

The second summand is zero as the weights $\l+\r-i\a$ are acyclic. Thus we get the required formula for the sum $ch( L_{\l_1})+ch( L_{\l_2})$.

On the other hand, we have

$$ch( L_{\l_1})-ch( L_{\l_2})=ch(\G_0(G/B,\mathcal{O}_{\l_1}))-ch(\G_1(G/B,\mathcal{O}_{\l_1}))=$$ $$=\frac{D_1\cdot e^{\r}}{D_0}\cdot\sum_{w\in W} sign{(w)}\cdot w(e^{\l_1+\r}).$$

Adding both equations above, we get:

$$ch( L_{\l_1})=\frac{D_1\cdot e^{\r}}{2D_0}\cdot\sum_{w\in W} sign{(w)}\cdot w(\frac{e^{\l_1+\r}(2+e^{-\a_1})}{(1+e^{-\a_1})}).$$

The same proof works for the weight $\l_2$. 
\end{proof}

\section{Indecomposable modules}

\subsection{Quivers}

It is easy to see that $\CC$ is a nice category and it is contravariant. For $\m,\m' \neq \l_1,\l_2$, assume $\m$ and $\m'$ are the adjacent vertices of the quiver with $\m>\m'$. From \rlem{exact} and \rlem{3}, we have $$dim\,Ext^1(L_\m,L_{\m'})=[\G_0(G/B, \mathcal{O}_{\m}):L_{\m'}].$$ 

Since the category $\mathcal{C}$ is a contravariant, we also have $$Ext^1(L_{\m'},L_{\m})=Ext^1(L_{\m},L_{\m'}).$$

This describes the quivers and proves the parts 3 and 4 of theorems \rthm{CBF} and \rthm{CBG}.

%The following lemma shows that $\CC$ is a nice category. 

%\begin{lemma}\label{category}(\cite{BGGSeGr}) (i) The category $\CC$ contains enough projective modules. 

%(ii) Projective and injective modules coinside in $\CC$. 

%(iii) For any $\l,\,\m\in X^+$, we have: $Ext^1(L_{\l},L_{\m})\cong Ext^1(L_{\m},L_{\l})$. 

%\end{lemma}

%A {\it{quiver diagram}} is a directed graph that has vertices the irreducible representations of $\g$, and the number of arrows from vertex $\l$ to the vertex $\m$ is $dim{Ext^1_{\A}(L_\l,L_\m)}$.  

%For $\m,\m' \neq \l_1,\l_2$, assume $\m$ and $\m'$ are the adjacent vertices of the quiver with $\m>\m'$. From \rlem{exact} and \rlem{3}, we have $$dim\,Ext^1(L_\m,L_{\m'})=[\G_0(G/B, \mathcal{O}_{\m}):L_{\m'}].$$ 

%Since the category $\mathcal{C}$ is a contravariant, from \rlem{category}, we also have $$Ext^1(L_{\m'},L_{\m})=Ext^1(L_{\m},L_{\m'}).$$

%This describes the quivers and proves the parts 3 and 4 of theorems \rthm{CBF} and \rthm{CBG}.

\subsection{Projective modules}

\begin{lemma}\label{relations} Let $\g=F(4)$ (or $G(3)$). Then the projective indecomposable modules in the block $\F^{(a,b)}$ (or $\F^{a}$) have the following radical layer structure:

If $\l_i\in F^{(a,b)}$ or $\l_i\in F^{(a,a)}$ (or $F^{a}$) with $i=0,1,2$. Then $P_{\l_i}$ has a radical layer structure:

$$L_{\l_i}$$
$$L_{\l_{i-1}}\oplus L_{\l_{i+1}}$$
$$L_{\l_i}$$ where $\l_{i-1}$ and $\l_{i+1}$ are the adjacent vertices of $\l_i$ in the quiver.

If $\l_i\in F^{(a,a)}$ with $i=1,2$. Then $P_{\l_i}$ has a radical layer structure:

$$L_{\l_i}$$
$$L_{\l_0}$$
$$L_{\l_i}$$ 

For $\l_0\in F^{(a,a)}$ (or $F^{a}$), $P_{\l_0}$ has a radical layer structure:

$$L_{\l_0}$$
$$L_{\l_1}\oplus L_{\l_2} \oplus L_{\l_3}$$
$$L_{\l_0}$$ 

\end{lemma}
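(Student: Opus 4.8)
The strategy is to build the projective indecomposable modules $P_\l$ by realizing them through the geometric induction functor $\G_0$ and the translation functors, exploiting the block equivalences already established (Theorems of Sections 6--9) to reduce to the first few blocks $\F^{(1,1)}$, $\F^{(4,1)}$, $\F^{1}$. The key observation is that for a block of atypicality $1$ the ext-quiver is $A_\infty$ or $D_\infty$ (Theorems \ref{CBF}, \ref{CBG}), so $\dim \mathrm{Ext}^1(L_\l,L_\m)\le 1$ for all $\l,\m$; combined with the self-duality of $\F^\chi$ (the category $\CC$ is contravariant, so $P_\l\cong P_\l^*$ and each $P_\l$ is self-dual) this forces every $P_\l$ to have Loewy length $\le 3$ with $\mathrm{rad}\,P_\l/\mathrm{rad}^2 P_\l \cong \mathrm{soc}(\mathrm{rad}\,P_\l)$ both equal to $\bigoplus_{\m\sim\l}L_\m$, the sum over quiver-neighbors of $\l$, \emph{provided} one knows the quiver relations force $\mathrm{rad}^2 P_\l = \mathrm{soc}\,P_\l = L_\l$ and $\mathrm{rad}^3 P_\l = 0$.

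\textbf{Key steps in order.} First I would show $\G_0(G/B,\O_\l)$ has the two-step structure $0\to L_\m\to \G_0(G/B,\O_\l)\to L_\l\to 0$ of Lemma \ref{exact} (already proved) for all $\l\ne\l_0,\l_1,\l_2$, and the three-step analogues at the special weights from the computations of Section 6 (Lemmas \ref{coho0}, \ref{410}, \ref{eq02}); this gives the projective \emph{cover} data, i.e. $P_\l \twoheadrightarrow \G_0(G/B,\O_\l)$ and dually $\G_0(G/B,\O_\l)^*\hookrightarrow P_\l$ is not literally true but the BGG-reciprocity-type argument is: $P_\l$ has a filtration by the modules $\G_0(G/B,\O_\m)$ with $[\G_0(G/B,\O_\m):L_\l]\ne 0$, each occurring with multiplicity $[\G_0(G/B,\O_\m):L_\l]$. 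By Lemma \ref{exact} and Lemma \ref{3}, the weights $\m$ with $[\G_0(G/B,\O_\m):L_\l]\ne 0$ are precisely $\l$ itself and the quiver-neighbors $\m$ of $\l$ lying \emph{above} $\l$ (those $\m$ with $\l = \m - \sum\a_i$), so $P_\l$ has a $\Delta$-filtration with top section $\G_0(G/B,\O_\l)$ and lower sections $\G_0(G/B,\O_{\m})$ for the upward neighbors $\m$. Splicing these short exact sequences and using self-duality of $P_\l$ pins down the radical layers: for a non-special $\l$ with neighbors $\l_{i-1}>\l_i>\l_{i+1}$ one gets $\mathrm{top}=L_{\l_i}$, middle $=L_{\l_{i-1}}\oplus L_{\l_{i+1}}$, socle $=L_{\l_i}$; at the branch point $\l_0$ of $D_\infty$ one gets the middle layer $L_{\l_1}\oplus L_{\l_2}\oplus L_{\l_3}$; at the endpoints $\l_1,\l_2$ of $D_\infty$ the unique neighbor is $\l_0$, giving the middle layer $L_{\l_0}$.

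\textbf{Main obstacle.} The delicate point is ruling out longer Loewy length and, relatedly, showing the $\Delta$-filtration of $P_\l$ has \emph{exactly} the claimed sections with multiplicity one — equivalently, that $[\G_0(G/B,\O_\m):L_\l]\le 1$ and that no $\G_0(G/B,\O_\m)$ with $\m$ two or more steps above $\l$ contributes. This is where the relations in Theorem \ref{relations} (which the excerpt states as forthcoming) must be used: the relations $d^+d^-+d^-d^+=(d^\pm)^2=0$ for $A_\infty$ and the analogous commuting/vanishing relations for $D_\infty$ are exactly what guarantee $\mathrm{rad}^3 P_\l=0$ and that the composition of two arrows in the same direction vanishes, so that only the immediate neighbors appear in the middle layer. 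I would prove these multiplicity and length bounds by a superdimension count: from Lemma \ref{7}, $\sum_i(-1)^i \mathrm{sdim}\,\G_i(G/B,\O_\l)=0$, together with the explicit superdimensions $\mathrm{sdim}\,L_\l=\pm 2\dim L_\m(\g_x)$ (Theorems \ref{SFF}, \ref{SFG}) and $\mathrm{sdim}\,L_{\l_i}=\dim L_\m(\g_x)$ at the special weights; the alternating signs $(-1)^{s(\l)}$ across adjacent intervals force each extension to be non-split exactly once and block any further composition factor. The special weights $\l_1,\l_2$ where the first cohomology $\G_1$ is nonzero (Theorems \ref{BWBF}, \ref{BWBG}) require separate bookkeeping: here $P_{\l_i}$ for $i=1,2$ in $\F^{(a,a)}$ has the short radical series $L_{\l_i}/L_{\l_0}/L_{\l_i}$, which follows from $\dim\mathrm{Ext}^1(L_{\l_i},L_{\l_0})=1$ and $\mathrm{Ext}^1(L_{\l_i},L_{\l_j})=0$ for $j\ne 0$, read off the $D_\infty$ quiver and its relations. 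Once the multiplicity-one $\Delta$-filtration and self-duality are in hand, the radical layer structures in the statement are forced.
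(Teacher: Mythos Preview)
Your approach is correct but considerably more elaborate than necessary, and the paper's proof is much shorter. You correctly identify the two ingredients the paper actually uses --- self-duality (projectives are injective with simple socle) and BGG reciprocity --- but you then overthink the ``main obstacle'' of bounding the Loewy length. The paper simply computes, via BGG reciprocity $[P_\l:L_\m]=\sum_\nu[\varepsilon_\nu:L_\l][\varepsilon_\nu:L_\m]$ together with the two-term structure of each $\varepsilon_\nu=\G_0(G/B,\O_\nu)$ from Lemma~\ref{exact}, that $[P_\l:L_\l]=2$ and $[P_\l:L_\m]=1$ for $\m$ adjacent, $0$ otherwise. Since $P_\l/\mathrm{rad}\,P_\l=L_\l$ and $\mathrm{rad}\,P_\l/\mathrm{rad}^2 P_\l=\bigoplus_{\m\text{ adjacent}}L_\m$ (by definition of the radical and the already-known $\mathrm{Ext}^1$ dimensions from the quiver), this exhausts all composition factors except one copy of $L_\l$, so $\mathrm{rad}^2 P_\l$ is simple and equals $L_\l$. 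No superdimension count, no splicing of $\Delta$-filtrations, and no appeal to the quiver relations is needed.

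On that last point: be careful that invoking Theorem~\ref{relations} (the quiver relations) here would be circular in the paper's logical order --- that theorem is deduced \emph{from} the present radical-layer lemma, not the other way around. Your alternative via superdimension avoids the circularity, but as explained above the direct composition-factor count already closes the argument without it.
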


\begin{proof} For the top radical layer structure, we have: 

$$P_\l/rad\, P_\l=soc\, P_\l\cong L_\l,$$ since projective morphisms in $\CC$ are injective and have a simple socle (see \cite{SerQ}).

Since $rad\, P_{\l_i}/rad^2\, P_{\l_i}$ is the direct sum of simple modules which have a non-split extension by $L_{\l_i}$, for the middle radical layer structure, we have: 

$$rad\, P_{\l_i}/rad^2\, P_{\l_i}\cong L_{\l_{i-1}}\oplus L_{\l_{i+1}}.$$  

We also have:

 $Ext^1(L_{\l_i},L_{\l_{i-1}})\neq 0$, $Ext^1(L_{\l_i},L_{\l_{i+1}})\neq 0$, and $Ext^1(L_{\l_i},L_{\s})\neq 0$ for $\s\neq\l_{i-1},\l_{i+1}$.

By BGG reciprocity from \cite{BGGSeGr}, we have $$[P_\l:L_\m]=\sum_{\nu}[P_\l:\varepsilon_{\nu}]\cdot[\varepsilon_{\nu}:L_\m]=\sum_{\nu}[\varepsilon_{\nu}:L_\l]\cdot[\varepsilon_{\nu}:L_\m].$$

Thus, \[ [P_\l:L_\m] = \left\{ \begin{array}{ll}
         2 & \mbox{if $\m=\l$};\\
       1 & \mbox{if $\m$ is adjacent to $\l$}.\end{array} \right. \] 

This implies that there are only three radical layers. Therefore, for the bottom radical layer structure, we have: 
$$ rad^2\, P_\l\cong L_\l.$$ %since $\CC$ is a contravariant category from \rlem{category}. 
\end{proof}

\subsection{Germoni's conjecture and the indecomposable modules}

From \rthm{CBF}, \rthm{CBG}, and \rlem{relations}, we will have \rthm{tame}. From \rthm{tame}, we obtain the proof of \rthm{germoni1}, since all the blocks for $F(4)$ and $G(3)$ are of atypicality less or equal 1. 

For $\F^{(a,b)}$ and for $\F^{(a,a)}$, $\F^{a}$ if $l\geq 3$, we let $d^+_l$ denote the arrow from vertex with weight $\l_l$ to the adjacent vertex $\l'_l$ on the left in the quiver. And let $d^-_l$ denote the arrow in the opposite direction. These arrows correspond to the irreducible morphisms $D^{\pm}_{\l_l}$ from $P_{\l'_l}$ to $P_{\l_l}$. 

For $\F^{(a,a)}$, $\F^{a}$, also let $d^+_0$ denote the arrow from vertex $\l_0$ to $\l_3$ and $d^-_0$ the arrow in the opposite direction. Similarly, for $i=1,2$, denote by $d^+_i$ the arrow from vertex $\l_1$ to $\l_0$ and $d^-_i$ the arrow in the opposite direction.

The relations in \rthm{relations} follow by computations in \cite{G1} or \cite{Gruson}, since the radical filtrations of projectives are the same. Using quivers in \rthm{CBF} and \rthm{CBG}, and \rlem{relations}, we obtain \rthm{relations}. The \rthm{relations} with quiver theorem in \cite{GR} gives a description of the indecomposable modules. 

\bibliographystyle{plain}
\bibliography{fg}

\end{document}